\documentclass{amsart}
\usepackage{anysize}

\marginsize{2,5 cm}{2,0 cm}{2,0 cm}{2,0 cm}

\usepackage{amsmath, amsfonts,amsbsy,amssymb,amscd}
\usepackage{t1enc}
\usepackage[cp1250]{inputenc}
\usepackage[british]{babel}
\usepackage[all]{xy}
\usepackage{color}
\usepackage{amsfonts}
\usepackage{latexsym}
\usepackage{mathrsfs}
\usepackage{hyperref}
\usepackage{graphicx}

\usepackage{color}
\usepackage{enumerate}
\usepackage{bbm}
\definecolor{orange}{rgb}{1,0.5,0}

\newcommand{\bes}{\begin{equation*}}
\newcommand{\ees}{\end{equation*}}

\usepackage{mathrsfs} 

\DeclareMathAlphabet{\mathpzc}{OT1}{pzc}{L}{it} 

\newtheorem{thm}{Theorem}[section] 
\newtheorem{lemma}[thm]{Lemma}     
\newtheorem{cor}[thm]{Corollary}
\newtheorem{prop}[thm]{Proposition}

\numberwithin{equation}{section}

\def\R{\mathrm{Re\,}}

\def\C{\mathbb{C}}
\def\geq{\geqslant}
\def\leq{\leqslant}
\def\R{\mathbb{R}}

\def\Z{\mathcal{Z}}
\def\Cb{B}
\def\N{\mathbb{N}}

\def\cB{\mathcal{B}}

\def\cC{\mathcal{C}}

\def\V{\mathcal V}
\def\pSymLog{\mathcal PSymLog}

\newcommand{\bea}{\begin{eqnarray}}
  \newcommand{\eea}{\end{eqnarray}}
  \newcommand{\beab}{\begin{eqnarray*}}
  \newcommand{\eeab}{\end{eqnarray*}}

  \newcommand{\be}{\begin{equation}}
  \newcommand{\ee}{\end{equation}}

\newcommand{\la}{\lambda}
\newtheorem{defn}[thm]{Definition}
\newtheorem{remark}[thm]{Remark}
\title[Disjointness of rescalings]{Disjointness of rescalings of smooth area preserving flows on surfaces}

\author[P.\ Berk]{Przemyslaw Berk}
\address{Institut f\"ur Mathematik, Universit\"at Z\"urich, Winterthurerstrasse 190,
CH-8057 Z\"urich, Switzerland}
\email{prze.berk@gmail.com}

\author[C.\ Ulcigrai]{Corinna Ulcigrai}
\address{Institut f\"ur Mathematik, Universit\"at Z\"urich, Winterthurerstrasse 190,
CH-8057 Z\"urich, Switzerland}
\email{corinna.ulcigrai@math.uzh.ch}

\begin{document}
\baselineskip=14pt

\begin{abstract} 
We consider the problem of \emph{disjointness of rescalings} $(\varphi_{\kappa t})_{t\in \mathbb{R}}$, $\kappa\in\R$ 
of a flow $(\varphi_{t})_{t\in \R}$ in the context of   smooth flows preserving 
a smooth invariant measure, or, equivalently, locally Hamiltonian flows on 
compact orientable surfaces. We show that, when the genus of the surface is 
$g\ge 2$, almost every  locally Hamiltonian flow with 2g-2 non-degenerate 
simple saddles is such that any distinct two rational rescalings 
$(\varphi_{\kappa t})_{t\in \mathbb{R}}$ and $(\varphi_{\kappa' t})_{t\in 
\mathbb{R}}$, with $\kappa=p/q$ and  $\kappa'=p'/q'$ of different absolute values, are  disjoint. 
Previous results on disjointness of rescalings were available only for 
locally Hamiltonian flows and their special flow representations in genus one. 


 The result is proved using a criterion for disjointness based on the study of the distribution  of Birkhoff sums of a special representation and in particular estimates on their exponential tails decay.

A key novel geometric ingredient in the proof is the existence of a sequence of rigidity times which display what we call bounded-type rigidity, so that a large set of points comes back in time $q$ with distance $O(1/q)$. To produce such bounded-type rigidity times we exploit  a particular way of degeneration of a translation surface to a flat torus for which the vertical flow has bounded-type rotation number.
\end{abstract}

\maketitle


\section{Introduction}
{
We push in this paper the investigation of \emph{disjointness} in the context of smooth area-preserving flows on surfaces. The definition of disjointness and the question of disjointness among flows and within their time-changes goes back to seminar work by Furstenberg \cite{Fur}, see e.g.~\cite{Glas} and the references therein. 
The notion of \emph{disjointness of rescalings} more specifically has seen a revival of interest in recent years, in particular in connection with Sarnak's Moebius orthogonality conjecture and in the context of parabolic dynamics (see \S~\ref{sec:Moebius}). We pursue in this paper the investigation of disjointness of rescalings as a fundamental property of parabolic flows, by proving its typicality among a class of smooth-area preserving flows on surfaces. Our main result is stated in \S~\ref{sec:main}.

\subsection{Disjointness in parabolic dynamics}
Let us first recall the notion of disjointness of rescalings and its relevance. }
\subsubsection{Disjointness of rescalings}
Given a (measure-preserving) flow $\varphi_\R:= (\varphi_t)_{t\in \R}$ on a measure space $(X,\mu)$, we can investigate what happens when we \emph{rescale} time linearly, namely consider, for a given $\kappa\neq 0$, the flow  $\varphi^\kappa_\R:= (\varphi^{\kappa}_t)_{t\in \R}$ where $\varphi^{\kappa}_t:= \varphi_{\kappa t}$.  Even such a simple time-reparametrization can drastically change the ergodic theoretical properties of the flow. In particular, a non-trivial rescaling $\varphi^\kappa_\R$ (where by non-trivial we mean that $\kappa\neq 1$) can fail to be \emph{isomorphic} to the original flow. \emph{Disjointness} can be seen as a strong form of lack of isomorphism. There are several notions of disjointness, for example \emph{spectral disjointness} (see   \S~\ref{sec:disjointness}).  
We will focus in this paper on \emph{disjointness in the sense of Furstenberg}, or absence of common joinings (see \S~\ref{sec:disjointness} for definitions), a notion introduced by Furstenberg in \cite{Fur}. 

\subsubsection{Disjointness to study Moebius orthogonality}\label{sec:Moebius}
A surge of activity in the investigation of \emph{disjointness} in several classes of entropy zero flows, was initially motivated by a famous conjecture by Sarnak \cite{Sa, Sa:Af} on orthogonality of the Moebius functions with systems of zero entropy, see for example the survey \cite{surveyMoebius} and the references therein. Disjointness of rescalings has been exploited in several recent results (starting from the seminal work in this direction \cite{BSZ} by Bourgain, Sarnak and Ziegler on horocycle flows) as a key tool which implies orthogonality with the Moebius (and other multiplicative) functions. 
 
This recent activity on disjointness  has at the same time led to the realization that 
the property of \emph{disjointness of rescalings} may be a distinctive phenomenon for  \emph{parabolic} flows (see for example \cite{KLU} and the references therein).  It is in this spirit that we push the investigation of this property within the fundamental class of smooth area-preserving flows on surfaces. 

As a disclaimer, although several results on disjointness of rescalings were also proved for entropy zero \emph{maps}  (see e.g.~\cite{CE} and \cite{BK} for reducible IETs or \cite{ALL} for rank one transformations)
 we focus in this paper on \emph{flows}, and hence provide only a brief survey of the literature 
on disjointness in entropy zero \emph{flows}, known as \emph{parabolic flows}.

\subsubsection{Parabolic flows}
Parabolic flows (and more in general parabolic dynamical systems) are flows which display  \emph{slow} sensitive dependence on initial conditions, in the sense that the speed of divergence of nearby orbits is subexponential, typically polynomial or subpolynomial (in contrast with \emph{hyperbolic flows}, which exhibit exponential divergence), see for example the surveys  \cite{Fo:ICM, Ul:ECM}.   Classical examples of parabolic flows include the \emph{horocycle} flow on a surface with constant negative curvature and its time-changes, time-changes of nilflows on nilmanifolds and smooth area-preserving flows on surfaces (for which we refer the interested reader to the surveys  \cite{Ul:ICM, Fr-Ul:Enc} and the references therein). 

{ 
One of the key discoveries by Marina Ratner in her seminal work in homogeneous dynamics was the phenomenon of \emph{joining rigidity} of (time-changes of) horocycle and unipotent flows: informally, two such  flows (or rescalings) are disjoint unless there is an algebraic reason for them to be isomorphic, see e.g.~\cite{Rat:Act, Rat:Inv}. The investigation of joining rigidity of time-changes of horocycle and unipotent flow is still an active area of research, as demonstrated by the several results recently proved, see  for example \cite{FF, AFR, DKW, T1, T2}. 



If we consider disjointness of \emph{rescalings},
the  standard horocycle flow $h_\R$ on $SL(2,\R)/SL(2,\mathbb{Z})$ \emph{fails} to have such property: indeed  $h_\R$  is \emph{conjugated} to (and hence in particular \emph{not disjoint} from)  all its rescalings (via the geodesic flow $g_t$, in view of the relation $g_t\circ h_{s}= h_{e^t}s\circ g_t$). 
  However, for many non-trivial \emph{time-changes} (also called~time \emph{reparametrizations}) of horocycle flows, different \emph{rescalings}  (i.e. $ p\neq q$) are disjoint, see \cite{KLU, FF}. This can be interpreted by saying that the (standard) horocycle flow (in view of its homogeneity and hence additional symmetries) is rather an \emph{exception} among reparametrizations of horocycle flows: \emph{typical} {time-changes} \emph{do} display disjointness of rescalings. 
  

Results on disjointness and disjointness of rescalings for other classes of parabolic flows are not many. In the context of nilflows on nilmanifolds, Furstenberg disjointness of rescalings was only proved for typical time-changes of special (bounded type) Heisenberg nilflows by Forni and Kanigowski  \cite{FoKa}. The problem for higher rank Heisenberg flows, and even for typical (Diophantine) Heisenberg nilflows, remains currently widely open.

\subsubsection{Disjoint rescalings for surface flows}
In the context of smooth area-preserving flows, some results on disjointness of rescalings were proved, but up to now (to the best of our knowledge), typicality of this property  was only shown to hold in some  genus one settings, that we now recall.}
  
In  \cite{KLU}, 
 Kanigowski, Lemanczyk and the second author  proved  that Furstenberg disjointness of rescalings holds for typical \emph{Arnold flows} on genus one surfaces: these are given by (the restriction to the unique minimal component of) a flow on a torus with one center and one saddle.  
 Arnold flows admit a representation 
  as special flows over a rotation of the circle, under a roof function with so called \emph{logarithmic asymmetric singularities} (see \S~\ref{sec:reduction}). 

 The same type of special representation, when the base is an interval exchange transformation, provides a description of minimal components of locally Hamiltonian flows on higher genus surfaces (in presence of a saddle loop homologous to zero). A typical flow on a higher genus surface (i.e.~a surface with genus $g\geq 2$) which is \emph{minimal}, on the other hand, admits a representation
as a special flows over a IET 
under a roof function with \emph{symmetric} logarithmic singularities (see the definitions in \S~\ref{sec:roofs}). 
The special case in which the base is a rotation (which does not correspond to any locally Hamiltonian flow, but provides the prototype for the study of minimal locally Hamiltonian flows in higher genus) was studied by Kanigowski and the first author in \cite{BK}, where they showed that distinct \emph{rational} rescalings (i.e.~rescalings of the form $\varphi^\kappa_\R$ where $\kappa=p/q\in\mathbb{Q}$) of a typical such flow are disjoint.

Another class of special flows 
 for which spectral disjointness of 
 powers was proved, namely so-called  {\it von Neumann flows}. This class (introduced in \cite{vN} to produce examples of systems with continuous spectrum) 
contains special flows over rotations under a piecewise linear roof; 
 one can show that they arise from the study of linear flows on surfaces with boundary, see \cite{Fr-Le3,Co-Fr}.  
In \cite{DK}, Kanigowski and 
 Dong 
 showed that \emph{any} two different rescalings of a Von Neumann flow are disjoint, unless they are isomorphic. In \cite{BK}, Kanigowski and the first author also consider more generally \emph{von Neumann flows} in higher genus (special flows over interval exchanges under a piece-wise linear roof). They prove that for a typical IET, distinct rational rescalings of any special flow with a piece-wise linear roof function, are disjoint.

\subsection{Main result and open questions}\label{sec:main}
The  results summarized above indicate that disjointness of rescalings may be a typical feature of parabolic flows and lead to the natural program of pushing the investigation of this property further in the realm of parabolic dynamics. The first natural question is whether disjointness of rescalings (in Furstenberg or spectral sense, for rational or almost every rescaling) can be proved for smooth area preserving flows on surfaces of higher genus, i.e.~of genus $g\geq 2$. 
 Our main result is the following:
\begin{thm}[Disjointness of rational rescalings]\label{thm:main}
For a \emph{typical}  locally Hamiltonian flow $\varphi_\R$ on a surface $M$ of 
any genus $g\geq 2$ with only simple saddles, any two rational distinct rescalings $\varphi^{\kappa}_\R$ and $\varphi^{\kappa'}_\R $ with $\kappa,\kappa'\in \mathbb{Q}$, such that $|\kappa|\neq|\kappa'|$, are \emph{disjoint} in the sense of Furstenberg. 
\end{thm}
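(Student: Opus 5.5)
We reduce the problem to special flows and then apply a disjointness criterion driven by Birkhoff sums of the roof. A typical locally Hamiltonian flow with only simple saddles on a surface of genus $g\geq 2$ is minimal. It is measure-theoretically isomorphic to a special flow $T^f$ over an interval exchange transformation $T$ on $d=2g+m-1$ intervals, with $m=2g-2$ saddles. The roof $f$ has symmetric logarithmic singularities at the discontinuities of $T$, and is absolutely continuous away from them. Rescalings commute with this isomorphism: $(\varphi_{\kappa t})$ corresponds to $T^{f/\kappa}$ after renormalizing the measure. Since the inverse flow of $T^f$ is isomorphic to a special flow with the same kind of roof over $T^{-1}$, we may assume $\kappa,\kappa'>0$. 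After scaling we may also take $\kappa=p$, $\kappa'=p'$ with $p\neq p'$ coprime integers, up to a common factor $q$, which preserves disjointness.

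The criterion we plan to use is the one of \cite{KLU,BK}: disjointness of $T^{f}$ and $T^{cf}$ ($c\neq 1$ rational) follows if there is a sequence of rigidity times $q_n$ and sets $A_n$ with $\mu(A_n)\geq \delta>0$ satisfying three conditions. First, $d(T^{q_n}x,x)\to 0$ uniformly on $A_n$. Second, the centered Birkhoff sums $S_{q_n}f(x)-q_n\int f$, restricted to $A_n$, converge in distribution along a subsequence to a nontrivial law $P$. Third, this limit has tails decaying fast enough, so that $P$ cannot be the rescaled law $P(c\,\cdot)$ convolved appropriately. In \cite{BK} the relevant rigidity argument used the bounded-type property of the rotation.

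The plan therefore has three steps. \textbf{Step 1, bounded-type rigidity.} Via Rauzy–Veech induction we will construct, for a.e.\ IET, infinitely many times $q_n$ and towers of heights $q_n$ covering measure at least $\delta$ with base width $\asymp 1/q_n$. On these towers $T^{q_n}$ acts as a rotation-like map with displacement $O(1/q_n)$. We will obtain this by choosing a finite path in the Rauzy diagram whose associated translation surface is close to a degenerate one: one cylinder dominates and the surface collapses to a flat torus with bounded-type slope. By ergodicity of the Teichmüller flow (Masur–Veech), such a path occurs infinitely often for a.e.\ IET.

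\textbf{Step 2, Birkhoff sums.} On the rigidity towers we will control $S_{q_n}f$ by splitting $f$ into its logarithmic part and an absolutely continuous part. The logarithmic part is handled by the bounded-type geometry: the $q_n$ orbit points are $\asymp 1/q_n$-spaced, exactly as for a bounded-type rotation. Symmetry of the singularities cancels the leading $\log q_n$ terms, so after centering the sums stay tight. Their distribution converges to a law with exponentially decaying tails, because the points closest to singularities contribute $-\log(\text{distance}\cdot q_n)$. The absolutely continuous part contributes a bounded correction, after subtracting a piecewise-constant coboundary correction on the towers.

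\textbf{Step 3, conclusion.} We then verify the non-degeneracy of the limit laws and apply the criterion for all pairs $p\neq p'$. The main obstacle is Step 1: obtaining uniformly $O(1/q_n)$ rigidity on a set of definite measure for a \emph{full-measure} set of IETs in higher genus, where typical IETs are not of bounded type in any global sense. The degeneration to a bounded-type torus is the device we expect to use to circumvent this.
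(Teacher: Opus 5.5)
Your reduction to special flows and your idea of forcing a degeneration to a bounded-type torus via ergodicity of renormalization are the right geometric input; this is how the paper obtains BT-rigid presentations. The gap is in Step 3, where the proof actually lives: it contains no mechanism, and the criterion you state cannot work.

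Non-degeneracy or fast tails of the limit law of $S_{q_n}f$ say nothing about $(T^f_{Kt})$ versus $(T^f_{Lt})$. The Fr\k{a}czek--Lema\'nczyk type criterion compares two rescaled laws, $(S_{Kq_n}f-Ka_n)/K$ and $(S_{Lq_n}f-La_n)/L$. Both are taken on the \emph{same} control set $W_n$ of measure $1-\gamma$, with the same centering $a_n$. It needs a tail set $\{|s|\geq t\}$ on which the two laws differ by more than $\gamma/(1-\gamma)$. Exponential tails are only a technical hypothesis there.

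With ordinary rigidity, $S_{Lq_n}f\approx L\,S_{q_n}f$ on the rigidity set, so the two rescaled laws essentially coincide and nothing is distinguished. Bounded-type rigidity is needed not for displacement $O(1/q_n)$ (ordinary Rauzy--Veech towers already give $o(1/q_n)$), but for a displacement $\Delta_n/q_n$ bounded \emph{below}. Then $S_{Lq_n}f(x)\approx\sum_{j<L}S_{q_n}f(x-j\Delta_n/q_n)$, and the closest visit to the singularities moves by a definite amount at each multiple. Where orbits approach the singularities from the left they move away, so the $L$-tail sits inside the $K$-tail. On the other side they move towards the singularities, and the $L$-tail is strictly larger. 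The convexity inequality $\sum_{i=1}^K\log(R+\frac{L-K}{2}+i)>\frac KL\sum_{i=1}^L\log(R+i)$ shows the gain beats the loss, by roughly $(\frac{L-K}{2}-(\alpha-1))e^{-D}$ with $\alpha<\frac{L-K}{2}+1$.

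This is also why rigidity sets of measure only ``$\geq\delta$'' are fatal. The uncontrolled set consists of the small towers plus the bad zone near the singularities where torus-like rigidity breaks, pulled back $L$ times. It has measure of order $(L+1)\rho\,e^{-D}$, the \emph{same} scale as the tail difference. To win, the construction must:
\begin{enumerate}
\item force the singularities to be cramped at nearly equal heights (the conditions $0<\tau_i<\rho'$);
\item put the relative displacement in a narrow window $[(1-\delta)e^{-D},e^{-D}]$ tied to the tail threshold;
\item choose $\rho$ (depending on $L$) before $B$, and $B$ before $D,\epsilon,C$.
\end{enumerate}
Also, ``one cylinder dominates'' is exactly the picture in which the relative displacement tends to $0$. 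You need two large rectangles of comparable width glued along their vertical sides as in the torus example, plus thin cramped ones. So the main difficulty is not Step 1.

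Step 2 has further gaps. Centering at $q_n\int f$ does not give tightness in genus $\geq 2$. Special Birkhoff sums of even smooth functions deviate polynomially from $q_n\int f$: there is no Denjoy--Koksma inequality and no solvable cohomological equation, and no ``piecewise-constant coboundary correction'' fixes this. One centers instead at a reference orbit, $a_n=S_{q_n}f(z_n)$. Every orbit of the control set is compared with it through matchings: a split into at most three sub-segments that travel together through Rohlin towers, so the mean value theorem applies.

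For this one needs $|\tilde S_rf'|\leq Bq_n$ on trimmed sums, and $1/q_n$-spacing alone only gives $O(q_n\log q_n)$. The linear bound is Ulcigrai's absence-of-mixing estimate. It holds only at returns of the natural extension of Rauzy--Veech induction to a product set of positive measure. Those times must be synchronized with the BT-rigid ones: a return to that set, followed $N$ steps later by a visit to the open BT-rigid set, with persistence of linear bounds under bounded cutting and stacking.

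Two smaller points. The criterion requires weak mixing, which holds for typical flows by Ulcigrai's weak mixing theorem. The mixed-sign case $\kappa>0>\kappa'$ does not reduce to positive rescalings merely because the reversed flow is a special flow over $T^{-1}$. One uses instead that $\phi_{t_n}\to(1-\gamma)\int\phi_t\,dQ(t)+\gamma U$ implies $\phi_{-t_n}\to(1-\gamma)\int\phi_{-t}\,dQ(t)+\gamma U^*$, so the same limits give disjointness from the reversed flow as well.
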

We will provide below the definition of \emph{disjoint} in the sense of Furstenberg. The notion of \emph{typical}, used here in a \emph{measure theoretic sense}, i.e.~gives a full measure set with respect to a natural measure class on locally Hamiltonian flows with given singularity types, sometimes referred to as the Katok fundamental class;   
see \S\ref{sec:measureclass} for definitions. 

\smallskip
%

{Let us mention a different disjointness result (from some point of view \emph{weaker}, although  of spectral nature) which was recently proved in higher genus.  
Instead of asking whether \emph{time-changes} of a typical flows in a certain class are disjoint, or have disjoint rescalings, one can first ask,  whether two typical flows from a given (parabolic) \emph{class} are disjoint.  
Under the assumption that all singularities are  \emph{non-degenerate}, i.e.~that all saddles are \emph{simple},  within the open set\footnote{Here \emph{open} refers to the topology given by small perturbations by a closed $1$-form. Locally Hamiltonian flows 
with only \emph{non-degenerate} (or Morse-type) singularities are indeed divided 
in two open sets, often denoted in the literature by $\mathcal{U}_{min}$  and $\mathcal{U}_{\neg min}$: in  $\mathcal{U}_{min}$ the typical flow is \emph{minimal} (as well as \emph{weak mixing} \cite{Ul:wea},\label{ft:wm} but not \emph{mixing} \cite{Ul:abs}).   
In  the open set  $\mathcal{U}_{\neg min}$ on the other hand, the typical flow decomposes into (open) union of closed orbits and   
 invariant subsurfaces (with boundary) on which the restriction of the flow is minimal (the \emph{minimal components}, which are at most $g$, where $g$ is the genus), see e.g.~\cite{Ma:tra}, on which the flow is 
\emph{mixing} \cite{Ul:mix, Rav:mix}, as well as mixing of all orders \cite{KKU}.

\label{footnote:opensets}} 
  of  minimal locally Hamiltonian flows 
  (or equivalently special flows under roofs with logarithmic \emph{symmetric} singularities), 
it was very recently proved in \cite{FKU} by Fr\k{a}czek, Kanigowski and the second author that   
 the typical \emph{pair} of such flows is  spectrally disjoint.  

It is also natural to ask whether \emph{spectral} (rather than Furstenberg) disjointness holds,  at least  in the genus two setting. Although our proof is inspired by the proof by Kanigowski and the first author of \emph{spectral} disjointness of rescalings (i.e.~we use the same type of disjointness criterion) for their analogous genus one setting (namely special flows over rotations under a symmetric logarithm), we pursue a different strategy (as explained in \S~\ref{sec:strategy}) because of the difficulties of controlling the tails in the higher genus case\footnote{One of the crucial differences when passing from rotations (that can be seen as interval exchange transformations with $d=2$ intervals) to \emph{interval exchange transformations} (IETs) with $d\geq 2$ intervals is that the latter fail \emph{Denjoy-Koksma} inequalities and the cohomological equations cannot in general be solved. This leads to a \emph{centering} problem of special Birkhoff sums, see \S~\ref{sec:strategy}).}, which leads only to the proof of Furstenberg disjointness. 


Finally, the question of disjointness of rescalings should also be investigated for the generalization of \emph{Arnold flows} (i.e.~special flows under asymmetric logarithmic roofs, see \S~\ref{sec:roofs}) to higher genus $g\geq 2$, i.e.~\emph{minimal components} of 
 locally Hamiltonian flows with saddle loops homologous to zero\footnote{This is the open set often denoted in the literature as $\mathcal{U}_{\neg min}$, see footnote~\ref{footnote:opensets}.}. In this setting, Furstenberg disjointness of rescalings 
 is an open problem as soon as $g>1$ (it was only proved in
 \cite{KLU} for Arnold flows, i.e.~for $g=1$). The stronger notion of \emph{spectral} disjointness of rescalings is open even for Arnold flows with genus one (i.e.~special flows over rotations). 
This seems a difficult problem, since already the nature of the spectrum of Arnold flows is an open question (see e.g.~\cite{FKU} and the spectral questions therein).

\subsection{Strategy of the proof and bounded-type rigidity}\label{sec:strategy}
{We now conclude the introduction by highlighting some of the novel ideas in 
the proof. In particular, we introduce the use of what we call \emph{bounded type rigidity} 
times, which we build for an IET using a geometrically based  mechanism of  
\emph{degeneration} to a two-tower rigidity (produced constructing 
renormalization times where a surface, of which the IET is a Poincar{\'e} 
section, degenerates to a torus with a bounded type irrational flow, see 
\S~\ref{sec:degeneration} for details). }
 
\subsubsection{Reduction to Birkhoff sums distributions}
To study ergodic and spectral properties of locally Hamiltonian flows, it is 
standard to exploit their representation as special flows over an IET (or a 
rotation when $g=1$) that we will denote by $T$. The growth of the 
\emph{Birkhoff sums} $S_n f = \sum_{k=0}^{n-1} f\circ T^k$ of the roof function 
$f$ and its derivatives plays a crucial role in the proof of properties such as 
mixing, weak mixing, multiple mixing, as well as spectral and disjointness 
properties  (see e.g.~\cite{Ul:wea, SK:mix, Ul:mix, Rav:mix, KKU, FFK, KLU, 
FKU}). 
Similarly, our approach to disjointness of rescalings is also based on the study of Birkhoff sums $S_n f $, but rather than of estimates on the growth of their derivatives (which are crucial to study mixing, as well as Ratner forms of shearing, as in \cite{Rav:mix, KKU}),  
 we are interested in the \emph{distribution} of $S_n f-n\int\! f $ (seen as a random variable where the initial point $x$ is chosen uniformly with respect to Lebesgue measure). 

\subsubsection{The disjointness criterion}\label{sec:intro_disj_crit}
The \emph{criterion} that we use for proving \emph{disjointness} of special flows (stated in \S\ref{sec:disj_sf}) comes from \cite{Fr-Le1} as well as from \cite{BFr1}
and is based on the study of the \emph{tails} of Birkhoff sums distributions at times where they display a strong form of \emph{tightness}: 
heuristically, if one can find a sequence of \emph{rigidity times}  $(r_n)_{n\in \N}$ such that the Birkhoff sums $S_{r_n}(f)$ are \emph{tight} (in the sense of random variables) and their tail \emph{decays exponentially} and, furthermore, multiples $S_{2 r_n}(f)$, $S_{3r_n}(f), \dots, S_{k r_n}(f)$ have \emph{tails} of the same order,  
one can show that the flow $\varphi_\R$ and its rescaling $\varphi^k_\R$ are disjoint (Proposition~\ref{prof:disjointnesssf} provides the formal statement). 

We remark that 
 \emph{tightness} of Birkhoff sums is a feature of 
flows which display \emph{absence of mixing}.  In a weaker form (tightness along \emph{partial} rigidity sets), it suffices indeed to prevent mixing\footnote{An important early criterion for absence of mixing appears in Katok's work \cite{Ka:int}, which shows 
 that special flows over IETs under roof functions of bounded variation are never mixing, and by Kochergin's, which shows the absence of mixing for special flows over rotations under a roof with a symmetric logarithmic singularity (see \cite{Ko:abs, Ko:07}). Both criteria require as input  \emph{tightness} of Birkhoff sums along some subsequences of \emph{rigidity} (or \emph{partial rigidity}) \emph{times}, i.e.\ one has to show that there exists a sequence $(q_n)$ of times such that $T^{q_n}$ converges to identity on subsets $E_n $ of measure tending to one (if there is rigidity, or measure bounded below in the case of partial rigidity) and at the same time, for some centralizing sequence $(a_n)$ and uniform constant $C$,
 $Leb\{ x\in E_n | \ |S_{q_n}(f)(x) - a_n|<C\}/Leb(E_n) \to 1 $.}
see e.g. \cite{Ka:int, Ko:abs, Ko:07, Sch:abs, Ul:abs}. An idea that goes back to 
work of  Fr\k{a}czek and Lema\'nczyk 
 \cite{Fr-Le2},  is that if in addition  
to \emph{tightness}, one can also control the \emph{tails} of the distribution of the centralized Birkhoff sums $S_{q_n}(f)(x) - a_n$, one can prove much stronger results (using joinings and Markov operators), in particular spectral disjointness from mixing flows and singularity of the spectrum. This approach lead to the proof of singularity of the spectrum in genus two \cite{Ch-Fr-Ka-Ul} and recently in any genus \cite{FKU}.

{
\subsubsection{Bounded type degeneration} 
The main difficulty that we had to overcome is that these by now \emph{classical} rigidity times are \emph{not suitable} to study and \emph{differentiate} the (exponentially small) tails in higher genus 
to verify the assumptions for disjointness.  Roughly speaking, if $(q_n)_{n\in\N}$ is such a sequence of rigidity times, then the values of Birkhoff sums $S_{q_n}(f)(T^{kq_n}x)$ are essentially the same for a typical point from the interval. 
To be able to  distinguish the tails of distributions Birkhoff sums for different rescalings, we introduce and exploit   a new mechanism for tightness: 
instead than tightness times 
  produced essentially using one large Rohlin tower (or, at the surface level, a large \emph{almost cylinder}), we exploit a new type of rigidity times, which we call \emph{bounded-type} rigidity times (or for short, BT rigid times). The name refers to the type of  rigidity displayed by a \emph{bounded-type} rotation, which can be modelled as a  \emph{two} Rohlin towers picture for IET (or,  geometrically, two rectangles with suitable glueings), see \S~\ref{sec:degeneration}.

  These times are constructed controlling the degeneration of a surface suspending the IET along the Teichm{\"u}ller geodesic flow (using ergodicity of  
  Rauzy-Veech induction) and using \emph{rectangle representations} (a slight generalization of the zippered rectangles construction, see \S~\ref{sec:rectangles}) to then infer the existence of suitable Rohlin towers for the IET (see \S~\ref{sec:towers} for details). 

The geometric picture given by these two towers degeneration allows at the same time to produce rigidity and \emph{match} pieces of orbits which travel together (a mechanism that we call \emph{matching} of orbits and of Birkhoff sums, which is explained in Section~\ref{sec:matchingBS}), but also to distinguish two rescalings through the distribution of their tails, since when considering successive \emph{multiples} of this type of rigidity time, since the \emph{closest visit} to the singularity (which is responsible for the tail of the distribution) \emph{moves away} by a definite amount. 

We stress that the strategy exploited to study rescalings  and their disjointness by Kanigowski and the first author in \cite{BK} and here are quite different (and the first yields spectral disjointness, while we can only prove Furstenberg disjointness). 
 The main difference comes from the fact that, unlike in \cite{BK},  here we do not have control on the distributions of Birkhoff sums of the roof function over the whole interval. Furthermore, it turns out that the measure of the tails (which we use to differentiate between the rescalings) on one hand, and the measure of the set where we do not have the control over the values of Birkhoff sums on the other, are closely tied and related to the bounded rigidity scale. Thus, while in \cite{BK} the authors could choose to work with arbitrarily large tails (since showing positivity of the measure of the differentiating set was sufficient), in our setting we need to make much more careful choices, since, to guarantee that we can use the criterion for disjointness (see Proposition \ref{prof:disjointnesssf}), the measure of tails has to be substantially larger than the measure of the non-controlled set.

\subsection{Structure of the paper}
In Section~\ref{sec:background} we gather background definitions and material. We first recall in \S~\ref{sec:localHam} some background on locally Hamiltonian flows and their reduction to special flows in the setting we are studying. 
We then recall in \S\ref{sec:disjointness} basic joinings and disjointness notions and then state the criterion that we will use to prove disjointness of rescalings in special flows (Proposition~\ref{prof:disjointnesssf}). Some basics  on   translation surfaces appear in 
 \S~\ref{sec:surfaces}. 

In Section~\ref{sec:BTrigid} we motivate and define the new type of rigidity times we are going to use, that we call BT rigid (short for \emph{bounded type} rigid) (see Definition~\ref{def:BTrigid}).  These times are used to \emph{match} Birkhoff sums, an idea which is first motivated and then explained in \S~\ref{sec:matchingBS}.
The classical 
 estimates on (Birkhoff sums of) functions with symmetric logarithmic singularities that are needed for the proof of exponential tails are collected in Section~\ref{sec:BS}. Assuming the existence of a suitable sequence of \emph{good} BT rigidity times (see Definition~\ref{def:goodMn}), the proof that exponential tails follow from these estimates, is also provided in this section (see in particular \S~\ref{sec:exptails_via_match}).  

In Section~\ref{sec:Eproof} we prove the existence of good BT rigidity times, in which, using results by the last author from \cite{Ul:abs} (see also \cite{FKU}) one also has a good control on (trimmed) Birkhoff sums of the derivatives. This section requires the use of Rauzy-Veech induction and its natural extension. The needed facts on Rauzy-Veech induction used are stated in \S~\ref{sec:RV}. 
Finally, in the final Section~\ref{spectral disjointness}, we conclude the proof of disjointness by estimating the measures of the tails of Birkhoff sums distributions for different rescalings at good BT rigid times.}

\section{Background material}\label{sec:background}
{This background section consists of three subsections, dealing respectively 
with locally Hamiltonian flows and their representation as special flows (see 
\S~\ref{sec:localHam}), joinings and disjointness notions and the disjointness 
criterion (see \S~\ref{sec:disjointness}) and  basics on translation surfaces 
in \S~\ref{sec:surfaces}.} 

\subsection{Locally Hamiltonian flows and their special flow representations}\label{sec:localHam} 
{In this section we first recall some basic notions on locally Hamiltonian 
flows (from \S\ref{sec:locHam} to \S\ref{sec:measureclass}) and the definition 
of  special flows with logarithmic singularities over IETs (see 
\S~\ref{sec:iets} to \S~\ref{sec:roofs}). Then we state the properties of the 
special flow representations of the flows were are interested (see 
\S~\ref{sec:reduction}). }

\subsubsection{Locally Hamiltonian flows}\label{sec:locHam}
Throughout the paper let $S$ denote a  smooth, compact, connected, orientable surface with a fixed smooth area form (namely a non-degenerate two form that can be represented in local coordinates on a chart $U\subset S$ as $V(x,y)\mathrm{d} x \wedge \mathrm{d}y$ positive real valued function  $V:U \to \mathbb{R}$). These flows are in one-to-one correspondence with closed smooth $1$-forms on $S$, the form $\eta:=\imath_X\omega=\omega( \eta, \,\cdot \,)$, where $\imath_X$ denotes the contraction operator, being associated to the flow $\varphi_\R$ obtained integrating the vector field $X$. The assumption that $\eta $ is closed guarantees that $\varphi_\R$ preserves the area form and vice versa. The flow $(\varphi_t)_{t\in\mathbb{R}}$  is  also known in the literature as the \emph{multi-valued Hamiltonian} flow (see \cite{No:the, Ar:top} or \emph{locally Hamiltonian flow} (in \cite{Ul:abs, Rav:mix, KKU, FKU} among other more recent papers)  associated to $\eta$. Indeed, 
on each coordinate chart $U\subset S$ (but not generally globally) one can find a \emph{local Hamiltonian} $H: U \to \R$ and coordinates $(x,y)$ so that $(\varphi_t)_{t\in\mathbb{R}}$ is given by the solution to the  equations $\dot{x}={\partial H}/{\partial y}$, $\dot{y}=-{\partial H}/{\partial x}$ for some smooth  real-valued Hamiltonian function $H$.  
When $g\geq 2$, the  (finite) set of fixed points of $(\varphi_t)_{t\in\mathbb{R}}$ is always non-empty. We say that fixed points are \emph{non-degenerate} if the $1$-form $\eta$  is \emph{Morse}, i.e.\ it is locally the differential of a Morse function. 

It is  well known that  any  minimal (or minimal component of) locally Hamiltonian flow can be represented as \emph{the special  flow} over an  \emph{interval exchange transformations} or, for short, IET. 	We recall this representation, which is the way we will concretely work with these flows in this paper, 
 in  \S~\ref{sec:reduction}.
 
 We will assume in this paper that the surface under consideration has genus $g\ge 2$, that $(\varphi_t)$ is \emph{minimal} and that it has \emph{non-degenerate} fixed points. This implies in particular that there are \emph{2g-2} fixed points, all of which are \emph{simple saddles} (i.e.\ four-pronged saddles, with two incoming and two outgoing separatrices).

\subsubsection{Katok fundamental class and full measure}\label{sec:measureclass}
Let us denote by $\mathcal{L}_g$ the set or \emph{locus} ($\mathcal{L}$ stands for {locus}) of minimal locally Hamiltonian flows on a surface of genus {$g\ge 2$}. {When all the singularities of the flow are simple saddles (or equivalently have index $-1$), one can check (e.g.~in view of the Poincar{\'e}-Bendixon theorem) that their number is exactly $ 2g-2$.} 
The ({measure-theoretical}) notion of \emph{typical} on $\mathcal{L}_g$ (which is the notion used in the statement of Theorem~\ref{thm:main})  is defined as follows and coincide with the notion of typical induced by the  \emph{Katok fundamental class}   (introduced by Katok in \cite{Ka:inv}, see also \cite{NZ:flo}). 

{Let $\Sigma:= \{p_1,\ldots,p_{2g-2}\}$ be the set of saddle points of $\varphi_\R$ and let $H_1(S, \Sigma, \mathbb{R})$ be the relative homology of $S$ with respect to $\Sigma:=\{p_1,\ldots,p_{2g-2}\}$. Notice that the dimension $d$  of $H_1(S, \Sigma, \mathbb{R})$ is $2=2g+(2g-2)-1=4g-3$.} 
Consider the period map $Per: \mathcal{L}_g\to \mathbb{R}^{4g-3}$  defined as follows.  Let    $\gamma_1, \dots, \gamma_{4g-3}$ be a basis of $H_1(S, \Sigma, \mathbb{R})$. 
 The image of  $\eta$ by the period map $Per $ is {$Per(\eta) = (\int_{\gamma_1} \eta, \dots, \int_{\gamma_{4g-3}} \eta) \in \mathbb{R}^{4g-3}$.}
We say that a property holds for a \emph{typical} flow in $\mathcal{L}_g$ if it \emph{fails} on a set of measure zero  with respect to the pull back of the Lebesgue measure class via $Per: \mathcal{L}_g\to \mathbb{R}^{4g-3}$, namely it fails on the preimage $Per^{-1}(Z)$ of a set $Z\subset \mathbb{R}^{4g-3}$ with $Leb(Z)=0$. In \S~\ref{sec:reduction}, we give  a reformulation of this notion of typical in terms of special flows representations (see Lemma~\ref{lemma:reduction}).

\medskip
\subsubsection{Interval exchange transformations}\label{sec:iets}
We now recall the definitions of interval exchange transformations, which appear as Poincar{\'e} sections of locally Hamiltonian flows (as well as translation flows and billiards in rational polygons). The locally Hamiltonian flow will then be recovered as a 
 special flow (\S~\ref{sec:sfdef}) over an IET  (see \S~\ref{sec:reduction}).
 
 \smallskip
Let $I:=[0,|I|)$.  An interval exchange transformation (IET) of $d$ intervals $T:I\to I$ with \emph{ combinatorics} $\pi$, where  $\pi: \{{1},\dots, d\}\to \{{1},\dots, d\}$ is a permutation of $d$ symbols)  and \emph{endpoints} (of the continuity intervals) 
$0=:\beta_0 < \beta_1 < \dots \beta_{d-1}< \beta_d:= |I|$ 
is a piecewise isometry which sends the interval $I_i:=[\beta_i,\beta_{i+1})$, for $0\leq i<d$,
by a translation, explicitly given by
\[
T(x) = x-\beta_i +\beta_{\pi(i)}, \qquad \text{if }\ x \in [\beta_i,\beta_{i+1}).
\]
We say that $\pi:\{1,\dots, d\} \to \{1,\dots, d\}$ is \emph{standard} if $\pi(1) = d$ and $\pi(d)=1$. It turns out that by suitably choosing Poincar{\'e} sections of translation flows, in every stratum the first return map to this section is an IET whose combinatorial data is given by the standard permutation (see \cite{Rau}, as well as Lemma~\ref{lemma:reduction}).

Let us denote by $Disc(T)$ the set of endpoints of continuity intervals of $T$ (that we call \emph{discontinuity} set), namely
\begin{equation}\label{def:Disc}
Disc (T):= \{\beta_0:=0, \beta_1, \dots, \beta_{d-1}, \beta_d:=|I|\}
\end{equation}
We say that $T$ satisfies \emph{Keane condition} if $T^m(\beta_i)\neq \beta_j$, for all $m\ge 1$ and for all $i,j\in\{0,\ldots,d-1\}$, except $j=0$ with $m=1$. Keane \cite{Keane} showed that an  IET with an irreducible permutation that satisfies the Keane condition is \emph{minimal}.

We say that a result holds \emph{for almost every IET with combinatorics} $\pi$ if it holds for almost every choice of the lengths $|I_i|=\beta_{i+1}-\beta_i$ of the exchanged intervals, with respect to the restriction of the Lebesgue measure on $\mathbb{R}^d$ to the simplex $\Delta_{d-1} = \{(\lambda_1,\dots  \lambda_d)$ such that $ \lambda_i\geq 0$ and $\sum_{i=0}^{d-1} \lambda_i=1\}$.

{
\smallskip
Given $x\in I$ and $r\in\mathbb{N}$ we denote by $\mathcal{O}_T(x,r)$  the \emph{orbit segment} of length $r$ starting from $x$, namely
\begin{equation}\label{eq:orbitsegment}
\mathcal{O}_T(x,r):=\{ x, T(x), T^2(x),\dots, T^{r-1}(x)\}.
\end{equation}
Finally, for any $\beta\in Disc(T)$ we  denote by $m(x,\beta,r)$ the \emph{{minimum distance} of $\mathcal{O}_T(x,r)$ to the singularity $\beta$} of $T$ given  by
\begin{equation}\label{eq:mindistdef}
m(x,\beta,r):= \min_{j=0,\ldots,r-1}|T^jx-\beta|. 
\end{equation}}

\subsubsection{Special flows}\label{sec:sfdef}
Let us now recall the definition of \emph{special flow}.  Let $T$ be an automorphism of a standard (Borel)  probability space $(X,\mathcal{B},\mu)$. Let $f:X\to\R_{>0}$ be an integrable function  so that $\inf_{x\in X} f(x)>0$. Let us denote by $S_n(f)(x)$ the  \emph{Birkhoff sum} defined by
\[S_n(f)(x)=\left\{
\begin{array}{rcl}
\sum_{0\leq i<n}f(T^ix)&\text{if}& n\geq 0\\
-\sum_{n\leq i<0}f(T^ix)&\text{if}& n< 0.
\end{array}
\right.\]
The \emph{special flow} $(T^f_t)_{t\in\R}$ built \emph{over} the automorphism $T$ and \emph{under} the \emph{roof function} $f$  acts
on
\[X^f:=\{(x,r)\in X\times\R: 0\leq r<f(x)\}\]
and is given by
\[T^f_t(x,r)=(T^nx,r+t-S_n(f)(x)),\]
where $n=n(t,x) \in\Z$ is a unique integer number with $S_n(f)(x)\leq r+t<S_{n+1}(f)(x)$.  Under the action of $(T^f_t)_{t>0}$,  a point $(x,y) \in X_f$ moves  with unit velocity along the vertical line up to the point $(x,f(x))$, then jumps instantly to the point $\left( T(x),0 \right)$, according to the base transformation and afterward it continues its motion along the vertical line until the next jump and so on.  The integer $n(t,x)$ (for $t>0$) is the number of discrete iterations of the map $T$ undergone by the orbit of $x$ up to time $t$.

The flow $(T^f_t)_{t\in\R}$ preserves the finite measure $\mu^f$ which is the restriction of $\mu\times\ Leb_\R$ to $X^f$. The $\sigma$-algebra $\mathcal B^f$ on $X^f$ is $\cB\otimes \cB(\R)$ restricted to $X^f$.
If $T$ is ergodic with respect to $\mu$, it is easy to see then  $(T^f_t)_{t\in\R}$ is also ergodic (with respect to  $\mu^f$), see e.g.~\cite{CFS:erg}.  


\subsubsection{Roofs with logarithmic singularities}\label{sec:roofs}
 We now define the class of roof functions we will work with, namely roofs with (symmetric) logarithmic singularities, which  arise as roof functions in the special flow representation of locally Hamiltonian flows with non-degenerate (\emph{isomorphic}) saddles.
%
\smallskip
Let $T$ be an IET with continuity intervals $I_0,\dots , I_{d-1}$ and endpoints $0:=\beta_0<\beta_1< \dots \beta_{d-1}<\beta_d:=|I|$ (see ~\S\ref{sec:iets}). 


\begin{defn}[logarithmic singularities]\label{def:SymLog}
We say that a function $f $ has \emph{pure logarithmic singularities} (at the endpoints of $T$) and write  $f \in \ \mathcal{P}Log \left(T\right)$ if it is of the form
\begin{equation}\label{eq-1}
f(x)=\sum_{0\leq i<d}\chi_{(\beta_i,\beta_{i+1})}(x) \big(-C_i^+\ \log(x-\beta_i)-C_{i+1}^-\ \log(\beta_{i+1}-x)\big),
\end{equation}
for some constants $C_i^{\pm}\geq 0$, not all simultaneously zero.  

\smallskip
\noindent We say that $f$ has \emph{pure symmetric logarithmic singularities} (at the endpoints of $T$) and write $f \in {\mathcal{P} SymLog \left(T\right)}$  if in addition  we have that $\sum_{i=0}^{d-1}C_i^+=\sum_{i=1}^dC_{i}^-$. 

\smallskip
\noindent
We say that $f$ has \emph{logarithmic singularities} (resp. \emph{symmetric logarithmic singularities}) and write $f \in \ Log^2{\left( \sqcup_{i=0}^{d-1} I_i\right)}$ (resp.\  $f \in  SymLog^2 \left({\sqcup_{i=0}^{d-1} I_i}\right)$) if and only if $f$ can be written as $f=f^p+g$ where $f^p \in \mathcal{P} Log{\left( T\right)}$ (resp.\  $f \in \mathcal{P}SymLog{\left( T\right)}$) has pure logarithmic (symmetric) singularities and $g:I\to\R$ is {such that the restrictions $g_i:=g|I_i: I_i\to \R$ to each continuity interval $I_i$, for $1\leq i\leq d$, extend to a twice differentiable function on the closure $\overline{I}_i$, i.e.~are restriction of $g_i\in \mathcal{C}^2(\overline{I_i})$ for each $1\leq i\leq d$}.
\end{defn}
{
\begin{remark}\label{rk:regularity_g}
In the literature, when one says that $f$ has (symmetric) logarithmic
singularities, and  writes $f\in SymLog(T)$, it is often only assumed that 
$f=f^p+g$ where $f^p$ as here is in $\mathcal{P} Log{\left(T\right)}$ but $g$ 
is only assumed to have \emph{bounded variation}. The apex $2$ in the notation 
$SymLog^2(T)$ denotes this additional assumption\footnote{{In \cite{FKU} the 
notation $SymLog^{2+}(T)$ is used to recall that $g$ \emph{extends} to a 
$\mathcal{C}^2$ function on each closure $I_i$, $1\leq i \leq d$; we choose to 
leave the $+$ not to make the notation too heavy.}}. 
We need the (twice) differentiability of $f$ on each $I_i$ essentially in the proofs in this paper, but one can show that the roofs which arise in representations of locally Hamiltonian flows on surfaces automatically do satisfy this additional regularity assumption (see the reduction explained in  \S~\ref{sec:reduction}, in particular Lemma~\ref{lemma:reduction} and its proof,  
or directly \cite{FKU} for the result used in the reduction).
\end{remark}}

 Notice that the signs in \eqref{eq-1} are chosen so that $f\geq 0$. We remark that we allow some of the $C_i^\pm$ to be zero; so $f$ could have a finite one-sided limit at some $\beta_i$ (but we assume that at least one of the singularities is indeed logarithmic).
Let us also introduce a notation for the constant obtained summing the constants $C_i^\pm$:
\begin{defn}\label{def:Cf}
Given $f\in \mathcal{P} Sym Log (T)$ of the form \eqref{eq-1}, we define $C_f$ to be the constant
$$C_f := \sum_{i=0}^{d-1}C_i^+ =  \sum_{i=1}^dC_i^-.$$ 
\end{defn}

\subsubsection{Reduction to symmetric special flows}\label{sec:reduction}
The following Lemma provides the reduction of a locally Hamiltonian flow in $\mathcal{L}_g$ to a  special flow 
over a symmetric IET, under a  roof with symmetric logarithmtic singularities. 

\smallskip

\begin{lemma}[Special flow reduction for flows in $\mathcal{L}_g$]\label{lemma:reduction}
If  $S$ has genus $g\ge 2$  and  $(\varphi_t)_{t \in \mathbb{R}}\in\mathcal{L}_g $ is a minimal locally Hamiltonian flow with  $2g-2$ non-degenerate simple saddles $p_1,\ldots,p_{2g-2}\in M$, then it is isomorphic to a special flow over an IET $T$ with a standard permutation $\pi$ with $d=4g-3$ and roof  $f\in SymLog^2{\left( T\right)}$. 
%
Furthermore, for a property to be \emph{typical} within   $\mathcal{L}_g$ (in the sense of \S\ref{sec:measureclass}), 
it is enough to show that for almost every IET  with  standard permutation $\pi$ (see  \S~\ref{sec:iets}) with $d=4g-3$, every special flow with symmetric logarithmic singularities  $f\in SymLog^2{\left(T \right)}$ has such property. 
\end{lemma}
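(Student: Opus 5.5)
The proof has two parts: first the special flow representation, then the transfer of full measure from IET parameters to the Katok class on $\mathcal{L}_g$. Both rely on classical facts, and I would follow the standard reduction (as in \cite{Ul:abs, Rav:mix, FKU}).

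\emph{Step 1: Poincar\'e section.} Since $\varphi_\R$ is minimal with no saddle connections (after possibly removing a measure zero set, see below), choose a transversal segment $J$ starting at a saddle $p_i$ along an outgoing separatrix direction. Rauzy's construction \cite{Rau} lets one choose $J$ so that the first return map $T:J\to J$ is an IET with $d=4g-3$ intervals and a standard permutation. In the coordinate $x(p)=\int_{\gamma_p}\eta$ along $J$, the induced measure is Lebesgue, so $T$ is an isometry on the continuity intervals. The discontinuities of $T$ are the points whose forward orbit hits a saddle before returning; there are $2\cdot(2g-2)$ incoming separatrices, plus the endpoint. The first return time $f$ is finite and smooth away from these points.

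\emph{Step 2: shape of the roof.} Near a simple saddle use Morse local coordinates, in which $H=xy$ after a smooth change of coordinates preserving the area form up to a smooth density. The passage time past the saddle for an orbit at transverse distance $s$ is $-C\log s+O(1)$, with a $\mathcal{C}^2$ remainder. The finer regularity of the remainder follows from the analysis in \cite{FKU}, to which Remark~\ref{rk:regularity_g} refers. Each saddle has two incoming separatrices. Orbits passing on either side of the same separatrix near a simple saddle pick up the same coefficient $C_{p}$ on the two sides of the corresponding discontinuity. Summing over saddles therefore gives $\sum_i C_i^+=\sum_i C_i^-$, i.e.\ $f\in SymLog^2(T)$. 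The main technical obstacle is controlling the $\mathcal{C}^2$ extension of $g=f-f^p$ up to the endpoints. I would handle it via the explicit normal form computation of the transit time in Morse coordinates, quoting \cite{FKU}.

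\emph{Step 3: measure class.} The lengths $\lambda=(\lambda_1,\dots,\lambda_d)$ of the exchanged intervals are integrals of $\eta$ over relative cycles in $H_1(S,\Sigma,\R)$. Locally on $\mathcal{L}_g$ the map $\eta\mapsto\lambda$ is the composition of $Per$ with a linear isomorphism of $\R^{4g-3}$; here we use $d=4g-3=\dim H_1(S,\Sigma,\R)$. Hence preimages of Lebesgue-null sets of $\lambda$ (projectivized to $\Delta_{d-1}$, since rescaling $\eta$ only rescales time and lengths) are null for the Katok class. The non-minimal or saddle-connection cases form a null set, as do IETs failing Keane's condition. Covering $\mathcal{L}_g$ by countably many such charts with fixed combinatorics, we conclude: if the property holds for every $f\in SymLog^2(T)$ for a.e.\ $T$, then it holds for $\varphi_\R$ outside a countable union of null sets, i.e.\ typically.
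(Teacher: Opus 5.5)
Your architecture is the one the paper relies on. The paper gives no argument of its own: it cites \cite{Ch-Fr-Ka-Ul} (Lemma 2.1, Corollary 2.2) for the section, the constants $C_i^\pm$ and the transfer of null sets through the period map, and \cite{FKU} (proof of Theorem 3) for the extra regularity. Your Steps 1 and 3 are standard versions of the first part.

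\textbf{The gap.} The genuine problem is in Step 2, at the point you yourself call the main obstacle. The transit time past a saddle does \emph{not} have a $\mathcal{C}^2$ remainder, so the explicit normal form computation in Morse coordinates cannot give $f\in SymLog^2(T)$. Take $H=xy$ and $\omega=V\,dx\wedge dy$. The transit time from $\{y=1\}$ to $\{x=1\}$ at level $s$ is $\int_s^1 V(x,s/x)\,\frac{dx}{x}$, and the mixed term $V_{xy}(0)\,xy$ of $V$ contributes exactly $-V_{xy}(0)\,s\log s$; higher diagonal terms give $s^i\log s$. Replacing these transversals by any other smooth ones only adds functions that are smooth in $s$ up to $s=0$. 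The $\eta$-coordinate on $J$ near the discontinuity is $s$ itself. So for a generic saddle $g=f-f^p$ has $g'\sim c\log s$ and is not even $\mathcal{C}^1$ on the closed interval. This is why the paper stresses that the construction of \cite{Ch-Fr-Ka-Ul} only yields $g$ of bounded variation, and that $SymLog^2$ needs a \emph{suitable choice of cross section} taken from \cite{FKU}. No choice of Morse chart or of smooth transversals near the saddles removes these terms. The regularity must be imported from \cite{FKU} as a separate input, not derived from the local normal form.

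\textbf{Two smaller points.} First, your symmetry argument (the same coefficient $C_p$ on both sides of each discontinuity) is not right for the section you chose. If $J$ emanates from a saddle $p$, the incoming separatrix bounding the sector that contains $J$ gives a discontinuity with $C_p$ on one side and only $C_p/2$ on the other. Those orbits make half a passage and land on $J$ near $p$. The endpoint of $J$ at $p$ is itself a singular point with coefficient $C_p/2$. Only the totals $\sum_i C_i^+$ and $\sum_i C_i^-$ agree, which is what the explicit constants of \cite{Ch-Fr-Ka-Ul} show. Second, the first assertion of the lemma is for every minimal flow in $\mathcal{L}_g$, while your Step 1 discards flows with saddle connections. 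That is harmless for the typicality statement, which is the only part used later, but it does not prove the first sentence as stated.
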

\noindent The proof of Lemma~\ref{lemma:reduction} is essentially given in \cite{Ch-Fr-Ka-Ul}
 (see in particular Lemma 2.1 and Corollary 2.2 in Section 2.7 in 
 \cite{Ch-Fr-Ka-Ul}), {where the precise form of the constants $C^\pm_i$ is 
 computed, although it is only shown that  $f\in SymLog (T)$ (see 
 Remark~\ref{rk:regularity_g}), namely that  
$g$ in the decomposition $f=f_p+g$ has bounded variation. The proof that one can choose a suitable cross section of the flow for the special representation to ensure the additional regularity assumptions of $g$ (i.e.~to show that   $f\in SymLog^2 (T)$, see Definition~\ref{def:SymLog}) is given in \cite{FKU} (see the proof of Theorem 3 in \S~3.1 in \cite{FKU}).}

\subsection{Joinings and disjointness criteria}\label{sec:disjointness}
 In this section we  recall the definition of joinings and  
 the definition of \emph{disjointness in the sense of Furstenberg}, introduced by Hillel Furstenberg in \cite{Fur}. We then state a criterion for disjointness of two flows, which
will provide the key tool to prove spectral disjointness of rescalings in this paper.

\subsubsection{Disjointness}	
{
Let $\phi_\R=(\phi_t)_{t\in\R}$ and $\psi_\R=(\psi_t)_{t\in\R}$ be two  measure preserving flows acting respectively on standard { Borel} probability spaces $(X,\mathcal B,\mu)$ and $(Y,\cC,\nu)$. A {\em joining} $\rho$ between $\phi_\R$ and $\psi_\R$ is a $\phi_\R\times \psi_\R$ invariant probability measure such that $\rho(B\times Y)=\mu(B)$ and $\rho(X\times C)=\nu(C)$, for all $B\in \cB$ and $C\in \cC$. 
The set of joinings is denoted by $J(\phi_\R,\psi_\R)$. Two flows are {\em disjoint} in the sense of Furstenberg if  $J(\phi_\R,\psi_\R)=\{\mu\otimes \nu\}$. In particular, it implies that the flows are non-isomorphic. We denote by $J_2(\phi_\R):=J(\phi_\R,\phi_\R)$ the set of {\em self-joinings} of $\phi_\R$.
}
We now present a criterion of two flows being disjoint. Our final goal is to obtain a version of this criterion, useful for our purposes. However, we start from recalling the following classical definition.

\begin{defn}\label{def:expdec}
We say that a measure $P\in\mathcal P(\R)$ has \emph{exponential decay} if there exist constants $c,b\in\R_{>0}$ such that
	\[
	P\big((-\infty,-t)\cup(t,\infty) \big)<ce^{-b t}\ \text{ for every }\ t\in\R_{>0}.
	\]
\end{defn}	
Let us also recall that, for every $t\in\R$, the \emph{Koopman} \emph{operator} associated to the automorphism  $\phi_t$, which, abusing the notation,  we will denote also by $\phi_t$, is the operator
\[\phi_t:L^2(X,\mu)\to L^2(X,\mu)\quad \text{  given by \ \ }\phi_t(f)=f\circ \phi_t.\]
Moreover, for every $P$, the probability measure on $\R$, we consider also the \emph{integral operator} $P(\phi)$, given by the formula
\[
P(\phi):=\int_\R \phi_{t}\,dP(t).
\]

\subsubsection{A disjointness criterion}\label{sec:disj_crit} The following lemma gives a criterion for two flows to be disjoint in the sense of Furstenberg. 

	\begin{lemma}[Furstenberg disjointness criterion]\label{krytspek}
		Assume that $\phi_\R=(\phi_t)_{t\in\R}$ and $\psi_\R=(\psi_t)_{t\in\R}$ are weakly mixing\footnote{Note that assuming weak mixing is necessary: indeed, if a flow $(\phi_t)_{t\in\R}$ has a non-zero eigenvalue $a\in\R$ then $KLa$ is a common eigenvalue for $(\phi_{Kt})_{t\in\R}$ and $(\phi_{Lt})_{t\in\R}$ for every $K,L\in\N$. Thus the natural powers of a non-trivial ergodic flow which is not weakly mixing are never pairwise Furstenberg disjoint.} flows on probability spaces $(X,\mathcal B,\mu)$ and $(Y,\mathcal C,\nu)$ respectively. Suppose that there exists a sequence $(t_n)_{n\in\N}$ increasing to infinity, a number $\gamma>0$ and linear operators $U$ on $L^2(X,\mathcal B,\mu)$ and $V$ on $L^2(Y,\mathcal C,\nu)$ such that  
		\begin{equation}\label{zbie}
			\phi_{t_n}\to (1-\gamma)P(\phi)+\gamma\,  U\quad \text{and}\quad \psi_{t_n}\to (1-\gamma)Q(\psi)+\gamma\, V\quad \text{weakly},
			\end{equation}
		where $P,Q$ are probability measures with exponential decay on $\R$. If there exists a Borel set $A\in\R$ such that $P(A)- Q(A)>\frac{\gamma}{1-\gamma}$ then $\phi_\R$ and $\psi_\R$ are disjoint in the sense of Furstenberg.
		\end{lemma}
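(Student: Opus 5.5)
Suppose, for contradiction, that $\rho\in J(\phi_\R,\psi_\R)$ is an ergodic joining different from $\mu\otimes\nu$; by ergodic decomposition and weak mixing it suffices to treat ergodic $\rho$. To $\rho$ we associate its Markov operator $\Phi=\Phi_\rho:L^2(X,\mu)\to L^2(Y,\nu)$, defined by $\langle \Phi f,g\rangle_{L^2(\nu)}=\int f\otimes \bar g\,d\rho$. Invariance of $\rho$ gives the intertwining relation $\Phi\circ\phi_t=\psi_t\circ\Phi$ for all $t\in\R$, and $\rho=\mu\otimes\nu$ if and only if $\Phi$ restricted to $L^2_0(X,\mu)$ (zero-mean functions) vanishes. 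Pass to the weak limit along $t_n$ in the intertwining relation; this is legitimate because $\Phi$ is bounded, and weak convergence is preserved under composition with a fixed bounded operator on either side. Using \eqref{zbie} we obtain
\[
(1-\gamma)\,\Phi P(\phi)+\gamma\,\Phi U=(1-\gamma)\,Q(\psi)\Phi+\gamma\, V\Phi .
\]
Since $\Phi$ intertwines each $\phi_t$ with $\psi_t$, we have $\Phi P(\phi)=P(\psi)\Phi$. Hence
\[
(1-\gamma)\big(P(\psi)-Q(\psi)\big)\Phi=\gamma\,(V\Phi-\Phi U).
\]
The weak limits $U$ and $V$ are contractions, being weak limits of convex combinations of unitaries, so the right-hand side has norm at most $2\gamma\|\Phi\|$. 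This crude bound alone loses a factor $2$, so instead we test the identity against well-chosen vectors.

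We use spectral theory for the unitary flow $\psi$ on $L^2_0(Y,\nu)$. By Stone's theorem, $P(\psi)-Q(\psi)=\widehat{(P-Q)}(\psi)$ acts on the spectral measure as multiplication by the function $s\mapsto \hat P(s)-\hat Q(s)$, where $\hat P(s)=\int e^{ist}dP(t)$. The main obstacle is to turn the hypothesis $P(A)-Q(A)>\frac{\gamma}{1-\gamma}$, a statement about a set $A$ rather than about the Fourier transforms, into an estimate on this operator. My plan is to replace $P,Q$ by their images $P\ast\delta$-type combinations via the whole family $\{\psi_t\}$.

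Concretely, the intertwining also gives, for every $s$, the identity $(1-\gamma)(P(\psi)-Q(\psi))\psi_s\Phi=\gamma(V\psi_s\Phi-\psi_s\Phi U)$, because $\psi_s$ commutes with all the limit operators. Integrate this against a signed density approximating $\mathbbm{1}_A$ in a suitable sense. Exponential decay of $P$ and $Q$ is what allows this: it makes $\hat P$ and $\hat Q$ analytic in a strip, and hence allows us to approximate $\mathbbm{1}_A$ by smooth functions whose Fourier transforms are well controlled, uniformly in the relevant norm. The outcome is an operator inequality of the form $(1-\gamma)(P(A)-Q(A))\|\Phi f\|\le\gamma\|\Phi f\|$ on a dense set of $f$.

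Alternatively, and this is what I would try first, argue directly at the level of joinings, following Fr\k{a}czek--Lema\'nczyk. Apply the same limiting procedure to the self-joining $\rho$ pushed by $\phi_{t_n}\times\psi_{t_n}$. This produces a decomposition of $\rho$ as
\[
\rho=(1-\gamma)\int(\phi_t\times\mathrm{id})_*\rho\,dP(t)+\gamma\rho_1=(1-\gamma)\int(\mathrm{id}\times\psi_t)_*\rho\,dQ(t)+\gamma\rho_2 .
\]
Since $\rho$ is invariant under the diagonal flow, $(\phi_t\times\mathrm{id})_*\rho=(\mathrm{id}\times\psi_{-t})_*\rho$. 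Consider the map $t\mapsto(\mathrm{id}\times\psi_t)_*\rho$. If $\rho\ne\mu\otimes\nu$, this map is injective up to a trivial stabilizer by weak mixing, which makes the "time-shift" of $\rho$ a well-defined $\R$-valued coordinate. Comparing the two decompositions on the set of shifts in $A$ then gives $(1-\gamma)P(A)\le(1-\gamma)Q(A)+\gamma$. This contradicts the hypothesis, so $\rho=\mu\otimes\nu$.

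The delicate step is making this "shift coordinate" rigorous, namely the measurable disintegration along the orbit of the $\R$-action on the space of joinings. Exponential decay of $P$ and $Q$ is used to guarantee that the limiting measures can be recovered, with no mass escaping to infinity, from Fourier or analytic data.
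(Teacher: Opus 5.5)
There is a genuine gap in both of your routes.

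\textbf{The spectral route.} It cannot deliver the inequality you claim. The identity $(1-\gamma)(P(\psi)-Q(\psi))\Phi=\gamma(V\Phi-\Phi U)$ only sees $\hat P-\hat Q$ on the spectrum of $\psi$. Integrating its $\psi_s$-translates against a density $h$ close to the indicator of $A$ gives the operator $((P-Q)*h)(\psi)\Phi$, not the number $P(A)-Q(A)$. So the inequality $(1-\gamma)(P(A)-Q(A))\|\Phi f\|\le\gamma\|\Phi f\|$ is asserted, not derived, and no linear or spectral manipulation localizes $P-Q$ on a set. Your contraction claim also fails: only $\lim\phi_{t_n}$ is a weak limit of unitaries, while $U=\gamma^{-1}(\lim\phi_{t_n}-(1-\gamma)P(\phi))$ need not be a contraction. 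The set-level inequality has to come from a non-linear fact, namely mutual singularity of distinct ergodic joinings. That is what your second route uses, as does the Markov-operator/joining argument of Berk--Fr\k{a}czek that the paper cites.

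\textbf{The joining route: the key identity is wrong.} Your second route has the right idea, but $\rho$ is not a mixture of its shifts. Although $(\phi_{t_n}\times\psi_{t_n})_*\rho=\rho$, this limit cannot be computed from the separate limits of $\phi_{t_n}$ and $\psi_{t_n}$. Indeed, if your identity held, extremality of the ergodic $\rho$ would force $(\phi_t\times\mathrm{id})_*\rho=\rho$ for $P$-a.e.\ $t$, i.e.\ $P=\delta_0$. What actually holds is $(\phi_{t_n}\times\mathrm{id})_*\rho=(\mathrm{id}\times\psi_{-t_n})_*\rho$. Taking the limit of each side separately, the right one via the negative-time convergence of Lemma~\ref{lem:positive_enough}, gives
\[
(1-\gamma)\int\rho_t\,dP(t)+\gamma\rho^U=(1-\gamma)\int\rho_t\,dQ(t)+\gamma\rho^V,\qquad \rho_t:=(\phi_t\times\mathrm{id})_*\rho=(\mathrm{id}\times\psi_{-t})_*\rho,
\]
where $\rho^U(f\otimes g)=\int Uf\otimes g\,d\rho$ and $\rho^V(f\otimes g)=\int f\otimes V^*g\,d\rho$. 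In your version the $Q$-side carries $(\mathrm{id}\times\psi_t)_*\rho=\rho_{-t}$, so you would end up comparing $P(A)$ with $Q(-A)$.

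\textbf{Positivity.} $\rho^U$ and $\rho^V$ are invariant under the diagonal flow, because $U$ and $V$ commute with the flows. However, they are \emph{positive} measures only if $U,V$ are Markov operators. You use this tacitly, and it must be assumed, since the statement only says ``linear''.

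\textbf{The comparison step.} The step you call ``delicate'' is just uniqueness of the ergodic decomposition.
\begin{enumerate}
\item Each $\rho_t$ is an ergodic joining, being the image of $\rho$ under a map commuting with the diagonal flow.
\item The map $t\mapsto\rho_t$ is Borel and injective. If $\rho_r=\rho$ with $r\neq0$, then $\Phi\phi_r=\Phi$, and averaging over powers of the ergodic $\phi_r$ gives $\rho=\mu\otimes\nu$.
\item Hence $\{\rho_t:t\in A\}$ is a Borel set of ergodic measures whose preimage is exactly $A$.
\item Evaluating the ergodic-decomposition measures of the two sides on this set gives $(1-\gamma)P(A)\le(1-\gamma)Q(A)+\gamma$, the desired contradiction.
\end{enumerate}
Exponential decay of $P$ and $Q$ plays no role in this argument.
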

		\noindent		{This Lemma can be seen as a special 
		case of Proposition 3.2 in \cite{BFr1}.}\footnote{{In \cite{BFr1}, the language of \emph{Markov operators} 
		(a generalization of Koopman operators), as well as the language of 
		$3$-joinings is used (see the proof of  Proposition 3.2). However, to 
		prove Lemma~\ref{krytspek}, only $2$-joinings are needed, and the proof 
		(which we sketch here) can be obtained by considering a projection on 
		two coordinates in the proof of Proposition 3.2 in \cite{BFr1}.}} 

{We finish this section by explaining a simple argument, that will allow us to focus only on positive rescalings of considered flows.
	\begin{lemma}\label{lem:positive_enough}
		Assume that $\phi_\R=(\phi_t)_{t\in\R}$ is a flow for which there exists a sequence $(t_n)_{n\in\N}$ increasing to infinity, a number $\gamma>0$ and a linear operator $U$ on $L^2(X,\mathcal B,\mu)$ such that  
		\begin{equation*}
			\phi_{t_n}\to (1-\gamma)\int \phi_t\, dQ(t)+\gamma\,  U\quad \text{weakly},
		\end{equation*}
		where $Q$ is a probability measure on $\R$. Then
		\begin{equation*}
			\phi_{-t_n}\to (1-\gamma)\int \phi_{-t}\, dQ(t)+\gamma\,  U^*\quad \text{weakly},
		\end{equation*}
		where $U^*$ is the dual operator of $U$.
	\end{lemma}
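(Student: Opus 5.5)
The plan is to take adjoints in the assumed weak convergence. The inverse of a Koopman operator is its adjoint, $\phi_{-t}=\phi_t^{-1}=\phi_t^*$, since each $\phi_t$ is a unitary operator on $L^2(X,\mathcal B,\mu)$. This holds because $\phi_t$ preserves $\mu$ and is invertible, so $\langle f\circ\phi_t,g\rangle=\langle f,g\circ\phi_{-t}\rangle$ by change of variables.

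First, fix $f,g\in L^2(X,\mathcal B,\mu)$ and write
\[
\langle \phi_{-t_n}f,g\rangle=\langle f,\phi_{t_n}g\rangle=\overline{\langle \phi_{t_n}g,f\rangle}.
\]
By hypothesis, applied to the pair $(g,f)$, the right-hand side converges to
\[
(1-\gamma)\,\overline{\Big\langle \int\phi_t\,dQ(t)\,g,f\Big\rangle}+\gamma\,\overline{\langle Ug,f\rangle}.
\]

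Second, identify each term.
\begin{itemize}
\item Since $\overline{\langle Ug,f\rangle}=\langle f,Ug\rangle=\langle U^*f,g\rangle$, the second term is $\gamma\langle U^*f,g\rangle$. Here $U$ is a bounded operator, being a weak limit of uniformly bounded operators up to the bounded $P$-term, so $U^*$ makes sense.
\item For the integral operator, the operator $\int\phi_t\,dQ(t)$ is defined weakly by $\langle(\int\phi_t\,dQ)g,f\rangle=\int\langle\phi_tg,f\rangle\,dQ(t)$. The integrand is bounded by $\|f\|\|g\|$ and measurable in $t$, by continuity of $t\mapsto\phi_t$ in the strong operator topology for a measurable measure-preserving flow. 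Hence
\[
\overline{\int\langle\phi_tg,f\rangle\,dQ(t)}=\int\langle f,\phi_tg\rangle\,dQ(t)=\int\langle\phi_{-t}f,g\rangle\,dQ(t)=\Big\langle\int\phi_{-t}\,dQ(t)\,f,g\Big\rangle.
\]
\end{itemize}
This uses again $\phi_t^*=\phi_{-t}$ and the fact that $Q$ is a real, positive measure, so conjugation passes inside the integral.

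Combining these gives $\langle\phi_{-t_n}f,g\rangle\to\langle((1-\gamma)\int\phi_{-t}\,dQ(t)+\gamma U^*)f,g\rangle$ for all $f,g$, which is the claimed weak convergence.

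There is no real obstacle. The only point needing care is justifying that the integral operator is well defined and that adjoints commute with the Bochner or weak integral. This follows from the weak definition above and boundedness, $\|\int\phi_t\,dQ\|\le 1$.
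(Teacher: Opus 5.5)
Your proposal is correct and follows essentially the same route as the paper: you fix $f,g$, use $\phi_{-t_n}=\phi_{t_n}^*$ to pass to $\overline{\langle \phi_{t_n}g,f\rangle}$, apply the hypothesis, and move conjugation and adjoints inside the $Q$-integral and onto $U$. Your added remarks on the boundedness of $U$ and on the weak definition of $\int\phi_t\,dQ(t)$ go slightly beyond what the paper spells out, and they are correct.
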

	\begin{proof}
		Let $f,g\in L^2(X,\mathcal B,\mu)$ be arbitrary. By the assumption of the lemma, we have
		\[
		\begin{split}
		\lim_{n\to\infty}\langle f\circ \phi_{-t_n},g \rangle &=\lim_{n\to\infty}\langle f,g\circ \phi_{t_n} \rangle=\lim_{n\to\infty}\overline{\langle g\circ \phi_{t_n},f \rangle}
		=
		\overline{\Big\langle(1-\gamma)\int g\circ \phi_t\, dQ(t)+\gamma\,  U(g),f\Big\rangle}
		\\ & =
		(1-\gamma)\int\overline{\langle g\circ \phi_t,f\rangle\, dQ(t)}+\gamma\,  \overline{\langle U(g),f\rangle}
		=(1-\gamma)\int\overline{\langle g,f\circ \phi_{-t}\rangle\, dQ(t)}+\gamma\,  \overline{\langle g,U^*(f)\rangle}\\
		&=(1-\gamma)\int{\langle f\circ \phi_{-t},g\rangle\, dQ(t)}+\gamma\, {\langle U^*(f),g\rangle}
		=\Big\langle(1-\gamma)\int f\circ \phi_{-t}\, dQ(t)+\gamma\,  U^*(f),g\Big\rangle.
	\end{split}
		\]
		Since $f$ and $g$ were arbitrary, this means that $\varphi_{t_n}$ converges weakly to the desired limit and hence finishes the proof.
	\end{proof}
	\begin{cor}\label{cor: just positive}
		If $\phi_\R=(\phi_t)_{t\in\R}$ and $\psi_\R=(\psi_t)_{t\in\R}$ are two flows satisfying the assumptions of Lemma \ref{krytspek}, then $(\phi_t)_{t\in\R}$ is disjoint to both $(\psi_t)_{t\in\R}$ and $(\psi_{-t})_{t\in\R}$.
		\end{cor}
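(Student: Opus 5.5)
The corollary should follow by combining Lemma~\ref{krytspek} with Lemma~\ref{lem:positive_enough}. Disjointness of $\phi_\R$ and $\psi_\R$ is exactly the conclusion of Lemma~\ref{krytspek}, so the only thing to show is that $\phi_\R$ and the reversed flow $\psi^{-}_\R:=(\psi_{-t})_{t\in\R}$ again satisfy its hypotheses, with the same $\gamma$ and the same set $A$.

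First I will check the standing assumptions. The flow $\psi^-_\R$ is weakly mixing because its Koopman operators are the inverses (adjoints) of those of $\psi_\R$. Hence an eigenfunction of $\psi^-_\R$ with eigenvalue $e^{iat}$ is an eigenfunction of $\psi_\R$ with eigenvalue $e^{-iat}$, and so it is constant.

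Next I apply Lemma~\ref{lem:positive_enough} to $\psi_\R$, using the sequence $(t_n)$, the number $\gamma$, and the measure $Q$ given by \eqref{zbie}. This gives
\[
\psi^-_{t_n}=\psi_{-t_n}\to(1-\gamma)\int\psi_{-t}\,dQ(t)+\gamma V^* = (1-\gamma)\int \psi^-_t\,dQ(t)+\gamma V^*
\]
weakly. The flow $\psi^-_\R$ therefore satisfies \eqref{zbie} with the same $t_n$, $\gamma$ and $Q$, and with $V$ replaced by $V^*$. The measure $Q$ still has exponential decay, and $V^*$ is a linear operator on $L^2(Y)$.

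The convergence for $\phi_\R$ is unchanged, so $P$, $Q$ and $\gamma$ are the same as before. The Borel set $A$ with $P(A)-Q(A)>\frac{\gamma}{1-\gamma}$ therefore still works, and Lemma~\ref{krytspek} gives disjointness of $\phi_\R$ and $\psi^-_\R$.

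There is no real obstacle here. The one point to be careful about is that the limit is written in terms of the reversed flow, $\int\psi^-_t\,dQ(t)$, rather than via the reflected measure $\tilde Q(B)=Q(-B)$. If the limit were written with $\tilde Q$ instead, the set would have to be changed to $-A$, and the inequality would involve $P(A)$ versus $\tilde Q(-A)$, which is not what we want.
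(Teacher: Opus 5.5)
Your proposal is correct and is exactly the argument the paper intends. Lemma~\ref{lem:positive_enough} applied to $\psi_\R$ shows that the reversed flow $(\psi_{-t})_{t\in\R}$ satisfies \eqref{zbie} with the same $(t_n)$, $\gamma$ and $Q$ (with $V^*$ in place of $V$), so Lemma~\ref{krytspek} applies verbatim with the same set $A$, and your check that the reversed flow is still weakly mixing fills in a detail the paper leaves implicit.
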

}

\subsubsection{Disjointness criterion for special flows}\label{sec:disj_sf}
In the setting of special flows, the  following criterion for Furstenberg disjointness of \emph{rescalings} was proven in \cite{Fr-Le2} and in more general version in \cite{BFr}.
Let us first recall the definition of \emph{partial rigidity}.

\begin{defn}
We say that $(q_n)_{n\in\N}$ is \emph{a partial rigidity sequence} for a dynamical system $(X,\mathcal B,\mu, T)$ if there exists a sequence  $(W_n)_{n\in\N}$ of  measurable subsets $W_n\in \mathcal B$, called 
 \emph{ partial rigidity sets}, and $0<\gamma<1$, such that $\lim_{n\to \infty} \mu(W_n)= 1-\gamma$
 and for every measurable $A\subset \mathcal B$ we have 
	\[
	\lim_{n\to\infty}\mu((A\triangle T^{q_n}A)\cap W_n)=0.
	\]
	\end{defn}
\begin{remark}\label{metr}
	Suppose that $(X,\mathcal B,\mu)$ is endowed with a metric $d$
	generating the $\sigma$-algebra~$\mathcal B$. If $$
\lim_{n\to \infty}\sup_{x\in
		W_n}d(T^{q_n}x,x)= 0,$$ then $(q_n)_{n\in\N}$ is a rigidity
	sequence for $T$ with rigidity sets $(W_n)_{n\in\N}$. A proof of this observation can be found for example in \cite{BFr}.
\end{remark}	
\noindent The following proposition will be used as a tool to proving Furstenberg disjointness of different rescalings of a special flow $(T_t^f)_{t\in\R}$ and to reduce it to the study of the distribution of Birkhoff sums $S_{q_n}f$ of the roof function $f$ along a sequence of rigidity times. More precisely, we consider the sequence $(S_{q_n}f-a_n)_{n\in \N}$ where $(a_n)_{n\in \N}$ will be chosen appropriately (to \emph{center} Birkhoff sums) and the associated sequence $(P_n)_{n\in \N}$ of measures on $\R$ obtained as pull-back of the invariant measure $\mu$ on the base via $(S_{q_n}f-a_n)_{n\in \N}$, namely the measures $P_n:=(S_{q_n}(f)-a_n)_\ast (\mu)$, $n\in \N$. 
Explicitly,
\begin{equation}\label{def:pullback}
P_n \left( (-\infty, t)\right):= \mu\left(\{x \in X: \; S_{q_n}f(x)-a_n< t \}\right), \qquad \text{ for every }\ t\in\R_{>0}.
\end{equation}
For every measure $\nu$ on $(X,\mathcal B)$ we denote by $\nu_B$, the conditional measure with respect to $B$, i.e. for every $A\in \mathcal B$ we have 
$$\nu_B:=\frac{\nu(A\cap B)}{\nu(B)}.$$

The following Proposition is the main criterion used to prove disjointness of rescalings of flows. It follows directly from Theorem 6 in \cite{Fr-Le2} and Lemma \ref{krytspek}. For convenience of the reader, we include here its proof, which shows how tail estimates of Birkhoff sums distributions can be used to prove disjointness of special flows. 

\begin{prop}[Disjointness of powers of special flows]\label{prof:disjointnesssf} Consider  a weakly mixing special flow $T^f_\R:=(T_t^f)_{t\in \R}$ on probability standard Borel space $(X^f,\mathcal B^f,\mu^f)$. Let $K,L\in\N$, with $K< L$
Suppose that there exists an increasing sequence of partial rigidity times  $(q_n)_{n\in\N}$  with corresponding partial rigidity sets $(W_n)_{n\in\N}$ with constant $\gamma>0$. Moreover, suppose that there exists a sequence $(a_n)_{n\in\N}$ of real numbers
	such that the sequences of measures $P_n:=(S_{Kq_n}(f)-Ka_n)_\ast (\mu_{W_n})$ and $Q_n:=(S_{Lq_n}(f)-La_n)_\ast (\mu_{W_n})$  on $\R$ 
	have \emph{uniform} exponential tails, 
		 i.e.~there exists constants $c,b\in\R_{>0}$ such that for every $n\in\N$ we have
\begin{equation}\label{eq:exptails}
\max\{\mu_{W_n}\left(\{x \in W_n: \; |S_{Kq_n}f(x)-Ka_n|> t \}\right),\mu_{W_n}\left(\{x \in W_n: \; |S_{Lq_n}f(x)-La_n|> t \}\right)\}<  ce^{-b t},
	\end{equation}
	for every $t>0$.
Then, if  for some  $t>0$
\begin{equation}\label{differenttail}
	\begin{split}
&\liminf_{n\to +\infty}\mu_{W_n}\left(\{x \in W_n \; |S_{Kq_n}f(x)-Ka_n|\geq 
Kt\}\right)\\
&- \limsup_{n\to +\infty}\mu_{W_n}\left(\{x \in W_n: \; |S_{L q_n}f(x)-L 
a_n|\geq L t\}\right)>\frac{\gamma}{1-\gamma},
\end{split}
\end{equation}
 then the $K$ and $L$ rescalings 
 $(T_{Kt}^f)_{t\in\R}$ and  $(T_{Lt}^f)_{t\in\R}$ of $(T_t^f)_{t\in\R}$ are disjoint in the sense of Furstenberg.
\end{prop}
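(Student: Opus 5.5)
We will deduce Proposition~\ref{prof:disjointnesssf} from Lemma~\ref{krytspek}, applied to $\phi_\R:=(T^f_{Kt})_{t\in\R}$ and $\psi_\R:=(T^f_{Lt})_{t\in\R}$ along the time sequence $t_n:=a_n$. (If $a_n$ is not increasing to infinity, we pass to a subsequence; since $S_{q_n}f\sim q_n\int f$ on most of $X$, one has $a_n\to\infty$.) Both rescalings are weakly mixing because $T^f_\R$ is. The first step is to identify the weak limits. Fix bounded test functions $F,G\in L^2(X^f,\mu^f)$ that depend continuously on the vertical coordinate. For $(x,r)$ with $x\in W_n$, the point $T^f_{Ka_n}(x,r)$ equals $T^f_{Ka_n-S_{Kq_n}f(x)}(T^{Kq_n}x,r)$ by the cocycle identity. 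Partial rigidity of $q_n$ on $W_n$ gives partial rigidity of $Kq_n$ on a set of asymptotically full $\mu_{W_n}$-measure, since $T^{q_n}$ maps most of $W_n$ close to itself. After removing the part of $W_n$ lying near the roof or near the base, which is negligible by the exponential tails \eqref{eq:exptails}, we obtain
\[
\int_{W_n^f} F\circ T^f_{Ka_n}\cdot \overline G\,d\mu^f\approx \int_{W_n^f} F\circ T^f_{-(S_{Kq_n}f(x)-Ka_n)}(x,r)\,\overline{G(x,r)}\,d\mu^f .
\]

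The second step is the passage to the limit. We pass to a subsequence along which $P_n\to P$ and $Q_n\to Q$ weakly. This is possible because uniform exponential tails give tightness, and the limits inherit the bound $ce^{-bt}$, so they have exponential decay. To replace $\mu_{W_n}$ by $\mu$ in the limit, we need the standard asymptotic independence of $W_n$ from the fixed functions $F,G$. We will secure this as in \cite{Fr-Le2}, Theorem 6, by using that the distribution of $S_{Kq_n}f-Ka_n$ becomes asymptotically independent of the position in $X^f$ (a Ces\`aro/rigidity argument). Rescaling time by $K$ turns shifts $T^f_{-s}$ into $\phi_{-s/K}$, so we get
\[
\phi_{a_n}\to (1-\gamma)\int\phi_{s}\,d\widetilde P(s)+\gamma U,
\]
where $\widetilde P$ is the image of $P$ under $s\mapsto -s/K$ and $U$ is a weak limit point of the operators restricted to $W_n^c$. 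We take a further subsequence so that this limit exists, using weak compactness of the unit ball of contractions. The same argument for $\psi$ gives the limit $\widetilde Q$, the image of $Q$ under $s\mapsto -s/L$, together with an operator $V$. Each measure keeps the mass $1-\gamma$, as in \eqref{zbie}.

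The third step is to check the separating condition. Take $A:=\{s:|s|\geq t\}$. Then $\widetilde P(A)=P(\{|u|\ge Kt\})$ and $\widetilde Q(A)=Q(\{|u|\geq Lt\})$. By the portmanteau theorem (closed sets give a $\limsup$ bound, open sets a $\liminf$ bound), together with a small perturbation of $t$ to a continuity point, which the strict inequality in \eqref{differenttail} tolerates, these are bounded below and above by the $\liminf$ and $\limsup$ in \eqref{differenttail}. Hence $\widetilde P(A)-\widetilde Q(A)>\gamma/(1-\gamma)$, and Lemma~\ref{krytspek} gives disjointness.

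The main obstacle is the first two steps: rigorously replacing the conditional distribution on $W_n$ by a factor multiplying the whole operator $\int\phi_s\,dP$, which is the decoupling of $W_n$ from the test functions. We plan to invoke Theorem 6 of \cite{Fr-Le2} directly for this, after verifying that its hypotheses hold for both $Kq_n$ and $Lq_n$.
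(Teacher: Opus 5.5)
Your route is the paper's. You obtain the weak limits of $T^f_{Ka_n}$ and $T^f_{La_n}$ from Theorem~6 of Fr\k{a}czek--Lema\'nczyk along a Prokhorov subsequence: each is $(1-\gamma)$ times the integral operator of the rescaled limit law, plus $\gamma$ times a remainder operator. You then apply Lemma~\ref{krytspek} with $A=\{|s|\ge t\}$. Your push-forward under $u\mapsto -u/K$ is more precise than the paper's $Res_K$, and harmless since $A$ is symmetric. However, two justifications you add to make this rigorous are not correct as stated. The paper's proof passes over both points without comment.

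\textbf{Rigidity along multiples.} Rigidity along $q_n$ on $W_n$ does not give rigidity along $Kq_n$ on an asymptotically full proportion of $W_n$. Nothing forces $T^{q_n}x$ back into $W_n$, and measure preservation only gives $\liminf_n\mu(W_n\cap T^{-q_n}W_n)\ge 1-2\gamma$. Iterating therefore loses mass of order $K\gamma$. That changes both the sets on which the laws in \eqref{differenttail} are computed and the threshold $\gamma/(1-\gamma)$. To apply Theorem~6 to $Kq_n$ and $Lq_n$ with the same $W_n$, you must \emph{assume} that $W_n$ are partial rigidity sets for these multiples as well. In the application this is exactly what the $L$-fold matching sets are built for, see \eqref{eq:Ltimesnotbad}.

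\textbf{Portmanteau step.} The closed set gives $P(\{|u|\ge Kt\})\ge\liminf_n P_n(\{|u|\ge Kt\})$ directly. For $Q$, however, you only get
\[
Q(\{|u|\ge Lt\})\le\limsup_n Q_n(\{|u|\ge Lt\})+Q(\{|u|=Lt\}).
\]
The strict inequality does not tolerate moving $t$ to a continuity point. Condition \eqref{differenttail} is assumed at one $t$ only, and both quantities in it are non-increasing in $t$, so any shift worsens one of them by an uncontrolled amount. For bare sequences of measures the implication is in fact false. Take $P_n=\delta_{Kt}$ and $Q_n=\delta_{Lt-1/n}$: they have uniform exponential tails and give difference $1>\gamma/(1-\gamma)$ (for $\gamma<1/2$), yet $\widetilde P=\widetilde Q=\delta_{-t}$.

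You need an extra input. One option is that $P(\{|u|=Kt\})$ or $Q(\{|u|=Lt\})$ is below the slack in \eqref{differenttail}. Another is that \eqref{differenttail} holds for all thresholds near $t$, so a common continuity point can be chosen. In the application, the slack in Proposition~\ref{prop:generaltail} is large compared with $\gamma$, which is what makes such a repair available there.
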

\begin{remark}
	In the assumptions of Theorem 6 in \cite{Fr-Le1}, there is a weaker 
	condition than exponential tails, namely that the condition 
	\eqref{eq:exptails} is replaced by
	\[
		\Big(\int_{W_n}|S_{q_n}f(x)-a_n|^2d\, \mu(x)\Big)_{n\in\N}\quad\text{is a bounded sequence.}
	\]
	However, it is an easy exercise that the condition on uniform exponential tails implies the above.
\end{remark}


\begin{proof}[Proof of Proposition~\ref{prof:disjointnesssf}]
	Given a measure $P\in \mathcal{P}(\R) $ and $w\in \R$, let us denote by $Res_w(P)\in \mathcal{P}(\R)$ the $w$-\emph{rescaling}, namely the measure given by $[Res_w(P)](A)=P(w\cdot A)$. Note that if $P$ is an absolutely continuous measure with density $x\mapsto f(x)$ then $Res_w(P)$ is also an absolutely continuous measure with density $x\mapsto wf(w\cdot x)$. 
	
	By Theorem 6 in \cite{Fr-Le1}, the assumptions of Proposition \ref{prof:disjointnesssf} imply that in the weak operator topology, we have the following convergences:
	\begin{equation}\label{eq: operatorconveregence1}
		T_{Ka_n}^f\to (1-\gamma)[Res_K]_{\ast}P\left((T^f_{Kt})_{t\in\R}\right)+\gamma \tilde P\quad\text{and}\quad T_{La_n}^f\to (1-\gamma)[Res_L]_{\ast}Q\left((T^f_{Lt})_{t\in\R}\right)+\gamma\tilde Q,
		\end{equation}
		where $P:=\lim_{n\to\infty} P_n$ and $Q:=\lim_{n\to\infty} Q_n$ and $\tilde P,\tilde Q$ are linear operators on $L^2(X^f,\mathcal B^f,\mu^f)$. Note that $P$ and $Q$ exist up to taking a subsequence due to \eqref{eq:exptails} and Prokhorov Theorem.
		
		We now apply Lemma \ref{krytspek} to measures $[Res_K]_{\ast}P$ and $[Res_L]_{\ast}Q$, considering the \emph{tail} set $$A:=(-\infty,-t]\cup[t,\infty).$$
		 Then it remains to notice, that the assumption \eqref{differenttail} gives exactly that 
		  $$[Res_K]_{\ast}P(A)-[Res_L]_{\ast}Q(A)>\frac{\gamma}{1-\gamma},$$
		  so  that the conclusion of Proposition \ref{prof:disjointnesssf} follows from the Furstenberg disjointness criterion (Lemma~\ref{krytspek}). 
\end{proof}

\subsection{Translation surfaces and suspensions}\label{sec:surfaces}
 A translation surface $M$ is a closed surface with a Euclidean metric outside a set $\Sigma$ consisting of finitely many points  where the metric has conical singularities. On $M\backslash \Sigma$,  there exists a natural \emph{translation structure}, namely an atlas of charts on $M\setminus\Sigma$ such that each transition map is a translation. {At each conical singularity, the cone-angle is of the form $2\pi k$ for some $k\in\mathbb{N}$. We denote by $\mathcal{H}(k_1, \cdots, k_d)$ the \emph{stratum} consisting of all translation surfaces with $d$ singularities with conical angles $2\pi (k_1+1), \dots , 2\pi (k_1+d)$.} 
 
Translation surfaces are obtained by gluing polygons in the plane,  by identifying pairs of parallel, isometric sides, by translations.  We will recall here only few basic properties and the construction of a \emph{suspension} of an IET, namely a translation surface that has the given IET as a Poincar{\'e} section, which will be useful in the proof.  As a reference on background material on translation surfaces, we refer the 
 reader to one of the surveys \cite{Vi,Yo}. 

\subsubsection{Suspension polygons.}
Fix a \emph{standard} permutation $\pi$ of $d$ elements and take $\lambda\in\Lambda_{d-1}$. To build a translation surface, we  need to also choose a vector $\tau\in\R^{d}$, known as \emph{suspension datum}, such that for every $1\le j< d$ we have
\begin{equation}\label{eq: deftau}
	\sum_{1\le i< j}\tau_i>0\quad\text{and}\quad
	\sum_{i\mid 1\le \pi(i)<j}\tau_i<0.
\end{equation}
We denote the set of $\tau\in\R^{d}$ satisfying \eqref{eq: deftau} by $\Theta_{\pi}$.
We then consider a polygon in $\mathbb{R}^2$, which we identify with $\C$, we made by connecting the vertices given by
$$
v^+_i:= \sum_{i=1}^j(\lambda_{i}+ i\tau_{i}),\ \text{for} \ 1\leq i \leq d, \quad \text{ and} \quad v_i^-:=\sum_{i=1}^j(\lambda_{\pi(i)}+ i\tau_{\pi(i)}) \ \text{for} \ 1\leq i\leq d, $$ 
and the vertex $v_0:=0$. Due to \eqref{eq: deftau}, this polygon is made of two broken lines, one \emph{upper}, whose all non-extreme vertices are above the real axis, made connecting $v_0, v_1^+, \cdots, v_d^+$ on another, \emph{lower}, obtained connecting $v_0, v_1^-, \cdots, v_d^-$  whose all non-extreme vertices are below the real axis. Notice that $v_d^+=v_d^-$ and (because the permutation is symmetric) the two broken lines do not self-intersect and hence bound a polygon (see Figure~\ref{fig:suspension})  with $2d-2$ vertices.  By construction, for each edge $E^+_i:=\overline{v_i^+ v^+_{i+1}}$ of the top line, there exist an edge $E^-_i:=\overline{v_j^- v^-_{j+1}}$ which is parallel and isometric. 
 By idenfitfying all of such pairs by the unique translation which maps $E^+_i$  to $E^-_i$, we obtain a \emph{translation surface} that we denote $M=(\pi,\lambda,\tau)$. We denote by $\Sigma=\Sigma(\pi,\lambda,\tau)$ the set of vertices of $(\pi,\lambda,\tau)$ which form the singularities of $M$. 
\begin{figure}
	\includegraphics[width=.5\textwidth]{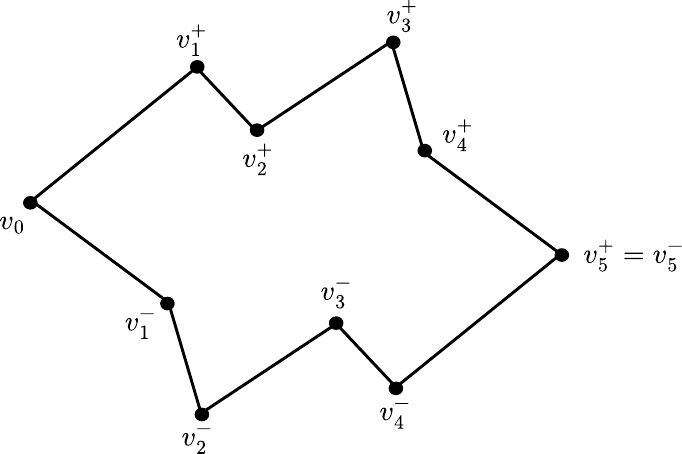}
	\caption{An example of a polygon representing a translation surface with $\pi=(54321)$. }\label{fig:suspension} 	
\end{figure}

Finally let us recall that any translation surface can be represented in more than one way as a polygon, for example \emph{cutting} the polygon along a straight line with endpoints in $\sigma$,  and \emph{glueing}  it back using the identifications of another pairs of sides. The (invertible) \emph{Rauzy-Veech algorithm}, described in Section \ref{sec:invertibleRV}, provides a way to produce a sequence of suspensions which all give different polygonal representations of the same translation surface.  

\subsubsection{The vertical flow.}\label{sec:vflow}
On any  translation  surface $M$ one can  define a global notion of direction induced by directions on $S^1\subset \R^2$ and from this one can define directional flows on $M$. We will consider one of such flows in particular, namely the \emph{vertical translation flow} $\varphi^v_{\R}=(\varphi_t^v)_{t\in\R}$ which moves points upwards with unit speed. This flow is defined on $M\backslash \Sigma$. We often refer to the points in $\Sigma$ as \emph{singularities} of $\varphi^v_{\R}$. 
 {Conical singularities of angle $4\pi$ give rise to simple saddle points of the linear flow. We write $\mathcal{H}(1^d)$ for the so-called \emph{principal} stratum, which consists of translation surfaces with $d$ conical angles $4\pi$ (and linear flows with only simple saddles). If the surface has genus $g$, $d=2g-2$.} 
 It is easy to see that the directional flows preserve the Lebesgue measure on $M$, which can be seen as a restriction of the Lebesgue measure on $ \mathbb{R}^2$. 

When $M=M(\pi, \lambda,\tau)$ is the suspension defined above, by considering the horizontal interval $I\subseteq M$ starting at $0$ and of length $|\lambda|$ as a Poincar\'e section for  $\varphi^v_{\R}$, one can see that the first return map to $I$ is exactly the  IET $T=(\pi,\lambda)$. 
We remark that in this setting the discontinuities of $T=(\pi,\lambda)$ are created by the singularities $\Sigma=\Sigma(\pi,\lambda,\tau)$ of $M=(\pi,\lambda,\tau)$: more precisely, the trajectories of  $\varphi^v_{\R}$ which start from a discontinuity of 
$T$ end in a singularity of $M$, and hence are  incoming \emph{separatrices} of a \emph{saddle} of   $\varphi^v_{\R}$.


\section{Bounded type rigid rectangles and towers}\label{sec:BTrigid}
{ We describe in this section the key geometric construction on which many of 
the analytic estimates rely upon. We start by considering translation surfaces 
obtained from glueing rectangles (through \emph{rectangles presentations}, 
which are a generalization of the notion of zippered rectangles, see 
\S~\ref{sec:rectangles}). We first describe the geometry of an irrational 
linear flow with a \emph{bounded-type} rotation number in terms of  
\emph{bounded-type rigidity} (see Def.~\ref{def:BTrigid}) presentations made by 
two rectangles of comparable areas, with certain identifications 
of their sides.
	We then describe how the geometry of a translation surface in higher genus can be forced to \emph{degenerate} to that of torus with bounded-type rigidity, by degenerating in a controlled way  rectangle presentations (see \S~\ref{sec:degeneration}). Rectangle presentations will be used to build \emph{Rokhlin towers} presentations for IETs, (defined in \S~\ref{sec:towers}).}

\subsection{Bounded type rigidity via rectangle presentations}
We will work with rectangle presentation of translation surfaces, which are a generalization of the zippered rectangles construction.
\subsubsection{Rectangle presentations}\label{sec:rectangles}
Let $S$ be a translation surface. When given by a rectangle presentation, $S$ is represented as a union of rectangles with glueings of the sides.
\smallskip

 A \emph{rectangle presentation} $\mathcal{R}(S)$, or simply a \emph{presentation} of $S$, is a collection   
 $$\mathcal{R}(S):= \{  R_1,  R_2, \cdots, R_d ;\,  \sim_{\mathcal{R}(S)} \} $$   
where, for each $1\leq i\leq d$, $R_i\subset \mathbb{R}^2$ is a (closed) rectangle with horizontal and vertical sides and $\sim_{\mathcal{R}(S)}$ is  equivalence relation identifying the boundaries of the rectangle by parallel translations  so that
$$S=  R_1\sqcup R_2\sqcup \dots R_d /\sim_{\mathcal{R}(S)}.$$
For the reader who is familiar with them,  \emph{zippered rectangles} (as defined by Veech in \cite{Ve:gau}, see also \cite{Yo}) are a  special type of rectangle presentations. We will work with rectangle presentations build similarly to zippered rectangles: the top sides of $R_i$ will be identified with points belonging to the union of the bottom sides of the rectangles, while each vertical side of a rectangle is union of two segments (one of which could be reduced to a point), so that each of the segments belonging to a left (respectively right) vertical side is identified by a (restriction of a planar) translation  to a (isometric)  segment contained in a right (respectively left) vertical side. 
An example of a rectangle presentation build in this way is shown in Figure \ref{fig: recpresent}.

\begin{figure}
	\includegraphics[width=.4\textwidth]{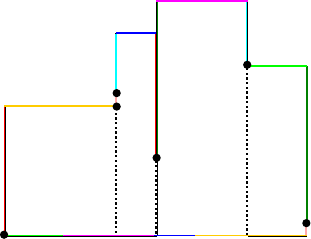}
	\caption{An example of a rectangle presentation. The dotted line are glued to the according to the adjacency as seen on the picture. The gluing of the remaining segments is glued with accordance to the color.}\label{fig: recpresent}
\end{figure}

{
\subsubsection{Bounded-type rigidity}\label{sec:degeneration}
We will be interested in building rectangle presentations of a translation surface $S$ which allow us to visualize $\epsilon$-(partial)\emph{rigidity times} for the vertical flow $\varphi_\R^v:= \varphi_\R^{v}$ on $S$, 
 i.e.~times $q$ such that, for a set $S_\epsilon\subset S$ of measure  $1-\epsilon$ of initial points  $x\in S_\epsilon$ the trajectory of  $\varphi_\R$ starting at $x$ returns after time $q$ $1/q$-close to $x$, i.e.~
 $$d(x,\varphi_q(x))<1/q, \quad \text{for\ all}\ x\in S_\epsilon$$ 
(where $d$ is the Euclidean metric induced on $S$, see \S~\ref{sec:surfaces}). 

 The classical example of a rectangle presentation which gives (partial) \emph{rigidity} times consists of one large rectangle, say $R_1$, which covers area $1-\epsilon$ and other $d-1$ rectangles whose total area is at most $\epsilon$, see Figure \ref{fig: onecylinder}. In this case,  the height $q_1$ of the rectangle $R_1$ is a rigidity time, as we can see  setting  $S_\epsilon:=R_1\cap \varphi_{-q_1}(R_1)$ (so that if $x\in S_\epsilon$, $x\in R_1$ it travels to the top of $R_1$ under the vertical flow and \emph{returns} and stays in $R_1$ up to time $q_1$).  Notice that in this case the displacement of $I_1$ (base interval of $R_1$) under the IET which identifies the top and bottom of the rectangles is at most $\epsilon$, so that for any $x \in S_\epsilon$, the distance $d(x, \varphi_{q_1}(x))$ is bounded above $\epsilon/{q_1}$.
 
 \begin{figure}
 	\includegraphics[width=.45\textwidth]{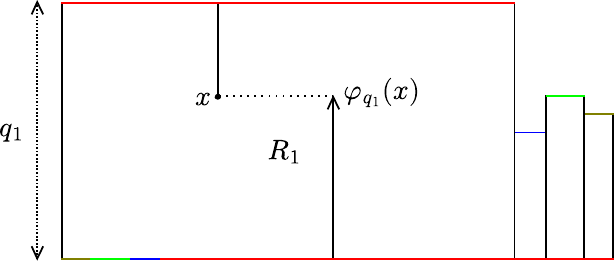}
 	\caption{One dominating cylinder $R_1$ of height $q_1$, yields rigid orbits of length $q_1$.}\label{fig: onecylinder}
 \end{figure}
 
 \subsubsection{Bounded-type rigidity times}\label{sec:BTrigidity}
We want to work with (a sequence of increasing) {partial} rigidity times $q$ such that $d(\varphi_q(x), x) \geq c/q$ for some $0<c<1$, i.e.~the \emph{relative displacement} $q \, d(x,\varphi_q x)$ is bounded below.  
The name \emph{bounded type rigidity} comes from rotations of \emph{bounded-type}, which display this type of controlled rigidity.  
In particular, if $S$ is a flat torus (i.e.~$S\in \mathcal{H}(0)$ is a genus one translation surface with one marked point), such that the Poincar{\'e} map of the vertical flow on a horizontal interval $I\subset S$ is a rotation $R_\alpha: I\to I$ of \emph{bounded-type}, i.e.~such that there exists $0<c<1$ so that 
$$
\left| {q\alpha}- p\right| \geq\frac{c}{q}, \qquad \text{for\ infinitely\ many}\ p \in \mathbb{Z}, \ q \in \mathbb{N}\backslash \{ 0\},
$$
  then we have times $q$ such that for all points $x$ on the torus $q d(x,\varphi_q x)\geq c$. 
We will also need to \emph{specify} the relative displacement range, i.e.~guarantee that, given an interval $C=(c_0,c_1)$, we have  $c_0\leq  q \, d(x,\varphi_q x)\leq c_1$. Let us hence introduce the following definition:

\begin{defn}[BT rigidity times]\label{def:BTrigid}
Given an interval $C=[c_0,c_1]$ where 
 $0<c_0<c_1\le 1$  and $\epsilon>0$, we say that $q$ is a $(C,\epsilon)$-BT rigidity time, or simply a $(C,\epsilon)$-rigidity time, if $q$ is an $\epsilon$-rigidity time and
$$
\frac{c_0}{q}\leq  d(x, \varphi_q x) \leq \frac{c_1}{q}, \quad  \text{for\ any}\ x\in S_\epsilon .
$$
\end{defn}
\noindent A sequence $(q_n)_n$ is 
 a sequence of $C$-\emph{BT rigidity times} (or simply BT rigidity times) if  for any $\epsilon>0$ there exists
 $n\in \N$ such that $q_n$ is a $(C,\epsilon)$-BT rigidity time.  
For example,  if $ \varphi_\R$ is as above the vertical flow on a flat torus with a  Poincar{\'e} map which is a rotation $R_\alpha$ with $\alpha$ of bounded type, 
the sequence of the denominators $(q_n)_n$ of the convergents of $\alpha$ form a sequence of $C$-BT rigidity times for some $C=[c,1]$ with $c>0$.

\subsubsection{BT rigidity using rectangles}\label{sec:BTrectangles}
Notice that sequences of $\epsilon_n$-rigid rectangle presentations with one rectangle $R_n^1$ of height $q_n$ and area $1-\epsilon_n$ increasing to $1$ \emph{cannot} be used to produce the type of BT rigidity in Definition~\ref{def:BTrigid}, since as remarked above $\Delta_n:=q_n d(x, \varphi_{q_n} x))<\epsilon_n\to 0$  as $n$ grow.}  
On the other hand, BT-rigity times can be built using \emph{two} towers presentations. 

\smallskip
\noindent {\bf Example} (BT rigidity with $d=2$). If $S$ is a  flat torus, we can represent it via a rectangle presentation $\mathcal{R}(S):= \{  R_1,  R_2 ;\,  \sim_{\mathcal{R}(S)} \} $ with $d=2$ rectangles, where the right vertical side of $R_2$ is glued to the top part of the left vertical side of $R_1$ (see Figure~\ref{fig:BTrigidityd2}).  
If the base of $R_1$ is larger than $c$, then the height $q_1$ of $R_1$
is a $c$-BT rigidity time: to see this, given $x\in S$, consider separately the two
cases when $x\in R_1$ or $x\in R_2$, as illustrated in Figure~\ref{fig:BTrigidityd2}). Notice that when $x\in R_2$, the glueings of the vertical sides are essential to get rigidity. Notice also that the  relative displacement $\Delta:= q\, d(x, \varphi_q(x))$ is exactly the $q \lambda_1$, where $\lambda_1$ is the width of the base of the first rectangle $R_1$, so the two rectangles should have comparable width to have a BT rigidity time.

\begin{figure}
	\includegraphics[width=.4\textwidth]{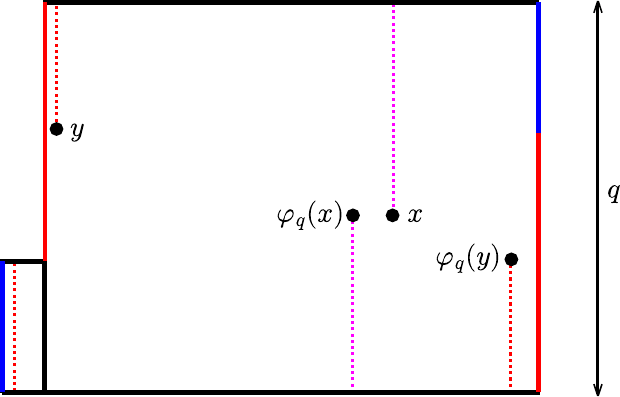}
	\caption{A $d=2$ rectangle presentation of a torus. Colors indicate vertical gluings. The height $q$ of the larger rectangle is a rigidity time. Picture on the right illustrates rigidity for $x$ and $y$ on the circle, exhibiting two different rigidity phenomena. }\label{fig:BTrigidityd2} 	
	\end{figure}




{
\subsubsection{BT rigid presentations in higher genus}
In higher genus/for higher number $d>2$ of exchanged intervals, we will work with BT rigidity times which are given by a rectangle presentation with three  \emph{large} rectangles (two with the same height) 
 and additional ones which have \emph{small} area. One can obtain this type of BT rigid rectangle presentations with $d>2$ rectangles by \emph{cutting}  in two  one of the two rectangles in $d=2$ presentation glued as in the above example, say $R_1$, and inserting additional rectangles in the middle (see Figure~\ref{fig:BTrigidrectangles}).  
 If the area of these additional rectangles is less than $0<\epsilon<1$, one can get in this way $\epsilon$-rigidity times which are also BT rigid.  
This is encoded in the following definition. Some comments on the meaning of the assumptions are given just below the definition.

\begin{figure}	\includegraphics[scale=1.6]{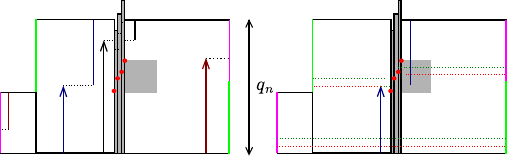}
	\caption{{\small A BT rigid rectangle presentation with $d=5$, illustrating the properties of Definition~\ref{def:BTrectangles}. The green and purple sides are identified. The \emph{bad zone} where rigidity fails is indicated in grey. On the left-hand side picture, we illustrate three rigid orbits with the short horizontal segments connecting the beginning and the end of the orbit. On the right-hand side picture, we illustrate absence of rigidity in the "bad zone", by drawing two dotted horizontal segments starting from the endpoint of the orbit. One can see that unlike in the first picture, these segments do not realign with the initial point of the orbit.}}\label{fig:BTrigidrectangles}
\end{figure}

\begin{defn}[BT rigid rectangles presentations for $d\geq 3$.]\label{def:BTrectangles}
 A rectangle presentation $\mathcal{R}(S)$ with $d\geq 3$ rectangles $R_1, 
 \cdots, R_d$ with heights $q_1,\cdots, q_d$ is a $(C, \epsilon, \rho)$-BT rigid 
 presentation, where $C=[c_0,c_1]\subset \mathbb{R}_{+}$, $\epsilon>0$ and $ \rho>0$, if 
\begin{itemize}
\item[(i)] the rectangles $R_3, \cdots, R_{d-1}$ have total area 
$\mathrm{Area}(R_3\cup \cdots \cup R_{d-1})<\epsilon$;
\item[(ii)] the two rectangles $R_2$ and $R_{d}$ have the same height $q:=q_2=q_{d}$;
\item[(iii)] the rescaled displacement $\Delta:= q \lambda_1 $ where $\lambda_1$ is the width of the rectangle $R_1$ belongs to $C$, i.e.~$c_0\leq \Delta \leq c_1$.
\item[(iv)] the left vertical side of $R_1$ is glued to the top part of the right vertical side of $R_d$ (in particular $q_d>q_1$).  
\item[(v)] the points of discontinuities of the glueings of the vertical sides 
between the rectangles $R_2,\cdots, R_{d-1}$ have (vertical) distance less than 
$ \rho \, q $.
	\item[(vi)] for every $z\in R_3, \cdots, R_{d-1}$, we have 
	\[
	\max\left\{\operatorname{dist}(z,R_2),\operatorname{dist}(z,R_d)\right\}\le \frac{\epsilon}{q},
	\]
{where $dist(p, q)$ is the Euclidean distance between two points $p,q$ on the translation surface $S$ and 
$dist (z, R)$ denotes the distance of the point $x$ from the rectangle $R$, i.e.~$\min \{ dist (z,p)$, $p\in R\}$.}
\end{itemize}
\end{defn}
In this rectangle presentation, in view of $(ii)$, the rectangles $R_2$ and $R_{d}$ (of equal heights) should be thought as obtained from the same rectangle, by cutting it vertically\footnote{The reason why we do not simply define a BT rigid rectangle presentation with two large rectangles and the other small ones is technical: the degenerations that we build for a typical translation surface to this type of BT rigid presentation lead us to this type of picture (obtained by cutting and pasting a related zippered rectangles picture obtained by Rauzy-Veech induction, see Section~\ref{sec:Eproof} for details).} to add the additional \emph{small} rectangles $R_3, \cdots, R_{d-1} $. Thus, by $(i)$, the rectangles $R_1, R_{2}$ and $R_d$ occupy a $1-\epsilon$ proportion of space; we will call them \emph{large}; the other rectangles will be called \emph{small}. 

{Given a  BT rigid presentation, one can show that $q:=q_2=q_d$,  the common height of $R_2$ and $R_{d}$, is a BT rigid time (see 
\S~\ref{sec:degenerationRV}, 
in particular the proof of Corollary~\ref{cor:E_BTrigidity}).} 
Notice that, as in the $d=2$ example, the \emph{vertical glueings}  between the vertical sides of the three \emph{large} rectangles $R_1$, $R_2$ and $R_{d}$ given by $(iv)$ play a crucial role, since it allows for rigidity.  Condition $(v)$ will be referred to as \emph{cramping of discontinuities} and guarantees   that all discontinuities of small rectangles have vertical distance  from each other which takes a proportion less than $\rho $ of the height $q$ of $R_2$ and $R_d$. This is needed because rigidity may not hold in  a rectangular region inside $R_{d}$ (i.e.~the region shaded in Figure~\ref{fig:BTrigidrectangles}, see \S~\ref{sec:Edegenerations_proof} for further details) and condition $(v)$ guarantees that this region has  area { of order $\rho$} (so that  the rigidity set which is obtained by removing both the union of small rectangles $R_3\cup \cdots \cup R_{d-1}$, their images by $\phi_{-q}$, which has  area less then $\epsilon$, and this rectangular region of area proportional to $\rho$). {Finally, condition $(vi)$ guarantees that the distance of a point to a singularity, is very well controlled by the distance of this point to the side of a zippered rectangle.}
 


\subsubsection{Existence of BT rigid degenerations}\label{sec:Edegenerations}
{
We now   show  that almost every translation surface in $\mathcal{H}(1^{2g-2})$ (namely with genus $g\geq 1$ and $2g-2$ conical singularities of angle $4\pi$, see \S~\ref{sec:surfaces})  admits a sequence of BT rigidity times. The same result could be proved for other strata, we only restrict to  $\mathcal{H}(1^{2g-2})$ since these are the translation surfaces underlying\footnote{{Locally Hamiltonian flows in $\mathcal{U}_{min}$, namely  locally Hamiltonian flow with only $2g-2$ simple saddles (see \S~\ref{sec:locHam})  are indeed time-reparametrizations of linear flows on translation surfaces in $\mathcal{H}(1^{2g-2})$. }} them are of this type.}
\begin{prop}[Existence of bounded-type rigid presentations]\label{prop:Edegenerations} 
For
any interval $C=[c_0,c_1]$ with $0<c_0<c_1<1$ and any choice of $0<\rho<1$ and $0<\epsilon<1$,  
 for almost every translation surface {$S\in \mathcal{H}(1^{2g-2})$}, 
 there exist a sequence of 
 $(C,\epsilon,\rho)$-BT rigid rectangle presentations 
 (as in 
 Definition~\ref{def:BTrectangles}) with $d=4g-2$ rectangles, {with heights  growing to infinity.} 
\end{prop}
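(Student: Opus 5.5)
The strategy is to realise the required rectangle presentations as zippered rectangles produced by Rauzy--Veech induction, and to get infinitely many of them from recurrence of the Teichm\"uller flow to a suitable open set. Write a typical $S\in\mathcal{H}(1^{2g-2})$ as a suspension $M(\pi,\lambda,\tau)$ with standard $\pi$ and $d=4g-3$ intervals. The invertible Rauzy--Veech induction, i.e.\ the natural extension acting on triples $(\pi,\lambda,\tau)$, gives a sequence of zippered rectangle presentations of the same surface $S$. It is a first-return map of the Teichm\"uller flow $g_t$ to a transversal, and it is ergodic (Veech, Masur) for a measure equivalent to Lebesgue. Let $q^{(n)}_i$ be the heights and $\lambda^{(n)}_i$ the widths of the $n$-th presentation. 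Renormalising by $g_{t_n}$ turns them into $e^{-t_n}q^{(n)}_i$ and $e^{t_n}\lambda^{(n)}_i$, and every $(C,\epsilon,\rho)$ condition in Definition~\ref{def:BTrectangles} is invariant under this renormalisation once $\Delta=q\lambda_1$ is read as a product of a height and a width. So it suffices to find a set $\mathcal{U}$ of normalised data $(\pi,\lambda,\tau)$ with positive measure such that every datum in $\mathcal{U}$, after a fixed finite cut-and-paste, yields a $(C,\epsilon,\rho)$-BT rigid presentation with $4g-2$ rectangles. By ergodicity (Birkhoff), a.e.\ orbit visits $\mathcal{U}$ infinitely often. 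The heights then grow to infinity, because the Rauzy--Veech times tend to infinity and $\min_i q^{(n)}_i\to\infty$ for Keane IETs.

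Next I build $\mathcal{U}$ near a degenerate configuration. Take the two-rectangle torus presentation of the $d=2$ example with widths $\lambda_1,\lambda_2$ and heights chosen so that $q\lambda_1\in \mathrm{int}(C)$; this is possible since, after normalising the area to $1$, we have $q(\lambda_1+\lambda_2)\approx 1$ and $c_1<1$. Cut the rectangle of height $q$ vertically into $R_2$ and $R_d$, and insert $d-3$ thin rectangles $R_3,\dots,R_{d-1}$ of total area below $\epsilon$, placed so that each is within horizontal distance $\epsilon/q$ of both $R_2$ and $R_d$ (condition (vi)). Choose their heights, and the positions of the vertical gluing discontinuities between $R_2,\dots,R_{d-1}$, inside a vertical window of size $\rho q$ (condition (v)). Finally glue the vertical sides so that the resulting surface lies in $\mathcal{H}(1^{2g-2})$ with $4g-2$ rectangles; the extra rectangle compared with $d=4g-3$ comes from the cut, as in the footnote to the definition. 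One then checks that this presentation arises from a zippered rectangle with a permutation $\pi'$ in the Rauzy class of the standard $\pi$, by cutting $R_2\cup R_d$ back together. Every condition (i)--(vi) is a strict inequality or an equality forced by the combinatorics, namely (ii) $q_2=q_d$ and (iv) the gluing pattern. Hence the set of zippered data with combinatorics $\pi'$ whose lengths and heights are close to this model is open, and therefore has positive measure in the Rauzy--Veech natural extension. I take this open set as $\mathcal{U}$.

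The main obstacle is the combinatorial realisation in the previous step. We must show that some $\pi'$ in the Rauzy class of the standard permutation on $4g-3$ letters admits zippered rectangles with this degenerate shape. Concretely, the surface must be pinched to a bounded-type torus while all the remaining topology (genus $g-1$ and the $2g-2$ zeros) is squeezed into small rectangles whose discontinuities are vertically cramped. I would first try an explicit construction: start from a permutation obtained from the torus permutation $(21)$ by inserting the other letters between the two large letters, so that in the zippered picture they sit between the two parts of the cut rectangle. Then I would verify irreducibility and membership in the correct Rauzy class and stratum, e.g.\ by computing the genus and the singularity pattern, or by invoking the Kontsevich--Zorich classification of connected components. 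The heights must satisfy the zippered rectangle constraints, including the $\tau$-positivity conditions \eqref{eq: deftau}. I would check that these can be met with $d-3$ heights all lying in a $\rho q$ window while the two large heights differ as (iv) requires, adjusting $\tau$ by small perturbations. A secondary point is to confirm that $\mathcal{U}$ has positive measure with respect to the invariant measure, and not merely nonempty interior in the slice where the cut-and-paste identification $q_2=q_d$ holds. This holds because $R_2,R_d$ come from a single zippered rectangle, so $q_2=q_d$ is automatic and imposes no constraint on the zippered data.
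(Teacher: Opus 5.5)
\textbf{Your outline matches the paper, but the key construction is missing.} Your overall architecture agrees with the paper's: exhibit an open, hence positive-measure, set of zippered-rectangle data which after a fixed cut-and-paste yields $(C,\epsilon,\rho)$-BT rigid presentations, then return to it infinitely often by ergodicity and conservativity of the invertible Rauzy--Veech induction, with heights tending to infinity. However, building this open set is the whole content of the proof (Lemma~\ref{lemma_openset}), and you leave it as the ``main obstacle''. Moreover, the mechanism you propose for it does not work as stated. In your model the thin rectangles sit between $R_2$ and $R_d$. By (iv), the right side of $R_d$ is glued to $R_2$ only along a segment offset vertically by $q_1$, since its top part is glued to $R_1$. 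So $R_2\cup R_d$ is not a rectangle and cannot be ``cut back together'' into a single zippered rectangle over one subinterval of the base. Your claim that $q_2=q_d$ is automatic ``because $R_2,R_d$ come from a single zippered rectangle'' therefore has no basis. The same goes for the reduction to finding a suitable $\pi'$ in the Rauzy class.

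\textbf{The missing idea: use the standard permutation and induce on a shorter interval.} No search in the Rauzy class and no merging is needed. Take the standard permutation $\pi_0$ itself, which lies in the Rauzy class by Rauzy's theorem, and impose the open conditions $\lambda_2,\dots,\lambda_{d-1}<\epsilon'$, $c_0'<\lambda_1-\lambda_d<c_1'$ and $0<\tau_2,\dots,\tau_{d-1}<\rho'$.
\begin{itemize}
\item Since $\pi_0$ restricted to $\{1,d\}$ is the rotation permutation, the zippered rectangles $R_1,R_d$ form an almost-torus.
\item The middle heights lie within $d\rho'$ of $q_1+q_d$, and the singularities on their sides lie at heights within $d\rho'$ of each other. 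This gives the cramping.
\item The extra rectangle comes not from cutting a zippered rectangle but from \emph{inducing} on the shorter interval $\tilde I=[\tfrac18,\tfrac98-\lambda_d)$. The induced IET has $d+1$ intervals of lengths $(\lambda_1-\lambda_d,\lambda_d-\tfrac18,\lambda_2,\dots,\lambda_{d-1},\tfrac18)$.
\item The small intervals are the old $I_2,\dots,I_{d-1}$ and return after one step. By contrast, $\tilde I_2\subset I_1$ and $\tilde I_{d+1}\subset I_d$ both return after two steps. Each passes once through $R_1$ and once through $R_d$, in opposite order, with the same translation $\lambda_d-\lambda_1$.
\item Hence $\tilde R_2$ and $\tilde R_{d+1}$ have the common height $q=q_1+q_d$ automatically. $\tilde R_1$ has height $q_1$ and width $\lambda_1-\lambda_d$, and the gluing in (iv) comes for free.
\item Finally, one bounds $q$ via the area to choose $c_0',c_1',\epsilon'$ so that $q(\lambda_1-\lambda_d)\in C$.
\end{itemize}
All these conditions are open in the coordinates of the full space $\hat X_d$, not merely in a slice, so positivity of the measure is immediate. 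With this open set in place, the rest of your argument goes through.
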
 
\noindent As a corollary, we also have the existence of BT rigidity times:{
\begin{cor}\label{cor:E_BTrigidity}
For  almost every ${S\in \mathcal{H}(1^{2g-2})}$, for any non-empty interval $C=[c_0,c_1]\subset{\mathcal{R}_+}$ and any $0<\epsilon<1$, there exists a sequence $(q_n)_n$ with $q_n\to \infty$ of $(C,\epsilon)-$BT rigidity times.
\end{cor}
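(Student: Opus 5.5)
The plan is to derive the corollary directly from Proposition~\ref{prop:Edegenerations}. Given $C=[c_0,c_1]$ and $\epsilon$, we first shrink $C$ if necessary to a closed subinterval $C'\subset C\cap(0,1)$ with nonempty interior. Then we choose $\epsilon'\le\epsilon/4$ and $\rho$ small compared to $\epsilon$; the precise choice comes from the area count below. To get a full-measure set independent of $C$ and $\epsilon$, we intersect the full-measure sets given by Proposition~\ref{prop:Edegenerations} over countably many rational choices of $(C',\epsilon',\rho)$; any given $C,\epsilon$ contain one such rational choice. For $S$ in this set we obtain $(C',\epsilon',\rho)$-BT rigid presentations whose common height $q=q_2=q_d$ of $R_2,R_d$ tends to infinity. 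We take $q_n:=q$ as the candidate time.

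The main step is to exhibit, for one such presentation, a set $S_\epsilon$ with $\mathrm{Area}(S_\epsilon)\ge 1-\epsilon$ on which $d(x,\varphi_q x)=\lambda_1$ exactly, so that $q\,d(x,\varphi_qx)=\Delta\in C'\subset C$ by (iii). This mirrors the $d=2$ torus example (Figure~\ref{fig:BTrigidityd2}).

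For $x\in R_1\cup R_2\cup R_d$, we follow the vertical trajectory for time $q$. If $x$ is in $R_2$ or $R_d$ at height $h$, it reaches the top after time $q-h$. It then re-enters through a bottom side; generically this is the bottom of $R_1$, or the bottom of one of $R_2,R_d$. It continues up to height $h$. By the vertical glueings, notably (iv), which glues the left side of $R_1$ to the top part of the right side of $R_d$, the endpoint $\varphi_qx$ lies on the same horizontal level as $x$ and is joined to it by a horizontal segment of length $\lambda_1$. This holds provided the segment does not cross a discontinuity of the vertical glueings or a small rectangle.

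The points where this fails are of three kinds. The first is $R_3\cup\cdots\cup R_{d-1}$, of area $<\epsilon'$ by (i). The second is its preimage $\varphi_{-q}(R_3\cup\cdots\cup R_{d-1})$, again of area $<\epsilon'$ by invariance of area. The third is the ``bad zone'' inside $R_d$ (the grey region in Figure~\ref{fig:BTrigidrectangles}). This zone is a horizontal strip whose height is controlled by the vertical spread of the glueing discontinuities among $R_2,\dots,R_{d-1}$. By (v) that spread is $\le\rho q$, and the strip has width at most $\lambda_d\le 1/q$ up to bounded factors. Its area is therefore $O(\rho)$, and choosing $\rho\ll\epsilon$ makes the total excluded area $<\epsilon$. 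We define $S_\epsilon$ as the complement of these three sets.

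It remains to check $d(x,\varphi_qx)=\lambda_1$ as a distance on $S$, not merely as the length of a connecting segment, so that we have equality rather than just the upper bound. The segment length gives $d\le\lambda_1$. For $d\ge\lambda_1$, we use that $\lambda_1\le c_1/q$ is tiny while the large rectangles have height $\sim q$. Hence for $x$ not within $O(1/q)$ of singularities, any shortest path of length $\le c_1/q$ stays in a flat embedded neighbourhood of the horizontal segment. The exceptional points within $O(1/q)$ of a singularity form a set of area $O(1/q^2)$ per singularity, which we remove as well; condition (vi) handles proximity to the small rectangles. This gives the $(C,\epsilon)$-BT rigidity, since $q\to\infty$ along the sequence.

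I expect the main obstacle to be the precise combinatorial description of the bad zone and the re-entry through the bottom sides. This requires tracing exactly which glueings the orbit uses in the degenerate picture, and it is where (iv) and (v) must be used carefully.
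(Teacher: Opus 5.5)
Your proposal is essentially the paper's proof: apply Proposition~\ref{prop:Edegenerations} with smaller parameters, take $q$ to be the common height of $R_2,R_d$, and discard three sets, namely the small rectangles, their $\varphi_{-q}$-preimage, and the bad zone of area $O(\rho)$ next to the cramped singularities; the horizontal displacement $\lambda_1$ is then read off from the vertical glueings exactly as in the two-rectangle torus example. Your countable intersection over rational parameters (needed for the quantifier order in the statement) and your injectivity-radius argument for $d(x,\varphi_q x)\ge\lambda_1$ (which only uses that the fixed surface $S$ has positive systole while $c_1/q\to 0$) make explicit two points the paper leaves implicit; one small slip, shared with the paper, is that points whose orbit enters a small rectangle of height less than $q$ and leaves it before time $q$ lie in none of your three excluded sets and need not be rigid, so you should discard every point whose orbit segment of length $q$ meets a small rectangle, which still costs only area $O(\epsilon')$.
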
}
\noindent     The proof of both the Proposition and its Corollary will be presented in \S~\ref{sec:degenerationRV}, after having recalled in \S~\ref{sec:RV} basic notation from Rauzy-Veech induction.  Indeed, we  will prove the Proposition by defining  
 subsets of the parameter space (see Lemma~\ref{lemma_openset}) of zippered rectangles which yield BT-rigid rectangle presentations (in the sense of Definition~\ref{def:BTrectangles}) and exploiting ergodicity of Rauzy-Veech induction to visit them. 
 
 \begin{remark}\label{rk:degenerations} Geometrically, the surfaces given by rectangle presentations as in Definition~\ref{def:BTrectangles} can be thought of as obtained by glueing with a \emph{slit} construction\footnote{To glue, build a  translation surface which is the connected sum of two translation surfaces $S_1, S_2$, one can consider a \emph{slit} $\gamma$ on both $S_1, S_2$, i.e.~consider parallel, isometric (flat) segments $\gamma_i\subset S_i$ for $i=1,2$ (which one can consider as \emph{cuts}) and identifying the two sides of each with the opposite side of the other.} a higher genus flat surface $S_2$ with  $Area (S_2)<\epsilon$ to a torus (given by a BT rigid rectangle presentations with $d=2$ rectangles). The existence for $S$ of a sequence of BT rigidity times given by these rectangle presentations implies that the \emph{Teichmueller geodesic} $(g_{t} S)_{t\in \R}$ through $S$ admits a sequence $(t_n)_n$ of times such that $(g_{t_n} S)_{n\in \N}$ \emph{degenerates} to a genus one surface (with a $C$-BT rigid presentation). 
\end{remark}

\subsection{Rohlin towers with BT rigidity}\label{sec:towers}
From the existence of rectangle presentations with a certain geometry, one can deduce the existence of \emph{Rohlin towers} (see \S~\ref{sec:Rohlintowers} below) with similar properties (for the IET which describes the top/bottom glueings of the initial rectangle presentation). }

\subsubsection{Rohlin towers for IETs}\label{sec:Rohlintowers}
Let us first recall the definition of Rohlin tower (by intervals). 
\begin{defn}[Rohlin tower by intervals]
Let $T:I\to I$ be an IET with $|I|\leq 1$. Given an interval $J_n:=(a_n,b_n)\subset I$ and an integer $h_n\in \N$ we say that the union 
$\bigcup_{i=0}^{h_n-1}T^i J_n$ is a (Rohlin) \emph{tower by intervals} of \emph{base} $J_n$ of \emph{height} $h_n$ if and only if the images $T^i J_n, 0\leq i < h_n$ are pairwise disjoint intervals.
\end{defn}
Notice that since $T$ preserves the Lebesgue measure $\lambda$, all floors of $\mathcal{T}$ have the same  (Lebesgue) measure $|b-a|$ and the measure of $\mathcal{T}$ is then $\lambda(\mathcal{T})= h_n |b_n-a_n|$. 
{
\begin{defn}[Towers representation]
A \emph{tower representation}  (also called a \emph{skyscraper} representation) $\mathcal{T}=\mathcal{T}(T)$ of an IET $T: I\to I$ is a (finite) collection
$\mathcal{T}:= \{ \mathcal{T}_1, \dots, \mathcal{T}_k \}$ 
of Rohlin towers $\mathcal{T}_i$ for $T$, $0\leq i\leq k$, that \emph{partition} $I$, i.e.~such that $\bigcup_{i=1}^k\mathcal{T}_i =I$. 
\end{defn}
Given an IET $T: I\to I$ and a subinterval $J\subset I$, it is well known that the induced (or first return map) of $T$ to $J$ is also an IET $T_J$. A tower representation  of $T$ can then be obtained taking the  
Rohlin towers $\mathcal{T}_i$ each of which has as a base one of the continuity intervals $J_i\subset J$ of the induced map $T_J$,  and as height the (first) return time of  $J_i$ to $J$ under $T$. The action of $T$ can then be seen as \emph{moving upwards} by one every floor of a tower but the last one, which is identified to the base through $T_J$.

\subsubsection{BT rigid Rohlin towers presentations}\label{sec:BTtowers} 
Towers presentations can be thought of (and formally related to, see \S~\ref{sec:rectangles_vs_towers}) \emph{discretizations} of rectangle presentations. In this spirit, one can define BT rigid tower presentations in analogy to the definition of BT rigid rectangle presentations. 

\begin{figure}
	\includegraphics[width=.4\textwidth]{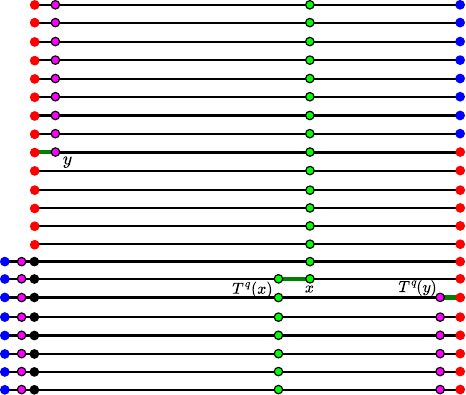}
	\caption{{\small The BT rigid tower presentation in genus 1 case is simply the two towers obtained by the classical Gauss map, when the partial quotient is bounded. Even though the measure of large tower is bounded away from 1, the heights of such towers still form a rigidity sequence for the rotation, as illustrated on the picture.} }\label{fig:BTrigiditygenus1} 	
\end{figure}

\begin{defn}[BT rigid towers presentations.]\label{def:BTtowers}
 A tower presentation $\mathcal{T} = \{ \mathcal{T}_1, \cdots, \mathcal{T}_d\}$ with $d\geq 3$ towers  with heights $q_1,\cdots, q_d$ is a  $(C, \epsilon, \rho)$-BT rigid presentation
for some $C=[c_0,c_1]\subset [0,1]$, $0<\epsilon,\rho<1$,  
  if 
\begin{itemize}
\item[(i)] the towers $\mathcal{T}_3, \cdots, \mathcal{T}_{d-1}$ have total measure $\lambda(\mathcal{T}_3\cup \cdots \cup \mathcal{T}_{d-1})<\epsilon$;
\item[(ii)] the two towers $\mathcal{T}_2$ and $\mathcal{T}_{d}$ have the same height $q:=q_2=q_{d}$;
\item[(iii)] the relative displacement $\Delta:= q\lambda_1$, where $\lambda_1 $
is the  width   of the tower $\mathcal{T}_1$, belongs to  $C$, i.e.~$c_0<\Delta<c_1$;
\item[(iv)] the tower $\mathcal{T}_d$  has height $q_d>q_1$ and the left endpoints of floors  of $\mathcal{T}_1$ are glued to the right endpoints of the top $q_1$ floors of $\mathcal{T}_d$, while top $q_2-q_1$ left endpoints of floors of $\mathcal{T}_2$ are glued to the right endpoints of the bottom $q_2-q_1=q_d-q_1$ floors of $\mathcal T_d$;  
\item[(v)] for $i=2,\dots, d-1$, the right endpoints of the bottom $k_i$
 floors of $\mathcal{T}_i$ are glued to the left endpoints of the corresponding floors of $\mathcal{T}_{i+1}$, while the right endpoints of the top $q_i-k_i$ floors are glued to 
to the left endpoints of the corresponding floors of $\mathcal{T}_{\pi^{-1}(\pi(i)+1)}$. Moreover, 
 $$
 \max_{3\leq i,j<d}| k_i-k_j|\leq {\rho\, q.}
 $$
 
 	\item[(vi)]  for every $x\in \mathcal T_2\cup\mathcal T_d$ and every $\beta\in Disc(T)$ if $x_{\beta}^+(x_{\beta}^-)$ is the closest visit to $\beta$ from the right (left) in time $q$, and $(a^+,b^+)$ (or $(a^-,b^-)$) is the level of $\mathcal T_2$ or $\mathcal T_{d}$ which contains $x_{\beta}^+(x_{\beta}^-)$, then
 	\[
 	\max\{\big||x_{\beta}^+-\beta|-|x_{\beta}^+-a^{+}|\big|, \big||x_{\beta}^--\beta|-|x_{\beta}^--b^{-}|\big| \}\le \frac{\epsilon}{q}.
 	\]
\end{itemize}
\end{defn}
\noindent Notice that the condition $(v)$ is a discretized version of condition $(v)$ for BT rigid rectangle presentations (see Definition~\ref{def:BTrectangles}). 

\subsubsection{Rohlin towers from rectangle presentations}\label{sec:rectangles_vs_towers}  
Let us finish this section by hinting at  the relation between rectangle representations of a translation surface $S$ and Rokhlin towers presentations of a IET $T$.  Given an IET $T$, let $S$ be a translation surface such that $T$ appears as  a Poincar{\'e} section of the vertical flow $\varphi_\R$ on an horizontal interval $I\subset S$ (we say in this case that $S$ is a \emph{suspension} of $T$). Then, given a sequence of rectangle presentations $(\mathcal{R}_n)_n$ for $S$ with increasing heights, 
one can produce a sequence of towers representations for $T$, which asymptotically have the same \emph{geometry} as the rectangle presentations (meaning the same ratios of width, heights, and the same gluing patterns between rectangles and towers), see  Figure~\ref{fig:rectangles_vs_towers}.

\begin{figure}
	\includegraphics[scale=1.8]{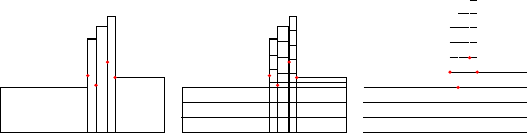}
	\caption{{\small Building Rohlin towers using  rectangle presentations.  The section which winds up around the rectangle presentation $\mathcal{R}(S)$ of the surface $S$,  makes it possible to  
		deduce properties of the Rokhlin towers by seeing them inside  
		rectangle presentations. Recall that the discontinuities of IET correspond to the first intersections of incoming separatrices, see \S~\ref{sec:vflow}.\label{fig:rectangles_vs_towers}}}
	\end{figure}
	
	To see this, notice that the interval $I\subset S$  can be visualized inside each $\mathcal{R}_n$ as a union of horizontal segments contained in the rectangles of $\mathcal{R}_n$ (see Figure~\ref{fig:rectangles_vs_towers});
 the horizontal segments are identified with each other according to the rectangle identifications of ${\mathcal{R}_n}$  (since they are part of the connected horizontal interval  $I\subset S$.
 From this correspondence, one can see that the dynamics of $T$ (which is the first return map of the vertical flow on $S $ to $I$ is exactly that one of a Rokhlin towers skyscraper (each tower being the union of floors contained in the same rectangle). We will use this correspondence to build Rokhlin towers for $T$ in Section~\ref{sec:Eproof} (see in particular \S~\ref{sec:Edegenerations}).}

\section{Birkhoff sums, matchings and exponential tails.}\label{sec:BS}
{In this section we introduce the tools that we need to verify the assumptions 
of the disjointness criterion for special flow (given by 
Proposition~\ref{prof:disjointnesssf}). We need to study the distribution of 
Birkhoff sums $S_{q} f$ of functions with  logarithmic singularities and to 
show that they have exponential tails along BT rigidity times.
In particular, we need estimates on the  derivatives 
 $S_q f'$ and $S_q f''$ (which are rather classical), as well as the notion (which is new to  this paper), of \emph{matchings of Birkhoff sums}.  We motivate and define the notion of \emph{matchings} (as well as \emph{balanced} matchings, see Definition~\ref{def:balancedm}) in \S~\ref{sec:matchingssec}. 
  In \S~\ref{sec:BSestimates} we recall some results providing estimates on Birkhoff sums of the roof and its derivatives; 
   Then, in \S~\ref{sec:exptails_via_match}, we exploit the existence of matchings with good properties and trimmed Birkhoff sums  linear bounds,  to prove the exponential tails estimates.}

\subsection{Matching orbits and Birkhoff sums}\label{sec:matchingssec}
In this section, we introduce 
 \emph{matchings}. 
We first heuristically explain  in \S~\ref{sec:matchingmotivation} the motivation  behind this  notion.  
 We then formally define the  notion of \emph{matchings of orbits} and then \emph{matchings of Birkhoff sums} in  \S~\ref{sec:matchingorbits} and \S~\ref{sec:matchingBS} respectively. 
 We then define \emph{balanced} matchings (see \S~\ref{sec:balancedmatchings}), which are matching sets with good geometry control of the distance from singularities.
   
\subsubsection{Motivation for matchings.}\label{sec:matchingmotivation}
Let $x,y $ be two points in $I$, fix $r\in \mathbb{N}$ and consider the  Birkhoff sums $S_r(x) $ and  $S_r(y) $.
If we want to estimate the difference $|S_rf(x) - S_r f(y) |$ (or the analogous differences between Birkhoff sums of the derivatives $f'$ and $f''$ of the roof function) using the mean-value theorem, it is necessary that $x,y$ both belong to the same continuity interval $(a,b)$ for $S_r f$, or, equivalently, that all the intervals $(a,b), T(a,b), \dots , T^{r-1}(a,b)$ do not contain singularities of $T$. 
This is the case, for example, if $(a,b)$ is the base of a Rohlin tower for $T$ with height $q$.
 More generally,  we can estimate $|S_rf(x) - S_r f(y) |$ by decomposing each Birkhoff sum $S_rf(x)$ and $S_r f(y)$ and, correspondingly (using the notation \eqref{eq:orbitsegment} introduced in \S~\ref{sec:iets} to denote orbit segments), decomposing 
 $\mathcal{O}_T(x,r)$ and 
$\mathcal{O}_T(y,r)$ into finitely many orbit segments, so that these orbit segments can be \emph{matched}, i.e. for each orbit segment of the form $\mathcal{O}_T(x_i,r_i)\subset \mathcal{O}_T(x,r)$ there is an orbit segment of the same length $\mathcal{O}_T(y_i,r_i)\subset \mathcal{O}_T(y,r)$ such that $| S_{r_i} f (x_i) -  S_{r_i} f (y_i)|$ can be estimated, as hinted above, using the mean-value theorem. This is the case when the interval $J:=I(x_i,y_i)$ is contained in an interval $(a_i,b_i)$ such that its iterates $T^j(a_i,b_i)$ for $0\leq m<r_i$ form a Rohlin tower. 


\subsubsection{Matching of orbits}\label{sec:matchingorbits}
Given $x_0,y_0\in I_0$ and $r\in \mathbb{N}$ consider the two orbit segments $\mathcal{O}_T(x_0,r)$ and $\mathcal{O}_T(y_0,r)$ (see \S~\ref{sec:iets}).  A \emph{matching} of these two orbit segments is formally defined as follows.
{
\begin{defn}\label{def:orbitmathcing}
Given a positive integer $k \in \mathbb{N}$, we  say that  two orbit segments  $\mathcal{O}_T(x,r)$ and $\mathcal{O}_T(y,r)$ \emph{admit a $k$-matching} if there exists 
 $k$-partition of $r$, 
  namely positive integers $r_1,\dots, r_k$ with $ r_1+r_2+\dots r_k=r$, 
such that we can decompose each orbit segments in $k$ disjoint orbit segments, namely 
\begin{align*}
\mathcal{O}_T(x,r)& = \mathcal{O}_T(x_1,r_1)\sqcup  \mathcal{O}_T(x_2,r_2)\sqcup \cdots \sqcup \mathcal{O}_T(x_k,r_k), 
\\ \mathcal{O}_T(y,r)& = \mathcal{O}_T(y_1,r_1) \sqcup  \mathcal{O}_T (y_2,r_2) \sqcup \cdots \sqcup  \mathcal{O}_T(y_k,r_k),
\end{align*}
so that,  for every $1\leq i\leq k$ there exists an interval $(a_i, b_i)$ which contains both points $x_i, y_{i}$ and 
is the base of  a Rohlin tower by intervals of height $r_i$.   
We then say that $\mathcal{O}_T(x_i,r_i)$ and $\mathcal{O}_T(y_i,r_i)$ admit a matching or simply can be matched if they admit a $k$-matching for some positive integer $k$.
\end{defn}}
\noindent Notice that the \emph{order} of the $\kappa$ orbit segments inside the bigger orbit segment of length $r$ may not be the natural one. For example, we can by convention set 
$$
x_1:=x,\quad  x_2:= T_{r_1}(x_1),  \quad \dots \quad x_k:= T^{r_{k}}(x_{\kappa-1})= T^{r_1+\dots + r_{\kappa}}(x_0)
$$
but then the corresponding points $y_i$ do not have the same order inside  $\mathcal{O}_T(y,r)$. We can describe their order by a permutation  $\nu:\{1,\dots, \kappa\}\to \{1,\dots, \kappa\}$, chosen  so that 
$$y_{\nu(1)}:= y,\quad  y_{\nu(2)}=T^{r_{\nu(1)}}y_{\nu(1)}, \quad \dots \quad , y_{\nu(\kappa)}:= T^{r_{\nu(\kappa-1)}}(y_{\nu(\kappa-1})).$$ 

\subsubsection{Matching of Birkhoff sums}\label{sec:matchingBS}
The following definition of matching of Birkhoff sums is based on matching of the underlying orbit segments (given in  Definition~\ref{def:matchingorbits} above).  
\begin{defn}\label{def:matchingorbits}
Given $x,y\in I_0$ and $r\in \mathbb{N}$, we say that the Birkhoff sums $S_r f(x) $ and  $S_r f(y) $ can be \emph{matched} if the orbits  $\mathcal{O}_T(x,r)$ and $\mathcal{O}_T(y,r)$ can be matched.
\end{defn}
\begin{remark}\label{rk:BSmatchings}
If  $S_r f(x) $ and  $S_r f(y) $ can be matched, we can decompose  each as
$$
S_r f(x) = \sum_{i=1}^k S_{r_i} f(x_i), \qquad S_r f(y) = \sum_{i=1}^k S_{r_i} f(y_i),$$
so that, for every $1\leq i\leq k$, $S_{r_i}f(x) $, as well as its first and second derivatives $S_{r_i}f' (x)$ and $S_{r_i}f''(x)$, are continuous for $x$ in an interval $(a_i,b_i) $ containing both $x_i$ and $y_i$, so that  the difference $| S_{r_i}f (x_i)- S_{r_i}f (y_i)|$ can be studied using mean value on $(a_i,b_i)$.
\end{remark}

{
\subsubsection{Matching sets}\label{sec:matchingset}
If we  now fix a reference Birkhoff sum $S_q f (y)$ and consider the set of points $x\in I_0$ such that $S_r f(x) $ and  $S_r f(y) $ can be matched, this gives a set which we call $(q,y)$-matching set:
\begin{defn}[matching sets]\label{def:matchingset}
Given $y \in I_0$ and $q\in \mathbb{N}$, we  say that a subset  $M\subset I_0$ is a \emph{$(y,q)$-matching set} if for every $x\in M$ the Birhoff sum $S_q f (x)$ 
 can be matched with the Birkhoff sum $S_q f (y)$. 
 {We say furthermore that $M$ is a  \emph{$(y,q)$-$\kappa$ matching set} if, 
 for every $x\in M$, $S_q f (x)$ admits a $k$-matching with  
 $S_q f (y)$ for a positive integer $k\leq \kappa$, i.e.~it can be matched with $S_q f (y)$ by decomposing it in at most $\kappa$ orbit segments.}%
\end{defn}
{
Let us give two examples of matching sets. Assume first that $T$ has a Rohlin towers presentation with $d=2$ towers, which correspond (in the sense of \S~\ref{sec:rectangles_vs_towers}) to the BT rigid presentation of $d=2$ rectangles described in the Example in \S~\ref{sec:BTrectangles}. Then, for any $y$ in the base of the towers  and $q=q_1+q_2$ equal to the sum of the two tower heights, the whole interval $I$ is a $(y,q)$-matching set, 
 since every orbit $\mathcal{O}(x,q)$ of length $q$ can be matched with $\mathcal{O}(y,q)$, as shown in Figure~\ref{fig: genus1matching}. 
 \begin{figure}
 	\includegraphics[width=.45\textwidth]{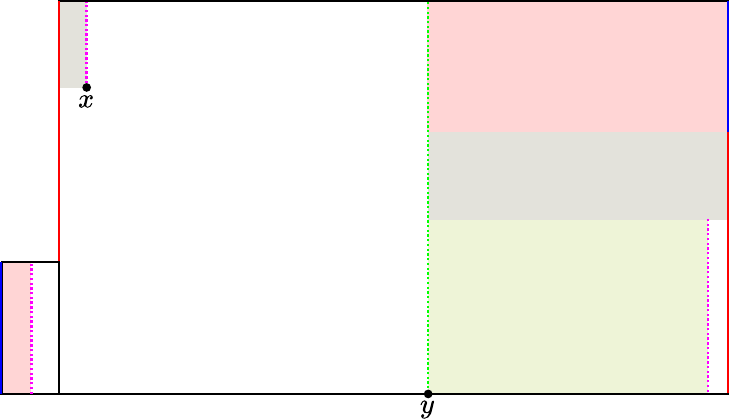}
 	\caption{An example of matching on genus 1 surface. The orbit of $x$ of length $q$, where $q$ is the height of the bigger tower, is matched to the orbit of $y$ of the same length. The colors indicate different ways with which the orbits are being matched. Notice that the connecting segments are not of the same length. }\label{fig: genus1matching}
 \end{figure}

On the other hand, if we consider Rohlin towers which correspond to a BT rigid presentation as in Definition~\ref{def:BTtowers} in \S~\ref{sec:BTtowers},  there is a \emph{bad zone} for rigidity (near the \emph{cramping} of discontinuities) which should be removed to obtain the matching set $M$ (see the comments in  \S~\ref{sec:BTrectangles} after Definition~\ref{def:BTrectangles}). In both these examples, $\kappa=3$, since all matchings can be achieved decomposing each orbit in $3$ segments. These type of matchings will be heavily exploited later, to bound Birkhoff sums using the mean value theorem (see \S~\ref{sec:matchingmotivation} for an introduction to the role of matchings in the proof).

Finally, to prove disjointness of an $L$-rescalings $(T_{Lt}^f)_{t\in\R}$ with 
the $K\leq L$ rescaling $(T_{Kt}^f)_{t\in\R}$ of the special flow 
$(T_t^f)_{t\in\R}$ for positive integers $K<L$, we will need to match not only 
orbits of length $q_n$ with a reference orbit $\mathcal{O}_T(y_n,q_n)$, but 
also orbits of length $L q_n$ with $L$ (and hence also $K$) copies of the 
orbit  $\mathcal{O}_T(y_n,q_n)$ (splitting the orbit of length $Lq_n$ into $L$ 
(resp.~$K$) segments of length $q_n$ each of which can be matched with 
$\mathcal{O}_T(y_n,q_n)$). For this, we define what we call \emph{$L$-fold 
matchings}:

\begin{defn}[$L$-fold matching sets]\label{def:Lmatchingset}
Given a positive integer $L\in \N$, $y \in I_0$ and $q\in \mathbb{N}$, we  say that a subset  $M\subset I_0$ is an \emph{$L$-fold} \emph{$(y,q)$-matching set} if for every $x\in M$ the $L$ Birhoff sum 
 $$
 S_q f (x), \ S_q f (T^q x), \  S_q f (T^{2q} x), \cdots, S_q f (T^{(L-1)q} x)
 $$
 can each be  matched with the reference Birkhoff sum $S_q f (y)$. 
 We say furthermore that $M$ is a $L$-fold \emph{$(y,q)$-$\kappa$ matching set} if each $ S_q f (T^{i q} x)$ for $0\leq i<L$  admits a $k$-matching with  
 $S_q f (y)$ for some $1\leq k\leq \kappa$. 
\end{defn}

\subsubsection{Balanced matchings}\label{sec:balancedmatchings} 
To estimate the values of the Birkhoff sum $S_q f$ and its derivatives at  points in a $(y,q)$-matching set $M$, we need additional assumptions on the matching set. In particular, since (as we recall in the next \S~\ref{sec:BSestimates}) controlling closests visits is essential to control Birkhoff sums of derivatives (see e.g.~Proposition~\ref{prop:accelerationset} in \S~\ref{sec:derivativesviaRV}), we need    
to guarantee that the orbit $\mathcal{O}_T(z,q)$ of $z\in M$ does not get too close to singularities. We will also need that the intervals used for matchings are not too small. 

 These additional requirements are axiomatised in the following Definition~\ref{def:balancedm} of what we call \emph{balanced} matching sets. 
We recall that $m(z,\beta,q)$ is the distance of $\mathcal{O}_T(z,r)$ from the set of discontinuities $Disc(T)$ (see \S~\ref{sec:iets}). 
\begin{defn}[Balanced matchings]\label{def:balancedm}
For $0<c<1$, we say that a  $(y,q)$-matching set $M$ is  $c$-\emph{balanced} if:
\begin{itemize}
\item[(B1)] the orbits $\mathcal{O}_T(y,q)$ and $\mathcal{O}_T(T^{-q}y,q)$  $c/q$-close, nor $1/cq$-far from any discontinuity in $Disc(T)$, namely 
$$\frac{c}{q}\leq  m(y,\beta,q)\leq \frac{1}{cq} ,\quad  
\frac{c}{q}\leq  m(T^{-q}y,\beta,q)\leq \frac{1}{cq} \qquad \text{ for\ any}\ \beta\in Disc(T).$$
\item[(B2)] for any $\beta \in Disc(T)$ and any $x\in M$, there exists at most one point   $z\in \mathcal{O}_T(x,q)$ such that  $d(z, \beta)\leq \frac{1}{cq}$;
\item[(B3)]  for all $x\in M$ the $k $ intervals $(a_i,b_i)$ given by Definition~\ref{def:orbitmathcing}  are such that  $b_i-a_i \leq \frac{1}{cq}$ for any $i=1,\dots, k$; 
\item[(B4)] for every $x\in M$ we have $\min_{j,j'\in\{0,\ldots,q-1\};\,j\neq j'}|T^jx-T^{j'}x|\ge\frac{c}{q}$. 

\end{itemize}
\end{defn}
\begin{remark}\label{rem:someoutside}
	Note that since $T$ acts as a translation on the matching intervals, for any $i\in\{1,\ldots,k\}$ any point $z\in(a_i,b_i)$ satisfies (B2) and also (B4) with $j,j'\in\{0,\ldots,r_i-1\}$. Hence some of the later estimates are possible also for points that not necessarily belong to $M$.
\end{remark}
\noindent Definition \ref{def:balancedm} can be easily extended to consider orbits of length multiplicity of $q$. 
\begin{defn}
    For $0<c<1$, we say that an $L$-fold $(y,q)$-matching set $M$ is 
    $c$-balanced, if {the conditions (B1) to (B4) in Definition 
    \ref{def:balancedm} are satisfied not only by each $x\in M$, but also by 
    all points of the form $T^{iq}(x)$, for  $x\in M$ and $0\le i<L$.}
\end{defn}

{
\subsection{Birkhoff sums estimates}\label{sec:BSestimates}
In this section we collect several estimates on Birkhoff sums of the roof and its derivatives that will be used in the next section in combination with matchings.

\subsubsection{Closest visits}\label{sec:closests}%
The contributions of the \emph{closest visits} of an orbit to the singularities play a key role in the study of Birkhoff sums of the derivatives of functions with logarithmic singularities  
(see e.g.~Proposition~\ref{prop:accelerationset} for the first derivative and Lemma~\ref{lem:secder} for the second). 
We therefore introduce  notation for closest visits to each singularity, from each side (i.e.~from the right and from the left).



\smallskip
Given an IET $T:I\to I$, a point $x\in I$ and a natural number $r$,   let us consider the orbit segment   of length $r$,
and for any  discontinuity $\beta\in Disc(T)$, let us denote by 
$x_\beta^{+}=x_\beta^+ (x,r) = T^{\ell^+_{\beta}}(x)$, where $0\le \ell^+_{\beta}< r$ (resp.~ $x_\beta^{-}=x_\beta^- (x,r)= T^{\ell^-_{\beta}}(x) $, where $0\le \ell^-_{\beta}< r$) 
 the closest visits of the orbit segment  $\mathcal{O}_T(x,r) $ to $\beta$  \emph{from  the right} (resp.~from the left).  Correspondingly, we denote by 
$m_\beta^{+}=m_\beta^+ (x,r)$ (respectively $m_\beta^{-}=m_\beta^- (x,r)$) 
 the minimum distance of the orbit points to  $\beta$  \emph{from  the right} (resp.~\emph{from the left}), namely, denoting by  $(x)^{pos}$ the 'positive' part
of $x$ (defined by $(x)^{pos}:= x$ if $x>0$, $(x)^{pos}:= 0$ otherwise): 
\begin{align*}
m_\beta^{+}& = m_\beta^{+}(x,r) : =   \min \{ (T^j x - {\beta} )^{pos} , \ 0\leq j < r \} = (T^{l_{\beta}^+} x - {\beta} )^{pos}=  x^+_\beta - {\beta} ,\\
m_\beta^{-} &= m_\beta^{-}(x,r): = \min \{  ({\beta}  - T^j x )^{pos}, \  0\leq j < r \} = ({\beta}-T^{l_\beta^-} x )^{pos}= \beta - x^-_\beta,
\end{align*}
 We also  set $m_i(x,r):= \min \{m_i^{+}(x,r), m_i^{-}(x,r) \} $ and 
give a name to the iterates where this closest distances (from either side) is achieved:
\begin{defn}\label{eq: defoftrimming}
For any $x\in I$ and $r\ge 0$, $\beta \in Disc(T)$, 
let  $0\le \ell_{\beta}=\ell_{\beta}(x,r)< r$ 
 be defined as the iterate satisfying
\[
m_\beta(x, r)= |T^{\ell_{\beta}}(x)-\beta|=\min_{0\le j<r}|T^{j}(x)-\beta|. \] 
\end{defn}
\noindent Notice then the distance $m(x,r)$  of $\mathcal{O}_T(x,r)$ from $Disc(T)$ (defined in \eqref{eq:mindistdef} in \S~\ref{sec:iets}) is then simply
$$
m(x,r):= \min  \{ m_\beta (x, r), \ \beta\in Disc(T) \} , \quad \text{where}\ m_\beta(x, r)=  \min \{ m^+_\beta(x, r), m_\beta^-(x, r)\}. 
$$

\subsubsection{Trimmed Birkhoff sums}\label{sec:trimmedBS_def}
The derivative $f'$ of $f$ with logarithmic singuarities 
 is not in $L^1$ 
(since singularities of type  $1/x$ are \emph{not} integrable). To bound the 
derivative Birkhoff sums $S_n f'$ it is  necessary to consider separately the 
contributions of the largest terms, which are controlled by the \emph{closest 
visits} of an orbit to the singularities.  We therefore introduce here the 
following notion of  \emph{trimmed} Birkhoff sum.\footnote{{We remark that 
other possible definitions of trimming appear in the literature. In particular 
in \cite{FKU} the notion of \emph{trimming} and the notation $\tilde 
S_{r}(f)(x)$ refers to a sum to which \emph{both} closest visits from the 
\emph{right} and from the \emph{left} are removed. We only remove the closest 
of the two since we will use trimming for orbits which only get close to the 
singularity $\beta$ from the right, \emph{or} from the left.}} 

Let $f$ be a function  continuous {and differentiable} on each continuity 
intervals of $T$, possibly singular at $Disc(T)$. For any orbit segment 
$\mathcal O_T(x,r)$, we recall that $\ell_\beta$ and $m_\beta (x,r)$ denote 
respectivecly the index and distance of the closest visit to singularities (see 
Definition \ref{eq: defoftrimming} 
 in the above subsection). 

\begin{defn}[Trimmed Birkhoff sums]
For any $x\in I$ and $r\in \mathbb{N}$ the \emph{trimmed $r^{th}$ Birkhoff sum} of $f$ is
\begin{equation}\displaystyle
	\tilde S_{r}(f)(x)=\sum_{\substack{0\leq j < r-1 \vspace{.3mm} \\ j\notin \{\ell_{\beta}\mid \, \beta\in Disc(T)\}}} f(T^j(x)) = \sum_{j=0}^{r-1}  f(T^j(x)) - \sum_{\beta\in Disc(T)} f(T^{\ell_\beta} (x))   .
\end{equation}
\end{defn}
\noindent Thus, the trimmed\footnote{The use of the terminology \emph{trimmed} Birkhoff sums is used in the literature to refer to the operation of  
 (\emph{trimming} a Birkhoff sums 
by  removing the largest elements (one, finitely many, or an increasing number, according to context) from the sum, in order to obtain for example limit theorems, see e.g.~\cite{Au-Sch}).} Birkhoff sum  for us is  defined removing from  Birkhoff sums all contributions given by all the closest visits to singularities. 

\subsubsection{Linear bounds on trimmed derivatives}\label{sec:sc via Bs}
The estimates that we need to prove tightness with exponential tails are what we call \emph{linear} control of the trimmed Birkhoff sums which correspond to subsegments of an orbit segment, as given by the following definition.
\begin{defn}[Orbits with linear bounds on  trimmed derivatives] \label{def:bounded}
We say that an orbit segment $\mathcal{O}_T(x,q)$, where $x\in I$ and $q\in \mathbb{N}$, has $\Cb$-\emph{linear bounds on trimmed derivatives} of $f$ for some $\Cb >0$ (or simply \emph{has linear bounds}) if
\[
\left|\widetilde{S}_r f'(x)\right| \leq \Cb\,\! q
 \ \  \text{for \ any}   \ 0\leq r \leq q.
 \]
\end{defn}
\noindent 
A result which yields linear bounds on trimmed derivatives was proved by the second author in  \cite{Ul:abs}  to prove absence of mixing  (there, linear bounds were shown for orbit segments $\mathcal{O}_T(x,q)$ with $x$  in the base and $q$ equal to  the height of special Rohlin towers built using Rauzy-Veech induction, see Proposition~\ref{prop:accelerationset} for the precise result\footnote{The definition of trimming in \cite{Ul:abs} is different, in that \emph{both} the closest visit from the right \emph{and} from the left are removed, but in the cases which we consider, removing the closest visit only leads to a sum of comparable order. We refer the reader to Section~\ref{sec:trimmedBS_def} for details.}). 
}
}

{
\subsubsection{Relative trimming and  its comparability in balanced matchings}\label{sec:reltrimming}
Given an orbit $\mathcal{O}_T(y,r) $ \emph{matched} to a given orbit segment $\mathcal{O}_T(x,r)$,  the closest visits to a given singularity may not appear at matched times.  In some proofs, it will be useful for us to trim $\mathcal{O}_T(y,r) $ not removing the contribution of closest visits, but 
removing the contributions of points which are \emph{matched} with the closest visits of  $\mathcal{O}_T(x,r)$.\footnote{For example, if  the reference orbit $\mathcal{O}_T(x,q)$ of  a $(x,q)$-matching
 matching set $M$, can be chosen to stay  \emph{far} from $Disc(T)$ (as in Definition~\ref{def:balancedm} of a balanced matching), it is not necessary (and it complicates the estimates) to  remove all the closest visits, since they may be actually very far from the discontinuities.}  We call this \emph{relative trimming}, formally defined as follows.
 


\begin{defn}\label{def:rel_trimming}
Assume that $\mathcal{O}_T(y,q) $ is matched to $\mathcal{O}_T(x,q)$ and consider 
 any orbit subsegment $\mathcal O_T(z,r)\subseteq\mathcal O_T(y,q)$, where the inclusion of orbits segments means that $z=T^j y$ for some $0\leq j<q$ and $0\leq r\leq  q-j$.  
We define the \emph{relatively trimmed Birkhoff sum} $ S_{r}^x(f)(z)$ trimmed relatively to the matched orbit of $x$ as follows:
\begin{align*}
\tilde S_{r}^x(f)(z)& :=\sum_{ j\in \{0,\ldots, r-1\}\setminus \mathcal{M}^x(z,r)} f(T^j(z)), \quad   \textrm{where} \\  \mathcal{M}^x(z,r) & := \{ 0\leq j< r | \ T^j(z)\ \text{ is matched to } T^{\ell_{\beta}}(x)\text{ for some }\beta\in Disc(T)\}.
\end{align*}
\end{defn}
\noindent  Notice that the contributions removed, of the form $T^j (z)$ for $j\in \mathcal{M}^x(z,r)$ are, by definition of $\mathcal{M}^x(z,r)$, those of the iterates which are matched to the closest visits in the orbit segment $\mathcal{O}(x,q)$ of $x$. 

Within a balanced matching set (see Definition~\ref{def:balancedm}), relative trimming and trimming of Birkhoff sums are comparable, as the following lemma shows. 
\begin{lemma}[Comparability of trimmings in a balanced matching set]\label{eq: trimmedythesame}
Given an IET $T$ and a function $f\in SymLog^2(T)$ and let $M$ be 
a balanced $\mathcal{O}(y,q)$-matching set. 
Then there exists $E\in\R$ 
 such that,   
 for every $x\in M$, 
\begin{equation}
	|\tilde S_{q}^x(f)(y)-\tilde S_q(f)(y)|\le  E,
\end{equation}
\end{lemma}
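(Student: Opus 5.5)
The difference $\tilde S_{q}^x(f)(y)-\tilde S_q(f)(y)$ is a signed sum of at most $2|Disc(T)|$ terms. The full Birkhoff sum $S_q f(y)$ cancels, so only the removed contributions remain:
$$
\tilde S_{q}^x(f)(y)-\tilde S_q(f)(y)=\sum_{\beta\in Disc(T)} f\big(T^{\ell_\beta(y,q)}y\big)-\sum_{j\in\mathcal M^x(y,q)} f(T^jy).
$$
Since $|\mathcal M^x(y,q)|\le |Disc(T)|=d+1$, it suffices to bound every single value $f(T^jy)$, $0\le j<q$, that can appear, uniformly in $x\in M$. The plan is therefore to show that \emph{all} points of the reference orbit $\mathcal O_T(y,q)$ stay at distance at least $c/q$ from $Disc(T)$. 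This is exactly condition (B1) of Definition~\ref{def:balancedm}: $m(y,\beta,q)\ge c/q$ for every $\beta$.

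Next, use the form of $f\in SymLog^2(T)$, namely $f=f^p+g$ with $g$ bounded (each $g_i$ being $\mathcal C^2$ on $\overline{I_i}$). The pure part satisfies $|f^p(z)|\le \sum_i (C_i^++C_i^-)\,|\log \mathrm{dist}(z,Disc(T))| + O(1)$. For $z$ on the orbit of $y$ this gives $f(z)\le C\log(q/c)+C'$, and since $f\ge 0$ there is no lower-bound issue. Summing gives
$$
|\tilde S_{q}^x(f)(y)-\tilde S_q(f)(y)|\le 2(d+1)\big(C_f'\log (q/c)+\|g\|_\infty+C''\big).
$$

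This bound grows like $\log q$, so it is not yet a uniform constant $E$. This is the main obstacle: the statement asks for $E$ independent of $q$, which requires cancellation between the two sums rather than a termwise bound. To obtain it, pair each removed term $T^{\ell_\beta(y)}y$ with a removed term $T^{j}y$, $j\in\mathcal M^x(y,q)$, whose value of $f$ is comparable up to $O(1)$.

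The pairing comes from the relation between closest visits of $x$ and of $y$. By (B2) and (B3), the closest visit $T^{\ell_\beta(x)}x$ lies, together with its matched point $T^jy$, in a matching interval of length at most $1/(cq)$, on which $T^{r_i}$ acts continuously. Hence $T^jy$ is at distance at most $1/(cq)$ from $T^{\ell_\beta(x)}x$. By (B1), both $T^jy$ and $T^{\ell_\beta(y)}y$ are at distances from $\beta$ lying in $[c/q, 1/(cq)+\ldots]$, so their distances to $\beta$ are comparable up to a factor depending only on $c$.

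The logarithmic singularity then gives $|f(T^jy)-f(T^{\ell_\beta(y)}y)|\le C_f\log(c^{-2})+O(1)$, where for points on the same side of $\beta$ only one coefficient $C_i^\pm$ matters, and otherwise one bounds by $\max C^\pm_i$. All other singular terms and the smooth part $g$ contribute $O(1)$ since, by (B4), these points are $c/q$-separated and at distance of order $1/q$ from the other discontinuities. Summing over $\beta\in Disc(T)$ yields a constant $E$ depending only on $c$, $d$ and $f$. The delicate point to check carefully is the case where $j$ coincides for two different $\beta$'s, or where a matched point falls near a different discontinuity; (B2) should exclude these cases.
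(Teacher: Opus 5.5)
Your pairing is the paper's own argument. After discarding $g$, which costs at most $\#Disc(T)\,\|g\|_\infty$, the paper compares, for each $\beta$, the removed point $T^{\ell_\beta(y)}y$ with the point $T^{\ell^x_\beta}y$ matched to $T^{\ell_\beta(x)}x$. It uses (B3) to get $|T^{\ell^x_\beta}y-\beta|=O(1/q)$ and (B1) to get $|T^{\ell_\beta(y)}y-\beta|\ge c/q$. Your preliminary $O(\log q)$ bound is an unnecessary detour.

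The ``$\ldots$'' in your upper bound is filled as follows. By (B1), $T^{\ell_\beta(y)}y$ is within $1/(cq)$ of $\beta$, and by (B3) its matched partner on $\mathcal O_T(x,q)$ is within a further $1/(cq)$. Hence $m(x,\beta,q)\le 2/(cq)$ and $|T^{\ell^x_\beta}y-\beta|\le 3/(cq)$.

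Two side remarks have the wrong justification. The remaining singular terms are $O(1)$ because a point within $O(1/q)$ of $\beta_i$ is at distance at least $\min_j|I_j|-O(1/q)$ from the other endpoint of its continuity interval. (B4) plays no role here, and being $1/q$-close to other discontinuities would give $\log q$, not $O(1)$. For the same reason, removed indices attached to distinct $\beta$'s cannot coincide once $q$ is large, so (B2) is not what is needed.

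The genuine gap is your sentence ``otherwise one bounds by $\max C^\pm_i$''. Suppose $T^{\ell_\beta(y)}y$ and $T^{\ell^x_\beta}y$ lie on opposite sides of $\beta=\beta_i$, both at distance $\asymp 1/q$. Then $f$ takes the values $C_i^{-}\log q+O(1)$ and $C_i^{+}\log q+O(1)$ there, in some order. Their difference is $\pm(C_i^+-C_i^-)\log q+O(1)$, which is not uniformly bounded unless $C_i^+=C_i^-$; membership in $SymLog^2(T)$ only imposes $\sum_iC_i^+=\sum_iC_i^-$.

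Conditions (B1)--(B4) do not exclude this configuration, and it occurs for the matchings built from BT towers. A point of $M_k$ whose orbit runs up $\mathcal T^k_{d+1}$ close to its left side passes the cramped singularities from the right. Meanwhile $\mathcal O_T(z_k,q_k)$, running up $\mathcal T^k_2$, passes them from the left.

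The paper's proof does not handle this case either: it uses a single constant $C_\beta$ for both terms, which tacitly assumes the same side. So the termwise comparison is valid only in the same-side case. A complete argument needs an extra input, for instance one of the following:
\begin{itemize}
\item $C_i^+=C_i^-$ at every singularity;
\item a same-side property extracted from the tower geometry;
\item a cancellation of the $\log q$ terms across a whole run of closest visits, using $\sum_iC_i^+=\sum_iC_i^-$, instead of a bound $\beta$ by $\beta$.
\end{itemize}
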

\begin{proof}
{First note that for any $g$ restricted to any $I_i$, with 
$i=1,\ldots,d$, is 
extendable to a $C^2$ function on $\overline{I_i}$, then for any $x\in M$ we 
have
\[
	|\tilde S_{q}^x(g)(y)-\tilde S_q(g)(y)|\le  \#Disc(T)\cdot\|g\|_{\infty}.
\]
Thus, we can assume that $f\in \mathcal{P}SymLog(T)$.}

For every $\beta \in Disc(T)$, let  $\ell_\beta^x:=\ell_\beta^x(y,q)$ be such that $T^{\ell^x_{\beta}}(y)$ is matched to $T^{\ell_{\beta}}(x)$.  
Then the set of indexes $\mathcal{M}^x(y,q)$ (see Definition~\ref{def:rel_trimming} of relative trimming) is exactly 
$
\mathcal{M}^x(y,q)=\{ \ell_\beta^x=\ell_\beta^x(y,q),$ $\beta \in Disc(T)\}
$. 
Thus, recalling the Definition~\ref{def:rel_trimming} of relative trimming and 
the form of the function $f$, we have 
\[
\tilde S_{q}^x(f)(y)=S_{q}^x(f)(y)-\sum_{\beta\in Disc(T)}-C_{\beta}\log(|\beta-T^{\ell_{\beta}^x(y,q)}(y)|)
\]
 where the constants $C_\beta$ for $\beta\in Disc(T)$ are defined to be 
\begin{equation}\label{def:Cbeta}
C_\beta:= \begin{cases}C_i^+ & \text{if}\ \beta=\beta_i\ \text{ and\ the\ closest\ visit\ occurs\ from\ the\ \emph{right},\  or}\\  C_{i}^- &   \text{if}\ \beta=\beta_i\ \text{ and\ the\ closest\ visit\ occurrs\ from\ the\ \emph{left}. }
\end{cases}
\end{equation}
Hence 
\[
\tilde S_{q}^x(f)(y)-\tilde S_q(f)(y)=\sum_{\beta\in Disc(T)}-C_{\beta}\log(|\beta-T^{\ell_{\beta}^x}(y)|)
-\sum_{\beta\in Disc(T)}-C_{\beta}\log(|\beta-T^{\ell_{\beta}}(y)|).
\]
By (B3), and then by (B2), we have that 
\[
|T^{\ell_{\beta}^x}(y)-T^{\ell_{\beta}}(x)|<\frac{1}{cq}\quad \Rightarrow \quad |T^{\ell_{\beta}^x}(y)-\beta|<\frac{c+c^{-1}}{q}.
\]
From this and from (B1) we get that 
\[
C_{\beta}\log(|\beta-T^{\ell_{\beta}^x}(y)|)
-C_{\beta}\log(|\beta-T^{\ell_{\beta}}(y)|)<C_{\beta}(\log(c+c^{-1})-\log(c)),
\]
which finishes the proof.
\end{proof}

\subsubsection{Second derivative estimates} 
To control trimmed Birkhoff sums of the second derivative of $f$ in a balanced matching, we can use  a simple classical estimate, which goes back to the work of Kocergin  (see e.g.~\cite{Ko:abs}) and is based on the sum of reciprocals of squares. This estimate holds whenever there is sufficient \emph{spacing} between points in the orbit (which given  by assumption (B4) in the  Definition~\ref{def:balancedm} of a balanced  matching). 

\begin{lemma}\label{lem:secder}
Given any {$f\in { {\mathcal{P}} SymLog \left(T\right)}$} with symmetric 
logarithmic singularities, let $M$ be a $c$-balanced $(y,q)$-matchings and let 
$x\in M$. 
	Then there exists $\hat C>0$, which depends only on $c$ {and $f$ (but not 
	on the matching  set as long as it is $c$-balanced)}, such that for every 
	$n\in\N$, $0\le r<q$ {and any $z\in M, r\in \N$ such that $\mathcal 
	O_T(z,r)\subset\mathcal O_T(x,q)$ we have the following estimates on the 
	trimmed Birkhoff sums of the second derivative $f''$:}
	\[
	0\leq \tilde S_{r}(f'')(z)\le  \hat Cq^2.
	\]
Moreover, if $z$ satisfies additionally that $m(z,r)\ge \frac{c}{q}$, then we have also a similar estimate for the  second derivative \emph{relative} trimmed Birkhoff sums, namely:
	\[
0\leq \tilde S_{r}^x(f'')(z)\le  \hat Cq^2.
\]
\end{lemma}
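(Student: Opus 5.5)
Since $f\in\mathcal{P}SymLog(T)$ is a finite sum of terms $-C\log(x-\beta_i)$ and $-C\log(\beta_{i+1}-x)$ on the continuity intervals, we have $f''(x)=\sum C_i^+ (x-\beta_i)^{-2}+\sum C_{i+1}^-(\beta_{i+1}-x)^{-2}$ on each $I_i$. So $f''\ge 0$ and both trimmed sums are nonnegative, which gives the lower bounds. For the upper bound we split the sum by singularity and by side. It suffices to show, for each $\beta\in Disc(T)$ and each side, that the sum of $|T^jz-\beta|^{-2}$ over the retained indices $0\le j<r$ with $T^jz$ on that side of $\beta$ is $O(q^2)$, with a constant depending only on $c$. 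Adding up the at most $2(d+1)$ such contributions then gives $\hat C$ in terms of $c$ and the constants $C_i^\pm$.

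Fix $\beta$ and, say, the right side. The points $T^jz$ with $0\le j<r$ lie in $\mathcal O_T(x,q)$, and $x\in M$, so by (B4) they are pairwise $c/q$-separated. Order those lying to the right of $\beta$ as $\beta<w_1<w_2<\dots$. Separation gives $w_k-\beta\ge w_1-\beta+(k-1)c/q$. For the plain trimming $\tilde S_r$, the closest visit to $\beta$ is removed. Since we only remove the closest of the two sides, we argue as follows. If the removed point is on the right, the remaining right points satisfy $w_k-\beta\ge (k-1)c/q$ for $k\ge2$. If it is on the left, then the closest right visit $w_1$ is retained, and we bound it using (B2): at most one point of $\mathcal O_T(x,q)$ lies within $\frac1{cq}$ of $\beta$, and that point is the removed one, so $w_1-\beta>\frac{1}{cq}\ge \frac cq$. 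In either case the retained right-side contribution is at most $\sum_{k\ge1}(kc/q)^{-2}=\frac{\pi^2}{6c^2}q^2$. The left side is symmetric.

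For the relative trimming $\tilde S_r^x(z)$, the removed indices are those matched to the closest visits $T^{\ell_\beta}(x)$ of the reference orbit, and these need not be closest visits of $\mathcal O_T(z,r)$. Here we use the extra hypothesis $m(z,r)\ge c/q$: every point of $\mathcal O_T(z,r)$ is at distance at least $c/q$ from each $\beta$. Combined with the $c/q$-separation from (B4) (which applies since the points lie in $\mathcal O_T(x,q)$, or via Remark~\ref{rem:someoutside} along matched segments), the $k$-th nearest point on a given side is at distance at least $kc/q$. The untrimmed one-sided sums are therefore already bounded by $\frac{\pi^2}{6c^2}q^2$, and removing nonnegative terms only decreases them. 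So no knowledge of which indices are removed is needed.

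The main obstacle is making sure (B4) and (B2) genuinely apply to the points of $\mathcal O_T(z,r)$, that is, verifying that $\mathcal O_T(z,r)\subset\mathcal O_T(x,q)$ with $x\in M$ gives the spacing. For the relative statement one may also need spacing on orbits that are merely matched, which is where Remark~\ref{rem:someoutside} is used. The rest is the Koksma-type sum of reciprocal squares.
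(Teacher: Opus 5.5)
Your proof is correct and is essentially the paper's argument: the bound on the plain trimmed sum is the same reciprocal-squares estimate, using the spacing from (B4) together with (B2), and your side-by-side case analysis simply makes explicit how (B2) keeps the retained closest visit on the opposite side at distance more than $\frac{1}{cq}$. The only difference is in the relative part. The paper bounds $|\tilde S_r(f'')(z)-\tilde S_r^x(f'')(z)|$ by the boundedly many swapped terms, each of size $O(q^2/c^2)$ because $m(z,r)\ge c/q$. You instead bound the full untrimmed sum directly using $m(z,r)\ge c/q$ and (B4), which is an equally valid and slightly cleaner route.
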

\begin{proof}  {Notice first that since $f\in {\mathcal{P}} SymLog 
\left(T\right)$, $f''\geq 0$ and hence both $\tilde S_{r}(f'')$ and $\tilde 
S_{r}^x(f'')$ are non-negative.} 
Since $x\in M$, which is by assumption $c$-balanced, and  $\mathcal O_T(z,r)\subset\mathcal O_T(x,q)$,  
	by assumptions $(B2)$ and $(B4)$ of a $c$-balanced matching (see Definition~\ref{def:balancedm}), 
	we have
	\[
	0\leq \tilde S_{r}(f'')(z)=\sum_{i=0}^{r-1} f''(T^i z)\le C_f\sum_{i=1}^{r}\left(\frac{ic}{q} \right)^{-2}
	=q^2\cdot c^{-2}C_f\sum_{i=1}^{r}\frac{1}{i^2}\le \frac{c^{-2}C_f\pi^2}{6}\cdot q^2.
	\]
	Furthermore, if $z$ is as in assumptions of the lemma, then we can also estimate the relative trimmed Birkhoff sums $\tilde S_{r}^x(f_p'')(z)$ by:
	\[
	|\tilde S_{r}(f'')(z)-\tilde S_{r}^x(f'')(z)|\le 2c^{-2}d\cdot q^2
	\]	Hence, the desired estimate holds  for $\hat C:=\frac{c^{-2}C_f\pi^2}{6}+2c^{-2}d$. 	
\end{proof}
\begin{remark}\label{rem:secder}
Notice that in the proof we have not used all the properties given by the 
assumption that $M$ is a $c$-balanced matching, but only $(B2)$ and $(B4)$. 
Thus, the estimate also holds for any suborbit $\mathcal{O}_T(x',r)$ of an 
orbit $\mathcal{O}_T(x,q)$ that satisfies $(B2)$ and $(B4)$ in the 
Definition~\ref{def:balancedm} of a $c$-balanced $(y,q)$-matching.
\end{remark}
}

{
\subsection{Exponential tails via matchings}\label{sec:exptails_via_match}
In this section we show that combining matchings (from \S~\ref{sec:matchingssec}) with linear bounds on trimmed derivatives (see Def.~\ref{def:bounded} in \S~\ref{sec:sc via Bs}) one can prove 
 \emph{exponential upper bounds} for the tail of Birkhoff sums of the roof function.
 We first introduce in \S~\ref{sec:goodmatchings} the notion of \emph{good matchings}  (adding further requirements to the matching orbit and the matching set of a balanced sequence of matchings  which will be  used to control Birkhoff sums,  see Definition~\ref{def:goodMn}), and  state a result on their existence (see Proposition~\ref{prop:Egoodmatchings}). In \S~\ref{sec:CorinnaConstant} we show that in good matchings one has good control on trimmed Birkhoff sums of matched orbits. Finally, in \S~\ref{sec:exptails_via_match}, we state and prove a criterion which gives exponential bounds from the existence of a sequence of good matchings.  

\subsubsection{Sequences of good matchings}\label{sec:goodmatchings}
A sequence of good matchings is a sequence of \emph{balanced} matching sets (see Definitions~\ref{def:matchingset} and~\ref{def:balancedm} respectively) with additional requirements on the \emph{matching orbit} $\mathcal{O}(y,q)$:  we want the matching orbit to satisfy linear bounds on trimmed derivatives (in the sense of Definition~\ref{def:bounded}). 
 These requirements are formalized in  the following Definition~\ref{def:goodMn}
 of \emph{good matchings}.  

\smallskip

\begin{defn}[good matchings for exponential tightness]\label{def:goodMn}
Given $T$ and $f\in SymLog^2(T)$,  $0<\gamma<1$ 
we say that a  sequence of ($L$-fold) matching sets $(M_n)_{n\in \N}$ is 
{$\gamma$-\emph{good for exponential tightness}} (or for short,   
\emph{$\gamma$-good}, or simply \emph{good}) for $T$ and $f$, 
 if there exists $0<c<1$,  $\kappa \in \mathbb{N}^+$ and {and $B_0>0$,}   such 
 that:
\begin{itemize}
\item[(G0)] $\limsup_{n\to\infty}Leb([0,1]\backslash M_n)\le \gamma$ as $n\to \infty$. 
\item[(G1)]  for any $n\in \N$, $M_n$ is a $c$-balanced ($L$-fold)  $(y_n,q_n)$-$\kappa$-matching set, with $q_n\to\infty$ as $n$ grows;
\item[(G2)] for each $n\in \N$, the matching orbit $\mathcal{O}_T(y_n,q_n)$ has $B_0$-\emph{linear bounds on trimmed derivatives}.
\end{itemize}
In addition, we say that we say that a  sequence of ($L$-fold) matching sets 
$(M_n)_{n\in \N}$ is {$(\gamma,B)$-\emph{good for exponential tightness}} (or 
for short,   \emph{$(\gamma,B)$-good}) for some $B>0$ and $0<\gamma<1$ if 
$(M_n)_{n\in \N}$ is {$\gamma$-\emph{good for exponential tightness}} and in 
addition:
\begin{itemize}
\item[(G3)] all orbits $\mathcal{O}_T(x,q_n)$ with $x\in M_n$ have $B$-\emph{linear bounds on trimmed derivatives, i.e.}
\begin{equation}\label{eq:mainCorinnaconstantq}
	|\tilde S_r(f')(x)|\le \Cb \, {q} \quad \text{ for\ any\ }n\in \N, \ \text{any}\ x\in M_n \text{ and }0\le r\le q_n.
\end{equation}
\end{itemize}
\end{defn}
\noindent Here assumption $(G2)$ concerns only the reference orbits 
$\mathcal{O}(y_n,q_n)$ to which all other orbits in each $M_n$ are matched, 
while the last assumption $(G3)$, requires that all points in $M_n$ satisfy 
linear bounds on trimmed derivatives (in the sense of 
Definition~\ref{def:bounded}) with a \emph{uniform} constant $B$ independent of 
$n$. { We will show that $(G3)$ can be derived from $(G2)$ under suitable 
assumptions (see Lemma~\ref{lemma:linearbounds} in the next 
\S~\ref{sec:CorinnaConstant}).}}


\smallskip

{We will show the following  result, giving the \emph{existence} of good 
matchings for exponential tails for typical IETs. We will build matching sets using BT rigid towers presentations.

{
\begin{defn}[Matching set given by a BT rigidity time]\label{def:givenby}
 We say that
a ($L$-fold)  $(y,q)$-matching $M$ is \emph{given by} a $(C,\epsilon,\rho)$-BT rigidity presentation with $d$ rectangles if 
$q$ is the common height of the towers $\mathcal{T}_2$ and $\mathcal{T}_d$ of a  $(C,\epsilon,\rho)$-BT rigid tower presentation. 



\end{defn}
 
{
For any $f\in SymLog (T)$, let us recall that $C_f$  is the constant  $C_f:= \sum_{i=0}^{d-1} C_i^+$ (see Definition~\ref{def:Cf}) when $f$ is written in the form given by Definition~\ref{def:SymLog}.

\begin{prop}[Existence of good matchings]\label{prop:Egoodmatchings}
For  almost every $\lambda \in \Delta_d$,  if we consider the IET $T=(\pi,\lambda)$ with 
a standard permutation $\pi$, 
 for every $L\in \N$, for  any $0< \rho<1$, and for any $f\in SymLog^2 (T)$, 
there exists $B>0$   such that,   
 for any $0<\epsilon<1$ 
 and any non empty interval $C=[c_0,c_1]\subset {[0,1/10]}$, there  exists  a 
 sequence of $L$-fold matchings $(M_n)_n$ which  {are $(\gamma,B)$-good for 
 exponential tails for   
$\gamma=\sup_{n\in\N}Leb(I\setminus M_n)\le (L+1) \rho c_1+\epsilon$ and 
  given by  $(C,\epsilon,\rho)$-BT rigid 
   presentation. } 
\end{prop}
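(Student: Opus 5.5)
The plan is to build the matching sets from BT rigid tower presentations, obtained via Rauzy--Veech induction, and then check conditions (G0)--(G3) one by one.

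\emph{Step 1: BT rigid towers along a full-measure set of times.} Fix $L$, $\rho$ and $f$. By Proposition~\ref{prop:Edegenerations}, applied to a.e.\ suspension $S$ of $T$, there are $(C,\epsilon,\rho)$-BT rigid rectangle presentations with heights tending to infinity. Via the correspondence of \S~\ref{sec:rectangles_vs_towers}, we convert them into $(C,\epsilon,\rho)$-BT rigid tower presentations of $T$ in the sense of Definition~\ref{def:BTtowers}. Condition (vi) of Definition~\ref{def:BTrectangles} gives (vi) of Definition~\ref{def:BTtowers}.

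To secure (G2) simultaneously, we do not take arbitrary visits. We choose the renormalization times as visits of the Rauzy--Veech (natural extension) orbit to a fixed positive-measure set. This set is a product of:
\begin{itemize}
\item the open set of parameters giving BT rigid presentations (Lemma~\ref{lemma_openset});
\item a set where the linear bound of Proposition~\ref{prop:accelerationset} from \cite{Ul:abs} holds with a fixed constant $B_0$ for base points of the large towers.
\end{itemize}
By ergodicity this intersection is visited infinitely often for a.e.\ $\lambda$. Since $C$ and $\epsilon$ range over countably many rational choices, a single full-measure set of $\lambda$ suffices. The constant $B$ then depends only on $f$ and $\rho$, through the fixed set chosen.

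\emph{Step 2: the matching set.} Take $q_n=q_2=q_d$, and let $y_n$ be a point in the middle of the base of $\mathcal T_2$. Choose it so that (B1) holds: its closest visits sit at distance comparable to the floor width, which is of order $1/q_n$ by (iii) and bounded geometry. Define $M_n$ as the union of $\mathcal T_1,\mathcal T_2,\mathcal T_d$, with the following removed:
\begin{enumerate}
\item the small towers;
\item the bad zone between the cramped discontinuities, which has measure at most $\rho\,q_n\cdot\lambda_1\le \rho c_1$ per shift by (v) and (iii);
\item its preimages under $T^{iq_n}$ for $0\le i\le L$, so that every segment $\mathcal O(T^{iq_n}x,q_n)$ avoids it.
\end{enumerate}
This gives $Leb(I\setminus M_n)\le (L+1)\rho c_1+\epsilon$, which is (G0).

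For $x\in M_n$ we build the $3$-matching with $\mathcal O(y_n,q_n)$ exactly as in the two-tower picture (Figure~\ref{fig: genus1matching}), using the gluings (iv). Each orbit of length $q_n$ splits into at most three segments, each travelling inside one floor column of a large tower alongside the corresponding piece of the orbit of $y_n$. Hence $\kappa=3$. Conditions (B2)--(B4) follow because floors have width $\asymp 1/q_n$ and the large towers have comparable widths, with constants depending on $c_0,c_1$. In particular, (B4) holds because distinct floors are disjoint intervals of width $\ge c/q_n$.

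\emph{Step 3: (G3) from (G2).} This is the main obstacle. For $x\in M_n$ and $r\le q_n$, decompose $\tilde S_r f'(x)$ along the matching segments. On each segment we compare with the matched piece of $y_n$ by the mean value theorem, using $f''$:
\[
|S_{r_i}f'(x_i)-S_{r_i}f'(y_i)|\le |x_i-y_i|\,\tilde S_{r_i}f''+\text{(closest-visit terms)}.
\]
Lemma~\ref{lem:secder} bounds $\tilde S f''$ by $\hat C q_n^2$, and (B3) gives $|x_i-y_i|\le 1/(cq_n)$, so the product is $O(q_n)$.

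The delicate part is that trimming of $x$ and trimming of $y_n$ remove different indices. We handle this with relative trimming: the closest-visit terms of $x$ are matched to indices of $y_n$. Since $y_n$ stays $c/q_n$-far from $Disc(T)$ by (B1), each such removed term is $O(q_n)$. The analogue of Lemma~\ref{eq: trimmedythesame} for $f'$ then shows that $\tilde S^x f'(y_n)$ and $\tilde S f'(y_n)$ differ by $O(q_n)$. The second part of Lemma~\ref{lem:secder} covers the relatively trimmed sums. Finally, the partial sums of $y_n$ over arbitrary subsegments are controlled by (G2). Together these give $|\tilde S_r f'(x)|\le Bq_n$ with $B$ depending on $B_0,c,f,\kappa$, which is (G3).
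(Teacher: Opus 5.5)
Your Step~1 does not work as stated, and it is the heart of the proposition. The acceleration set of Proposition~\ref{prop:accelerationset} has the form $E=\{\pi\}\times\Delta_A\times Y$ with $A$ a \emph{positive} matrix. Hence every $\lambda\in\Delta_A$ has all coordinates bounded below by some $c_A>0$. The BT set $\mathcal U(C,\epsilon,\rho)$ of Lemma~\ref{lemma_openset}, by contrast, forces $\lambda_2,\ldots,\lambda_{d-1}<\epsilon'<\epsilon$. So for small $\epsilon$ (the later application uses $\epsilon=10^{-6}e^{-2D-B}$) the intersection you want to visit is empty, and ergodicity gives nothing.

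You could try to repair this by choosing the neat compact set inside $\mathcal U$. But then $B_0$, and hence $B$, depends on $C$ and $\epsilon$, which breaks the quantifier order of the statement. That order is essential: in the proof of Theorem~\ref{thm:main}, $B$ is fixed first, and $\epsilon$ and $C$ are then chosen as functions of $B$ via Proposition~\ref{prop:generaltail}. Note also that Proposition~\ref{prop:accelerationset} bounds the Rauzy--Veech towers over $I^{(n)}_j$, not the cut-and-stacked towers $\mathcal T_2,\mathcal T_{d+1}$. So even at a common time a transfer argument would be needed.

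The missing idea is a time shift that decouples the two requirements.
\begin{enumerate}
\item Choose, depending only on $\rho$, a neat path $A=A'A^-$ with $\pi^{(-n)}=\pi$ and $\Theta_{A^-}\subset\Omega_{\rho/4}$.
\item By the Markov property \eqref{bisided_Markov}, $\hat{\mathcal V}^n$ maps $\{\pi\}\times\Delta_A\times\Theta_\pi$ onto a set whose $\lambda$-coordinates fill all of $\Delta_d$ and whose $\tau$-coordinates lie in $\Omega_{\rho/4}$. Hence $F=E\cap\hat{\mathcal V}^{-n}\mathcal U(C,\epsilon,\rho)$ has positive measure for every $C,\epsilon$, while $E$ and $n$ do not depend on them.
\item Take visits $n_k$ to $F$, where the Rauzy--Veech towers give $B_0$-bounds, and use the BT presentation at the shifted times $m_k=n_k+n$.
\item Transfer the bounds to the large towers (of measure $\ge 1/8$) by the persistence Lemma~\ref{lemma:finitecombination}. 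The resulting constant depends only on $B_0$, $n$ and $1/8$.
\end{enumerate}

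The same quantifier problem reappears in your Step~2. You let the balance constant $c$ depend on $c_0,c_1$, and your $B$ in Step~3 depends on $c$. You need $c$ uniform; the paper takes $c=1/5$, using that the floors of the large towers have width $\asymp 1/q_n$ with constants independent of $C$, and that $\Delta_n\le 1/10$. Your Step~3 is just Lemma~\ref{lemma:linearbounds}, which is already available, so it is not the real obstacle.

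Finally, $M_n$ must also exclude points whose orbit enters the small towers within time $Lq_n$, not only the small towers themselves; otherwise the matching fails for them. The extra $O(L\epsilon)$ in measure is absorbed by working with a $(C,\epsilon/(L+2),\rho)$-presentation.
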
}
\noindent 
{The order of quantifiers in the statement is delicate and important in the 
later proofs. In particular we remark that the rigidity range $C=[c_0,c_1]$ and 
the $\epsilon$ parameters that enter the definition of $(C,\epsilon)$-BT 
rigidity  can be chosen \emph{independently} on $\rho$ and in turn $B$ (which 
depends only on $\rho$ and on $f$). More precisely, one can show that the 
dependence of $B$ on $f$, if we write $f=f^p+g$ (by 
Definition~\ref{def:SymLog}) is through the constant $C_f$ and $\Vert 
g'\Vert_{\infty}$ and is linear in both.} 
The proof of this Proposition will be given in Section~\ref{sec:Eproof}.

\subsubsection{Trimmed BS bounds in good matchings}\label{sec:CorinnaConstant}
{We conclude this subsection proving that $\gamma$-good matchings are actually 
automatically $(\gamma,B)$-good matchings (see Definition~\ref{def:goodMn}).}


\begin{lemma}[Trimmed derivatives bounds]\label{lemma:linearbounds}
 For any $0<\gamma,c<1$,
given  
 any sequence 
 $(M_n)_{n\in \N}$ of $\gamma$-good matchings for  $T$ and $f\in SymLog^2 (T)$, 
 so that for any $n\in \N$, $M_n$ is a $(y_n,q_n)$-matching given by 
 $c$-balanced  presentation, 
 there exists $B>0$, independent of $\gamma$, such that $(M_n)_n$ is a sequence 
 of  $(\gamma,B)$-good matchings.
\end{lemma}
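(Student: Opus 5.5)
The plan is to transfer the linear bound (G2) from the reference orbit $\mathcal{O}_T(y_n,q_n)$ to every $x\in M_n$ through the matching, using the mean value theorem on each matched piece. Fix $n$, write $q=q_n$, $y=y_n$, and take $x\in M_n$ and $0\le r\le q$. By (G1) the orbits $\mathcal{O}_T(x,q)$ and $\mathcal{O}_T(y,q)$ admit a $k$-matching with $k\le\kappa$. This gives pieces $\mathcal{O}_T(x_i,r_i)$ and $\mathcal{O}_T(y_i,r_i)$ whose initial points lie in a common tower base $(a_i,b_i)$ with $b_i-a_i\le 1/(cq)$ by (B3). The initial segment $\mathcal{O}_T(x,r)$ is a union of at most $\kappa$ pieces: the full pieces $\mathcal{O}_T(x_i,r_i)$ plus one initial subsegment $\mathcal{O}_T(x_j,r')$ of a matched piece. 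Its matched counterpart $\mathcal{O}_T(y_j,r')$ is still a subsegment of a Rohlin tower, so the same mean value argument applies to it (cf. Remark~\ref{rem:someoutside}). We then decompose
\[
\tilde S_r(f')(x)=\sum_i \tilde S_{r_i}^{x}(f')(y_i)+\sum_i\big(\tilde S_{r_i}(f')(x_i)-\tilde S^{x}_{r_i}(f')(y_i)\big),
\]
where the tilde on the $x$-side denotes removal of the closest visits of $x$. On the $y$-side we use relative trimming (Definition~\ref{def:rel_trimming}), so that the removed indices correspond exactly.

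\textbf{Step 1 (reference terms).} The sum of the relatively trimmed pieces of $y$ must be compared with trimmed sums $\tilde S_s(f')(z)$ over subsegments of $\mathcal{O}_T(y,q)$. The orbit of $y$ may be traversed out of order, but each piece is a difference of two initial segments of $\mathcal{O}_T(y,q)$ (or of $\mathcal{O}_T(T^{-q}y,q)$ when the pieces wrap around). By (G2), each such difference is at most $2B_0q$ up to the removed closest visits. Switching between trimming and relative trimming costs at most $\#Disc(T)$ terms of the form $|f'(z)|$. By (B1), every point of $\mathcal{O}_T(y,q)$ is at distance at least $c/q$ from $Disc(T)$, so each such term is $O(C_fq/c)+\|g'\|_\infty$. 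This is the derivative analogue of Lemma~\ref{eq: trimmedythesame}. Summing over at most $\kappa$ pieces gives a bound $O(\kappa(B_0+C_f/c)q)$.

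\textbf{Step 2 (matching differences).} On each piece, the points $T^lx_i$ and $T^ly_i$ lie in a common floor of width at most $1/(cq)$. We apply the mean value theorem to $f'$ there:
\[
\big|\tilde S_{r_i}(f')(x_i)-\tilde S^x_{r_i}(f')(y_i)\big|\le \frac{1}{cq}\sup_{\xi}\tilde S_{r_i}(f'')(\xi),
\]
where the supremum is over points of that floor. By Lemma~\ref{lem:secder} and Remark~\ref{rem:secder}, the second-derivative trimmed sums are bounded by $\hat Cq^2$ for any point of the floor, using (B2) and (B4) together with Remark~\ref{rem:someoutside}. Each piece therefore contributes at most $\hat C q/c$. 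The $g$-part contributes at most $\|g''\|_\infty r/(cq)$.

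\textbf{Main obstacle.} The delicate point is that the mean value theorem must avoid the trimmed indices: the removed terms on the two sides are paired by construction of relative trimming, so the pairing itself is consistent. However, the intermediate points $\xi$ must stay at distance at least of order $c/q$ from $Disc(T)$ at the non-removed indices, which is what (B2) gives, since the floor does not contain $\beta$. Collecting the bounds, we obtain $B=\kappa(2B_0+E'+\hat C/c)$ with $E'$ depending only on $c$ and $f$, and this is independent of $\gamma$.
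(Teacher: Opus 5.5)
Your proposal is correct and follows essentially the same route as the paper. You use (G2) on subsegments of the reference orbit, decompose $\mathcal{O}_T(x,r)$ into at most $\kappa$ matched pieces, apply the mean value theorem on each piece with relative trimming, and control the second derivative by Lemma~\ref{lem:secder}; this gives a constant depending only on $B_0$, $c$, $\kappa$ and $f$, hence not on $\gamma$.

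Your passage through untrimmed sums via (B1) is a bit more careful than the paper's identity $\tilde S_r f'(T^jy_n)=\tilde S_{r+j}f'(y_n)-\tilde S_jf'(y_n)$, since trimming is not additive. Two small remarks:
\begin{itemize}
\item The wrap-around case never arises, because by definition the $y$-pieces partition $\mathcal{O}_T(y,q)$. This is just as well, since (G2) says nothing about $\mathcal{O}_T(T^{-q}y,q)$.
\item The bound $|T^l\xi-\beta|\ge c/q$ at non-removed indices uses (B1) for $T^ly_i$ as well as (B2) for $T^lx_i$, because $T^l\xi$ lies between these two points in a floor not containing $\beta$.
\end{itemize}
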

\noindent {The constant $B>0$ (which, by Definition~\ref{def:goodMn} of 
$(\gamma,B)$-good matchings, given the trimmed derivative bounds 
\eqref{eq:mainCorinnaconstantq} for all $x\in M_n$, 
 depends only on the constant $B_0>0$ in the bound for the trimmed derivatives of the matching orbit $\mathcal{O}(y_n,q_n)$ (which exists by $(G2)$ in the Definition~\ref{def:goodMn} of $\gamma$-good matching sets)  and the balance constant $c$ (from  Definition~\ref{def:balancedm}), but not on other parameters of the BT-rigid rectangle presentation inside which $M_n$ is built (in particular not on the parameters $\epsilon>0$ and $C=[c_0,c_1]$). This will be important  in the disjointness proof.}
\begin{proof}
 {In view of the Definition~\ref{def:SymLog} of $f\in SymLog^2(T)$, we can 
 write $f=f_p+g$ where $f_p\in {\mathcal{P} SymLog \left(T\right)}$. Let us 
 first discuss $g$.  
 Notice that since $g$ in $\mathcal{C}^1$ on each $I_i$, $1\leq i\leq d$, we trivially have that $\Vert \tilde S_r(g')(x)\Vert_\infty \le \Vert g'\Vert_\infty  \cdot {q}$ for any $0\le r \le q$. Hence we can assume WLOG that $f=f_p$ i.e.~that  $f\in \mathcal{P} SymLog(T)$.
}
 
Let us first show 
 that \eqref{eq:mainCorinnaconstantq} holds if $\mathcal{O}(x,r)\subset \mathcal{O}(y_n,q_n)$ is an orbit subsegment of the matching orbit $\mathcal{O}(y_n,q_n)$ for $M_n$, or, explicitely, if we assume that $x=T^j y_n$ for some $0\leq j<q_n$ and $0\leq r<q_n-j$. In this case,  we can write
\begin{align*}
|\tilde S_{r}f'(x)| = |\tilde S_{r}f'(T^j y_n)| & =| \tilde S_{r+j}f'(y_n) -\tilde S_{j}f'( y_n)|\leq
\\ & | \tilde S_{r+j}f'(y_n) | + |\tilde S_{j}f'( y_n)|
 \leq  \Cb_0 (r+j)+ \Cb_0 j \leq 2 \Cb_0 q_n,
\end{align*}
where the last estimate follows from the linear bound assumption on $\mathcal{O}_{T}(y_n,q_n)$ (see Definition~\ref{def:bounded}) and the assumption that $ j<q_n$ and $r+j<q_n$ (in view of the orbit segments inclusions). 

\smallskip
Consider now a generic $x\in M_n$.	
	 Since $M_n$ is a $(y_n,q_n)$ $\kappa$-matching set, $\mathcal{O}_T(x, q_n)$ is matched with $\mathcal{O}_T(y_n, q_n)$ (in the sense of Definition \ref{def:orbitmathcing}). Thus, 
since $0\leq r<q_n$ and hence $\mathcal{O}_T(x, r)\subset \mathcal{O}_T(x,q_n) $, we can decompose also $\mathcal O_T(x,r)$  into $\ell\leq \kappa$  disjoint orbit subsegments
$$
		\mathcal{O}_T(x,r) = \mathcal{O}_T(x_1,r_1)\sqcup  \mathcal{O}_T(x_2,r_2)\sqcup \cdots \sqcup \mathcal{O}_T(x_k,r_\ell), \quad \text{with} \ r_1+\ldots+r_\ell=r,$$ 
 and \emph{match} them with $\ell$ disjoint orbit subsegments of  $\mathcal{O}_T(y_n,q_n)$, which we will denote by 
 $ \mathcal{O}_T(y_{n,1},r_1)$,  $\mathcal{O}_T (y_{n,2},r_2)$, $\cdots ,$  $\mathcal{O}_T(y_{n,\ell},r_\ell)$. 
Then, by definition of matchings,  we use mean value to estimate
$$
\left|\tilde S_{r_i} f'(x_i) - \tilde S_{r_i}^{x_i} f'(y_{n,i})\right| \leq |\tilde S_{r_i}^{x_i} f'' (\xi_i)||b_i-a_i|,\qquad 1\leq i\leq k, 
$$
for some $\xi_i\in (a_i,b_i) $, where $(a_i,b_i)$ is the interval given by the definition of matching between $\mathcal{O}_T(x_{i},r_i)$ and $\mathcal{O}_T(y_{n,i},r_i)$. 
By the Condition (B1) applied to $y_{n,i}$ and by the condition (B4) applied to $x_{i}$, we have either
\[
\tilde S_{r_i}^{x_i} f'' (\xi_i)=\tilde S_{r_i} f'' (\xi_i)\qquad\text{or}\qquad m(\xi_i,r_i)\ge \frac{c}{q}.
\]

Now, using the first part  of the proof to estimate  the Birkhoff sum   $\tilde S_{r_i} f'(y_{n,i})$ along the orbit subsegment $\mathcal{O}_T(y_{n,i},r_i)\subset \mathcal{O}_T(y_{n},q_n)$, and Lemma~\ref{lem:secder}  (which can be applied since $f\in \mathcal{P}Sym Log(T)$) to estimate $|\tilde S_{r_i}^{x_i} f'' (\xi_i)|$, we get that, for each $1\leq i\leq k$,  since $b_i-a_i\leq c/q_n $ by properties $(B1)$ and $(B3)$ of balanced matchings (see Definition~\ref{def:balancedm}), we get
$$
\left|\tilde S_{r_i} f'(x_i)\right| \leq  \left| \tilde S_{r_i} f'(y_{n,i})\right|+\frac{2\kappa}{c} + |\tilde S_{r_i} f'' (\xi_i)||b_i-a_i|\leq 2\Cb_0 q_n +   \frac{{\hat{\Cb} q_n^2}}{c q_n} \leq \left(2\Cb_0+\frac{\hat{\Cb}}{c}\right)\, q_n,
$$
(where $\hat{\Cb}$ is the constant given by Lemma~\ref{lem:secder}, {which 
depends only on $c>0$}). Summing up over $1\leq i\leq \ell$, where $\ell\leq 
\kappa$, this concludes the proof of the desired linear bounds on trimmed 
derivatives.
\end{proof}

}

\subsubsection{The exponential tails via matchings criterion}\label{sec:exp_tail_criterion}
We can now state and  prove the criterion to prove exponential tails from the  existence of good matchings (in the sense of Definition~\ref{def:goodMn}).

\begin{prop}[Exponential tightness via matchings] \label{prop:exptails_via_match}
Assume $f\in { SymLog^2 \left(T\right)}$ has symmetric logarithmic singularities. Let $(M_n)_{n\in\N}$ be a sequence of $\gamma$-good $c$-balanced 
$(y_n,q_n)$-matchings.
Then there exists $b_0>0$  and $K>0$ such that for every $b\geq b_0$ we have
$$
\limsup_{n\to \infty } Leb(\{x\in M_n :|S_{q_n}(f)(x)-S_{q_n}(f)(y_n)|\geq b \})\le  K e^{-b/C_f}.
$$
\end{prop}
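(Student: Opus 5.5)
We compare $S_{q_n}f(x)$ with $S_{q_n}f(y_n)$ for $x\in M_n$ by splitting off the closest visits to the singularities and controlling the trimmed remainder through the matching. Write $f=f_p+g$ with $f_p\in\mathcal{P}SymLog(T)$ and $g$ piecewise $\mathcal{C}^2$. Fix $x\in M_n$ and the $k$-matching ($k\le\kappa$) of $\mathcal{O}_T(x,q_n)$ with $\mathcal{O}_T(y_n,q_n)$, with pieces $\mathcal{O}_T(x_i,r_i)$, $\mathcal{O}_T(y_{n,i},r_i)$ and intervals $(a_i,b_i)$. We decompose
\[
S_{q_n}f(x)-S_{q_n}f(y_n)=\Big(\tilde S_{q_n}f(x)-\tilde S^{x}_{q_n}f(y_n)\Big)+\sum_{\beta\in Disc(T)}\Big(f(T^{\ell_\beta}x)-f(T^{\ell^x_\beta}y_n)\Big),
\]
where $\ell^x_\beta$ is the index in the orbit of $y_n$ matched to $\ell_\beta(x,q_n)$. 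This is the relative trimming of Definition~\ref{def:rel_trimming}.

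First we show that the trimmed part is bounded uniformly in $n$ and $x$. On each matched piece the mean value theorem on $(a_i,b_i)$ gives
\[
|\tilde S_{r_i}f(x_i)-\tilde S^{x_i}_{r_i}f(y_{n,i})|\le |\tilde S_{r_i}f'(\xi_i)|\,|b_i-a_i|,
\]
up to the terms coming from closest visits that fall in different pieces. To bound $|\tilde S_{r_i}f'(\xi_i)|$ we use the linear bounds from Lemma~\ref{lemma:linearbounds}, that is (G3) with constant $B$, transported to $\xi_i$. The transport costs a second-derivative error controlled by Lemma~\ref{lem:secder} together with Remark~\ref{rem:someoutside}, and gives $|\tilde S_{r_i} f'(\xi_i)|\le B' q_n$. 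By (B3), $|b_i-a_i|\le 1/(cq_n)$, so each piece contributes $O(B'/c)$. Summing over at most $\kappa$ pieces gives a constant $K_1$. The closest visits that are not at matched indices are handled as in Lemma~\ref{eq: trimmedythesame}: by (B2) and (B4), every non-closest visit of $x$ is at distance at least $c/q_n$, and every visit of $y_n$ is at distance at least $c/q_n$ by (B1). Each such term is therefore within $O(\log(1/c))$ of its matched counterpart, which is again a constant.

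Second, we handle the closest-visit terms. By (B1) and (B3), $T^{\ell^x_\beta}y_n$ lies at distance between roughly $c/q_n$ and $(c+c^{-1})/q_n$ from $\beta$. Hence $f_p(T^{\ell^x_\beta}y_n)=C_\beta\log q_n+O(1)$, while $f_p(T^{\ell_\beta}x)=-C_\beta\log m_\beta(x,q_n)$. The logarithms of $q_n$ cancel, and we are left with
\[
|S_{q_n}f(x)-S_{q_n}f(y_n)|\le K_2+\sum_\beta C_\beta\,\big|\log(q_n m_\beta(x,q_n))\big|.
\]
Also $q_n m_\beta\le (c+c^{-1})$, so only the lower tail of $q_n m_\beta$ matters. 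If the left side is at least $b$, then for some $\beta$ we have $C_\beta\log(1/(q_nm_\beta))\ge (b-K_2)/\#Disc(T)-O(1)$. This crude estimate loses a factor $\#Disc$ in the exponent, so we refine it. By (B2) each $\beta$ is approached closely at most once, and since $C_\beta\le C_f$ it suffices to have $q_nm_\beta(x,q_n)\le e^{-(b-K_2)/C_f}$ up to constants for some $\beta$. We plan to sharpen this via symmetry, and otherwise accept a constant $K$ absorbing the loss.

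Third, we estimate the measure. The set $\{x: m_\beta(x,q_n)\le \delta/q_n\}$ is contained in $\bigcup_{j<q_n}T^{-j}(\beta-\delta/q_n,\beta+\delta/q_n)$, which has Lebesgue measure at most $2\delta$. Summing over $\beta$ gives a bound $2\#Disc(T)\,e^{K_2/C_f}e^{-b/C_f}$, uniform in $n$, which proves the claim with $K$ absorbing the constants and $b_0$ chosen so that $b-K_2>0$.

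The main obstacle is the first step: making the mean-value bound rigorous requires trimming consistently on both orbits and controlling $\tilde S f'$ at the intermediate point $\xi_i$ rather than at $x_i$. The case distinction from the proof of Lemma~\ref{lemma:linearbounds} (either $\tilde S^{x_i}=\tilde S$ or $m(\xi_i,r_i)\ge c/q_n$) is what we would reuse there. Getting the exact exponent $1/C_f$ rather than $1/(C_f\#Disc)$ is the secondary concern.
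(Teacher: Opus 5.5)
Your argument follows the paper's proof step for step. You split $f=f_p+g$, compare trimmed sums piece by piece with the mean value theorem on the matching intervals, and bound $\tilde S_{r_i}f'$ at the intermediate point using (G3) from Lemma~\ref{lemma:linearbounds} together with Lemma~\ref{lem:secder} and Remark~\ref{rem:someoutside}. You then use (B3) for the interval lengths, treat the closest visits through (B1)--(B3), and conclude with the union of the sets $T^{-j}(\beta-\delta/q_n,\beta+\delta/q_n)$.

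Writing the difference with relative trimming simply absorbs Lemma~\ref{eq: trimmedythesame} into the closest-visit terms. The removed indices then coincide in each matched piece, so there are no ``closest visits in different pieces'' left over. The bound $q_nm_\beta(x,q_n)\lesssim 1$, which you need to rule out the lower tail, should come from (B1) and (B3), not from an assumed $m_\beta(x,q_n)\le c/q_n$. The visit of $\mathcal{O}_T(y_n,q_n)$ within $1/(cq_n)$ of $\beta$ is matched to a point of $\mathcal{O}_T(x,q_n)$ within a further $1/(cq_n)$, so $m_\beta(x,q_n)\le 2/(cq_n)$.

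The step that does not go through is the exponent. Suppose $\sum_\beta C_\beta\log\frac{1}{q_nm_\beta(x,q_n)}\ge b-K_2$. Neither (B2) nor the individual bounds $C_\beta\le C_f$ then give a single $\beta$ with $q_nm_\beta(x,q_n)\lesssim e^{-b/C_f}$. (B2) only prevents repeated close approaches to the same $\beta$. Several distinct $\beta$'s may be approached at comparable distances in the same orbit segment (with cramped singularities this is the typical situation), and then what counts is their total weight.

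With $u_\beta:=\log^+\frac{1}{q_nm_\beta(x,q_n)}$ you only have $\sum_\beta C_\beta u_\beta\le\bigl(\sum_\beta C_\beta\bigr)\max_\beta u_\beta$, where $C_\beta$ is $C^+_i$ or $C^-_i$ according to the side of the closest visit. So the rate $1/C_f$ follows exactly when $\sum_\beta C_\beta\le C_f$. The paper's closing inequality $\sum_\beta C_\beta\bigl(-\log e^{-b/C_f+(B'''/C_f-\log c)}+\log c\bigr)+B'''\le b$ tacitly uses this; it fails for large $b$ if $\sum_\beta C_\beta>C_f$.

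The condition holds when all close approaches of $\mathcal{O}_T(x,q_n)$ are from one side, by the symmetry $\sum_iC_i^+=\sum_iC_i^-=C_f$; this is the picture of the runs in the BT-rigid towers. It is not a consequence of (B1)--(B4), so you must supply it as an extra input or derive it from the tower geometry. Without it, the union bound gives the rate $1/S$, where $S\le 2C_f$ is the sum over $\beta$ of the larger one-sided constant. The loss is therefore at most a factor $2$, not $\#Disc(T)$, but a multiplicative constant $K$ cannot absorb a change of exponent. That weaker uniform exponential tail is still all that Proposition~\ref{prof:disjointnesssf} needs.
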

{\begin{proof}
Since by assumption the $(y_n,q_n)$ matchings $M_n$ are part of a sequence 
$(M_n)_n$ of good matchings,
{by Lemma~\ref{lemma:linearbounds}, there exists 	 $\Cb>0$  such that 
$(M_n)_n$ are $(\gamma,B)$-good matchings,} so that for every $n\in\N$, 
	\begin{equation}\label{eq: Corinnaconstant}
	|\tilde S_{r}(f')(x)|<\Cb q_n\quad\text{for } 0\le r\le q_n, \ {\text{for\ any}\ x\in M_n}.
	\end{equation}
	 Fix $n\in\N$ and $x\in M_n$. 
{Write again  $f=f_p+g$ where $f_p\in {\mathcal{P} SymLog \left(T\right)}$ and 
$g$ is in particular bounded (in view of Definition~\ref{def:SymLog}). 	}
Let us split the Birkhoff sums keeping aside the contributions of the trimmed visits given by the pure logarithmic singularities of $f_p$ as:
	\begin{equation}\label{eq: decompintotrimmedandclosest}
		\begin{split}
	|S_{q_n}&(f_p)(x)-S_{q_n}(f_p)(y_n)|\\
	&\le \left|\sum_{\beta\in Disc(T)}{C_\beta}\log(m(x,\beta,q_n))-{C_\beta}\log{(m(y_n,\beta,q_n))}\right| 
	+|\tilde S_{q_n}(f_p)(x)-\tilde S_{q_n}(f_p)(y_n)|.
	\end{split}
	\end{equation}
Since all good matchings 	$M_n$ are $\kappa$-matching sets with the same $\kappa$ (see Definition~\ref{def:matchingset}), there exists a decomposition of the orbit $\mathcal O_T(x,q_n)$ and $\mathcal O_T(y,q_n)$ into $k\le \kappa$ pieces
	\begin{align*}
		\mathcal{O}_T(x,q_n)& = \mathcal{O}_T(x_1,r_1)\sqcup  \mathcal{O}_T(x_2,r_2)\sqcup \cdots \sqcup \mathcal{O}_T(x_k,r_k), 
		\\ \mathcal{O}_T(y_n,q_n)& = \mathcal{O}_T(y_{n,1},r_1) \sqcup  \mathcal{O}_T (y_{n,2},r_2) \sqcup \cdots \sqcup  \mathcal{O}_T(y_{n,k},r_k),
	\end{align*}
with $r_1+\ldots+r_k=q_n$, as in Definition \ref{def:orbitmathcing}. 
{In view of Lemma \ref{eq: trimmedythesame},} we can estimate the Birkhoff sums 
in the following way
\begin{align}\label{eq:matchingsplit}
|\tilde S_{q_n}(f_p)(x)-\tilde S_{q_n}(f_p)(y_n)|& \le \left|\tilde S_{q_n}(f_p)(x)-\tilde S_{q_n}^x(f_p)(y_n)\right|+E\\ & \nonumber =\left|\sum_{i=1}^{\kappa} \left(\tilde S_{r_i}(f_p)(x_i)-\tilde S_{r_i}^x(f_p)(y_{n,i})\right) \right|+E,
\end{align}
{where  $E$ is the constant given by Lemma \ref{eq: trimmedythesame}. 

We claim that we get that for every $i=1,\ldots,k$ and every $z\in[x_i,y_{n,i}]$ we have the following estimate
\[
\left|\tilde S_{r_i}^x({f_p}')(z)\right|\le \Cb'\cdot q_n \ \ \text{ for every }z\in[x_i,y_{n,i}],\qquad \text{where}\ \Cb':=\Cb+\frac{\hat \Cb}{c}.
\]
{
(Notice that, although this is a linear bound on trimmed Birkhoff sums of the derivative, it does not follow from $(G2)$ in the Definition~\ref{def:goodMn} of good matching since  interval $[x_i,y_{n,i}]$ is not necessarily included in $M_n$, so we have to prove it.) 
 To prove the claim,}} in view of Remark \ref{rem:someoutside}, by Lemma \ref{lem:secder} (and Remark \ref{rem:secder}), we get
\begin{equation}\label{eq: secderwithnovis}
|\tilde S_{r_i}^x({f_p}'')(z)|\le \hat \Cb\cdot q_n^2, \text{ for every }z\in[x_i,y_{n,i}]
\end{equation}
where $\hat \Cb$ does not depend on the point or $n$. 
By \eqref{eq: Corinnaconstant} and by (B3) in the Definition \ref{def:balancedm}, we then get the claimed bound. 
 Now, by Mean Value Theorem we get from \eqref{eq:matchingsplit} that 
\begin{equation}\label{eq:relBS}
|\tilde S_{q_n}(f_p)(x)-\tilde S_{q_n}(f_p)(y_n)|\le \Cb'', \quad where \, \Cb'':=\frac{\Cb'\kappa}{c}+E.
\end{equation}
{To estimate the analogous quantity for $g$, one can again use the 
decomposition into $k$ matched pairs of suborbits and mean value theorem, with 
the trivial estimates $\Vert S_{r_i}g'\Vert_\infty\leq r_i \Vert 
g'\Vert_{\infty}$ (and recalling that $r_1+\cdots +r_\kappa=q_n$ and 
$q_n|I_n|\leq 1$) to get  that
$$
| S_{q_n}(g)(x)- S_{q_n}(g)(y_n)|\le \left|\sum_{i=1}^{\kappa} \left(\tilde S_{r_i}(g)(x_i)-\tilde S_{r_i}(g)(y_{n,i})\right) \right|\leq \sum_{i=1}^\kappa \Vert g'\Vert_\infty r_i |I_n| \leq  
\frac{\Vert g'\Vert_\infty}{c} .
$$
Combining these estimates with 
\eqref{eq: decompintotrimmedandclosest}  and \eqref{eq:relBS}, 
we get
\begin{equation}\label{eq:forgottentrimmed}
	|S_{q_n}(f)(x)-S_{q_n}(f)(y_n)|\le \left|\sum_{\beta\in Disc(T)}{C_\beta}\log(m(x,\beta,q_n))-{C_\beta}\log{(m(y_n,\beta,q_n))}\right|+\Cb''  
+\frac{\Vert g'\Vert_\infty}{c}	
	\end{equation}
 Set $\Cb''':= \Cb''+\Vert g'\Vert_\infty/c$.  } Let $b_0:= 10(\Cb'''/C_f-\log 
 c)$, where $c$ is given by (B1) and let $b\ge b_0$. We claim that if 
\[
\min_{\beta\in Disc(T)} m(x,\beta,q_n)\ge \frac{e^{-b/C_f+(\Cb'''/C_f-\log c)}}{q_n},
\]
then $|S_{q_n}(f)(x)-S_{q_n}(f)(y_n)|<b$. Indeed, by \eqref{eq:forgottentrimmed} and  properties (B1) and (B2)  in the Definition~\ref{def:balancedm} of balanced matching, we have 

\[
|S_{q_n}(f)(x)-S_{q_n}(f)(y_n)|\le \sum_{\beta\in Disc(T)}C_{\beta}(-\log e^{-b/C_f+(\Cb'''/C_f-\log c)}+\log c)+\Cb'''\le b.
\]
Thus 
\begin{align*}
\{x\in M_n & :|S_{q_n}(f)(x)-S_{q_n}(f)(y_n)|\geq b \} \subset \\ & \subset  \bigcup_{\beta\in Disc(T)}
\bigcup_{i=0}^{q_n}T^{-i}\left[\beta-\frac{e^{-b/C_f+(\Cb'''/C_f-\log c)}}{q_n},\beta+\frac{e^{-b/C_f+(\Cb'''/C_f-\log c)}}{q_n}\right].
\end{align*}
Hence we get that
\[
Leb\left( \{x\in M_n :|S_{q_n}(f)(x)-S_{q_n}(f)(y_n)|\geq b \}  \right)\le K\cdot e^{-b/C_f},
\]
where $K:=2d \cdot e^{\Cb'''/C_f-\log c}$, which finishes the proof.

\end{proof}
}

\section{Construction of BT rigidity degenerations and good matchings.}\label{sec:Eproof}

The main goal of this section is to  prove Proposition~\ref{prop:Egoodmatchings}, i.e.~
to show that for almost every (Keane) IET we can obtain a sequence of good matchings.  
To produce good matchings, we first prove Proposition~\ref{prop:Edegenerations}, by showing the existence of rectangle presentations (see \S~\ref{sec:rectangles}) with BT rigid presentations (in the sense of Definition~\ref{def:BTrectangles}) for almost every translation surface in $\mathcal{H}(1,1)$. These, in turn, are constructed using (natural extensions of) the Rauzy-Veech induction algorithm, by now a classical and powerful tool to study IETs and translation surfaces. 

 In \S~\ref{sec:RV} we first recall some basic definitions and properties concerning Rauzy-Veech induction 
needed in the rest of this paper. The existence of BT-degenerations (Prop.~\ref{prop:Edegenerations}) is proved in \S~\ref{sec:degenerationRV} cutting and pasting good presentations of zippered rectangles. Then, in \S~\ref{sec:derivativesviaRV} we recall a result proved by the second author  \cite{Ul:abs} (see also \cite{FKU} for the formulation given here) that allows to produce times with linear bounds on trimmed derivatives using (the natural extension of) Rauzy-Veech induction. 
Finally, we combine these two results in  \S~\ref{sec:Edegenerations}.
to produce the desired good matchings from BT-degeneration times $q_n$ such that there is an orbit $\mathcal{O}_T(y_n,q_n)$ with linear bounds on trimmed Birkhoff sums.

\subsection{Background on Rauzy-Veech induction}\label{sec:RV}
 We recall here some results on Rauzy-Veech (RV for short) induction. For more details on the definition of the algorithm 
 we refer e.g.\ to  lecture notes by Yoccoz \cite{Yo} or Viana \cite{Vi}.

\subsubsection{The Rauzy-Veech algorithm for  IETs.} \label{sec:basic}
The Rauzy-Veech induction algorithm associates to
a.e.~IET, 
 a sequence of IETs  which are induced maps of $T$ onto a sequence of nested subintervals $I^{(n)}$ contained in $[0,1]$. These are chosen so that the induced maps are again  IETs of the same number $d$ of exchanged intervals.  The interval  $I':= I^{(1)}\subset I^{(0)}:=[0,1]$ associated to one step of the algorithm  is defined as follows: let $I^{t}_d:=[\beta_{d-1},1]$ (where $t$ stays for \emph{top}) be the last (i.e.~right-most) exchanged subinterval and let $I^b_d$
 be the  last after the exchange, i.e.~$I^b_d:=T(I_{j_d})$ where $j_d$ is such that  
  $T(I_{j_d})$ has $1$ as right endpoint. 
If $|I^t_d|>|I^b_{d}|$ (called case \emph{top} or \emph{type} $0$), $[0,1]\backslash I^t_d =[0,\beta_{d-1})$, while  if $|I^b_{d}|>|I^t_d|$ (called case \emph{bottom}, or \emph{type} $1$), $I':=[0,1]\backslash I^b_d$.  
We denote by $T'$ is the induced IET obtained as first return map on $I'$. 
The map  \emph{Rauzy-Veech map} $\mathcal{V}$ then associates to a $T=(\pi,\lambda)$  such that $|I^t_d|\neq |I^b_d|$
  the IET $\V (T)=\V (\pi,\lambda)$  obtained by renormalizing $T'$ by $|I'|$ so that the renormalized IET is again defined on an unit interval. If $|I^t_d|=|I^b_d|$, the induction is not defined. 
 
\smallskip

 The {Rauzy-Veech map}  $\V $ is then defined for all $n\in \N$ on a 
 full Lebesgue measure subset of the space
\begin{equation}\label{eq:defX}
X:= X(\mathcal R) =\mathcal R\times  \Delta_{d}  = \{ (\pi, \lambda)\ | \ \pi \in \mathcal R,\ \lambda \in \Delta_d \},
\end{equation}
where $\mathcal{R}$ is the \emph{Rauzy-class}  of $\pi$ (namely the smallest invariant set of permutations closed under iterations of $\V$) and $\Delta_d$ the lengths simplex, defined in \S~\ref{sec:iets} (precisely on the set 
 of IETs which satisfy the Keane condition
\smallskip
Veech proved in \cite{Ve:gau} that $\V$ is conservative and admits an invariant measure $\mu_{\V}$ on $X(\mathcal{R})$ which 
 is equivalent to the Lebesgue measure, but infinite. Zorich showed in \cite{Zo:fin} that one can accelerate\footnote{The acceleration of a map is obtained a.e.-defining  an integer valued function $z(T)$ which gives the return time to an appropriate section. The accelerated map is then given by $\mathcal{Z} (T) := \V^{z(T)} (T )$.} the map  $\V $ in order to obtain a map $\mathcal{Z}$, which we call \emph{Zorich map}, that admits a  \emph{finite} invariant measure $\mu_{\mathcal{Z}}$. Let us also recall that both $\V$ and its acceleration $\Z$ are \emph{ergodic} 
 with respect to $\mu_{\V}$ and $\mu_{\Z}$ respectively \cite{Ve:gau, Zo:fin}.
 
\subsubsection{Rauzy-Veech matrices and  lengths cylinders.}\label{sec:RVmatrices}
Given  $T=(\pi,\lambda)$ in the domain of $\V$, write $\mathcal{V}(\pi,\lambda)=(\pi',\lambda')$. Then, denoting by $|\cdot |$  the vector norm $|{\lambda}|=\sum_{i=1}^d \lambda_i$,  one can write
$$\lambda'=\frac{(A)^{-1}\lambda}{|(A)^{-1}\lambda|}, \quad \text{where}\  A=A(T) \in SL(d,\mathbb Z)$$
is 
 a non-negative matrix with entries $0$ or $1$ and with $1$ on the diagonal and only one $1$ off the diagonal. 
For any $n\in \mathbb{N}$,  set $A_n:= A(\V^n T )$. We will say that $(A_n)_n$ is the sequence of Rauzy-Veech matrices associated to $T$. Then
\begin{equation}\label{eq:Bgamma}
\mathcal V^n(\pi_0,\lambda)= \left(\pi^{(n)}, \lambda^{(n)}\right), \quad \text{where}\ 
\lambda^{(n)}=\frac{A(0,n)^{-1}\lambda}{\left| A(0,n)^{-1}\lambda\right|}, 
\ \text{for} \ \ A(0,n):=A_1 A_2 \cdots   A_{n} .
\end{equation}

\smallskip

\subsubsection{Invertible Rauzy-Veech induction and zippered rectangles}\label{sec:invertibleRV}
The \emph{natural extension} $\widehat{\V}$ (resp.~$\widehat{\Z}$) of the Rauzy-Veech map $\V$ (resp.~of the  Zorich map $\Z$), also known as \emph{invertible Rauzy-Veech} (resp.~\emph{invertible Zorich} induction) is an invertible  map $\widehat{\V}$ (resp.~$\widehat{\Z}$) defined on a domain $\widehat{X}$ 
which is an \emph{extension} of $\V$ (resp.~$\Z$), i.e.~such that there exists a projection $p \colon \widehat{X} \rightarrow X$ for which $p \widehat{\V} = \V p$ (resp.~$p \widehat{\Z} = \Z p$).

To define the domain of the \emph{natural extensions} of $\widehat{\V}$ and $\widehat{\Z}$,  for each $\pi \in \mathcal R$ let $\Theta_{\pi}\subset \mathbb{R}^{d}$ be the polyhedral cones  given by the inequalities 
\bes
\Theta_{\pi} := \left\{ {\tau}=(\tau_1,\dots ,\tau_d)  \in   \mathbb{R}^{d}\ \Big| \quad  \sum_{i=1}^{k} \tau_{\pi^{-1}i} <0, \, k=1,\dots, d-1 \right\},
\ees
and let $\widehat{X}_d$ be the following space of triples  (that admits a geometric interpretation in terms of the space of zippered rectangles,  see for example  \cite{Yo, Vi} for details):
\bes
\widehat{X}_d := \widehat{X}_d (\mathcal R)=\bigcup_{\pi\in \mathcal{R} }\{ \pi\} \times \Delta_d\times \Theta_\pi  = \{ \, (\pi, {\lambda}, {\tau} ) \ | \quad \pi \in \mathcal{R}, \, {\lambda}\in \Delta_d, \, {\tau} \in \Theta_{\pi}   \}.
\ees
\noindent The action of the invertible extension $\widehat\V$  on triples $(\pi,\lambda, \tau) \in \widehat{X}_d$ is obtained by extending the projective action of $\V$, i.e.~defined by
$$
\widehat{\mathcal{V}}(\pi,\lambda, \tau)=(\pi',\lambda', \tau'), \qquad \text{where}\ (\pi',\lambda')=\V (\pi,\lambda), \quad \tau'=\frac{A^{-1}\tau}{|A^{-1}\lambda|}, 
$$
where $A=A(T)\in SL(d,\mathbb Z) $ is the Rauzy-Veech matrix defined in 
\S~\ref{sec:RVmatrices}. {To each triple $(\pi,\lambda, \tau) \in 
\widehat{X}_d$ with a (bi-)infinite orbit\footnote{This is the case for example 
if both $\lambda$ and $\tau$ have coordinates that are irrationally 
independent. More precisely, $\widehat{\mathcal{V}}^i(\pi,\lambda, \tau)$ is 
defined for all $i\in \mathbb{Z}$ if and only if $(\pi,\lambda)$ satisfies the 
Keane condition (that guarantees that $\widehat{\mathcal{V}}^i(\pi,\lambda, 
\tau)$ is defined for all $i\geq 0$, and the translation  surface associated to 
$(\pi,\lambda,\tau)$ has no horizontal saddle connections, see \cite{MUY} and \cite{Be:back}.} under 
$\widehat{\mathcal{V}}$ we can associate a \emph{bi-infinite} sequence 
$(A_i)_{i\in \mathbb{Z}}$ of Rauzy-Veech matrices, by setting 
\begin{equation}\label{eq:seqA}
A_i:=A (\pi^{(i)}, \lambda^{(i)} ), \quad \text{where}\ (\pi^{(i)}, \lambda^{(i)} , \tau^{(i)}):= \mathcal{V}^i (\pi, \lambda, \tau), \ \  \text{for \ all}\ i\in \mathbb{Z}.
\end{equation}}

It is well known that $\widehat{\mathcal{V}}$ preserves a natural (infinite)\footnote{To have a \emph{finite} invariant measure, one can consider again the (invertible) Zorich acceleration $\widehat{\mathcal{Z}}$ of $\widehat{\mathcal{V}}$, restricted to the hypersurface $\hat X^1_d$ of triples   $(\pi,{\lambda},  {\tau}) $  with $Area (\pi,{\lambda},  {\tau}) =1$.} invariant measure $\widehat{\mu}_{\mathcal{V}}$ on $\widehat{X}_d$, absolutely continuous with respect to the Lebesgue measure.  
Moreover, $\widehat{\V}$ is \emph{conservative} and ergodic with respect to $\widehat{\mu}_\V$; in particular, Poincar{\'e} recurrence holds (see e.g.~\cite{Vi} for details).

\subsubsection{Cylinders and Markovian structure}\label{sec:loss} {
Given any triple $(\pi, \lambda, \tau)$, let  $A_i:=A(\mathcal{V}^i(\pi, 
\lambda, \tau) )$ for $i\in \mathbb{Z}$ be the associated (bi-sided) sequence 
of RV matrices, see \eqref{eq:seqA}. Fix two integers $n, m>0 $ and set 
\begin{equation}\label{eq:Apm}
A^-:=A_{-m}A_{-m+1}\cdots A_{-1}, \quad \text{and}\quad A^+:=A_{0}A_1\cdots A_{n-1},
\end{equation} and  
let us define 
\begin{equation}
\Delta_{A^+}:= A^+\, \Delta_d= \left\{ \frac{A^+ \lambda}{|A^+ \lambda|}, \quad \lambda \in \Delta_d\right\}\subset \Delta_d
\end{equation}
the simplex image of $\Delta_d$ by the projective action of $A^+$, and denoting by $\pi^{(-m)}$, as in \eqref{eq:seqA}, the permutation of $ \mathcal{V}^{-m}(\pi, \lambda, \tau)$,  
\begin{equation}
\Theta_{A^-}:= A^-\, \Theta_{\pi^{(-m)}}= \left\{ {A^- \tau}, \quad \tau \in \Theta_{\pi^{(-m)}} \right \}\subset \Theta_\pi
\end{equation}
the simplex image of $\Theta_{\pi^{(-m)}}$ by the action of $A^-$. Notice that if $A^\pm=B^\pm C^\pm$ then
\begin{equation}\label{eq:inclusion}
\Delta_{B^+C^+}\subset \Delta_{B^+}, \quad \text{while}\quad \Theta_{B^-C^-}\subset \Theta_{C^-}. 
\end{equation}
The product  set $\{\pi\}\times \Delta_{A^+} \times \Theta_{A^-} $ 
plays the role of (bisided) \emph{cylinder} for the symbolic coding of RV induction\footnote{Indeed, given any triple  
$(\pi, \lambda',\tau')$ with $\lambda' \in \Delta_{A^+}$ and $\tau' \in \Theta_{A^-}$, is such that the types and the Rauzy-matrices of the iterates  $\mathcal{V}^i(S)$ for $i=-{m},\dots, n$ have the same permutations and the same types than the  corresponding iterates    $\mathcal{V}^i(T)$ for $i=-{m},\dots, n-1$ of $T$; thus in particular the sequence $(A'_i)_{i\in \mathbb{Z}}$ or Rauzy-Veech matrices for $(\pi, \lambda',\tau')$  is such that $A_i'=A_i$ for $i=-{m},\dots, n-1$.}. In particular, we have the following relation  (see for example \cite{FKU}, \S~8.2), which shows that $\mathcal{V}^n$ acts as (the $n$-power of) the \emph{shift} map on the symbolic coding given by RV matrices:
\begin{equation}\label{bisided_Markov}
 \widehat{\mathcal{V}}^n \left( \{\pi\}\times \Delta_{A^+} \times \Theta_{A^-}\right) =
 \{ \pi_{n} \}\times \Delta_d \times \Theta_{A^-A^+} .
\end{equation} 
We recall that if $A^+>0$ is a positive matrix (i.e.~all entries are strictly positive), then $\Delta_{A^+}\subset \Delta_d$ is compact. We say (following \cite{AGY}) that $A^-$  is \emph{strongly positive} if  $\Theta_{A^-} \subset \Theta_\pi$ is also compact. For almost every $(\pi,\lambda,\tau)$ there exists $n,m$ such that $A^+$ and $A^-$ defined as in \eqref{eq:Apm} are both strongly positive, so that the product $\{\pi\}\times \Delta_{A^+} \times \Theta_{A^-}$  is a compact set in $\hat X_d$. We say that $K\subset \hat X_d$ is a \emph{neat} compact set if it  has the form $K=\{\pi\}\times \Delta_{A^+} \times \Theta_{A^-}$ where $A^+, A^-$ are both strongly positive matrices, that correspond to a \emph{neat} path\footnote{We recall that the types  of a (finite) sequence of moves of Rauzy-Veech induction can be described in terms of (finite) paths on the Rauzy-class of $\pi$. In \cite{AGY}, a path $\gamma$ is called \emph{neat} if $\gamma=\gamma_i\gamma_e$ implies that either $\gamma=\gamma_i$ or $\gamma=\gamma_e$. } in the sense of \cite{AGY}.}}

\subsection{Constructing bounded type rigidity degeneration via Rauzy-Veech}\label{sec:degenerationRV}
In this section we prove Proposition~\ref{prop:Edegenerations}, by showing the existence of 
BT rigid  rectangle presentations (as defined in \S~\ref{sec:BTrectangles})  
for a.e.~$S\in \mathcal{H}(1^{2g-2})$. To build these rectangle presentations,  
we will use the invertible Rauzy-Veech induction $\hat{\mathcal{V}}$ and 
zippered rectangles. 

\subsubsection{The BT rigidity  open sets.}\label{sec:openLemma}
The following Lemma provides some 
open sets  in the parameter space 
$\hat{X}_{4g-3}$
 giving $(C,\epsilon,\rho)$-BT rigidity:
{
\begin{lemma}[Rauzy-Veech parameters for BT rigid rectangle presentations] \label{lemma_openset} For every 
$C\subset \R_+$, where $C=[c_0,c_1]$ with $  0<c_0<c_1<1$
and any  $0<\epsilon, \rho<1$,  there exists an open sets $\mathcal{U}= \mathcal{U}(C, \epsilon, \rho)$ in the space 
  of zippered rectangles $\hat{X}_d$ with $d=4g-3$  such that if the  
  translation 
  surface $S$ is given by a triple $(\pi,\lambda,\tau)$
in  $\mathcal{U}$, then $S$ admits a $(C,\epsilon,\rho)$-BT rigid 
rectangle presentation. %
\end{lemma}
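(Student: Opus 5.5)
The plan is to construct $\mathcal{U}$ explicitly as a small neighbourhood of a \emph{model} degenerate configuration, and then to use that all conditions in Definition~\ref{def:BTrectangles} are open (strict inequalities on widths, heights and gluing offsets) or combinatorial (hence locally constant on cylinders). The first step is to fix a permutation $\pi$ in the Rauzy class of the standard permutation with $d=4g-3$. The model is a zippered-rectangle datum $(\pi,\lambda^0,\tau^0)$ in which all but three of the $\lambda^0_i$ are tiny. The three surviving lengths, together with their heights, reproduce the two-rectangle torus of the Example in \S~\ref{sec:BTrectangles}, with one of its rectangles cut vertically into two pieces of equal height $q$. The remaining small rectangles (a surface $S_2$ of small area, in the spirit of Remark~\ref{rk:degenerations}) are inserted along the cut. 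Concretely, I would find a finite Rauzy path $\gamma$ (using neat, strongly positive paths as in \S~\ref{sec:loss}) and restrict to a cylinder $\{\pi\}\times\Delta_{A^+}\times\Theta_{A^-}$. On this cylinder the zippered rectangles of $\widehat{\mathcal V}^n(\pi,\lambda,\tau)$, possibly after a fixed finite cut-and-paste of the zippered rectangles, have the gluing pattern required by (iv).

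The second step is to translate each condition into an inequality on $(\lambda,\tau)$, with the heights $h=h(\pi,\tau)$ linear in $\tau$.

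Condition (i), small total area of $R_3,\dots,R_{d-1}$, is an open upper bound $\sum_{3\le i<d}\lambda_ih_i<\epsilon$.

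Condition (ii), equal heights of $R_2$ and $R_d$, is not open if imposed as an equality of heights of distinct zippered rectangles. So I would \emph{produce} $R_2$ and $R_d$ by cutting a single large zippered rectangle vertically, at the point where the small rectangles are glued in. This makes (ii) automatic.

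Condition (iii) becomes the open condition $c_0<q\lambda_1<c_1$. Since the area is normalised and $\lambda_1 q$ is comparable to the width proportion of a torus, I can pick the model so that $q\lambda_1$ is the midpoint of $C$.

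Condition (iv) is combinatorial, so it is fixed on the cylinder. The requirement $q_d>q_1$ is a strict inequality.

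Condition (v), cramping, requires that the vertical positions of the discontinuities on the vertical sides of $R_2,\dots,R_{d-1}$ lie within $\rho q$ of each other. These positions are partial sums of the $\tau_i$, and in the model I would make the small rectangles' $\tau$-contributions tiny relative to $q$, which is open.

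Condition (vi) asks that small rectangles lie within $\epsilon/q$ of $R_2$ or $R_d$. Their widths are bounded by $\sum_{3\le i<d}\lambda_i$, and they are attached to the cut. Requiring this sum to be less than $\epsilon/q$ is again open.

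The third step is to check that the resulting set is non-empty and open. Non-emptiness follows by exhibiting the model point. Openness holds because a finite intersection of strict inequalities, taken inside a cylinder with nonempty interior, is open.

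I expect the main obstacle to be the combinatorial step: showing that, in the principal stratum's Rauzy class, some permutation (or zippered-rectangle picture after finitely many Rauzy-Veech moves and a cut-and-paste) realizes exactly the gluings (iv)–(v). That is, the left side of $R_1$ must be glued to the top of the right side of $R_d$, and all the other rectangles must be nested in the cut between $R_2$ and $R_d$. I would try first to take $\pi$ as the standard permutation and to choose $\lambda^0$ with $\lambda_1$, $\lambda_2$ and $\lambda_d$ large (these become the torus intervals, with the middle intervals small). I would then verify from the suspension polygon of \S~\ref{sec:surfaces} and \eqref{eq: deftau} that the small middle intervals form a slit-glued subsurface, reading off the vertical gluings directly. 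If the standard permutation does not give the pattern, I would apply a short explicit sequence of Rauzy moves, relying on the cylinder structure \eqref{bisided_Markov} to keep the combinatorics fixed on an open set.
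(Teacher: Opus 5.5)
You have a genuine gap, and it sits exactly where you expect it: the combinatorial construction you defer is essentially the whole content of the lemma, and the two concrete mechanisms you propose for it do not work.

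Cutting a single zippered rectangle vertically produces two pieces that are glued to each other along the entire cut, so nothing lies between them. But (vi), and the cramped discontinuities in (v), require the small rectangles to be sandwiched between $R_2$ and $R_d$. Conversely, in your tentative model (standard $\pi$ with $\lambda_1,\lambda_2,\lambda_d$ large) the middle rectangles are indeed between $R_2$ and $R_d$. However, $h_2$ and $h_d$ are distinct linear functions of $\tau$ (for instance $\tau_2$ enters $h_d$ but not $h_2$), so (ii) is again a hyperplane condition. You also do not indicate how Rauzy--Veech moves alone, which keep you among $d$ zippered rectangles, would turn (ii) into an open condition. As written, the proposal therefore exhibits no nonempty open set with the required property.

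The missing idea is to obtain the two equal-height rectangles as first-return towers over a well-chosen subinterval, each consisting of one passage through the same two original rectangles. Equality of heights then becomes an identity rather than a condition.

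The paper takes the standard $\pi_0$, $\lambda_1,\lambda_d$ large with $c_0'<\lambda_1-\lambda_d<c_1'$, $\lambda_2,\dots,\lambda_{d-1}<\epsilon'$, and $0<\tau_2,\dots,\tau_{d-1}<\rho'$. For the standard permutation this makes every middle height within $d\rho'$ of $h_1+h_d$, and it cramps the singularities.

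Inducing on $\tilde I=[\tfrac18,\tfrac98-\lambda_d)$ then yields $d+1$ rectangles of widths $\lambda_1-\lambda_d,\ \lambda_d-\tfrac18,\ \lambda_2,\dots,\lambda_{d-1},\ \tfrac18$:
\begin{itemize}
\item the second and the last each pass once through $R_1$ and once through $R_d$, hence both have height $q=h_1+h_d$;
\item the first returns after $R_1$ alone, and its left side is glued to the top part of the right side of the last;
\item the small rectangles lie between the second and the last.
\end{itemize}
One then bounds $q$ above and below using the area normalization. This is what lets one choose $c_0',c_1',\epsilon'$ so that $q(\lambda_1-\lambda_d)\in C$. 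With this explicit picture in hand, your bookkeeping of open conditions for (i), (iii), (v), (vi) goes through essentially as you describe.
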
}
\noindent 
We anticipate that the BT rigid rectangle presentation that we will build are not directly given by the zippered rectangles corresponding to the triple, 
 but are obtained \emph{from} them 
 by suitably cutting and pasting them (see Figure~\ref{fig:BTrigid_cutpaste} and the proof of Lemma~\ref{lemma_openset}).

\begin{proof}[Proof of Lemma~\ref{lemma_openset}]
{
Let
us first define 
auxiliary open set $\mathcal{U}':= \mathcal{U}'(C', \epsilon',\rho')$, 
for parameters $\epsilon', \rho'$ in $(0,1)$ and  $C'=[c_0',c_1']$ with $0< c_0'< c_1'<1$. 
 We will show that, given $\epsilon, \rho$ and $C$ as in the statement, if $\epsilon',\rho'$ are chosen sufficiently small and $c_0',c_1'$ suitably, then we can take  $\mathcal{U}'(C', \epsilon',\rho')$ to be the desired open set. 
 The set  $\mathcal{U}' = \mathcal{U}'(C', \epsilon',\rho') $  
 is  the subset of 
triples  $(\pi, \lambda, \tau)$ in $\hat{X}_d$ given by
$$
\mathcal{U}':=
 \{ (\pi, \lambda, \tau)\in  
\{\pi_0\}\times \Lambda_{c_0',c_1',\epsilon'} \times \Omega_{\rho'} 
\}, $$
where $\pi_0$ is a standard permutation and $ \Lambda_{c_0',c_1',\epsilon'} 
\subset 
\Delta_d$ and $\Omega_{\rho'}\subset \Theta_{\pi_0}$ are given by:
\begin{align}\label{eq:defLambda'}
\Lambda_{c_0',c_1',\epsilon'} &:=  \{\lambda \in \Delta_d \mid \lambda_{2}, 
\ldots, \lambda_{d-1}<\epsilon' \  \text{ and } c_0' 
<\lambda_{1}-\lambda_{d}<c_1'   \}  \subset \Delta_d,\\ 
  \Omega_{\rho'} & :=\{\tau\in \Theta_{\pi_0}\mid 0<\tau_{2}, \ldots, 
  \tau_{d-1}<\rho'  \}\subset \Theta_{\pi_0} .\label{eq:defOmega'}
\end{align}
Since $\mathcal{U}'$ 
is defined by open conditions it is clearly open.}  An example of a polygonal presentation as well as the corresponding zippered rectangles with data  $(\lambda, \tau, \pi)$ 
 in $\mathcal{U}'$ 
 are shown in Figure~\ref{fig:BTrigid} (left).  Let us denote $R_1, \dots , 
 R_d$ the rectangles of the corresponding zippered rectangles ($R_i$ being the 
 rectangle over the interval $I_i$, $1\leq i\leq d$). Observe that   that, on 
 one hand (because of the inequalities on $\lambda$) the  'middle rectangles'  
 $R_2,\ldots, R_{d-1}$ have $\epsilon'$-small width,  while on the other hand,  
 there is a  substantial shift dynamics 
on the remaining two towers (that could be seen as a bounded type rotation-like 
dynamics), {given by $\lambda_1-\lambda_d$, that by construction satisfies  
$c_0'\leq\lambda_1-\lambda_d \leq c_1'$.}  
Moreover, (in view of the inequalities on $\tau$), the singularities of the 
translation surface (shown as red dots in Figure~\ref{fig:BTrigid}) are all 
\emph{cramped}, i.e.~also have vertical displacement at most $d\rho'$. 
Furthermore, the 'middle' heights $q_2,\ldots, q_{d-1}$ are all comparable (they 
differ by at most $d\rho'$). 

\begin{figure}
	\includegraphics[scale=2]{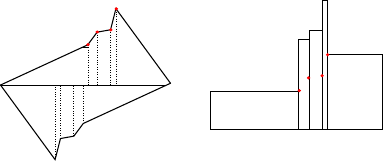}
	\caption{The polygonal representation (left)  and the corresponding zippered rectangles presentation (right) given by a triple $(\lambda,\pi,\tau)$ in $\mathcal{U}'$.
	 In this picture the width of the rectangles $R_2,R_2,R_4$ over $I_2, I_3, I_4$ is less then  $\epsilon'$ and  as the difference between the vertical coordinates of the singularities (red dots in the picture) is controlled by $\rho'$. \label{fig:BTrigid}}  
	\end{figure}
	
	\smallskip
	To produce a BT rigid rectangle presentation from these zippered rectangles,  
we consider the shorter base interval $\widetilde{I}\subset I:=[0,1]$ given by 
 $$\tilde I:=\left[\tfrac{1}{8}, 
 \tfrac{9}{8}-\la_{d}\right)=\left[\tfrac{1}{8}, (1-\la_{d})+ 
 \tfrac{1}{8}\right) \subset [0,1].$$ 
A rectangle presentation over $\tilde I$ (with $d+1$ rectangles) is obtained by 
cutting and stacking the rectangles over $I$ (see 
Figure~\ref{fig:BTrigid_cutpaste}) to get rectangles $\tilde R_1, \cdots, 
\tilde R_{d+1}$ over the intervals $\tilde I_1, \cdots \tilde I_{d+1}$ of the 
IET $\tilde T$ obtained inducing $T$ to $\tilde I$.  The  glueings between the 
top sides  of the rectangle presentation are given by the IET $\tilde T$. One 
can check (see Figure~\ref{fig:BTrigid_cutpaste}) that  the permutation 
$\tilde{\pi}$ of $d+1$ elements of $\tilde I$  and the length vector 
$\tilde{\lambda}=(\tilde{\lambda}_1, \ldots, \tilde{\lambda}_{d+1})$ are given 
by:
\begin{equation}\label{eq:lengths}
\tilde{\pi} = (2\ \star\ \star\ \star\ (d+1)\ 1), \qquad
\tilde\lambda
=\left(\lambda_{1}-\lambda_{d} ,
\lambda_{d}- \tfrac{1}{8},
\lambda_{2},
\ldots,
\lambda_{d-1}, \tfrac{1}{8}\right).
\end{equation}
Thus, all  intervals of continuity of $\tilde T$ are $\epsilon'$-small, but 
 the first, the second and the last one. The second and the last one,  $\tilde 
 I_2$ and  $\tilde I_{d+1}$, undergo the same displacement (i.e.~their distance 
 is the same before and after the exchange). The 
 first interval $\tilde I_1$, by definition of $\Lambda_{c_0',c_1', \epsilon'}$, has length of $c_0'<|\tilde I_1|<c_1'$  and  is responsible 
for the value of the displacement  of  $\tilde I_2$ and  $\tilde I_{d+1}$ under 
$\tilde T$.

 Notice also that the rectangle over $\tilde I_1$ has height $q_1$, and the 
 rectangles over $\tilde I_2$ and  $\tilde I_{d+1}$ are of equal height (see 
 Figure~\ref{fig:BTrigid_cutpaste}), given by 
 $q:=q_1+q_d$ (since they are both obtained cutting and stacking parts of the zippered rectangles over $I_1$ and $I_d$). 
 
Let us estimate the relative displacement $\Delta:= q |\tilde I_1|$. By construction, since 
$|\tilde I_1|= \lambda_1-\lambda_d $, the inequalities on $\lambda$ given by 
being in 
$\Lambda_{c_0',c_1',\epsilon'}$, 
given that  $c_0'<|\tilde I_1| <c_1' $. 
	To guarantee that $\Delta\in C=[c_0,c_1]$, we need to show that we can pick $c_0'$ and $c_1'$ so that $c_0\le qc_0'<qc_1'\le c_1$. For this purpose, let us evaluate $q$. On one hand we have
	\[
	1-\operatorname{Area}(\tilde R_1)\ge \operatorname{Area}(\tilde 
	R_2\cup\tilde R_{d+1})=1-\operatorname{Area}(\tilde R_1\cup \tilde R_3\cup 
	\ldots\cup \tilde R_d)
	\]
	and since $\operatorname{Area}(\tilde R_2\cup \tilde R_{d+1})=q(\tilde 
	\lambda_2+\tilde \lambda_{d+1})=q\lambda_d$, we get
	\[
	\lambda_d^{-1}(1-\operatorname{Area}(\tilde R_1))\ge q \ge 
	\lambda_d^{-1}(1-\operatorname{Area}(\tilde R_1\cup \tilde R_3\cup 
	\ldots\cup \tilde R_d)).
	\]
	From the conditions on $\lambda_d$, we have
	\[
	2\lambda_d-c_1'\le \lambda_1+\lambda_d\le 1=\lambda_1+\ldots+\lambda_d\le 
	2\lambda_d+\epsilon'-c_0'
	\]
	and thus ${(1-\epsilon'+c_0')}/{2}\le \lambda_d\le 
	{(1+c_1')/}{2}$. 
	Moreover, since $q_1\le q$, then $\operatorname{Area}(\tilde R_1)\le qc_1'$.
Hence
	\[
	q\le \lambda_d^{-1}(1-\operatorname{Area}(\tilde R_1))\le \frac{2}{1-\epsilon'+c_0'}(1-qc_1') \quad \Rightarrow \quad
	q\le \frac{2}{(1-\epsilon'+c_0')(1+c_1')}.
	\]
	Moreover, since the heights of zippered rectangles are given by $[\tilde 
	q_j]_{j=1}^d=\Omega_{\tilde \pi}^T[\tau_j]_{j=1}^d$, we have that the 
	heights $q_3, \ldots, q_d$ of the rectangles $\tilde R_3, \ldots, \tilde 
	R_d$ satisfy 
	\begin{equation}			
	-d\rho'\le-d\max_{j=2,\ldots,d-1} \tau_j\le \min_{j=3,\ldots,d} (q_j-q)\le 
	\max_{j=3,\ldots,d} (q_j-q) \leq d\max_{j=2,\ldots,d-1} \tau_j\le d\rho'
	\end{equation}
	and thus
	\[
		q\ge  \lambda_d^{-1}(1-\operatorname{Area}(\tilde R_1\cup \tilde 
		R_3\cup \ldots\cup \tilde R_d)) 
		\ge \frac{2}{1+c_1'}(1-qc_1'-q\epsilon'- d\rho'\epsilon').
	\]
	Thus we obtain
	\[
	q\ge \frac{2(1-d\rho'\epsilon')}{(1+c_1')(1+c_1'+\epsilon')}\ge 
	\frac{2(1-3\epsilon')}{(1+c_1')(1+c_1'+\epsilon')}.
	\]
	
	\smallskip
To conclude, for a given $c_0,c_1$, we need to pick $c_0'$ and $c_1'$ so that 
\begin{equation}\label{eq: precise c's}
c_0<\frac{2c_0'(1-3\epsilon')}{(1+c_1')(1+c_1'+\epsilon')} \quad \text{ and }\quad c_1> \frac{2c_1'}{(1-\epsilon'+c_0')(1+c_1')}.
\end{equation}
Since the expression on the left-hand side is strictly smaller than the expression on the right-hand side for every choice of $c_0'$ and $c_1'$, by picking $\epsilon'$ small enough, depending on $c_0$ and $c_1$, we can guarantee that $\Delta\in [c_0,c_1]$.

\begin{figure}
	\includegraphics[scale=2]{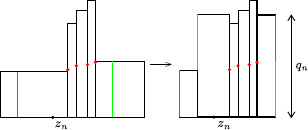}
	\caption{By choosing properly a shorter  subinterval as a base of the rectangle presentation, we obtain 
	a BT rigid rectangle presentation. The green and purple sides are identified. 
	\label{fig:BTrigid_cutpaste}}
	\end{figure}

\smallskip
The glueings of vertical sides of the rectangles shown in (see 
Figure~\ref{fig:BTrigid_cutpaste}) are such that the left side of the 
rectangle over $\tilde I_1$ is identified (by a translation) to the top right 
side of the last rectangle, over $\tilde I_{d+1}$. Moreover, by construction (by the inequalities on $\tau$ given by being in 
$\Omega_{\rho'}$), the points of discontinuity 
of the vertical glueings between the rectangles $\tilde R_2, \dots,\tilde 
R_{d+1}$ have vertical 
distance proportional  $q\rho'$ (with a proportionality constant which depends on the ratios of heights, which are by construction all  comparable. This shows that, choosing 
$\epsilon'<\epsilon/4$ sufficiently small so that \eqref{eq: precise c's} is satisfied and  $\rho'=\rho/4$, 
 one can guarantee that 
for any triples $(\lambda, \pi, \tau)$ that visit $\mathcal{U}'(C, 
 \epsilon', \rho' )$,
 the \emph{areas} (which are proportional to the widths) of $\tilde 
R _3, \cdots, \tilde R_d$ are less then $\epsilon$ and the vertical distance 
between discontinuities is less then $\rho q$.   
This concludes the proof that $X$ admits a  $(C,\epsilon,\rho)$-BT rigid rectangle representation in the sense of 
 Definition~\ref{def:BTrectangles} and thus that we can take $\mathcal{U}(C,\epsilon,\rho)$ to be equal to this $\mathcal{U}'(C',  \epsilon', \rho' )$. 
\end{proof}

%

The proof of Proposition~\ref{prop:Edegenerations} (namely the existence of BT rigid rectangle presentations)  follows immediately by this Lemma and ergodicity of Rauzy-Veech induction. 
\begin{proof}[Proof of Proposition~\ref{prop:Edegenerations}] 
{Consider the set $\mathcal{U}:=\mathcal{U}_{C,\epsilon,\rho}$ given 
by Lemma~\ref{lemma_openset}.  
In view of the ergodicity of $\hat{\mathcal{V}}$, for almost every 
$(\pi,\lambda,\tau)\in \hat{X}_d$,} there exists an increasing sequence 
$\{n_k\}_{k=0}^{\infty}$ s.t.
\begin{equation}\label{eq: returnset}
		\mathcal R^{n_k} (\pi,\lambda,\tau)\in \mathcal{U} \qquad 
		\forall k\in \N.
\end{equation}
Let us show that this implies that the surface $X$ given by 
$(\pi,\lambda,\tau)$ has an increasing sequence $(q_n)_n$ of $(C,\epsilon,\rho)$-BT rigid times. 
 To see this, for every $k\in\N$ consider the $n_k^{th}$ inducing subinterval $I^{(n_k)}$ given by Rauzy-Veech induction starting from $T=(\lambda, \pi)$ and let
$\lambda^{(n_k)}$ be the length vector of $\mathcal{V}^{n_k}(T)$. Notice that the lengths of the induced IET on  $I^{(n_k)}$ are
$|I^{(n_k)}| =\lambda^{(n_k)}_i$, for $1\leq i\leq d$.  
  Let us denote by $\tilde T_k$ the induced IET on  the interval  
 $$\tilde I^k:=\left[\tfrac{1}{8}|I^{(n_k)}|, \tfrac{9}{8}|I^{(n_k)}|-\la^{(n_k)}_{5}\right)\subset I^{(n_k)}.$$ 
We denote the interval exchanged by $\tilde T_k$  by $\tilde I^k_i$, for $1\leq 
i\leq {d+1}$. 
   Then, as in the proof of Lemma~\ref{lemma_openset} (see  
   \eqref{eq:lengths}), $\tilde T_k$ is an IET with a standard permutation 
   $\tilde{\pi}$ and length vector  
$$
\tilde{\lambda}^k:= |I^{(n_k)}|
\left [   \lambda^{(n_k)}_{1}-\lambda^{(n_k)}_{d} ,\
  \lambda^{(n_k)}_{d}-\tfrac{1}{8},\
  \lambda^{(n_k)}_{2},\
 \ldots,\  \lambda^{(n_k)}_{d-1},\
\tfrac{1}{8}\right).
$$
From Lemma~\ref{lemma_openset}, since $(\lambda^{(n_k)}, \pi^{(n_k)}, 
\tau^{(n_k)})\in \mathcal{U}$, the rectangles $\tilde R^k_i$, 
$1\leq i\leq d+1$ over the intervals $\tilde I_i^k$ give a $(C,\epsilon,\rho)$-BT 
rigid presentation. {Finally, since by construction $|\tilde I^k|\to 0$ as $k\to \infty$, the heights of the rectangles of $\mathcal{R}_k$ , which are all comparable (see the proof of Lemma \ref{lemma_openset}) grow to infinity as $k$ grows. }
\end{proof}

\subsubsection{Typical existence of BT-rigid times.}\label{sec:Edegenerations_proof}
{Let us now deduce Corollary~\ref{cor:E_BTrigidity} from  Proposition~\ref{prop:Edegenerations},  by showing that BT rigid rectangle presentations provide BT-rigidity times.}
\begin{proof}[Proof of Corollary~\ref{cor:E_BTrigidity}] 
Given $C\subset \mathbb{R}_+$ and $\epsilon>0$, choose $\rho:=\epsilon/3d$. 
By Proposition~\ref{prop:Edegenerations}, almost every $S$ in  $\mathcal{H}(1^{2g-2})$ 
admits a sequence of 
$(C,\epsilon/3,\rho)$-BT rigid rectangle presentations $\{ 
\tilde{R}^k_1,\cdots, \tilde{R}^k_{d+1}\}$, $k\in \mathbb{N}$. 
 Set 
\begin{equation}\label{eq: defrho}
q_k:=q^{(n_k)}_{1}+q^{(n_k)}_{d}, \qquad 	\Delta_k:=q_k|\tilde I_1^k|,
\end{equation} 
so that $q_k$ is the common height of $\tilde R^k_2$ and $\tilde R^k_{d+1}$ 
(since $\tilde R^k_2$ and $\tilde R^k_{d+1}$ come from zippered rectangles over 
$I^k_1$ and $I^k_d$ respectively)  and 
$\Delta_k$  the relative displacement of the continuity intervals  $\tilde 
I^k_2$ and $\tilde I^k_{d+1}$ of the IET $\tilde T_k$.

 {We will show that for each $k\in \mathbb{N}, $ $q_k$ is a $(C,\epsilon)$-rigidity time} (in the sense of Definition~\ref{def:BTrigid}).   

Consider a set of initial points $x$ which belong to the subset $S^k:= (\tilde 
R^k_1\cup 
\tilde R^k_2\cup \tilde R^k_{d+1})\setminus(\varphi_{-q}(\tilde 
R^k_3\cup\ldots\cup\tilde 
R^k_{d}))$. Notice that $S^k$, after glueing the 
boundaries of the rectangles according to the identifications, is (essentially) 
topologically 
a torus with a slit. 
In order to have rigidity, we need also to remove a \emph{bad zone}, that we call $B_k$ and is defined as follows. 
For every $k\in\N$, let $b_k := \tilde\lambda^k_1 + \cdots +  
\tilde\lambda^k_d  $ be the 
right endpoint of $\tilde I^k_d$ and let 
$\underline{\xi}_k$ and $ \overline{\xi}_k$ be respectively the smallest and 
largest heights of the discontinuities on the boundaries of $ \tilde R_2, 
\ldots , \tilde R_{d+1}$, namely
$$
\underline{\xi}_k:= \min \{  \tau_1^{(n_k)} , \tau_1^{(n_k)}+ \tau_2^{(n_k)}, \cdots,  
\sum_{i=1}^{d-1} \tau_i^{(n_k)} \}, \qquad   \overline{\xi}_k:= \max \{  
\tau_1^{(n_k)} , \tau_1^{(n_k)}+ \tau_2^{(n_k)}, \cdots,  
\sum_{i=1}^{d-1} \tau_i^{(n_k)} \}. 
$$
Define  $X_k:= S^k \backslash B_k $ (or equivalently, recalling the definition 
of $S^k$, $X_k= (\tilde R^k_1\cup \tilde R^k_2\cup \tilde R^k_{d+1}) \backslash 
B_k $), where  $B_k$ is defined to be the rectangular region (shaded in 
Figure~\ref{fig:BTrigidrectangles}) of width $\tilde \lambda^k_1$  next to the 
cramping of the singularities, given by 
\begin{equation}\label{eq:defBk}
B_k= \left[ b_k,  b_k + \tilde \lambda^k_1  \right] \times \left[ \underline{\xi}_k,   \overline{\xi}_k \right] . 
\end{equation}
Since  $|\overline{\xi}_k-\underline{\xi}_k|$ { is at most $d\rho q_k$, by 
choice of $\rho=\epsilon/d$, 
$ Area(B_k)= |\overline{\xi}_k-\underline{\xi}_k|\, \lambda^k_1\leq d\rho 
=\epsilon/3$. 
This, together with the fact that by property $(R1)$ of a 
$(C,\epsilon/3,\rho)$-rigid rectangle 
presentation (see Definition~\ref{def:BTrectangles}) $Area(R^k_3 \cup\ldots\cup 
\tilde R^k_d )<\epsilon/3$, 
we get that $Area(X_k)>1-\epsilon$. }

\smallskip
We now claim that $X_k$ is a $C$-rigidity set for the vertical flow $\varphi_\R=(\varphi_t)_{t\in\mathbb{R}}$ 
on  $S$. 
For all 
points $x\in X_k$ which have distance at least $\lambda_1^k$ from the right 
boundary of $\tilde R^k_d$, since $\Delta_k =q_k|\tilde I_1^k|\in C$, one has that
\begin{equation}\label{eq:BTshift}
{
\frac{c_0}{q_k}\leq  d(\varphi_{q_k}(x), x) = \big|\tilde {I^k_1}\big| \leq \frac{c_1}{q_k}.}
\end{equation}
using the BT dynamics of the vertical flow on $X_k$ and the vertical identifications, as in the Example of BT rigidity with $d=2$ rectangles in \S~\ref{sec:BTrectangles}.  For all points $x\in X_k$ close  $\lambda_1^k$ -close the right boundary of $\tilde R^k_d$, but \emph{below} the bad region $B_k$, \eqref{eq:BTshift} also holds trivially, since all adjacent rectangles are glued to each other below height $\underline{\xi}_k$. 
To locate $\varphi_{q_k}(x)$ for the initial points $x$ \emph{above} the region $B_k$,  
we note that due to the way $\tilde R^k$ was constructed, one has to exploit the vertical glueings between the top parts of the rectangles  $\tilde R^k_3,\ldots,\tilde R^k_d$. One can then see that $\varphi_{q_k}(x)$ has  an \emph{horizontal} displacement  from $x$ equal to $ \lambda^k_1\geq 1/8q_k$ (as in the other cases)  and no vertical displacement, 
  so that \eqref{eq:BTshift} still holds.
Thus, \eqref{eq:BTshift}  holds for every $x\in X_k$ and since we have already 
shown above that $Leb(S\setminus X_k)<\epsilon$, 
this  concludes the  proof that $(q_k)_k$ are all $(C,\epsilon)$-BT rigidity times for the 
vertical flow $\varphi^\R$ on $X$ in the sense of Definition~\ref{def:BTrigid}.
		\end{proof}

\subsection{Trimmed derivative linear bounds  using Rauzy-Veech induction}\label{sec:derivativesviaRV} 
To build good matching sets, we need (in addition to the BT rigid rectangle presentations produced in the previous \S~\ref{sec:Edegenerations}) tools to guarantee that the matching orbit has linear bounds on trimmed derivatives (see condition $(G2)$ of Definition~\ref{def:goodMn}).  
In \cite{Ul:abs} (see also \cite{FKU})  the last author, exploiting  Rauzy-Veech induction, builds many orbit segments for which linear bounds on trimmed derivative bounds hold (see Definition~\ref{def:bounded}). These good orbit segments correspond to 
 Rohlin towers given by (special times of) Rauzy-Veech induction, in the following sense.

\subsubsection{Linear bounds on Rohlin towers given by RV induction.}
Let us say that an orbit segment $\mathcal{O}_T(x,q)$ \emph{goes once up} a Rohlin tower $\mathcal{T}$ for $T$ if $x\in J$ where $J$ is the base of $\mathcal{T}$ and $q$ is the height of $\mathcal{T}$. 
\begin{defn}\label{def: derivativebound}
Given $f\in Sym Log^2 (T)$, we say that a Rohlin tower $\mathcal{T}$ with base $J$ and height $q$ \emph{gives} $B$-\emph{linear bounds on trimmed derivatives} (for $f$) for some $B>0$ if every orbit segment which goes once up $\mathcal{T}$ satisfies $B$-linear bounds on trimmed derivatives, i.e.
\[
\left|\widetilde{S}_r f'(x)\right| \leq B r 
 \ \  \text{for \ any}   \ 0\leq r \leq q\ \text{and \ any}\ \ x\in J .
 \]
\end{defn}

 \smallskip
 Given a Keane IET, if $\{ I^{(n)}, \ {n\in\N}\}$ 
 is the sequence of inducing intervals given by Rauzy-Veech induction, we denote by $( \mathcal{T}^{(n)}_j)_{n\in\N}$, $1\leq j\leq d$ the Rohlin towers  over the continuity intervals $I^{(n)}_j$ given  by
$$
\mathcal{T}^{(n)}_j:= \left\{  I^{(n)}_j,T(I^{(n)}_j), \dots, T^{h^{(n)}_j-1}I^{(n)}_j\right\}, \qquad 1\leq j\leq d,
$$
where $h^{(n)}_j$ is the first return time of  $I^{(n)}_j$ to $I^{(n)}$. We will refer to these Rohlin towers are \emph{Rohlin towers given by Rauzy-Veech induction} at time $n$.

\smallskip
The following Proposition is a technical
version of a result  proved by the last author in  \cite{Ul:abs} (refined\footnote{While the content of Proposition is essentially the central result in the work \cite{Ul:abs} of the second author,  the precise form of the accelerating set, and in particular its product structure (which is important here, as well as in \cite{FKU}, to combine these estimates with rigidity) was not stated in the proof in \cite{Ul:abs}. The adaptations to the proof in \cite{Ul:abs} that allow to show that the accelerating set has the desired product form appear in the Appendix $C$ in \cite{FKU}. }
 Rohlin towers given by Rauzy-Veech induction at times which corresponds to visits to the set $E$  give linear bounds on trimmed derivatives for any function $f$ in $SymLog^2 (T)$.  

\begin{prop}[Acceleration for bounded  trimmed derivative sums, see Proposition 8.1 in \cite{FKU}]\label{prop:accelerationset}
For any $d\geq 2$, any irreducible permutation $\pi$  and any neat compact set  $K=\{\pi\}\times \Delta_{A}\times \Theta_{A} $ (see \S~\ref{sec:loss}), there exists  $Y 
\subset \Theta_{A}$  such that the  product set $E$ given by 
$$
E
:= \{ \pi\}\times \Delta_{A} \times Y = \{ (\pi,\lambda, \tau)\in \hat{X}\, | \,\, \lambda\in \Delta_{A}, \, \tau \in Y \}
$$
has  $\hat{\mu}_{\hat{\mathcal{}}}(E)>0$, and 
 {given any $f$ in $\mathcal{P} SymLog (T)$
there exists   $B_0>0$}  
so that, if
 $(\pi,\lambda,\tau)$ 
 is recurrent to $E$
  under $\widehat{\Z}$ and $(n_\ell)_{\ell\in \N}$ is the sequence of  visits, i.e.~$$\widehat{\mathcal{Z}}^{n_\ell}(\pi,\lambda,\tau)\in E \quad \textrm{for\ every}\ \ell\in \N,$$ then for any  
  all the Rohlin towers  $( \mathcal{T}^{(n_\ell)}_j)_{\ell\in\N}$, $1\leq j\leq d$ over the inducing intervals $I^{(n_\ell)}_j$, $1\leq j\leq d$ give $B_0$-linear bounds on trimmed derivatives.
 \end{prop}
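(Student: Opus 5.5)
The statement is Proposition~\ref{prop:accelerationset}, which the paper attributes to \cite{Ul:abs} in the refined form of Proposition 8.1 in \cite{FKU}. My plan is to reproduce the structure of that argument rather than invent a new one. It has three parts: build a positive-measure product set $E$, control the Birkhoff sums $S_r f'$ along the towers by a hierarchical decomposition, and remove the closest visits by trimming.

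\textbf{Choice of $E$.} Fix the neat compact set $K=\{\pi\}\times\Delta_A\times\Theta_A$. Since $A$ is strongly positive, on $K$ the lengths $\lambda^{(n)}_j$ and the heights $h^{(n)}_j$ at a visit time are mutually comparable up to a constant $\nu(A)$. In particular $h^{(n)}_j|I^{(n)}|$ is bounded above and below.

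The set $Y\subset\Theta_A$ is then cut out by a condition on the past, i.e.\ on the backward sequence of Zorich matrices $A_{-1},A_{-2},\dots$. The condition asks that the norms of these matrices grow at most polynomially, say $\|A_{-m}\cdots A_{-1}\|\le C m^{\tau}$ for some fixed $\tau$. It also asks that the past visits to $K$ are not too sparse. Because these conditions depend only on $\tau$, $E=\{\pi\}\times\Delta_A\times Y$ has the required product structure. By Kesten--Kerstin/Birkhoff-type integrability of $\log\|Z\|$ for the Zorich cocycle, together with the ergodic theorem applied to $\widehat{\Z}^{-1}$, one can choose the constants so that $\hat\mu(E)>0$.

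\textbf{Decomposition of Birkhoff sums.} Take an orbit segment $\mathcal{O}_T(x,r)$ with $x\in I^{(n_\ell)}_j$ and $r\le h^{(n_\ell)}_j$. I decompose it along the past Rauzy--Veech towers at the times $n_\ell-m$ for $m=1,2,\dots$. This writes $S_r f'(x)$ as a sum of special Birkhoff sums $S^{(n_\ell-m)}f'$ over levels $I^{(n_\ell-m)}$. The number of pieces at each level is bounded by the norm of the corresponding product of matrices.

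The key cancellation comes from symmetry. For $f\in\mathcal{P}SymLog(T)$, the symmetric condition $\sum C_i^+=\sum C_i^-$ makes the $1/x$ contributions from the left and right of each singularity cancel to leading order. As a result each special Birkhoff sum, once its closest visit is removed, is of order $h^{(k)}(1+\log\text{-type error})$ rather than $h^{(k)}\log h^{(k)}$.

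\textbf{Summation.} I sum over levels, using the condition defining $Y$. The backward matrix norms grow polynomially while the heights $h^{(n_\ell-m)}$ decay exponentially in $m$, because of the positivity returns to $K$. This makes the series converge, giving
\[
|\tilde S_r f'(x)| \le B_0\, h^{(n_\ell)}_j .
\]
Comparability of the heights within $K$ then converts this into the bound $\le B_0 r$ in the form stated in Definition~\ref{def: derivativebound}. The trimming removes only the single closest visit per singularity. Within a tower from a visit to $E$, each singularity is approached at distance $\gtrsim 1/h$ from at most one side, up to the one trimmed point, so removing only the closest of the two visits (rather than both, as in \cite{Ul:abs}) costs at most $O(h)$.

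\textbf{Main obstacle.} I expect the hardest step to be the cancellation estimate for the special Birkhoff sums at intermediate levels. The difficulty is the failure of Denjoy--Koksma for IETs. One must show that the uniformly distributed pieces of the orbit inside $I^{(k)}$ produce symmetric contributions $\sum 1/(T^ix-\beta)$ up to $O(h^{(k)})$. This requires quantitative unique ergodicity (balanced lengths on $K$) at every level that is summed, which is exactly what the returns to $E$ are designed to provide.
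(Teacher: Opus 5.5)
Your outline follows the same architecture as the argument the paper imports from \cite{FKU} (Proposition 8.1 there and its Appendix C, refining \cite{Ul:abs}). That architecture is: a product set $\{\pi\}\times\Delta_A\times Y$ with $Y$ defined by a condition on the backward cocycle, a decomposition of $S_rf'$ into special Birkhoff sums at earlier levels, cancellation from symmetry, and geometric summation. Your remark that trimming only the closer of the two visits costs $O(h)$ at a balanced time is also fine.

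\textbf{First gap: the condition defining $Y$.} As written, $\|A_{-m}\cdots A_{-1}\|\le Cm^{\tau}$ for all $m$ holds only on a null set. The Zorich cocycle has positive top Lyapunov exponent, so backward products grow exponentially almost everywhere, and you get $\hat\mu(E)=0$. The condition also contradicts your own summation. Since $h^{(n_\ell)}=(A_{-m}\cdots A_{-1})^{T}h^{(n_\ell-m)}$, we have $\|A_{-m}\cdots A_{-1}\|\,h^{(n_\ell-m)}\gtrsim h^{(n_\ell)}$. So exponential decay of $h^{(n_\ell-m)}$ forces exponential growth of the products. If you bound the number of pieces at level $n_\ell-m$ by the norm of the product from that level up to $n_\ell$, every term of your series is $\gtrsim h^{(n_\ell)}$, and the total is at least of order $n_\ell h^{(n_\ell)}\approx h\log h$.

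The decomposition has to be greedy: the pieces at one level all lie inside a single piece of the next level, so their number is bounded by the norm of one block. Accordingly, $Y$ must constrain single backward blocks, for instance $\|A_{-m}\|\le Ce^{\eta m}$ with $\eta$ small compared with the backward decay rate of the heights, together with a positive frequency of backward visits to the balanced set. Such a set has positive measure by Zorich's $\log$-integrability and the ergodic theorem for $\widehat{\mathcal Z}^{-1}$.

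\textbf{Second gap: the cancellation estimate.} This is the core of \cite{Ul:abs}, and you assert it rather than prove it. The mechanism you describe is also wrong. Symmetry in Definition~\ref{def:SymLog} is only the global identity $\sum_iC_i^+=\sum_iC_i^-$, so there is no cancellation between the two sides of a single $\beta_i$. The terms of order $h\log h$ cancel only after summing over all singularities. They cancel only because the orbit has the same density on both sides of every singularity at every scale between $\lambda^{(n_\ell)}$ and $1$, up to errors summable over scales. You must show two things:
\begin{enumerate}
\item the left/right density mismatch at scale $\lambda^{(n_\ell-m)}$ decays geometrically in the number of positive blocks between $n_\ell-m$ and $n_\ell$ (Hilbert-metric contraction);
\item this decay beats the $\lesssim\log\|A_{-m}\|$ dyadic scales contributed by step $m$.
\end{enumerate}
This is where the condition on the past is really used. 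A per-level bound of the form $h^{(k)}(1+\log\text{-type error})$ is not enough, since summed over levels it gives back $h\log h$.

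Two further points belong to this step. Returns to $E$ do not make the intermediate levels balanced, and at unbalanced levels no bound linear in $h^{(n_\ell-m)}$ need hold for special Birkhoff sums of $f'$. So the decomposition must run along backward visits to the balanced set. You also have to estimate the closest visits of the individual pieces, which the global trimming does not remove.

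\textbf{A smaller error.} Comparability of heights relates $h^{(n_\ell)}_j$ to $h^{(n_\ell)}_{j'}$, not to $r$, so it cannot turn $B_0h^{(n_\ell)}_j$ into $B_0r$. The bound to prove, and the one used later (Definition~\ref{def:bounded} and Step 5 of the proof of Proposition~\ref{prop:Egoodmatchings}), is $B_0q$ with $q$ the height.
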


 \subsubsection{Persistence of linear bounds on large towers.}
We will need also the following Lemma, that shows that \emph{large} Rohlin towers obtained by cutting and stacking a bounded number of Rohlin towers which give linear bounds on trimmed derivatives, also give linear bounds on trimmed derivatives (up to changing the constant in the linear bound).

\begin{lemma}[Persistence of linear bounds, Lemma 8.4 in \cite{FKU}.]\label{lemma:finitecombination}
Given $T$ Keane and $f\in SymLog^2(T)$, assume that the sequence $(n_k)_k$ is  such that the Rohlin towers
 $\mathcal{T}^{(n_k)}_j$,  $1\leq j\leq d$,  given by Rauzy-Veech induction at times  in the  subsequence  $(n_k)_k$  give $\Cb_0$-linear bounds on trimmed  derivatives for $f$. 
 
 Assume furthermore that $(m_k)_k$ is obtained by \emph{shifting} $(n_k)_k$, i.e.~that there exists an integer $N\in \N$ such that 
 $$m_k := n_k+N, \qquad \text{ for\ every}\  k\in \N.$$ 
\noindent Then if  $(\mathcal{T}^k)_{k\in\N}$ is a sequence of towers of the form $\mathcal{T}^k:= \mathcal{T}^{(m_k)}_{j_k}$, where  $j_k\in\{1,\ldots,d\}$  are such that there exists $\nu>0$  such that the tower $\mathcal{T}^{(m_k)}_{j_k}$ has measure
$$\left|\mathcal{T}^{(m_k)}_{j_k}\right|= \lambda^{(m_k)}_{j_k} h^{(m_k)}_{j_k}\geq \nu  >0 \quad\text{for all}\quad k\in\N,$$
then
{$f$ has $B_1$-linear bounds on trimmed derivatives along $(\mathcal{T}^k)_{k\in\N}$ for some constant  $B_1>0$ which depends only on $B_0, N$ and $\nu$.} 

 \end{lemma}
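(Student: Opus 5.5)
The plan is to reduce the trimmed Birkhoff sums along $\mathcal{T}^k$ to a bounded number of sums along towers at the earlier time $n_k$. Those earlier sums are controlled by hypothesis, and the bounded number of pieces is what keeps the constant uniform.

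\emph{Combinatorics of the shift.} Between times $n_k$ and $m_k=n_k+N$, the Rauzy-Veech matrix $A(n_k,m_k)=A_{n_k+1}\cdots A_{m_k}$ is a product of $N$ elementary matrices with entries $0,1$. Its entries are therefore bounded by a constant $D=D(N,d)$, e.g.\ $D\le 2^N$. We have $h^{(m_k)}_{j}=\sum_i A(n_k,m_k)_{ij}h^{(n_k)}_i$, and the base $I^{(m_k)}_{j}$ is contained in some base interval $I^{(n_k)}_{i_0}$. Consequently every orbit segment going once up $\mathcal{T}^{(m_k)}_{j_k}$ is a concatenation of at most $dD$ orbit segments, each going once up some tower $\mathcal{T}^{(n_k)}_i$.

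\emph{Decomposition of a partial sum.} Fix $x$ in the base of $\mathcal{T}^k$ and $0\le r\le h:=h^{(m_k)}_{j_k}$. Write $\mathcal{O}_T(x,r)$ as at most $dD$ complete passages up towers $\mathcal{T}^{(n_k)}_{i}$, followed by one incomplete initial piece of such a passage. By the hypothesis applied to each piece, the \emph{trimmed} sum of $f'$ along each piece is bounded by $B_0$ times its length, hence by $B_0 r$ in total.

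The trimming, however, does not commute with concatenation. The trimmed sum over the whole segment removes only the overall closest visit $\ell_\beta$ for each $\beta$, whereas the piecewise bound removed one visit per piece. The difference is therefore a sum of at most $dD\cdot\#Disc(T)$ terms $f'(T^jx)$, each at a non-closest visit to some $\beta$ within a single piece.

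\emph{Main obstacle.} The hard step is bounding these extra terms by $O(h)$. Each such term has size about $C_\beta/\mathrm{dist}(T^jx,\beta)$, so it suffices to show that every visit within one tower passage lies at distance $\gtrsim 1/h$ from $\beta$. Two facts supply this:
\begin{itemize}
\item[(a)] the floors of $\mathcal{T}^{(m_k)}_{j_k}$ are disjoint intervals of length $\lambda^{(m_k)}_{j_k}$, and $\lambda^{(m_k)}_{j_k}\ge \nu/h$ by the measure assumption;
\item[(b)] two distinct visits to a neighbourhood of $\beta$ are separated by the spacing of orbit points along the tower, which is at least the floor width.
\end{itemize}
Hence only the closest visit to $\beta$ can lie within $\nu/(2h)$ of $\beta$, and all other visits satisfy $|f'(T^jx)|\le 2C_f h/\nu$. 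The regular part $g$ of $f$ contributes at most $\|g'\|_\infty r$. Since $r\le h$, summing everything gives
\[
|\tilde S_r f'(x)|\le \Big(dD\,B_0+\tfrac{2dD\#Disc(T)C_f}{\nu}+\|g'\|_\infty\Big)\,h .
\]
This is linear in $h=q$, matching Definition~\ref{def:bounded}, with a constant depending only on $B_0$, $N$ and $\nu$ (and on $f$).

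One subtlety needs care. If the required bound is $Br$ for small $r$ rather than $Bh$, the extra terms must be bounded relative to $r$. For this I would use $r\ge$ (number of pieces $-1$) times the minimal tower height, together with the comparability of tower heights at times $n_k$ implied by the measure lower bound. If that comparability fails, I would state the bound in the form $Bq$ of Definition~\ref{def:bounded}, which is what is used later.
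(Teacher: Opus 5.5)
Your argument is correct and follows essentially the same route as the proof the paper defers to (the cited Lemma 8.4): the bounded norm of $A(n_k,m_k)$ splits each passage up $\mathcal{T}^{(m_k)}_{j_k}$ into boundedly many passages up towers at time $n_k$, and $\lambda^{(m_k)}_{j_k}\ge \nu/h^{(m_k)}_{j_k}$ controls the boundedly many closest visits that are trimmed piece by piece but not globally. This gives the $B_1 q$ form of Definition~\ref{def:bounded}, which is the form used in Step 5 of the proof of Proposition~\ref{prop:Egoodmatchings}; drop your last paragraph's claim that the measure bound on one tower at time $m_k$ forces comparable heights at time $n_k$, since it does not, so that justification of the $B_1 r$ variant fails — but that variant is not needed.
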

\noindent The proof of this Lemma (which is a simple exercise of decomposition of Birkhoff sums) can be found\footnote{The statement of the Lemma is a specical case of Lemma 8.4, since, as remarked in \cite{FKU} just after the statement,  the first assumption (A1) of Lemma 8.4 holds automatically  if
 the product of the cocycle matrices
  between time $n_k$ and $m_k$ has uniformly bounded norm independently on $k$, as assumed in Lemma~\ref{lemma:finitecombination} here.}  in \S~8.2.1 in \cite{FKU}.

\subsection{Proof of existence of good matchings}\label{sec:Egoodmatchingsproof}
{In this subsection we prove  Proposition~\ref{prop:Egoodmatchings}, finding times where BT rigid Rohlin towers (coming from a BT rigid rectangle presentations) give linear bounds on trimmed derivatives.  The idea is again to produce this times using Rauzy-Veech induction, by considering RV times  $(n_\ell)_\ell$ which correspond to visits  to the product set $E$ given by Proposition~\ref{prop:accelerationset}, followed
by a visit at time $m_\ell = n_\ell+N $  (after a finite number $N$ of iterates)   to the open set  (given by Lemma~\ref{lemma_openset}) which gives the desired $(C,\epsilon,\rho)$-BT rigid rectangle presentations. The persistence of the trimmed derivative bounds (given by Lemma~\ref{lemma:finitecombination}) is used to show that linear bounds  on trimmed derivatives (which hold by construction at each time $n_\ell$) still hold at the shifted times  
 $(m_\ell)_\ell$ for the matching orbit.

\begin{proof}[Proof of Proposition~\ref{prop:Egoodmatchings}] 
Notice first  that 
 it suffices to prove that for any given $L\in \N$ there is a full measure set of IETs with 
a standard permutation $\pi$, such that the conclusion is true,  since 
 then we can intersect over $L \in \N$ and still get a full measure set. {Also,  choosing sequence  $\rho_n\to 0$, it suffices to prove that there is a full measure set of IETs for which the conclusion holds for a given $\rho\in (\rho_n)_n$ (since a $(C,\epsilon,\rho_n)$-rigid presentation is also a $(C,\epsilon,\rho)$ rigid presentation for every $\rho>\rho_n)$. 
  Let us hence fix $L\in \mathbb{N}$ and $0<\rho<1$.  }

\smallskip
 We now  split the proof in steps for clarity. 
 
 {
 \smallskip
\noindent{\it Step 1 (Definition of  the return set)}:   
Let us define a suitable neat compact set $K=\{\pi\} \times \Delta_{A} \times \Theta_{A}$  (see \S~\ref{sec:loss} for definitions and notation), to which we will apply Proposition~\ref{prop:accelerationset} to get an acceleration set for $B_0$-linear bounds on trimmed derivatives. Let  $ \Omega_{\rho'}\subset \Theta_\pi$ be the set given by \eqref{eq:defOmega'} where $\rho'=\frac{\rho}{4}$.
Let $A^-:=A_{-m}\cdots A_{-1}$ where  $(A_i)_{i\in \mathbb{Z}}$ is the sequence associated as in  \eqref{eq:seqA} to a Birkhoff generic $(\pi, \lambda,\tau)$ with   $\tau\in \Omega_{\rho'}$, and 
$m$ is chosen sufficiently large so that $
 \Theta_{A^-}\subset \Omega_{\rho'}\subset \Theta_\pi
 $
(this is possible since $\Omega_{\rho'}$ is open and for a Birkhoff generic  $\tau\in  \Omega_{\rho'}$,  the cones $\Theta_{A_{-m}\cdots A_{-1}}$ have diameter going to zero as $m$ grows, see \cite{MUY} or \cite{Be:back}). Then set $A=A_{-n}\cdots A_{-1}$ where $n$ is chosen so that $n>m$, so that $A>0$ can be written as $A=A' A^-$ (taking $A:=A_{-n}\cdots A_{-m-1}$), it is neat\footnote{This can be achieved for example by adding a block of length more than $n/2$ of top (or bottom) Rauzy-Veech moves, see \cite{AGY}.} and $\pi^{(-n)}=\pi$.
Consider now the (neat) compact set $\{\pi\}\times \Delta_{A}\times \Theta_{A}$ and let $Y\subset \Theta_{A}$ and $B_0>0$ be the set and the constant given by  Proposition~\ref{prop:accelerationset}. 

For any $\epsilon>0$ and $C=[c_0,c_1]\subset \mathbb{R}_+$, consider now the open set $\mathcal{U}(C,\epsilon, \rho)$ given by (the proof of)  Lemma~\ref{lemma_openset}, which has the product form $\mathcal{U}=\Omega_{\rho'}\times \Lambda'$, where $\Lambda':= \Lambda_{c_0',c_1',\epsilon'}$ is given by  \eqref{eq:defLambda'} and $c_0', c_1', \epsilon' $ are chosen as in the proof of Lemma~\ref{lemma_openset}. We now claim that the set $F=F(\rho,C,\epsilon)\subset \hat{X}_d$ given by 
\begin{equation}\label{def:Ek}
F=F(\rho,C,\epsilon):=E \cap \mathcal{Z}^{-n} (\mathcal{U}(C,\epsilon,\rho)), \qquad \text{where}\ E:={ \{ \pi \} \times \Delta_{A}\times Y },
\end{equation}
has $\mu_{\widehat{Z}}(F)>0$. The proof of this will take the rest of this Step. Let us first consider 
the set $
{( \{ \pi \} \times \Delta_{A}\times \Theta_A )} \cap \mathcal{Z}^{-n} (\mathcal{U}(C,\epsilon,\rho))$, which contains $F$ and is by construction open, and show that it is  non-empty.
To see this, consider any $(\pi,\lambda, \tau)$, where  $\tau\in \Theta_\pi$ and $\lambda$ is  such there $\lambda := A \lambda'/ \Vert A\lambda'\Vert$ for some $\lambda'\in  \Lambda'$.  Then, by construction  
$(\pi,\lambda, \tau)\in E=\{\pi\}\times \Delta_{A}\times Y$ (since $A\lambda'/ \Vert A\lambda'\Vert \in \Delta_{A}$ by definition \eqref{eq:defLambda'} of $\Delta_A$). Moreover, since $A$ was defined so that $A=A' A^-$, by \eqref{bisided_Markov} we have that
$$
\mathcal{V}^n(\pi, \lambda,\tau)\in \mathcal{V}^n( \{\pi\}\times \Delta_{A' A^-}\times \Theta_{\pi})= \{\pi\}\times \Delta_{d}\times \Theta_{A^-A'}\subset \{\pi\}\times \Delta_{d}\times \Omega',
$$
where the last inequality follows since
$\Theta_{A'A^-}\subset \Theta_{A^-}$ by \eqref{eq:inclusion} and 
 $A^-$ was defined so that $ \Theta_{A^-}\subset  \Omega'$.  
Since by definition the $\lambda$-coordinate of $\mathcal{V}^n(\pi, \lambda,\tau)$ belongs to $  \Lambda'$,  we have that $(\pi, \lambda,\tau)\in E$ also belongs to $\mathcal{V}^{-n}(\Lambda'\times \Omega')=\mathcal{V}^{-n}(\mathcal{U}(C,\epsilon,\rho))$, and hence that $(\pi, \lambda,\tau)$ belongs to $F$.  
Remark that $\mu_{\widehat{\mathcal{Z}}}$ gives positive measure to open non-empty sets. Since $F$ is contained in this open, non-empty set, and the Lebesgue measure of $Y$ is positive, it follows that $\mu_{\widehat{\mathcal{Z}}}(F)>0$.

\smallskip
\noindent {\it Step 2 (Definition of typical IETs and of  return times):}
Let us now define the set of full measure IETs which will satisfy the conclusion. By ergodicity 
  of $\widehat{\mathcal{V}}$, there exists a a set  $\widehat{X}^\ast$ such that $\widehat{\mu}_{\mathcal{Z}} (\hat{X}\backslash \widehat{X}^\ast) =0$,
such that any $(\pi,\lambda,\tau)\in \widehat{X}^\ast$ is Birkhoff generic, and thus, in particular 
its orbit under $\widehat{\mathcal{V}}$ visit any open, non empty set in $\widehat{X}^\ast$. Since the invariant measure $\mu_\mathcal{V}$ on the IET space $X$ is given by the pushforward  $\mu_\mathcal{Z} = p_* \widehat{\mu }_\mathcal{Z}$ and is equivalent to the Lebesgue measure on $X$, it follows that there exists a full measure set $X^\ast$ of IET $T=(\pi, \lambda)$ that have a \emph{lift}, i.e.~a triple $(\pi, \lambda, \tau)$ in the fiber $p^{-1}(\pi,\lambda)$ of the projection map $p:\widehat{X}\to X$, which belongs to  $\widehat{X}^\ast$.

For any non-empty $C=[c_0,c_1]\subset \mathbb{R}_+$ and $0<\epsilon<1$, since the  set $F=F(C,\epsilon,\rho)$  defined as in \eqref{def:Ek}   has $\widehat{\mu}_{\V}(F)>0$ by Step 1, 
for any  $(\pi,\lambda)\in X^\ast$, there exists a lift  $(\pi,\lambda,\tau)\in \widehat{X}^\ast$} and
 an increasing sequence $(n_k)_{k\in\N}$ such that
\begin{equation}\label{eq:returns}
\widehat{\mathcal{V}}^{n_k} (\pi,\lambda,\tau) \in F(C,\epsilon,\rho) := E\cap \hat{\mathcal{V}}^{-N}(\mathcal{U}(C,\epsilon,\rho)), \qquad \forall \ k \in \N.
\end{equation}
Moreover, setting $(m_k)_{k}$ to be the \emph{shifted sequence} given by $m_k:=n_k+n$ for each $k\in \mathbb{N}$, by definition \eqref{def:Ek} of $F$ as preimage, we also have 
\begin{equation}\label{eq:returns2}
\widehat{\mathcal{V}}^{m_k} (\pi,\lambda,\tau) = \hat{\mathcal{V}}^{n}(\widehat{\mathcal{V}}^{m_k} (\pi,\lambda,\tau))\in \mathcal{U}(C,\epsilon,\rho), \qquad \forall \ k \in \N.
\end{equation}

\smallskip
\noindent {\it Step 3 (Shifted return times give BT rigid tower presentations)}: 
We will now use the  sequence  $(m_k)_{k}$ of induction times defined in Step 2 
  to build the desired sequence of good matchings $(M_n)_n$.
it gives a sequence of $(C, \epsilon, \rho)$-BT rigid (Rohlin) towers presentations for the IET $T=(\pi,\lambda)$ in the sense of Definition~\ref{def:BTtowers}.
For any $k\in \mathbb{N}$, since by \eqref{eq:returns2} $m_k$ is a visit to the set {$\mathcal{U}(C,\epsilon,\rho)$}, by Lemma~\ref{lemma_openset}, the translation surface $S$ associated to the lift $(\pi,\lambda, \tau)$ of $T$ chosen in Step 2 
 has a $(C,\epsilon, \rho)$-BT rigid rectangle presentation 
 $ \mathcal{R}_k:=\mathcal{R}_k(S):= \{ \tilde R^k_1,\, \cdots,  \tilde R^k_{d+1} \,; \, \sim_{\mathcal{R}_k} \}$, $ k\in \mathbb{N}$ 
  (see also the proof of Proposition~\ref{prop:Edegenerations}). 

This sequence $( \mathcal{R}_k )_k$ of  BT rigid \emph{rectangles} presentations for $S$ produce a sequence of BT rigid \emph{towers} presentations 
 $ \mathcal{T}^k(T):= \{ \mathcal{T}^k_1, \cdots,  \mathcal{T}^k_{d+1} \}$, $k\in \mathbb{N}$, for $T$,  
through the relation between rectangles and towers explained in \S~\ref{sec:rectangles_vs_towers}.  
For $1\leq i\leq d+1$, the base of the Rohlin tower $ \mathcal{T}^k_1$ is the base $\tilde I^k_i$ of the rectangle $\tilde R^k_i$. We will denote by $\tilde q^k_i$ the (integer) height of the tower $ \mathcal{T}^k_i$ and set
$q_k:=\tilde q^k_2=\tilde q^k_{d+1}$ to be the common height of $\mathcal{T}^k_2$ and $\mathcal{T}^k_{d+1}$.   
{Let $\Delta_k:=q_k|\tilde I_1^k|$. By Lemma~\ref{lemma_openset}, we  have that $c_0<\Delta_k<c_1$, so this gives that $q_k$ is a $C$-BT rigidity time with $C=[c_0,c_1]$.  }

The \emph{cramping} of the singularities of the rectangle presention $\mathcal{R}_k(S)$ (condition $(iv)$ of Definition~\ref{def:BTrectangles}) induce \emph{cramping} of 
 discontinuities of $T$ (in the sense of $(v)$ of Definition~\ref{def:BTtowers}); more precisely, {if $0\le \underline{\ell}_k<\overline{\ell}_k'<\tilde q_k$ are such that $\underline{\ell}_k$ is the first iteration of $\tilde I^k$ via $T$ that sees a discontinuity (or in other words, this $\underline{\ell}_k$ is the height of the lowest discontiinuity) 
 and $\overline{\ell}_k$ is the last one before first return (or in other words, this $\underline{\ell}_k$ is the height of the lowest discontinuity), then 
 \begin{equation}\label{eq: tausareclose}
 \overline{\ell}_k-\underline{\ell}_k\le \rho \cdot  q_k.
 \end{equation} 

{Finally, we claim that for any point $x\in I\setminus X_n$, 
and every $\beta\in Disc(T)$ if $x_{\beta}^+$(respectively $x_{\beta}^-$) is the closest visit to $\beta$ from the right (resp.~left) in time $q$, and $(a^+,b^+)$ (respectively $(a^-,b^-)$) is the level of $\mathcal T_2$ (resp. $\mathcal T_{d+1}$) which contains $x_{\beta}^+(x_{\beta}^-)$, then
\begin{equation}\label{eq: distfromdisc}
\max\{\big||m^+_{\beta}(x,q_k)-\beta|-|m^+_{\beta}(x,q_k)-a^{+}|\big|, \big||m^-_{\beta}(x,q_k)-\beta|-|m^-_{\beta}(x,q_k)-b^{-}|\big| \}\le |\tilde I^k_3|+\ldots+|\tilde I^k_d|.
\end{equation}
%
This follows from 
by the definition of $\Omega_{\rho_0}$ given in the proof of Lemma~\ref{lemma_openset}, since by considering the vertical orbits starting at the base of the zippered rectangles $R_1$, $R_d$ as the return sets for the horizontal translation flow, we get that each horizontal orbit of such points crosses the zippered rectangles $\tilde R_3,\ldots,\tilde R_d$ at most once (for a detailed proof of this claim, we refer the reader to Proposition 4.1 in \cite{BFT}).  In view of the bound on $ |\tilde I^k_i|$ for $i=3,\ldots,d$ and \eqref{eq: distfromdisc}, the final condition follows}.

Thus, this concludes the proof that the \emph{towers} presentations 
 $ \mathcal{T}^k(T):= \{ \mathcal{T}^k_1, \cdots,  \mathcal{T}^k_{d+1} \}$ are $(C,\epsilon, \rho)$-BT rigid presentations (according to Definition~\ref{def:BTtowers}) for any $k\in \mathbb{N}$. 
}

\smallskip

\noindent {\it Step 4. (Choice of reference orbit)}: 
 {We now choose for every $k\in\N$ 
a point $z_k\in I$ which will serve as a reference point when constructing the matching orbit segments. We take for example\footnote{{It is worth mentioning, that the point $z_k$ can be chosen among many other points and, in particular, does not have to be in the base of a Rohlin tower. In the end, we are proving that the matching set is \emph{large}, and the matching property is \emph{transitive}. Hence every point from the final matching set would work as well.}} $z_k$ to be the mid-point of the interval $\tilde I_2^k$, that is the base of the tower $\mathcal T_2$.} 
 Let  $q_k$,  $k\in \N$, be the common height of the 'large' BT rigid towers $\mathcal{T}^k_2$ and $\mathcal{T}^k_{d+1}$ constructed in Step 2.
The orbit $\mathcal{O}_T(z_k,q_k)$ of lenght $q_k$ of the reference point $z_k$ will be our reference orbit for matchings.

\smallskip

\noindent {\it Step 5. (Large towers at shifted return times give linear trimmed derivative bounds)}: Consider any $f\in\pSymLog^2 (T)$. 
Since by its definition  {the recurrence set  $F\subset E$ (see \eqref{def:BKL} in Step 1)}
 and by Step 2 $(n_{k})_{k\in\N}$ is a subsequence of the returns for $(\pi,\lambda,\tau)$ to $E$, by Proposition \ref{prop:accelerationset}, there exists $B_0$ such that all the Rohlin towers $\mathcal{T}^{(n_k)}_{j}$ with $1\leq j\leq d+1$, $k\in \N$ give  
 $B_0$-linear bounds on trimmed derivatives for $f$.  

Consider now the shifted return sequence $(m_k)_k$ and let $n$ be the integer such that $m_k=n_k+n$ for each $k\in \N$. Consider the 'large'  Rohlin towers  $\mathcal{T}^k_2$ and $\mathcal{T}^k_{d+1}$ 
in the Rohlin tower rigid representations $\mathcal{T}^k$ build in Step 3 
and remark that $Area(\mathcal{T}^k_2)\geq 1/8$ and $Area(\mathcal{T}^k_{d+1})\geq 
1/8$ for every $k $ sufficiently large. Thus, by the persistence of linear 
bounds  Lemma~\ref{lemma:finitecombination}, there exists a constant $B_1>0$, 
depending only on $B_0$, $n$ and $\nu=1/8$, such that the Rohlin towers  
$(\mathcal{T}^k_2)_k$ and $(\mathcal{T}^k_{d+1})_k$ all give $B_1$-linear bounds on 
trimmed derivatives for $f$. In particular, since any subsegment 
$\mathcal{O}(z_k,r)$ of the orbit  $\mathcal{O}(z_k,q_k) $ crosses at most once 
the Rohlin towers over $\mathcal{T}^k_2$ and  $\mathcal{T}^k_{d+1}$ ({since $z_k$ was chosen in Step 4 to be the midpoint of $\tilde I_2^k$}), and  $q_k=\tilde q_k^2=\tilde q_k^{d+1}$,  
we have that, for any $r\leq q_k$, 
$$
|S_r f' (z_k)|\leq |S_r f' (z_k)| \leq B_1 \tilde q_k  \text{ if}\ 0\leq r< q_k,
$$
so that $\mathcal{O}(z_k,q_k) $ also satisfies $B_1$-linear bounds on trimmed Birkhoff sums for every $k\in \N$.

\smallskip
\noindent {\it Step 6. (Definining matching sets and matchings)}: 
 We will now define, for each $k\in \N$, a subset $M_k:=M_k^L\subset \mathcal{T}^k_2\cup \mathcal{T}^k_{d+1}$ and show that it is a $L$-fold $(z_k,q_k)$-matching set.  We set  $M_k:= [0,1]\backslash X_k$ where $X_k=X_k^L$ is a region 
which we throw away since we cannot define matchings either already for the 
orbits of length $q_k$, or in order to have an $L$-fold matching (see 
Definition~\ref{def:Lmatchingset}):
\begin{equation}\label{eq:defXk}
X_k=X_k^L:=\left(\bigcup_{i\in\{3,\ldots,d\}}\bigcup_{j=-Lq_k}^{q^k_i-1}\tilde I^k_{i}\right)\ \cup\  \bigcup_{i=0}^{L-1} \left(\bigcup_{j=\underline{\ell}_k-iq_k}^{\overline{\ell}_k-iq_k} T^j[\partial\tilde I^k_{d+1}, \partial\tilde I^k_{d+1}+|I^k_1|]\right).
\end{equation}
Notice that the first part of the union in \eqref{eq:defXk} contains the 'small' towers $\tilde {\tau}_i^k$ over $\tilde I^k_i$ for $i=3,4,5$, as well as the points that enter them in time at most $ L q_k $; the second part of the union in  \eqref{eq:defXk} contains the intervals which belong to the \emph{bad zone} $B_k$ (which have width $|I_1^k|$ and endpoints at height between $\underline{\ell}_k$ and $\overline{\ell}_k$, which are the smallest and largest of the discontinuities heights) where one does not control BT rigidity (see the definition \eqref{eq:defBk} of $B_k$  in the proof of Proposition~\ref{prop:Edegenerations} in \S~\ref{sec:Egoodmatchingsproof}), as well points whose orbit enter the bad zone after a multiple $l q_k$ with $1\leq l< L$ (since throwing these points is needed to control $L$ fold matchings). {In particular, if $X_k^1$ denotes the set \eqref{eq:defXk}, $X_k^1$ is union of the small towers  $\tilde {\mathcal{T}}_3^k\cup \ldots\cup \tilde {\mathcal{T}}_d^k$ and the bad zone $B_k$, we have that
\begin{equation}\label{eq:Ltimesnotbad}
\mathcal{O}(T^{j q_k}x, q_k)\cap X_k^1=\emptyset, \quad \text{for}\ j=0,\dots, L-1.
\end{equation}}
We now illustrate that for every $k\in\N$, 
the set $M_k:=I\setminus X_k$ is a {$L$-fold } $(z_k,q_k)$-matching set.  We note that due to the way $\tilde I^k$ was defined, the left-endpoints of the floors of the towers $\mathcal{T}^k_1$ and $\mathcal{T}^k_2$  over $\tilde I^k_1$ and $\tilde I^k_2$ do not contain any 
discontinuity of $T$. Moreover, since $\mathcal{T}^k_1$, $\mathcal{T}^k_2$ and $\mathcal{T}^k_{d+1}$ 
were cut from the Rohlin towers over the 'large' subintervals of  $I^{(n_k)}$ (see Figure~\ref{fig:BTrigid_cutpaste}), we have the following identifications: 
 \begin{align}\label{eq: gluing1}
		T^{i}(\tilde I^k_1) & \text{ is glued to the right-hand side of 
		}T^{q_k-q_k'+i}(\tilde I^k_{d+1})\text{ for }i=0,\ldots q_k'-1,
\\ \label{eq: gluing2}
	T^{q_k'+i}(\tilde I^k_2) & \text{ is glued to the right-hand side of 
	}T^{i}(\tilde I^k_{d+1})\text{ for }i=0,\ldots q_k-q_k'-1,
\end{align}
where $q_k'$ is the height of the tower $\mathcal T_1^k$.
A crucial consequence of these identifications is that one can verify that 
\begin{equation}\label{eq: partialrigidity}
	\max_{x\in M_k}|T^{q_k}(x)-x|<\frac{1}{q_k},
	\end{equation}	
which shows that	$\{q_k\}_{k\in\N}$ are BT rigidity times for points in $M_k$.

{From these observations, we claim that it follows that given any  $x\in I\setminus X_k$,  $\mathcal{O}_T(x,q_k)$ is matched to the reference orbit segment $\mathcal{O}_T(z_k,q_k)$, so that $M_k$ is a $\mathcal{O}_T(z_k,q_k)$-matching set.} The choice of the matching intervals depends on the initial 
position of $x$ in $M_k$ and exploits BT rigidity times and vertical identifications. We 
illustrate all possible matchings in Figure \ref{fig: matchings}.   There are six possible types of matchings, corresponding to six possible regions of $M_k$ in which $x$ can be located (these correspond to the six subpictures in  Figure \ref{fig: matchings}; in each the initial location of $x$ is in the shaded region in that pictures). In color in the figures of  Figure \ref{fig: matchings}, we illustrate the matching intervals which match $\mathcal{O}_T(x,q_k)$ to  $\mathcal{O}_T(z_k,q_k)$; 
 different colors in the figure correspond to  matching intervals of different lengths (so that in a $k$-matchings there are $k$ colors).   One can check that all $M_k$ are $\kappa$-$(z_k,q_k)$ matchings with $\kappa=3$ (since in all subfigures of Figure \ref{fig: matchings} at most $3$-colors, i.e.~three different lengths of matching intervals, are used). 

\begin{figure}
	\includegraphics[scale=0.7]{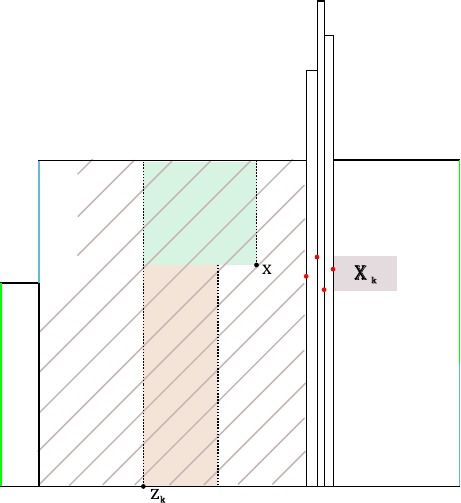}
	\hspace{13mm}
	\includegraphics[scale=0.7]{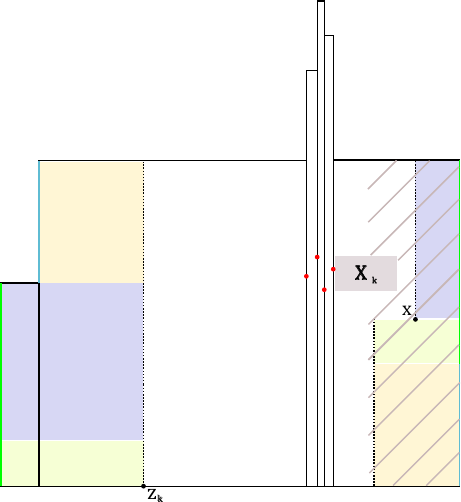}
	\vspace{3mm}
	
	\includegraphics[scale=0.7]{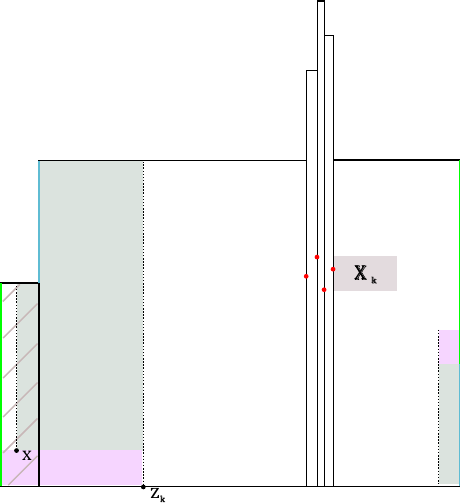}
	\hspace{13mm}
	\includegraphics[scale=0.7]{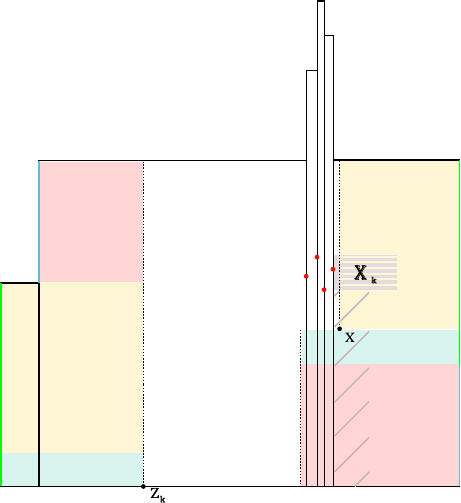}
	\vspace{3mm}
	
	\includegraphics[scale=0.7]{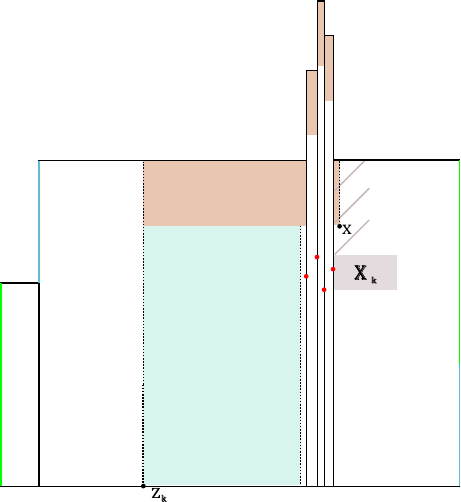}
	\hspace{13mm}
	\includegraphics[scale=0.7]{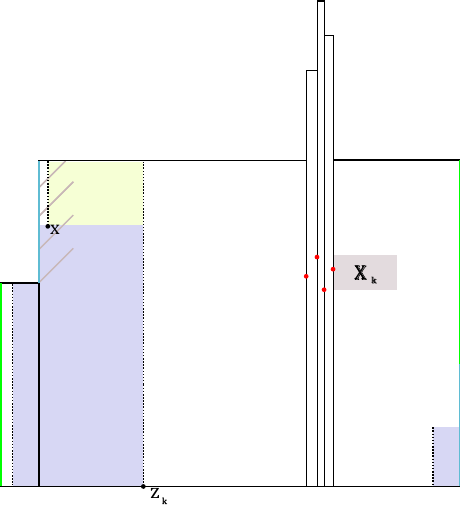}
	\caption{\label{fig: matchings} All six possible types of matchings
	 between the orbit $\mathcal{O}_T(x,q_k)$ of the point $x\in M_k$ and the reference orbit $\mathcal{O}_T(z_k,q_k)$. 
	The red points denote the discontinuities of $T$, while 
	the blue and green segments on the vertical sides are identified due to the sides rectangles (and hence Rohlin towers) being identified. In each subfigure, the shaded region denotes the region of possible locations of $x$ in which the  illustrated type of matching applies. 
Colored areas denote the location of matching intervals. Each color denote a set of matching intervals of the same length, thus showing that each matching is a $3$-matching ($3$ colors only).}
\end{figure}

As an explict example, we illustrate the first subfigure that is the case when $x\in  T^j(\tilde I^k_2)$ for some 
$j\in\{0,\ldots,q_k\}$. Under this assumption, we can write
\begin{align*}
\mathcal{O}_T(x,q_k)& =
\mathcal O_T(x,q_k-j)\sqcup \mathcal 
O_T(T^{q_k-(j-p_k)}x,j) , \\
\mathcal{O}_T(z_k,q_k)& =\mathcal O_T(z_k,j)\sqcup  O_T(T^{j}z_k,q_k-(j))),
\end{align*}
and match  the orbit segment $\mathcal O_T(x,q_k-j)$ with $\mathcal O_T(T^{j}z_k,q_k-j))$ (via the intervals {in the green area in the top left subfigure of}  Figure~\ref{fig: matchings}), while $\mathcal 
O_T(T^{q_k-j}x,j)$ is matched with $\mathcal O_T(z_k,j)$ (via the intervals in the brown colored region of the same  top left subfigure of  Figure~\ref{fig: matchings}),.

{To conclude the proof that  $M_k$, for each $k\in \N$, is an $L$-fold matching set (see Definition~\ref{def:Lmatchingset}), we now need to match to the reference orbit $\mathcal{O}(z_k,q_k)$ 
also to all other orbits segments of length $q_k$ 
obtaining subdividing the orbit  $\mathcal{O}(x, L q_k)$  of $x\in I\setminus X_k^L$ of length $Lq_k$ into $L$ pieces, namely $\mathcal{O}(T^{j q_k } x, q_k)$ for $j=1, \cdots, L-1$.  The definition of $X_k=X_k^L$ guarantees that for every such $j$, the 
point $T^{jq_k}x$ belongs to {$I\backslash X_k^1$ (i.e.~their orbits of length $q_k$ avoid the small rectangles and the bad set $B_1$), see \eqref{eq:Ltimesnotbad})}, and hence allows to apply to each  $\mathcal{O}(T^{j q_k} x, q_k)$  exactly the same 
matching arguments.}

\smallskip
\noindent {\it Step 7.   Verifying the balanced matchings properties:}
Let us show that the matching sets $M_k =I\setminus X_k$ are $1/5$-balanced matchings for every $k\in\N$, (see Definition \ref{def:balancedm}). Thus, we have to prove that properties $(B1)$ to  $(B4)$ of Definition \ref{def:balancedm} hold for $c:= 1/5$. {We will then show at the end of this step that the same proof also gives that they are indeed $L$-fold $1/5$-balanced matching sets.}

To see that (B1) holds, it is more convenient to look at the dynamics or orbits inside the Rohlin towers over the intervals $I^{(n_k)}_j$, $1\leq j\leq d$ (before the cut and stack corresponding to inducing on the interval $\tilde I^k$). The orbit $\mathcal{O}_T(z_k,q_k)$ (since $q_k=q^{(n_k)}_1+ q^{(n_k)}_d$ and $z_k\in I^{(n_k)}_1$) goes once through the Rohlin tower over $I_{1}^{(n_k)}$, and once through the tower over the interval $I_{d}^{(n_k)}$. Moreover, since the relative displacement $\Delta_k<1/10$ (by assumptions on $C$), we have that the point $T^{q_{1}^{(n_k)}}(z_k)$ lies in the middle half of the interval $I_{d}^{(n_k)}$, while $T^{q_{1}^{(n_k)}+q_{5}^{(n_k)}}(z_k)$ lies in the middle half of the interval $I_{1}^{(n_k)}$. Hence every point in the orbit  $\mathcal{O}_T(z_k,q_k)$   has distance at least $\frac{1}{4}|I_{5}^{(n_k)}|$  from  the endpoints of levels of towers, which contain in particular the set $Disc(T)$. It remains to notice that for $k$ large enough we have by \eqref{eq: returnset} that $|I_{d}^{(n_k)}|>\frac{9}{10q_k}$. 

The proof of (B2) goes along the similar lines as the proof of (B1), by noticing that if a point does not belong to a level of tower which has a discontinuity as its endpoint then its distance from any element of $Disc(T)$ is at least the length of one level of one of the bigger towers. On the other hand, the property (B3) follows from the fact that no matching interval is longer than the length of the $\tilde I^k$, as seen in Figure \ref{fig: matchings}.

Let us now prove (B4).  For a given $k\in \N$, assume that $x\in M_k$. If 
$x\notin \bigcup_{j=-q_k}^{\tilde q_1^k-1}(\tilde I_1^k)$, then the orbit of 
$x$ of length $q_k$ is fully contained in the Rokhlin tower 
$\left(\bigcup_{j=0}^{q^k-1}T^j(\tilde I^k_{2}\cup \tilde I^k_{d+1})\right)$ and 
each point is on a different floor. Thus $(B4)$ follows from the fact that $\Delta_k\in C=[c_0,c_1]$.
If on the other hand $x\in \bigcup_{j=-q_k}^{\tilde 
q_1^k-1}(\tilde I_1^k)$, then $T^{\ell}x\in \tilde I_1^k$ for some 
$\ell\in\{0,\ldots,q_k\}$ if and only if $T^{\ell}x\notin \tilde I_2^k\cup 
\tilde I_{d+1}^k$. Thus, again by $\Delta\in C$, 
we get that (B4) 
is satisfied for every $x\in M_k$.

\smallskip
\noindent {\it Step 8.   Verifying the good matchings properties:}
We now show that $(M_k)_{k\in\N}$ is a   sequence of  $(\gamma, B)$-good matchings for exponential tails for $f$, in the sense of Definition $\ref{def:goodMn}$, for  $\gamma:= (L+1) \cdot {\rho}\cdot c_1 +\epsilon$ (as claimed in the statement of Proposition~\ref{prop:Egoodmatchings}) {and a suitable $B>0$.  We then have to verify properties (G0) to (G3) of Definition~\ref{def:goodMn}.}

To prove (G0), we have to estimate the measure of $[0,1]\backslash M_k =[0,1]\backslash ([0,1]\backslash X_k)= X_k$, see  \eqref{eq:defXk}. 
While the measure of the  first union in the definition \eqref{eq:defXk} of $X_k$ convergences to 0 as $k\to\infty$ (since $\epsilon_k=1/k\to 0$), the second part of the union in  \eqref{eq:defXk} has non-trivial, although small, measure. More precisely, in view of the control on $\tau$ entries in terms of $\rho$ given by
 \eqref{eq: tausareclose},   we have 
\begin{equation}\label{eq: trashissmall}
	\limsup_{k\to +\infty} Leb(X_k)\le (L+1) \cdot {\rho}\cdot c_1 +\epsilon. 
\end{equation}
This shows that (G0) holds with  $\gamma:= (L+1) \cdot {\rho}\cdot c_1 +\epsilon$ up to  replacing $(M_k)_k$ with a subsequence if necessary.  

The proof that (G1)  holds, namely that  $(M_k)_{k\in\N}$ are $c$-good for $c=1/5$, was given in Step 7. 
 In Step 5 we proved that the reference orbit 
 $\mathcal{O}_T(z_k,q_k)$ satisfies  $B_1$-linear bounds for every $k$ for a uniform constant $B_1>0$ by using that $(n_k)_k$ are visit times to the set $E$ of Proposition~\ref{prop:accelerationset} (and the persistence Lemma~\ref{lemma:finitecombination}); {thus (G2) holds taking as constant the constant $B_1$ (in the place of $B_0$ in (G2)).

Finally, by Lemma~\ref{lemma:linearbounds}, there exists $B>0$ (which depends only on $B_1$) such that for any $k\in \mathbb{N}$, any orbit $\mathcal{O}(x,q_k)$ of  $x\in M_k$ satisfy $B$-linear bounds on trimmed derivatives, namely (G3) of Definition~\ref{def:goodMn} holds. 
This concludes the proof that $(M_k)_k$ are 
$(\gamma,B)$-good matching sets for exponential tails.
  
The 
proof that  $(M_k)_k$ are also  
$(\gamma,B)$-good $L$-\emph{fold} matching sets is similar. Indeed, the arguments in Step 7 (giving $1/5$-balance) as well as the arguments in this Step, can be applied to all other orbit segments $\mathcal{O}(T^{jq_k}x, q_k)$ for $j=1,\ldots,L-1$ in which each orbit $\mathcal{O}(x, L q_k)$ with $x\in M_k$ is subdivided. As in Step 6, the definition of $X_k:=X^L_k$ guarantees that each of the initial points  $T^{jq_k}x$ stays in $X_k^1$, see \eqref{eq:Ltimesnotbad}, and allows to reason in exactly the same way to verify the required properties.}
\end{proof}

%
%
%
%
%
%
%

\section{Tail estimates and  disjointness} \label{spectral disjointness}
In this section we will prove Theorem~\ref{thm:main}, namely disjointness for 
rescalings.  To do so, we apply the disjointness criterion given by Proposition~\ref{prof:disjointnesssf}. 
The key technical result of this section is that some \emph{tail sets}, namely sets of points which lie in the \emph{tail} of the limit distribution of Birkhoff sums (at multiples of) rigid times with exponential tails, 
 can be \emph{distinguished}, namely have different measures (see \S~\ref{sec:tailoverview} for more details). 

We first describe, in \S~\ref{sec:tailoverview}, the strategy of proof and the idea in a special, simplified setting. 
As we explain in \S~\ref{sec:final} (see the beginning of the proof of Theorem~\ref{thm:main}),  it suffices to prove that the flows  $\varphi^{L}_\R$ and $\varphi^{K}_\R$ are disjoint  for all pairs of positive integers  $K, L \in \N$. 
To keep the notation simple and clarify the main ideas of the proof, we will first prove the main estimate (Proposition~\ref{prop:distinguishedtailsL2}) with full details in the special case in which we consider  $\varphi^K_\R$ and $\varphi^L_\R$ with $K=1$ and $L=2$ (see \S~\ref{sec:distinguishedtrailssL2}).  
We will then indicate the modifications one needs to do for the general case in \S~\ref{sec:distinguishedtailsgeneral}.
Finally, the final arguments to get the proof of Theorem~\ref{thm:main} are presented in \S~\ref{sec:final}.

\smallskip
\noindent {\it Assumption}: To simplify the exposition of the proof, we assume throughout this section and the next that, WLOG, $f$ is normalized so that the constant $C_f$ given by Definition~\ref{def:SymLog} is  $C_f=1$, so that
\be \label{eq:C1} \sum_{i=0}^{d-1}C_i^- = 1= \sum_{i=1}^{d}C_i^+.
\ee
 Notice also that this implies that all the constants $C_i^\pm$ are in $[0,1]$.

{
\subsection{Strategy of the proof.}\label{sec:tailoverview} 
Let us start by giving in this section an overview of the strategy of the proof and the key idea behind the key tail estimate  
in  a simplified setting.  

 \subsubsection{Tail sets. } 
In view of the Criterion \ref{krytspek}, if $(q_k)_k$ is a sequence of rigidity times with exponential tails,   we need to 
find a set for which the limit distributions of the (centralized) Birkhoff sums 
of $f$ of length $q_n$, namely $S_{q_k}f - a_k$ for a suitable $a_k$, and  $S_{2q_k}f - 2a_k$  are going to differ, i.e.~can be \emph{distinguished}. More precisely, 
we will find a sequence $(t_k)_k$ of real numbers, converging to some $t\in\R$, such that, if  we define the tail sets
$$
A_k:= \{ x\in [0,1) |\, \, S_{q_k}f -a_k> t_k\} , \quad\text{and} \quad  B_k:= \{  x\in [0,1) |\, \, S_{2q_k}f-2a_k > 2t_k\}, \quad k\in \mathbb{N} , 
$$
then $Leb (B_k) - Leb( A_k)>c>0$.  
Here, $q_n$ are 
will be the heights of Rauzy-Veech towers obtained by Rauzy-Veech 
algorithm. More precisely, we will use a sequence of BT rigidity times and the corresponding good sets for exponential tails produced in Section~\ref{sec:Egoodmatchingsproof} and $q_k$ will be the common height of the \emph{large} towers $\tilde{\mathcal{T}}^k_2$ and $\tilde{\mathcal{T}}^k_{d+1}$ of the BT-rigid presentation. The constant $c$ providing the lower bound will be proportional to the displacement given by BT-rigidity.   
We call the set $A_k$ the \emph{single tail} set (since it corresponds to the Birkhoff sums for a \emph{single} iteration of the rigidity time $q_k$ only); we call he set $B_k$ the \emph{double tail} set (since it corresponds to the Birkhoff sums for a \emph{double} iteration of the rigidity time $q_k$, namely for $S_{2q_k}f$). The control of the Birkhoff sums distributions  that we prove, will not be obtained on the whole interval, but rather considering only a sequence of good matching sets $(M_k)_{k}$. A  main difficulty to overcome, is the fact that the measure of the set \emph{reminder} set $X_k=[0,1)\setminus M_k$ is also proportional to the displacement. This makes the required estimates delicate; however we are able to show that the constant of proportionality can be made much smaller for the set $X_k$, thus making the assumptions of the disjointness criterion given by Propostiion~\ref{krytspek} hold.}

In the proof we will show that the double tail set $B_k$ contains a substantial part of $A_k$, but also an additional part, so that $B_k\backslash A_k$ has larger measure  than $A_k\backslash B_k$. Thus, 
$
Leb(B_k)-Leb(A_k)
= Leb(B_k\backslash A_k) -Leb(A_k\backslash B_k)>0 , 
$
and prove a lower bound for the measure of this difference. 
To estimate the measures of these tail sets, we first   show that the Birkhoff sums $S_{q_k}f$ and $S_{2q_k}f$ both  blow up \emph{monotonically} near the endpoints of the floors of the Rohlin towers (see Step 3 in the proof of Proposition~\ref{prop:distinguishedtailsL2} in \S~\ref{sec:distinguishedtrailssL2}), so that the tail sets $A_k$ and $B_k$ can both be described by a union of intervals which contain these floors \emph{endpoints} where the Birkhoff sums blow up.  We will call these intervals \emph{tail intervals}, or more precisely \emph{single tail intervals} when they belong to $A_k$, and \emph{double tail intervals} when they belong to $B_k$.  Single (resp.~double) tail intervals  occur at {near the \emph{endpoints} of the floors,  due to closest visits to  a singularity  from the \emph{left}  (in which case they happen near the right endpoint of an interval of the tower $\mathcal{T}^k_{2}$), or due to closest visits to a singularity from the \emph{right}  (in which case they happen near the left endpoint of an interval of the tower $\mathcal{T}^k_{d+1}$). We call these tail intervals respectively \emph{left} and \emph{right} tail intervals). }

A picture of the double tail and single tail sets is shown in Figure~\ref{fig:tailsets}, which shows that the sets both consists of intervals that are either to the right of a floor of the tower $\tilde{\mathcal{T}}^k_2$ or to the left of a floor of the tower $\tilde{\mathcal{T}}^k_{d+1}$ and hence travel close to the singularities (that all belong to the boundary of the 'small towers' $\tilde{\mathcal{T}}^k_3,\cdots \tilde{\mathcal{T}}^k_d $).  
 The \emph{tail intervals} belonging to   different floors of the same tower  (either $\tilde{\mathcal{T}}^k_2$ or $\tilde{\mathcal{T}}^k_{d+1}$) turn out to all have approximately the same measure (see Steps 3 to 8 of the proof of Proposition~\ref{prop:distinguishedtailsL2}), so let us focus in the next \S~\ref{sec:keyestimateidea} on one right interval and one left interval and describe the single tail and double tail restricted to these two intervals.

	

 \subsubsection{The idea of the key estimate in the simplified setting.}\label{sec:keyestimateidea}
Let $(a,b)$ be one of the  floors  of the \emph{large} tower $\tilde{\mathcal{T}}_2$. For simplicity let it be the floor such that $b$ is one of the singularities. 
Let us compare the single and double tail inside $(a,b)$, namely $A_k\cap (a,b)$ and  $B_k\cap (a,b)$. One can  first show   that if $x$ is sufficiently close to $b$, $S_{q_k}f (x)$ is monotonically increasing (see Step 1 of the proof of Proposition~\ref{prop:distinguishedtailsL2}).  Thus, both sets are intervals with $b$ as an endpoint. Furthermore, since for any $x\in (a,b)$,  $T^{q_k}x$ is to the left of $x$, by monotonicity $ S_{q_k}f(T^{q_k}x) <  S_{q_k}f(x) $, so that 
 $$
 S_{2q_k}f(x) = S_{q_k}f(x)+ S_{q_k}f(T^{q_k}x)< 2 S_{q_k}f(x). 
 $$
 From this, one can see that if $x\in B_k $, then  $x\in A_k$ (since  $S_{2q_k}f(x)>2a_k+2t_k$ implies that $S_{q_k}f(x)>a_k+t_k$), so that $B_k\cap(a,b)\subset A_k\cap (a,b)$ i.e.~in this floor (as in other floors of this tower $\tilde{\mathcal{T}}^k_2$), the double tail set is contained in the single tail set.

  We now want to estimate the measure of the difference $A_k\backslash B_k\cap (a,b)$. 
By monotonicity,   $B_k\cap (a,b)$ is an interval of the form $(v_k, b)$ where the point $v_k$ where 'the double tail starts' is defined by  $S_{2q_k}f(v_k)-2a_k=2t_k $. It will be chosen, so that $|b-v_k|\approx \tfrac{\Delta_k}{q_k}$, where $\Delta_k=q_k\tilde\lambda_1^{k}$ is the relative shift given by $T^{q_k}$. Moreover, $x \in (a,b)\backslash B_k$ is in 
$A_k\backslash B_k\cap (a,b)$ exactly when $2S_{q_k}f(x)\ge 2t_k+2a_k= S_{2q_k}f(v_k)$, namely when  
$2S_{q_k}f(x)- S_{2q_k}f(v_k)\ge0$.  
 One can show (exploiting the bounds on trimmed Birkhoff sums of the derivative) that the main order of $S_{q_k}f (x)$ for $x$ close to $b$ is given by the contribution of the \emph{closest visits} to singularities. Furthermore, since singularities in a BT rigid presentation are all \emph{cramped} close to each other (see Figure~\ref{fig:tailsets}, in which singularities are indicated by red dots) and by assuming that $C_f=\sum_{i=1}^d C_i^-=1$, the effect of the $d$ singularities is (up to a negligible error, computed in Step 4 of the proof of Proposition~\ref{prop:distinguishedtailsL2}) the same than the contribution of the closest taking the constant $1$, namely $-\log(b-x)$.  Similarly, $ S_{2q_k}f(v_k)$ can be approximated by the contribution the closest visit for  $S_{q_k}f(v_k)$ (which gives a contribution $-\log(b-v_k)$) and of the closest visit of $S_{q_k}f(T^{q_k} v_k)$ (which is of order $(\log(b-T^{q_k}v_k)$). 
Thus,  the difference $2S_{q_k}f(x)- S_{2q_k}f(v_k)$, when $x=v_k-\tfrac{1}{2}(b-v_k)$ (i.e. halfway between $v_k$ and $T^{q_k}(v_k)$), is approximately 
\[
 -2\log(b-x)+\left(\log(b-T^{q_k}v_k)+\log(b-v_k)\right)=-2\log\left(\frac{3}{2}\right)+\log(2)<0.
\]
By continuity, if we consider a point  $x$  in $(v_k, b)$
 sufficiently close to $v_k$, we continue to have  that $2S_{q_k}f(x) - S_{2q_k}f(v_k) <0 $, so that $x\notin A_k$.  
More precisely, $x$ can be chosen so that $|b-x|=\frac{\alpha}{q_k}e^{-D}$, with $\alpha\in(\sqrt{2},\frac{3}{2})$ (in the main proof that follows we take $\alpha=\frac{10}{7}$). This shows that strictly less than a third of  $ A_k\cap  (a, b)$ does \emph{not} belong to $B_k$.

Similarly, one can then treat also the left intervals of the tails, namely those that correspond to orbits come close to the cramped singularities from the left. 
 Since again one can show that they all have approximately the same size (see Step 7 and 8 of the proof of Prop.~\ref{prop:distinguishedtailsL2}),  let us here only  consider the floor $(\hat{a},\hat{b})$  which has a singularity $\hat{a}$ as (left) endpoint.  
Again, we assume that $x$ is sufficiently close to $\hat{a}$, so that $S_{q_k}f (x)$ is monotone and we can approximate its values with the contribution of the closest visits, which are all cramped and contribute as the closest. {We first show that both the single and the double right tail on this side too  are union of intervals. In particular, we show that 
$B_k\cap (\hat{a},\hat{b}) = (\hat{a}, \hat{v}_k)$ where $ \hat{v}_k$ is defined by $S_{2q_k}f(\hat{v}_k) -2a_k= 2t_k$.  To show that any $x\in  (c, \hat{v}_k)$ is also in $B_k$,  since  $T^{q_k}x$ is (as before) to the \emph{left} of $x$ (and hence now moves \emph{in direction} on the singularities rather than away from them) we have to discuss more cases, according to whether $T^{q_k}x$ is still to the right of $c$ (in which case one can use a simple monotonicity argument, analogous to the one on the left side), or if it is to the left of $c$ (and hence visits  the singularities also from the left). In this case one can still use matching and monotonicity to show that $(\hat{a}, \hat{v}_k)\subset B_k$ (see Step 8 of the proof of Prop.~\ref{prop:distinguishedtailsL2}  for details). 

A key difference on this side is that now that   the single tail interval $A_k\cap (\hat{a},\hat{b})$ in $(\hat{a},\hat{b})$  is \emph{contained} in the double tail interval  $(\hat{a},\hat{v}_k)=B_k\cap (\hat{a},\hat{b})$ (see Figure~\ref{fig:tailsets}). Indeed,  while $T^{q_k}x< x$ as 
as before , now
  $S_{q_k}f (x)$ is decreasing; therefore, if we assume for example that $T^{q_k}(x) $ is still in $(\hat{a},\hat{b})$, 
 $ S_{q_k}f(T^{q_k}x) >  S_{q_k}f(x) $ and
 $$
 S_{2q_k}f(x) = S_{q_k}f(x)+ S_{q_k}f(T^{q_k}x)> 2S_{q_k}f(x),
 $$  
 so if $ S_{q_k}f(T^{q_k}x) -a_k> t_k/2$ (i.e.~$x\in A_k$), we also have that  $S_{2q_k}f(x)-2a_k > t_k$ (i.e.~$x\in B_k$). 
 
  Exploiting  convexity as well as the symmetry of the function $f$ (which plays here an essential role, since it allows for a fine control of the constants involved in the estimates),  one can then  prove, similarly as before, that if {$\hat v_k$ 
  is the right endpoint of  the right double tail $(\hat{a},\hat{b})\cap B_k$, all the points $x$ such that { $\hat v_k-\frac{\Delta_k}{2q_k}\le x<\hat v_k$ } are contained  in $(\hat{a},\hat{b})\cap B_k\backslash A_k$. Notice that (essentially for a convexity effect)  this is an interval of  size roughly $\frac{1}{2(\alpha-1)}$ (i.e.~$\frac{7}{6}$ for $\alpha=\frac{10}{7}$)} of the size of the interval $(a,b)\cap A_k\backslash B_k$.  
Since { $7/6>1$} this gives an heuristic explanation of
 why we can expect  
$$ Leb \left((\hat{a},\hat{b})\cap B_k\backslash A_k \right)>  Leb \left((a,b) \cap A_k\backslash B_k \right)$$
and how to provide an estimate from below for its measure. Since all the other tail intervals whose orbits get close to \emph{all} cramped singularities, from the bottom one to the top one, in one \emph{run} (as defined in Step 2 of the proof of Prop.~\ref{prop:distinguishedtailsL2})  give  contributions to the single and double tails of comparable sizes to the tail intervals in $(a,b)$ and $(\hat{a},\hat{b})$ (see Steps 5 to 8 of the proof of Prop.~\ref{prop:distinguishedtailsL2}). We consider separately the tail intervals which are contained in floors close to the cramped singularities, whose orbits travels close to some but not all of them (we call these \emph{incomplete runs}). These provide a contribution of comparable, but  negligible order (in the sense that it is proportional to the measure of the union of all the other tail intervals, but with a small proportionality constant; see Step 9 of the proof of Prop.~\ref{prop:distinguishedtailsL2} for details).}  Adding up the contribution of all left and right tail intervals (corresponding to complete and incomplete runs) 
we then show that $ Leb ( B_k\backslash A_k )-  Leb ( A_k\backslash B_k)>0$, and furthermore give a lower bound for the measure of the difference.  {}


The proof of Proposition~\ref{prop:distinguishedtailsL2} that we present in the next section \S~\ref{sec:distinguishedtrailssL2} follows the lines of this simplified sketch,  taking into account the contribution of the error terms due to the simplifications and approximations made above.

\begin{figure}
	\includegraphics[scale=0.9]{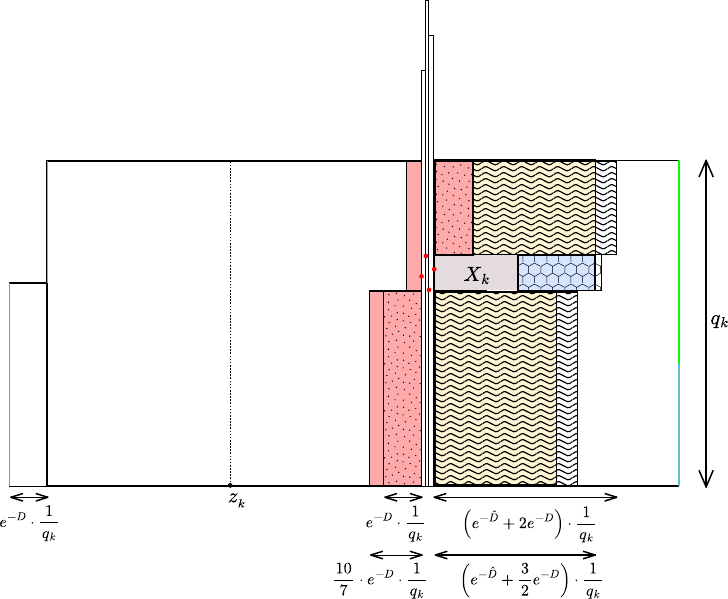}
	\caption{{An illustration of the tail sets  considered in 
	proof of Theorem \ref{thm:main}. \emph{Single} tail intervals are in color: 
\emph{right} single tail intervals in red, \emph{left}  ones  in yellow;  those coming from incomplete visits are in light blue. 
\label{fig:tailsets}	 \emph{Double} tail intervals are shaded with patterns:  \emph{left} double tail intervals are dotted, right ones are shaded with waves and the right double tail intervals which come from incomplete visits have an octagonal pattern. One sees that the single tail \emph{contains} the double tail intervals (dotted) on the right, while on the left the double tails intervals (with waves) contain the (yellow) single tails intervals.}}
	\end{figure}
	
\subsection{Distinguishing simple and double tails}\label{sec:distinguishedtrailssL2}
The following proposition shows that properly chosen tails can be distinguished when $K=1$ and $L=2$.

\begin{prop}[Distinguishing tails when $K=1,L=2$]\label{prop:distinguishedtailsL2}   {For a sufficiently small  $\rho>0$, for    any $B>0$,  
 there exists 
  $ \epsilon>0$ and  a non-empty $C=[c_0,c_1]\subset \mathbb{R}_+$, 
}
 such that 
 for any sequence $(M_k)_k$ of  $(z_k,q_k)-$matchings {for $T$ and $f\in SymLog^2(T)$} given by a $(C,\epsilon, \rho)$-BT rigid presentation  {which are {$(\gamma, B)$-good}} for 
 {$\gamma<3 \rho c_1+\epsilon$,} 
 there exists a \emph{bounded} sequence $(t_k)_k$
  such that if, for every $k\in \N$, we consider the tail sets:
\[
A_k:=\{x\in M_k \mid S_{q_k}(f)(x) -a_k \ge t_k \}, \quad  \text{and} \quad  
B_k:=\{x\in M_k\mid S_{2q_k}(f)(x)-a_k \ge 2t_k  \} , 
\]
 then we have that $Leb(B_k)-Leb(A_k)\geq \gamma$. 
	\end{prop}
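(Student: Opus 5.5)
Following the heuristic of \S~\ref{sec:keyestimateidea}, I would fix the centering $a_k:=S_{q_k}(f)(z_k)$, so that $S_{q_k}f-a_k$ is uniformly controlled on $M_k$ by Proposition~\ref{prop:exptails_via_match}. I would then choose $t_k:=D_k$ with $D_k$ defined by requiring that the double tail threshold point $v_k$ next to a singular floor endpoint satisfies $|b-v_k|=\tfrac{\alpha}{q_k}e^{-D}$, with $\alpha=\tfrac{10}{7}$ and a fixed large $D$. Boundedness of $(t_k)$ will follow from \eqref{eq:forgottentrimmed}, since the closest visit term $-\log(q_k|b-v_k|)$ equals $D-\log\alpha$ up to $O(B/c)$. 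The parameters are chosen in the following order: $\rho$ small, then $B$ given, then $D$ large compared with $B$, $\hat C$ and $\log c$, then $C=[c_0,c_1]$ with $c_0\approx c_1\approx \alpha e^{-D}$, a fixed multiple of the tail scale, and finally $\epsilon$ much smaller than $c_0 e^{-D}$.

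\textbf{Step 1 (monotonicity near endpoints).} On each floor of $\mathcal T^k_2$ and $\mathcal T^k_{d+1}$, within distance $\ll 1/q_k$ of the endpoint where the closest visits occur, $S_{q_k}f$ is monotone. The closest visit term contributes a derivative of size $\gtrsim q_k e^{D}$, while the trimmed part is at most $Bq_k$ by (G3). Hence $A_k$ and $B_k$, restricted to each floor, are intervals adjacent to the singular endpoint, and all other points lie outside both sets once $D$ is large.

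\textbf{Step 2 (cramped singularities).} Using (v) and (vi) of Definition~\ref{def:BTtowers} and $C_f=1$, I would show that along a \emph{complete run} the sum of the closest visit contributions equals $-\log(\mathrm{dist})+O(\rho+\epsilon q_k)$. Lemma~\ref{eq: trimmedythesame} and the matching mean value estimates give a trimmed part equal to a common constant up to $o(1)$ as $D\to\infty$. This yields the explicit approximations $S_{q_k}f(x)\approx a_k'-\log(q_k(b-x))$ and $S_{2q_k}f(x)\approx S_{q_k}f(x)+S_{q_k}f(T^{q_k}x)$, where $T^{q_k}x=x-\Delta_k/q_k$ by \eqref{eq: partialrigidity} and (iii).

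\textbf{Step 3 (right and left floors).} On the floors of $\mathcal T_2$ approached from the left, monotonicity gives $B_k\subset A_k$. The logarithmic computation from the sketch bounds $Leb((A_k\setminus B_k)\cap\text{floor})\le (\alpha-1)\tfrac{\Delta_k}{q_k}\cdot(1+o(1))$ per unit of $e^{-D}$-scale. On the floors of $\mathcal T_{d+1}$ approached from the right, $A_k\subset B_k$. Here one case is that $T^{q_k}x$ stays in the floor, handled by monotonicity. The other case is that $T^{q_k}x$ crosses the singularities; there I would use matching together with monotonicity. Convexity of $-\log$ and the symmetry $\sum C^+=\sum C^-$ then give that $[\hat v_k-\Delta_k/2q_k,\hat v_k)\subset B_k\setminus A_k$, of relative size $\tfrac{1}{2(\alpha-1)}=\tfrac76$.

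Finally I would sum over all floors. The numbers of right and left complete runs are both about $q_k$ minus $O(\rho q_k)$, by the gluings \eqref{eq: gluing1}--\eqref{eq: gluing2}. The incomplete runs number $O(\rho q_k)$ and contribute at most $O(\rho)$ times the main terms. This gives $Leb(B_k)-Leb(A_k)\ge (\tfrac76-1-O(\rho))\,\kappa_0\,\Delta_k e^{-D}$, with $\kappa_0$ a positive constant. Because $\Delta_k\ge c_0$ is a fixed multiple of $e^{-D}$-scale, the bound dominates $\gamma\le 3\rho c_1+\epsilon$ once $\rho$ is small and $\epsilon\ll c_0e^{-D}$.

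The main obstacle is Step 3 on the left floors where $T^{q_k}x$ passes the singularities. There the orbit visits the cramped singularities from both sides, and the error terms must stay below the $\tfrac16$ margin; controlling them uniformly in $B$ is what forces $D$ to be chosen depending on $B$.
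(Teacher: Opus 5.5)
Your overall route is the paper's. You center at $a_k=S_{q_k}f(z_k)$ and use monotonicity near floor endpoints. You use the $\alpha=\tfrac{10}{7}$ bound (any $\alpha>\sqrt2$ works) on the $\mathcal T^k_2$ floors and the Jensen/AM--GM bound on the $\mathcal T^k_{d+1}$ floors, and you treat incomplete runs as an $O(\rho)$ error.

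There is, however, a genuine gap in how you fix the threshold. You set $t_k$ from a left floor only, and justify this by asserting in Step 2 that all trimmed parts equal a common constant up to $o(1)$. Lemma~\ref{eq: trimmedythesame} and the matching estimates in the proof of Proposition~\ref{prop:exptails_via_match} only give \emph{bounded} differences, with constants like $E$ and $B''$ depending on $B$ and $c$. Moreover, orbits starting in $\mathcal T^k_2$ and in $\mathcal T^k_{d+1}$ pass on opposite sides of the cramped singularities, so no mean-value comparison between them is available.

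So write $S_{q_k}f\approx L-\log(q_k(b_j-x))$ on left floors and $S_{q_k}f\approx R-\log(q_k(x-\hat a_j))$ on right floors. Here $\kappa:=R-L$ is a priori only bounded in terms of $B$. Measure lengths in units of $\Delta_k/q_k$, with your left double-tail endpoint at distance $1$.
\begin{itemize}
\item Each left floor contributes $1-\sqrt2$ to $Leb(B_k)-Leb(A_k)$.
\item On a right floor, up to small errors, a point at distance $u<1$ (so that $T^{q_k}x$ crosses to the left floor) lies in $B_k$ iff $u(1-u)\le 2e^{\kappa}$.
\item The non-crossing part of the right double tail has length $w$ with $w(1+w)=2e^{2\kappa}$.
\end{itemize}
Hence for $\kappa$ below roughly $-\log 8$ your inclusion $[\hat v_k-\Delta_k/2q_k,\hat v_k)\subset B_k$ fails. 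As $\kappa\to-\infty$ the right floors contribute only $O(e^{\kappa})$, so $Leb(B_k)-Leb(A_k)<0$.

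The paper avoids this in Step 1. It computes both $v_k^-$ (with $b-v_k^-=e^{-D}/q_k$) and $v_k^+$ (with $T^{q_k}v_k^+-\hat a=e^{-D}/q_k$) and defines $t_k$ from the one with the smaller $S_{2q_k}f$. The opposite side's double tail is then at least as long ($\hat D\le D$, i.e.\ $\kappa\ge 0$ after possibly swapping the roles of the sides). This is what makes the crossing case (Case~2 of Step~7) go through and what gives the AM--GM estimate of Steps~7--8 its margin.

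Several further points need correcting.
\begin{itemize}
\item The closest-visit error in your Step 2 is not $O(\rho+\epsilon q_k)$: $\epsilon q_k$ is unbounded, and $\rho$ plays no role there. By (vi) of Definition~\ref{def:BTtowers}, distances to the cramped singularities agree with the distance to the floor endpoint up to $\epsilon/q_k$. So for distances $\ge e^{-D-B}/q_k$ the error is at most $\log(1+\epsilon e^{D+B})$, which is why $\epsilon\ll e^{-2D-B}$ is chosen last.
\item This matters because $\rho$ must be fixed before $B$, since the $B$ of Proposition~\ref{prop:Egoodmatchings} depends on $\rho$. On the right floors the single-tail endpoint sits at distance about $e^{-\hat D}/q_k$, which may be $e^{O(B)}$ times $\Delta_k/q_k$. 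Every error must therefore vanish as $D\to\infty$ for fixed $B$.
\item For the same reason, the incomplete runs need a per-floor bound independent of $B$, and crude trimmed-sum bounds lose a factor $e^{O(B)}$. The paper instead compares each such floor with the complete-run floor it is a shift of, where some closest visits move by $\Delta_k/q_k$. This shows the single tail there is contained in, and the double tail contains, the shifted complete-run interval; strictness uses $Be^{-D}\le C_{min}e^{-2B}$.
\item Your concluding lower bound should be a multiple of $\Delta_k\asymp e^{-D}$, not of $\Delta_k e^{-D}$. The latter could not dominate $3\rho c_1$.
\item The crossing of the singularities by $T^{q_k}x$ happens on the $\mathcal T^k_{d+1}$ floors, not on the $\mathcal T^k_2$ floors as your last paragraph says.
\end{itemize}
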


	\begin{proof}[Proof of Proposition~\ref{prop:distinguishedtailsL2}]
The proof is divided into steps, which will hopefully help to follow the strategy.
 

\smallskip

\noindent {\it Step 0: Choice of parameters.} 
Let us fix any $0<\rho\leq  10^{-6}$. If {$C_i$, $i\in \{0, \cdots , d-1\}$ are the constants from Definition \ref{def:SymLog}, let 
\be \label{Cmin}
C_{min}:= \min\{C_i^\pm>0\mid p=0,\ldots,d-1\}.
\ee
For the parameter $B>0$ given by the Definition \ref{def: derivativebound},} consider a number $D\in\R$ such that:
\begin{equation}\label{eq: defD}
D\ge 10^6\cdot\min\{1,B\}\ \text{ and }\ Be^{-D}\le \min\{\tfrac{1}{20}\log(\tfrac{50}{49}),C_{min }\cdot e^{-2B}\}.
\end{equation}
(The parameter $D$ will be used to define the size of the tail, see 
Step 1 in the proof of Prop.~\ref{prop:distinguishedtailsL2}).  Set 
$$ 
 \epsilon:= 10^{-6}e^{-2D-B}. 
$$
We then define  the BT-rigidity interval $C=[c_0,c_1]$  in terms of $D$ and the
auxiliary constant $\delta>0$ to be  
$$c_0:= (1-\delta)e^{-D}, \quad c_1:=e^{-D} , \qquad  \text{where} \ \delta=e^{Be^{-D}}-1 .$$
\medskip

\noindent {\it Step 1: Definition of the tail sets.} 
{Let  $(M_k)_{k\in\N}$ be a sequence of $(\gamma,B)$-good} $(z_k, q_k)$-matching sets given by  $(C,\epsilon, \rho)$-BT rigid presentations. 
{Since by assumption $\gamma<3 \rho c_1+\epsilon$, we have that
\begin{equation}\label{eq: gammaestimate}
\gamma
\leq 3\cdot 10^{-6} e^{-D}+10^{-6}e^{-2D-B}< 4\cdot 10^{-6}e^{-D}.  
\end{equation}}
 We will denote by ${{\mathcal{T}}}^k_i$ for $1\leq i\leq d$  the towers of the $k^{th}$ BT rigid presentation (used to define  $M_k$). 
We define the centering sequence $(a_k)$   {using the Birkhoff sums of the  matching point $z_k$ for $M_k$ as a reference point, namely setting: 
\[
a_k:=\sum_{i=0}^{q_k-1}f(T^i(y_k)), \qquad k \in \mathbb{N}.
\] 
To choose the tail parameters $(t_k)_k$ (which will be used to define the tail sets), we consider for every $k\in \N$  points $v_k^\pm$ in the base whose orbits 
$\mathcal{O} (v_k^\pm, 2q_k)$ of lenght $2q_k$ have distance from singularities respectively from the right and the left of order $e^{-D}/q_k$ (where $D$ is the constant defined above). More precisely,  $v^-_k\in (a,b):=\tilde I_2^k$ belongs to the base of the (large) tower $\tilde I_{2}^k$ and  $v^+_k\in  \tilde I_{d+1}^k$  
 belongs to the base of the (large) tower $\tilde I_{d+1}^k$, that we will denote $(\hat{a},\hat{b}):= \tilde I_{d+1}^k$, and are implicitely defined by\footnote{{Notice that  $T^{q_k} v_k^{\pm} = v^{\pm}_k - \Delta_k/q_k < v^{\pm}_k$. Therefore, the closests visits from the right within $\mathcal{O} (v_k^-, 2q_k)$ is contained in the first half of the orbit, namely in  
 $\mathcal{O} (v_k^-, q_k)$, while the closest visits from the left in  $\mathcal{O} (v_k^+, 2q_k)$ is contained in the second half of the orbit, namely 
 $\mathcal{O} (T^{q_k} v_k^+, q_k)$. This is the reason why we consider the distance from $\hat{c}$ of the point  $T^{q_k} v_k^+$.}} 
\bes 
 b- v_k^-= \frac{e^{-D}}{q_k}  , \quad \text{and}\quad   T^{q_k} (v_k^+)-\hat{a}=\frac{e^{-D}}{q_k} .
 \ees
We then compare the Birkhoff sums $ S_{2q_k}(f)$ at these two points and let $v_k\in \{v_k^+, v_k^-\}$ be the point with the smallest one,  so that 
\be \label{eq:vkmin}
S_{2q_k}(f)(v_k)= \min \{ S_{2q_k}(f)(v_k^+), S_{2q_k}(f)(v_k^-)\}.
\ee
We can then define the tail parameters $(t_k)$ {using the Birkhoff sums of $v_k$, by setting}  
\be \label{def:tk} t_k:= \frac{S_{2q_k}(f)(v_k)}{2}-
a_k = \frac{1}{2}\left(S_{2q_k}(f)(v_k)-
2a_k \right), \qquad k \in \mathbb{N}.
\ee
Consider now, for every $k\in \mathbb{N}$, the \emph{single  tail set} }
\[
A_k:=\{x\in M_k \mid S_{q_k}(f)(x)-a_k\ge t_k \} 
\]
and  the \emph{double tail} set
\[
B_k:=\{x\in M_k \mid S_{2q_k}(f)(x)-2a_k\ge 2 t_k\}.
\]
Note that the tail sets are defined considering only points in the matching set 
$M_k=I\setminus X_k$.  
{We remark also, for later use, that, by the definitions of the tail sets and $v_k$, so that \eqref{eq:vkmin} holds,
\be \label{rk:vk}
\text{if} \ v_k=v_k^-, \ \text{then}\, v_k^+\in B_k, \quad \text{while}\quad \text{if} \ v_k=v_k^+, \ \text{then}\, v_k^-\in B_k.
\ee }
Let us now show that, as claimed, the sequence  $(t_k)_k$ giving the thresholds of the tail sets 
 $A_k$ and $B_k$ 
  is \emph{bounded} (as claimed as part of the statement we are proving). To show this,  let us bound  the difference of Birkhoff sums $S_{q_k}(f)(v_k)-S_{q_k}(f)(z_k) $ by estimating separately the difference of the trimmed parts $\tilde S_{q_k}(f)(v_k)- {\tilde S}_{q_k}(f)(z_k) $ (using  mean value theorem and  \eqref{eq: Corinnaconstant}) and the contribution of  the closest visits (recalling that their contributions is positive, that the closest distance is bounded below by {$e^{-D}$} by construction, and using that $\sum_{i=0}^{d-1}C_f^+=1$). We then get the estimate 
 \begin{equation}\label{eq: boundonBS}
 0\le	S_{q_k}(f)(v_k)-a_k = S_{q_k}(f)(v_k) -S_{q_k}(f)(z_k) \leq \tilde S_{q_k}(f)(v_k)-\tilde  S_{q_k}(f)(z_k) - \sum_{i=0}^{d-1}{C^+_i}\log {m(v_k,q_k)}   \le B +D.
 \end{equation}
One can then show that $0\leq t_k\leq S_{2q_k}(f)(v_k)-2a_k \leq 2(B+D)$ for every $k$, so that $(t_k)_k$  is bounded. 
 \medskip
 
\noindent {\it Step 2: Tail intervals location.} 
 Let us first identify the points $x\in M_n$ such that the orbit $\mathcal{O}(x,q_n)$ gets close to {\emph{each} of the cramped singularities in sequence (from the lowest to the heighest), at the same distance.} Let 
 $\underline{\ell}_k$ and $\overline{\ell}_k$  in $\{0,1,\cdots, q_k-1\} $ be the heights  of (the floor which contains) respectively the \emph{lowest} and the \emph{heighest} of the cramped singuarities (see also the definition in Step 3 of Proposition~\ref{prop:Egoodmatchings}).  Let $(a_{0},b_{0})= T^{\underline{\ell}_k} \tilde 
I^k_2= T^{\underline{\ell}_k} (a,b)$ be the floor of the tower $\tilde {\mathcal{T}}^k_2$ at the height of the lowest cramped singularity.  One can then consider the intervals\footnote{{One \label{ft:intervals} can show that $T^{-i}$ on $(a_0,b_0)$  is continuous  for $0\leq i\leq q_k-1-( \overline{\ell}_k-\underline{\ell}_k)$ and hence the preimages  $T^{-j}(a_0,b_0)$ considered in \eqref{eq:ajbj} are indeed intervals. One can indeed verify that, by the tower representation dynamics, none of the (cramped) singularities is hit by an iterate $T^i(a_0,b_0)$ until $i=q_k-( \overline{\ell}_k-\underline{\ell}_k)$. }} given by
\be \label{eq:ajbj} (a_j,b_j):= T^{-j}(a_0,b_{0}), \quad   \text{for}\ j\in\{0,\ldots,q_k-1 - ( \overline{\ell}_k-\underline{\ell}_k) \}.
\ee   
{These intervals contain  points whose orbit up to time $q_k$ gets close to the \emph{all} cramped singularities \emph{from the left}, in order from the lowest to the heights (we will refer to  this as a \emph{run} of closests visits from the left). We  will  show that these intervals contain the \emph{left} (single and double) tail intervals, which have as endpoints the $b_j$s. 
Similarly, to describe the part of the tail sets 
which  correspond to the closest visits \emph{from the right side}, 
{let   $(\hat{a}_{0},\hat{b}_{0})= T^{\underline{\ell}_k} \tilde 
I^k_{d+1}= T^{\underline{\ell}_k} (\hat{a},\hat{b})$  and consider the following preimages, that one can also show (as in footnote \ref{ft:intervals}) are all intervals:
\be \label{eq:ajbjhat}
(\hat a_j,\hat 
b_j):=T^{-j}(\hat a_{0},\hat b_{0}), \quad 
\text{for}\  
j\in\{0,\ldots,q_k-1-(\overline{\ell}_k-\underline{\ell}_k)\}. \ee
These intervals contain  points whose \emph{forward} orbit up to time $q_k$ gets close to the \emph{all} cramped singularities \emph{from the right} in a \emph{run} (from the lowest to the heighest). We will show that they contain the \emph{right} (single and double) tail intervals, which have the $\hat{a}_j$s as left endpoints. 

\smallskip
Remark that if $x\in T^i(\hat{a}, \hat{b}) \backslash X_k$ for $0\leq i\leq \overline{\ell}-\underline{\ell}$ (i.e.~belongs to the floors of $\mathcal{T}^k_{d+1}$ which \emph{contain} the bad zone $X_k$ but is not it),  then $\mathcal{O}(x,q_k)$ will also have (as any orbit) a closest visit to each of the singularities, but it will not contain a full \emph{run} of visits (from the lowest to the heighest) with the  same approximte distance (up to a small error estimated in Step 4). We then say that this orbit (or the corresponding Birkhoff sum $S_{q_k}f(x)$) has an \emph{incomplete run} of closest visits.} These points can still belong to the (single or double) tail sets (in Figure~\ref{fig:tailsets} these points are denoted in light blue or hexagonal pattern).
}

\medskip
\noindent {\it Step 3: Monotonicity near the endpoints.} 
We now claim that the function $S_{q_k}(f)$ is monotone, close to the endpoints  of the intervals defined in Step 1 (more precisely, increasing in $(a_j,b_j)$ sufficiently close to $b_j$ and decreasing in $(\hat{a}_j, \hat{b}_j)$ sufficiently close to $\hat{a}_j$. 
More precisely, since (from the form of $f$ in Definition~\ref{def:SymLog}) visits from the left of a discontinuity give positive contributions to $f'$ and visits to the right negative ones,  
since $B$ is chosen in accordance to Definition \ref{def: derivativebound}, for points in $y\in (a_j,b_j)$ which are sufficiently close to $b_j$ (which get close to singularities \emph{from the left} in time $q_k$), 
 we can estimate 
{}
$$
 S_{q_k}(f')(y) \geq \sum_{i=1}^d\frac{C_i^-}{m^-_i(y,q_k)} +\tilde{S}_{q_k}(f')(y)   \geq  \frac{1}{|b_j-y|} -  B q_k 
$$
so that, if $|b_j-y|<1/Bq_k$, the $ S_{q_k}f$,  and similarly $ S_{2 q_k}f$,  are increasing. Similary, 
for a point  $y\in (\hat{a}_j, \hat{b}_j) $ sufficiently close to $\hat{a}_j$,  we can estimate 
{}
$$
 S_{q_k}(f')(y) \leq  \sum_{i=0}^{d-1}\frac{C_i^+}{m^-_i(y,q_k)} +\tilde{S}_{q_k}(f')(y)   \leq  \frac{1}{|\hat{a}_j-y|} -  B q_k 
$$
so that, if $|\hat{a}_j-y|<1/B q_k$,  then $ S_{q_k}f$ is  decreasing.   Similarly one can  show that  also $ S_{2q_k}f$ is increasing in $(a_j,b_j)$ sufficiently close to  $b_j$ and descreasing in $(\hat{a}_j,\hat{b}_j)$ sufficiently close to $\hat{a}_j$ as long as the distance from the endpoints is not greater than $1/2B$.

The tail thresholds chosen in Step 1 automatically guarantee that all the tail sets that we will describe in the next Steps are contained in these monotonicity intervals near enpoints.
 (Indeed, we will always work with points that are at most ${e^{-D+2B}}/{q_k}$ far from either the  endpoint $b_j$ of $(a_j,b_j)$ or the $\hat{a}_j$ for right tail intervals), and by definition $D\geq 10^6\cdot {\min\{1,B\}}$, so that one has that $e^{-D+2B}\leq 1/2B$. This guarantees  that the    $A_{k}\cap(a_j,b_j)$ and $B_k\cap(a_j,b_j)$ are both \emph{intervals} with $b_j$ as endpoint and similarly that the   $A_{k}\cap(\hat{a}_j,\hat{b}_j)$ and $B_k\cap(\hat{a}_j,\hat{b}_j)$ are \emph{intervals} with $\hat{a}_j$ as endpoint.

\smallskip
\noindent {\it Step 4: Errors in closest visits approximation.}
For a given $j\in\{0,\ldots,q_k-1-(\overline\ell_k-\underline\ell_k)\}$, the distance from the actual discontinuities of the orbit of a points $y$ in $(a_j,b_j)$ or $(\hat a_j,\hat b_j)$
is not $|b_j-y|$ or $|y-\hat a_j|$,  since the intervals $\tilde I_3^k,\ldots, \tilde 
I_d^k$ have positive lengths. These distances may differ by up to $|\tilde I^k_3|+\ldots+|\tilde I^k_d|$, see \eqref{eq: distfromdisc}. However, by the definition of $BT$-rigid configuration, for  $k$ large enough we have
\[
|\tilde I_3^k|+\ldots+ |\tilde I_d^k|\le \frac{\epsilon}{q_k}.
\] 
Let $j=0,\ldots,q_k-1$ and $y\in(a_j,b_j)$ with $|b_j-y|\ge \frac{e^{-D-B}}{q_k}$. Remembering that $\sum_{i=0}^{d-1}C_i^+=1$, we have 
\begin{equation}\label{eq: residuum}
	\begin{split}
	-\log(b_j-y)& +\sum_{p=1,\ldots,d-1} C_p\log({m_{p}^-(y,q_k)})=
	\sum_{p=1,\ldots,d-1} C_p\left(\log\frac{m_{p}^-(y,q_k)}{b_j-y}\right)\\
	&\le \log\left(\frac{b_j-y+\frac{\epsilon}{q_k}}{b_j-y}\right)= 
	\log\left(1+\frac{\epsilon}{q_k(b_j-a)}\right)\le \log(1+\frac{\epsilon}{e^{-D-B}})\le 10^{-6}e^{-D},
	\end{split}
\end{equation}
where the last inequality follows from the choice of $\epsilon$.
We define
\[
\xi_k^j:=\sup_{y\in\left(a_j,b_j-\frac{e^{-D-B}}{q_k}\right)}-\log(b_j-y) +\sum_{p=1,\ldots,d-1} C_p\log({m_{p}^-(y,q_k)}).
\]
Analogously, for the right-hand side, we define 
\[
\hat\xi_k^j:=\sup_{y\in\left(\hat a_j+\frac{e^{-D-B}}{q_k},\hat b_j\right)}-\log(\hat a_j-y) +\sum_{p=1,\ldots,4} C_p\log({m_{p}^+(y,q_k)})
\]
and by proceeding analogously as in \eqref{eq: residuum} we get that for any $k$ large enough,  we have
\begin{equation}\label{eq: residuasmall}
	\xi_k:=\max_{j=0,\ldots,q_k-1}\{\xi_k^j,\hat\xi_k^j\}<10^{-6}e^{-D}.
	\end{equation}


\smallskip
\noindent {\it Step 5:  Estimate of the left double tail intervals.} 
\smallskip
{From now on we assume that $v_k=v_k^-$. If the point $v_k$ defined in Step 1 had been $v_k^+$ 
	we would have right tail intervals that are larger than the left tail intervals (i.e.~$b-v_k>\hat v_k-\hat a$) and  the monotonicity arguments and computations done in the following Steps would remain analogous, with the role of the left-hand side tails and right-hand side tails swapped.
}
Let us describe the set $B_k\cap \bigcup_{j=0}^{q_k-1}(a_j,b_j)$, where $(a_j,b_j):= 
T^{-j}(a_{0},b_{0})$ are the sets defined in Step 2 {(dotted in Figure~\ref{fig:tailsets})}. We will show that  $B_k\cap (a_j,b_j)$
are intervals  roughly the same size (which correspond to the points whose closest 
visit is on the left-hand side of the singularities) namely of the intervals 
of the form $(v^j_k,b_j)$.  
For each  $j\in\{0,\ldots,q_k-1\}$ consider the interval and let
$z^j_k\in$ be the starting point of the tail, namely the unique point (which exists by monotonicity) such that
\begin{equation}\label{eq: defzkj}
S_{2q_k}(f)(v^j_k)-2a_k=S_{2q_k}(f)(v_k)-2a_k:=2t_k,
\end{equation}
and let  us define
\begin{equation}\label{eq: defDjk}
D^j_k:=-\log(b_j-v^j_k)-\log(q_k) \quad \Leftrightarrow   \quad  b-v^j_k=\frac{e^{-D^j_k}}{q_k}  .
\end{equation}
By the choice of $v^j_k$ and monotonicity (Step 3), we then have the inclusions
	\begin{equation}\label{eq: intervals on the left}
		[v_k^j,b_j)\subset B_k 
		\text{ 		for every } 
		j\in\{0,\ldots,q_k-1-(\overline{\ell}_k-\underline{\ell}_k)\}.
	\end{equation}
We now want to compare $D^j_k$ and $D$ to show that all left double tail intervals $[v_k^j,b_j)$, which by definition have size ${e^{-D^j_k}}/{q_k}$,  have roughly the same size, comparable to ${e^{-D}}/{q_k}$ . We claim that we have a first have a comparison that comes from the choice of $B$ (see Definition \ref{def: derivativebound}) and \eqref{eq: residuasmall}, which is
\begin{equation}\label{eq: trivialDwithD}
	D-2B\le D^j_k\le D+2B.
\end{equation} 
{Indeed, otherwise, if for example $D^j_k<D-2B$, then by Mean Value Theorem we would have
\[
\begin{split}
S_{2q_k}&(f)(v_k^j)-S_{2q_k}(f)(v_k)\\
&<(D-2B)-D-\log(e^{-D+2B}+\Delta_k)+\log(e^{-D}+\Delta_k)+\tilde S_{2q_k}(f)(v_k^j)-\tilde S_{2q_k}(f)(v_k)+4\xi_k\\
&< -{2}B+4\xi_k<0,
\end{split}
\]
which contradicts the choice of $v_k^j$. One proves analogously that $D^j_k>D+2B$ cannot hold either.
Moreover, we have
\begin{equation}\label{eq: BS splitting}
\begin{split}
0=S_{2q_k}(f)(z^j_k)-S_{2q_k}(f)(v_k)
&=\tilde S_{q_k}(f)(z^j_k)+\tilde S_{q_k}(f)(T^{q_k}z^j_k)-\tilde 
S_{q_k}(f)(v_k)-\tilde 
S_{q_k}(f)(T^{q_k}v_k)\\
&-\log(e^{-D^j_k}+\Delta_k)+\log(e^{-D}+\Delta_k)+D^j_k-D+\xi_k,
\end{split}
\end{equation}
where $\xi_k$ is defined in \eqref{eq: residuasmall}. By the choice of $B$, the fact that $M_n$ is a good matching set and Mean Value Theorem we get
\[
\tilde S_{q_k}(f)(z^j_k)+\tilde S_{q_k}(f)(T^{q_k}z^j_k)-\tilde 
S_{q_k}(f)(v_k)-\tilde 
S_{q_k}(f)(T^{q_k}v_k)\le 4Be^{-D+2B}.
\]
Note that the expressions $-\log(e^{-D^j_k}+\Delta_k)+\log(e^{-D}+\Delta_k)$ 
and $D^j_k-D$ are of the same sign. Hence, by \eqref{eq: BS splitting} and  
\eqref{eq: residuasmall}, we get
\begin{equation}\label{eq: DcompareswithD}
	|D^j_k-D|\le 4Be^{-D+2B}+|\xi_k|\le 5B{e^{-D+2B}}.
\end{equation}
\noindent Equation \eqref{eq: DcompareswithD}, in view of the definition \eqref{eq: defDjk} of $D_k^j$, shows that the part of the double tail inside a left tail interval $(a_j,b_j)$, namely $(a_j,b_j)\cap B_k$, is an interval $(z_j^k,b_j)$ whose length $|b_j-z_j^k|$ are the same up to a multiplicative constants. 

\smallskip
\noindent {\it Step 6: Estimate of the  left single tail intervals.} We will now show that the single tail from the left (in red in Figure~\ref{fig:tailsets}) satisfies
\begin{equation}\label{eq:singleinclusion}
(a_j,b_j)\cap A_k\subset \left(b_j-\frac{10}{7}(b_j-v_k^j),b_j\right), \qquad \forall\  0\leq j<q_k.
\end{equation}
 This is a  crucial step to show that the tail $A_k$ is sufficiently smaller than $B_k$.
 Let $j\in\{0,\ldots,q_k-1\}$ and let $(a_j,b_j)$, $v_k^j$ and $D_k^j$ be as above. We will show that the point 
\begin{equation}\label{eq: defykj}
y_k^j:=b_j-\frac{10}{7}(b_j-v_k^j)
=b_j-\frac{10}{7}\cdot\frac{e^{-D_k^j}}{q_k}
\end{equation}
does not belong to $A_k$. By monotonicity of the tail (Step 1) this suffices to get the desired inclusion \eqref{eq:singleinclusion}. Note that by \eqref{eq: DcompareswithD} we have
 \begin{equation}\label{eq: MVTinterval}
 	|y_k^j-v_k^j|,\ |T^{q_k}y_k^j-T^{q_k}v_k^j|<\frac{e^{-D}}{q_k}.
 \end{equation}
 We want
 \begin{equation}
	S_{q_k}(f)(y_k^j)-a_k\le \frac{1}{2}(S_{2q_k}(f)(v_k^j)-2a_k)=\frac{1}{2}(S_{2q_k}(f)(v_k)-2a_k),
\end{equation}
or, in other words we need to show that
\begin{equation}\label{eq: ynotintail}
2S_{q_k}(f)(y_k^j)\le S_{2q_k}(f)(v_k^j).
\end{equation}
By \eqref{eq: MVTinterval} and Mean Value Theorem, we have 
\[
|\tilde S_{q_k}(f)(y_k^j)-\tilde S_{q_k}(f)(v_k^j)|<Be^{-D}\text{ and }
|\tilde S_{q_k}(f)(y_k^j)-\tilde S_{q_k}(f)(T^{q_k}v_k^j)|<Be^{-D}.
\]
Recall that in view of \eqref{eq: residuasmall}, we have $\xi_k<10^{-6}e^{-D}<Be^{-D}$.
Since by the choice of recurrence set we have $(1-\delta)e^{-D}<\Delta_k<e^{-D}$ (recall that $\delta<e^{Be^{-D}}-1<2Be^{-D}$), again by Mean Value Theorem (applied to $\log$) we obtain
\[
|\log(e^{-D}+\Delta_k)-\log(e^{-D}+e^{-D})|<\frac{\delta e^{-D}}{e^{-D}}<2Be^{-D}.
\]
Hence we get 
\[
\begin{split}
2S_{q_k}&(f)(y_k^j)-S_{2q_k}(f)(v_k^j)\\\le
&2\tilde S_{q_k}(f)(y_k^j)-\tilde S_{q_k}(f)(T^{q_k}v_k^j)-\tilde S_{q_k}(f)(v_k^j)
-2\log({\tfrac{10}{7}e^{-D_k^j}})+\log(e^{-D}+\Delta_k)+\log(e^{-D})+4\xi_k\\\le
&2Be^{-D}+2\tilde S_{q_k}(f)(y_k^j)-\tilde S_{q_k}(f)(T^{q_k}v_k^j)-\tilde S_{q_k}(f)(v_k^j)
-2\log({\tfrac{10}{7}e^{-D_k^j}})+\log(e^{-D}+e^{-D})+\log(e^{-D})+4\xi_k\\ \le
&8Be^{-D}-2\log({\tfrac{10}{7}e^{-D-Be^{-D}}})+\log(e^{-D}+e^{-D})+\log(e^{-D})\\=
&10Be^{-D}-\log({\tfrac{100}{49}})+\log(2)\le -\frac{1}{2}\log(\tfrac{50}{49})<0,
\end{split}
\]
where the penultimate inequality follows from the choice of $D$ in Step 0. This shows that 
$ y_k^j$ does not belong to $A_k$, and hence,  by monotonicity (namely Step 3),  concludes the proof of Step 6.

\medskip



\smallskip
\noindent {\it Step 7: Description of the right double tail intervals.}
We now describe the  part of the \emph{double} tail, which corresponds to closest visits from the right (which is shaded by waves in Figure~\ref{fig:tailsets}). Let $\hat v_k\in(\hat{a}, \hat{b})$ be the point  in the  base $(\hat{a}, \hat{b}):= \tilde I_{d+1}^k$ of the tower $\tilde{\mathcal T}_{d+1}^k$ such that
\begin{equation}\label{eq: othersidesymmetry}
S_{2q_k}(f)(\hat v_k)=2t_k+2a_k=S_{2q_k}(f)(v_k)
\end{equation}
and let us define $\hat{D}$ so that
\begin{equation} \label{def:hatD}
\hat D=- \log q_k ( \hat a- T^{q_k}\hat v_k) \quad \Leftrightarrow \quad  \hat a-T^{q_k} \hat v_k=\frac{e^{-\hat D}}{q_k}  .
\end{equation} 
Since we are under the assumption that $v_k=v_k^-$ (see Step 1), and by monotonicity (Step 2) we have that
\begin{equation}\label{eq: D-tilde D relation}
	\hat D \le D\Leftrightarrow e^{-\hat D}\ge e^{- D},
\end{equation}
namely the point $T^{q_k}\hat{v}_k$ (which is chosen to have  distance $e^{-\hat D}/q_k$ from the closest singularity of the run) is \emph{further} from the singularities than $v_k$ (which has distance  $e^{-D}/q_k$).

\smallskip
Consider now the intervals $(\hat a_j,\hat 
b_j):=T^{-j}(\hat a_0,\hat b_0)$ defined in Step 2, which contain the right tail intervals. 
Similarly as before, let $\hat 
v^j_k\in(\hat a_j,\hat b_j)$ be such that
\begin{equation}\label{eq: deftildezkj}
	S_{2q_k}(f)(\hat v^j_k)= 2t_k+2a_k =S_{2q_k}(f)(\hat{v}_k)=S_{2q_k}(f)(v_k),
\end{equation}
and  (analogously to the definition 
\eqref{eq: defDjk} of $D^j_k$) let us then define:
\begin{equation}\label{eq: defhatDjk}
\hat D^{j}_k:=-\log(T^{q_k}\hat v^j_k-\hat a_j)-\log(q_k) \quad \Leftrightarrow \quad 
T^{q_k}\hat v^j_k-\hat a_j=\frac{e^{-\hat D^j_k}}{q_k}  
	. 
\end{equation}
Similarly as in \eqref{eq: trivialDwithD}, we have
\begin{equation}
	\hat D-2B\le \hat D_k^j\le \hat D+2B.
\end{equation}
Moreover, in view of \eqref{eq: othersidesymmetry}, by proceeding similarly as in 
\eqref{eq: DcompareswithD}, we get that
\begin{equation}\label{eq: tildeDcompareswithD}
	|\hat D^j_k-\hat D|\le 5B{e^{-D+2B}}.
\end{equation}
We now claim that we have 
	\begin{equation}\label{eq: intervals on the right}
	\left[\hat{a}_k, \hat v_k^j\right) \cap M_k\subset B_k\ \text{ 
		for every } 
	j\in\{0,\ldots,q_k-1-(\overline{\ell}_k-\underline{\ell}_k)\},\\
\end{equation}
i.e.~also the right double tail consists of intervals.\footnote{{Notice that here we cannot use directly a monotonicity argument, since the Birkhoff sum $S_{2q_k} f $ is not necessarily continuous on $(\hat{a}_k, \hat{b_k})$, since $T^i((\hat{a}_k, \hat{b_k}))$ can hit the cramped discontinuities for some $q_k\leq i\leq 2q_k$. It turns out that one can still use monotonicity arguments, but one has to split the Birkhoff sums $S_{2q_k} f (x)$ in two \emph{halves}, namely $S_{q_k} f (x)$ and $S_{q_k} f (T^{q_k}(x))$, and use different arguments and matchings on each part.}}
Indeed, fix $0\leq j \leq q_k-1-(\overline{\ell}_k-\underline{\ell}_k)$ and consider $x\in \left[\hat{a}_k, \hat v_k^j\right)\subset [\hat{a}_j, \hat{b}_j)$.   
We consider three separate cases (illustrated in Figure~\ref{fig:monotonicity}), according to the relative position of the iterate $T^{q_k}(x)$ with respect to the singularities.\footnote{Remark that $T^{q_k}(x)= x-q_k \Delta_k < x$; if $T^{q_k}(x)>\hat{a}_j$ (Case $1$), then $\mathcal{O}(T^{q_k}(x), q_k)$ is still to the right of the cramped singularities); otherwise, $T^{q_k}(x)$ belongs to the floor at the same height of $(\hat{a}_j,\hat{b}_j)$, but on the other \emph{side} of the singularities, namely $(a_j, b_j)$. In this case  $\mathcal{O}(T^{q_k}(x), q_k)$ is still to the right of the cramped singularities (see Cases 2 and 3).}
\smallskip

\begin{figure}
	\includegraphics[scale=1.5]{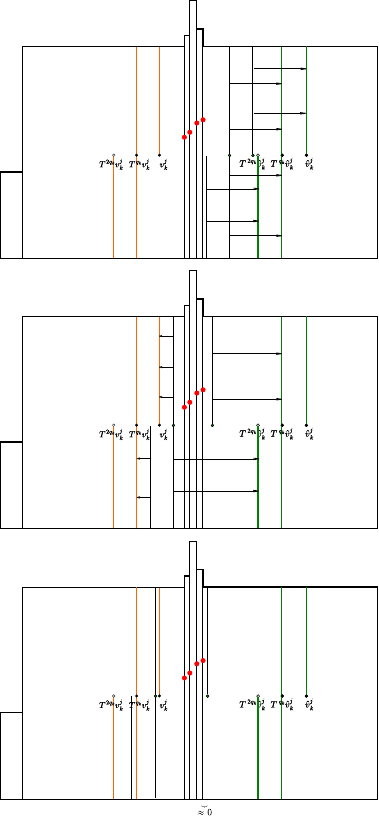}
	\caption{\label{fig:monotonicity} {Illustration of the  three Cases considered in Step 7 of the proof. Vertical (coloured) lines represent orbit segments and arrows indicate which orbit segments are matched and compared using different monotonicity arguments. The top figure illustrates {\it Case 1}, the next (middle) figure illustrates {\it Case 2}. The last figure (bottom) illustrates {\it Case 3}.}}


\end{figure}

\noindent {\it Case 1 (second run also  from the right)}.  Assume first that $T^{q_k}(x)$ still belongs to $(\hat{a}_j, \hat{b}_j)$.  Then one can show that $x\in B_k$ by exploiting the 
monotonicity of $S_{q_k}(f)$ and direct comparison of $S_{q_k}(f)(x)$ with 
$S_{q_k}(f)(\hat v_k^j)$ and $S_{q_k}(f)(T^{q_k}x)$ with 
$S_{q_k}(f)(T^{q_k}\hat v_k^j)$, which gives that 
$$
S_{2q_k}(f)(x)= S_{q_k}(f)(x) +S_{q_k}(f)(T^{q_k}(x)) >    S_{q_k}(f)(\hat v_k^j)+
S_{q_k}(f)(T^{q_k}(\hat v_k^j))= S_{2q_k}(f)(\hat v_k^j)
$$
 (see Figure~\ref{fig:monotonicity} {{top configuration, which shows that the two parts of the Birkhoff sum (in black) can be \emph{slided} to the left to match the other two parts (in green)}.} 
}

\smallskip
{Assume now that $T^{q_k}(v_k^j)\notin  (\hat{a}_j, \hat{b}_j)$. Then, since $x\in M_k$ (so that it cannot belong to the small towers $\mathcal{T}^k_3\cup \cdots \cup \mathcal{T}^k_{d} $) it has to belong to the floor of $\mathcal{T}^k_{2}$ at the same height than $(\hat{a}_j, \hat{b}_j)$, namely $(a_j,b_j)$. We consider two subcases (Case 2 and Case 3).
\smallskip

\noindent {\it Case 2 (second run to the left, but in the double tail)}.
If 
{$\hat{a}_{j}\le x \le \hat v_k^j$ and  $v_k^{j}\le T^{q_k} x\le b_{j}$,} where $v_k^{j}\in ({a}_j,\hat{b}_j)$ is the starting point of the double tail. Then, 
by two different monotonicity arguments, namely since $x< \hat{v}^j_k$ and $S_{q_k}(f)$ is decreasing in $(\hat{a}_j, \hat{b}_j)$, while $T^{q_k}x> {v}^j_k$ and $S_{q_k}(f)$ is increasing in $({a}_j, {b}_j)$ ({refer to Figure~\ref{fig:monotonicity}, middle}), we get: 
\[
\begin{split}
	&S_{q_k}(f)(x)\ge S_{q_k}(f)({\hat v_k^j})\ge\frac{1}{2}(S_{2q_k}(f)(v_k)-
	2a_k), \\ &S_{q_k}(f)(T^{q_k}x)\ge S_{q_k}f(v_k^{j})\ge\frac{1}{2}(S_{2q_k}f(v_k)-
	2a_k).
\end{split}
\]
Hence, adding these two estimates up,} we proved that  $x\in B_k$ in this case as well.

\smallskip
\noindent {\it Case 3 (second run to the left and outside of the double tail)}. If we are neither in Case 1, nor in Case 2, that is $x$ does not satisfy any of the considered conditions, then we claim that either 
$$|x-b_{j}|\le\frac{ 12Be^{-D+2B}\cdot e^{-D}}{q_k}, \quad \text{or} \quad |T^{q_k}x-b_{j}|\le\frac{ 12Be^{-D+2B}\cdot e^{-D}}{q_k}.$$  
\noindent { This means that 
in this case, the orbit has to pass very close to singularities (as illustrated in  the last picture in Figure~\ref{fig:monotonicity}, bottom).  To formally prove the claim, notice that,} by construction we have
$\Delta_k\in C=[c_0,c_1]$, 
by \eqref{eq: DcompareswithD} and \eqref{eq: tildeDcompareswithD}, we have
\begin{equation}\label{eq: allareclose}
	\max\{|\Delta_k-e^{-D}|,|\Delta_k-e^{-D_k^{j}}|\}\le 6Be^{-D+2B}\cdot e^{-D}.
\end{equation}
Assume that $|x-b_{j}|\le 12Be^{-D+2B}\cdot e^{-D}\cdot q_k^{-1}$, the other case is considered analogously. Since $b_{j}$ is a preimage of a discontinuity, the closest visit of the orbit $\mathcal O_T(x,q_k)$  has distance at most 
${(12Be^{-D+2B}\cdot e^{-D})}/{q_k}$. 
Thus, by using the $B$-linear trimmed derivative bounds (see Definition~\ref{def: derivativebound}) 
 and separating the closest visit from the remaining trimmed sum, we get $S_{q_k}(f)(x)-a_k\ge 2D-3B$. Note also, that the closest visit of the orbit $\mathcal O_T(T^{q_k}(x),q_k)$ is at most ${(12Be^{-D+2B}\cdot e^{-D}+\Delta_k)}/{q_k}$. Hence $S_{q_k}(f)(x)-a_k\ge D-3B$. Thus, by the choice of $D$, we have
\begin{equation}\label{eq: superclosevisit}
S_{2q_k}(f)(x)-2a_k\ge 3D-6B>2D+2B,
\end{equation}
which by \eqref{eq: boundonBS} implies that $x\in B_k$ and concludes the proof of the claim and of this Step in this case.

\medskip
{\noindent {\it Step 8: Estimate of  the right single tail intervals.}
We now describe the second part of the single tail, which corresponds to the 
\emph{right}-hand side visits. 	We claim that 
	$$
	(\hat a_j,\hat b_j)\cap A_k\subset \left(\hat a_j,\hat y_k^j \right), \quad \text{for} \ j\in \{ 0, \cdots, q_k-1-(\overline{\ell}-\underline{\ell})\},
	$$
	where $\hat y_k^j\in (\hat a_j,\hat b_j)$ is a point satisfying
	\begin{equation}\label{eq: deftildeykj}
	|\hat y_k^j-\hat a_j|=\Big|\hat v_k^j-\frac{1}{2}\cdot\frac{\Delta_k}{q_k}-\hat a_j\Big|(1+10Be^{-D}).
	\end{equation}
	We will show that $\hat y_k^j$ does \emph{not} belong to $A_k$. By \eqref{eq: residuasmall}, Jensen's inequality and Mean Value Theorem we have 
	\[
	\begin{split}
	2S_{q_k}&(\hat y_k^j)-S_{2q_k}(\hat v_k^j)\\
	&\le 2\tilde S_{q_k}(f)(\hat y_k^j)-\tilde S_{q_k}(f)(\hat v_k^j)-\tilde S_{q_k}(f)(T^{q_k}\hat v_k^j)\\
	&\quad-2\log (e^{-\hat D_k^j}+\frac{1}{2}\cdot {\Delta_k})-2\log(1+10Be^{-D})+\log (e^{-\hat D_k^j})+\log (e^{-\hat D_k^j}+\Delta_k)+4\xi_k\\
	&\le 2\tilde S_{q_k}(f)(\hat y_k^j)-\tilde S_{q_k}(f)(T^{-2q_k}\hat v_k^j)-\tilde S_{q_k}(f)(T^{-q_k}\hat v_k^j)-2\log(1+10Be^{-D})+4\xi_k\\
	&\le 6Be^{-D}-10Be^{-D}\le 0
	\end{split}
	\]
	Hence (since $\hat y_k^j$ does not belong to $A_k$) we have the inclusion 
	$$\bigcup_{j=0}^{q_k-1-(\overline{\ell}_k-\underline{\ell}_k)}(\hat a_j,\hat b_j)\cap A_k\subset \bigcup_{j=0}^{q_k-1-(\overline{\ell}_k-\underline{\ell}_k)}T^{-q_k}\left(\hat a_j,\hat y_k^j \right).$$  By combining this with Step 6, we get that 
	\begin{equation}\label{eq: singletailinclusion}
	A_k\cap\left(\bigcup_{j=0}^{{q_k-1-(\overline\ell_k-\underline\ell_k)}}(a_j,b_j) \cup \bigcup_{j=0}^{q_k-1-(\overline\ell_k-\underline\ell_k)}(\hat a_j,\hat b_j)\right)\subseteq \bigcup_{j=0}^{q_k-1-(\overline{\ell}_k-\underline{\ell}_k)}\left(y_k^{j'},\hat y_k^j\right),
	\end{equation}
where $j'=q_k-1-(\overline\ell_k-\underline\ell_k)-j$.
  }

\medskip

\medskip
\noindent {\it Step 9: Contribution to the tails of orbits with   incomplete runs.} 
Let us now prove  an \emph{upper} bound on the measure of  the set $A_k$ and \emph{lower} bound for the set $B_k$, one needs to consider also points which only make \emph{incomplete} runs.

In addition to the contribution from points whose orbits contain  a full \emph{run} (see Step 2) 
 The initial points of these orbits belong to  the sets 
$$\bigcup_{j=1}^{\overline{\ell}_k-\underline{\ell}_k}T^j(a_0,b_0)\ \text{ and} \  {\bigcup_{j=1}^{\overline{\ell}_k-\underline{\ell}_k} T^j(\hat a_{0},\hat b_{0}).}$$

 Let $j\in\{1,\ldots,\overline{\ell}_k-\underline{\ell}_k\}$ and let $(a,b):=T^j(a_0,b_0)$. In view of \eqref{eq: ynotintail}, we have that $y_k^0=b_0-\frac{\frac{10}{7}e^{-D_k^0}}{q_k}\notin A_k$. We claim that 
$	S_{q_k}(f)(T^{j}y_k^0)-S_{q_k}(f)(y_k^0)<0$, 
	and thus 
	\begin{equation}\label{eq: notvisitingall_1}
		A_k\cap T^j(a_0,b_0)\subset (T^{j}y_k^0,T^{j}b_0).
	\end{equation}
	Indeed, if $r\in\{1,\ldots,d-1\}$ is such that for $i\in\{0,\ldots,r\}$, we have $m_i^{-}(T^{j}y_k^0,q_k)=m_i^{-}(y_k^0,q_k)+\frac{\Delta_k}{q_k}$,  then, since $\Delta_k\in C$ and in view of the trimmed derivative $B$-bounds (see Definition~\ref{def: derivativebound}), 
	 we get 
	\[
	\begin{split}
		S_{q_k}(f)(T^{j}y_k^0)-S_{q_k}(f)(y_k^0)
		&\le B\Delta_k-\sum_{i=1}^{d-1}C_i^-\log(m_p^{-}(T^{j}y_k^0,q_k))+\sum_{i=1}^{d-1}C_i^-\log(m_p^{-}(y_k^0,q_k))\\
		&\le Be^{-D}+\sum_{i=0,\ldots,r}C_i^-\log\left(\frac{\frac{10}{7}e^{-D_k^0}}{\frac{10}{7}e^{-D_k^0}+\Delta_k}\right)\\
		&\le Be^{-D}+C_{min}\cdot\log\left(\frac{\frac{10}{7}e^{-D_k^0}}{\frac{16}{7}e^{-D_k^0}} \right)<0,
	\end{split}
	\]
	where the penultimate inequality follows from the choice of $D$, the definition \eqref{Cmin} of $C_{min}$ and \eqref{eq: allareclose}. 

\smallskip	
{	In a similar fashion, let us now prove that for $j=\{1, \dots ,(\overline{\ell}_k-\underline{\ell}_k)\}$ we have the inclusion
	\begin{equation}\label{eq: notvisitingall_2}
	A_k\cap T^j(\hat a_0,\hat b_0)\subset \left(T^{j}\hat a_0,T^{j}\hat y_k^0\right).
	\end{equation}
To see this, let again $r\in \{1,\ldots,d-1\}$ be such that for $i\in\{1,\ldots,r\}$, we have $m_i^{+}(T^{j}\hat y_k^0,q_k)=m_i^{+}(\hat y_k^0,q_k)+\frac{\Delta_k}{q_k}$. Using \eqref{eq: trivialDwithD}, we have
\[
\begin{split}
	S_{q_k}(f)(T^{j}\hat y_k^0)&-S_{q_k}(f)(\hat y_k^0)\le Be^{-D}-\sum_{i=1}^{d}C_i^+\log(m_p^{+}(T^{j}\hat y_k^0,q_k))+\sum_{i=1}^{d}C_i^+\log(m_p^{+}(\hat y_k^0,q_k))\\
	&\le Be^{-D}+\sum_{i=1,\ldots,r}C_i^+\log\left(\frac{(e^{-\hat D_k^0}+\frac{3}{2}\Delta_k)(1+10Be^{-D})}{(e^{-\hat D_k^0}+\frac{3}{2}\Delta_k)(1+10Be^{-D})+\Delta_k}\right)\\
	&\le Be^{-D}+C_{min}\ \log\left(\frac{e^{-D+2B}+\frac{3}{2}e^{-D}}{e^{-D+2B}+2e^{-D}} \right)\\
	&= Be^{-D}+C_{min}\ \log \left(1-\frac{1}{e^{2B}+4}  \right)\le Be^{-D}-C_{min}\ e^{-2B}<0,
\end{split}
\]
which shows \eqref{eq: notvisitingall_2}.
} 
Finally, we estimate the contribution of incomplete  runs to the double tail is also non-trivial, to show that it is comparable with the contribution to the single tail. More precisely, we show that for $\hat v_k^0$ with $|T^{q_k}\hat v_k^0-\hat a_0|=\frac{e^{-\hat D_k^0}}{q_k}$ and 
$j\in\{1,  \ldots, (\overline{\ell}_k-\underline{\ell}_k)\}$ we have
\begin{equation}\label{eq: notvisitingall_3}
	(T^j\hat a_0,T^j(\hat v_k^0))\cap M_k\subset B_k.
\end{equation}
To prove \eqref{eq: notvisitingall_3}, for a fixed {$j\in\{1,\ldots, (\overline{\ell}_k-\underline{\ell}_k)\}$, we need to show that $S_{2q_k}(f)(T^j\hat v_k^0))\ge S_{2q_k}(f)(\hat v_k^0)$,} since the point $\hat v_k^0$ marks the right-hand side endpoint of the tail $B_k$ (see Step 7). Let $r\in \{1,\ldots,d-1\}$ be such that for $i\in\{1,\ldots,r\}$, we have $m_i^{+}(T^{j}(\hat v_k^0),q_k)=m_i^{+}(T^{2q_k}\hat v_k^0,q_k)-\frac{2\Delta_k}{q_k}$.  Note that the rest of closest visits cancels out i.e. the points which are the closest visit or the second closest visit to a singularity in the orbits $\mathcal O(T^j(T^{-q_k}\hat v_k^0),2q_k)$ and $\mathcal O(T^{-2q_k}\hat v_k^0,2q_k)$ are the same, except the $r$ points pointed above. Thus, by using again the $B$-linear trimmed derivative bounds (see Definition \ref{def: derivativebound}), we get 
\[
\begin{split}
S_{2q_k}(f)(T^j({\hat v_k^0 })&- S_{2q_k}(f)({\hat v_k^0})\\
&\ge -2Be^{-D}-\sum_{i=1}^{d}C_p^+\log(m_p^{-}(T^{j}(\hat v_k^0),q_k))+\sum_{i=1}^{d}C_p^+\log(m_p^{-}(\hat v_k^0,q_k))\\
&\ge -2Be^{-D}+\sum_{i=1,\ldots,r}C_p^+\log\left(\frac{e^{-\hat D_k^0}+\Delta_k}{e^{-\hat D_k^0}-\Delta_k}\right).
\end{split}
\]
Thus, recalling the definition of $C_{min}$ in \eqref{Cmin},  we have
\[
\begin{split}
S_{2q_k}(f)(T^j({\hat v_k^0}))&- S_{2q_k}(f)({\hat v_k^0})\geq \\
&\ge -2Be^{-D}+C_{min}\cdot \log\left(\frac{e^{-D+2B}+e^{-D}}{e^{-D+2B}-e^{-D}} \right).\\
&= -2Be^{-D}+ C_{min}\ \cdot\log \left(1+\frac{2}{e^{2B}-1}  \right) \ge -2Be^{-D}+C_{min} \, \cdot e^{-2B}>0,
\end{split}
\]
which shows \eqref{eq: notvisitingall_3}.

We finish this Step by estimating from above the difference in measures of the single and double tails, on the set where the points realize the incomplete visits. By combining \eqref{eq: notvisitingall_1}, \eqref{eq: notvisitingall_2} and \eqref{eq: notvisitingall_3}, taking into account \eqref{eq: tausareclose}, we get
\begin{equation}\label{eq: tailsincomplete}
	Leb\left((A_k\setminus B_k)\cap \left(\bigcup_{j=1}^{\overline{\ell}_k-\underline{\ell}_k}T^j(a_0,b_0)\quad \cup\quad {\bigcup_{j=1}^{\overline{\ell}_k-\underline{\ell}_k}}T^j(\hat a_0,\hat b_0)  \right)\right)\le (\overline{\ell}_k-\underline{\ell}_k)\cdot \frac{3e^{-D}}{q_k}\le 3\rho e^{-D}.
\end{equation}

\medskip

\noindent {\it Step 10: Putting together all estimates.} 
Let us now combine the estimates from the previous Steps (Step 5 to Step 9), to estimate $Leb(B_k)-Leb(A_k)$. 
We divide the comparison into two parts, one corresponds to  \emph{complete} runs, which were studied in Steps 5 to 8, while the other comes from the \emph{incomplete} runs, with which we dealt in Step 9. First we deal with the latter. As a direct consequence of \eqref{eq: tailsincomplete} we get 
\begin{equation}\label{eq: finalestimate_incomplete}
	\begin{split}
\Bigg|Leb\,&\Bigg(B_k\cap \Bigg(\bigcup_{j=1}^{\overline{\ell}_k-\underline{\ell}_k}T^j(a_0,b_0) \cup {\bigcup_{j=1}^{\overline{\ell}_k-\underline{\ell}_k}}T^j(\hat a_0,\hat b_0)  \Bigg)\Bigg)\\
&-Leb\,\Bigg(A_k\cap \Bigg(\bigcup_{j=1}^{\overline{\ell}_k-\underline{\ell}_k}T^j(a_0,b_0) \cup{\bigcup_{j=1}^{\overline{\ell}_k-\underline{\ell}_k}}T^j(\hat a_0,\hat b_0)  \Bigg)\Bigg)\Bigg|\\
&\le Leb\,\Bigg((A_k\cap B_k)\cap \Bigg(\bigcup_{j=1}^{\overline{\ell}_k-\underline{\ell}_k}T^j(a_0,b_0) \cup{\bigcup_{j=1}^{\overline{\ell}_k-\underline{\ell}_k}}T^j(\hat a_0,\hat b_0)  \Bigg)\Bigg)\le 3\rho e^{-D}.
\end{split}
	\end{equation}
	
Now we go back to the contribution of the {orbits which undergo  complete runs.} In view of \eqref{eq: intervals on the right} and \eqref{eq: singletailinclusion} we get 
\begin{equation}\label{eq: finalestimate_complete_p1}
	\begin{split}
	Leb\,&\Bigg( B_k\cap\Bigg(\bigcup_{j=0}^{{q_k-1-(\overline\ell_k-\underline\ell_k)}}(a_j,b_j) \cup \bigcup_{j=0}^{q_k-1-(\overline\ell_k-\underline\ell_k)}(\hat a_j,\hat b_j)\Bigg)\Bigg)\\&-Leb\,\Bigg( A_k\cap\Bigg(\bigcup_{j=0}^{{q_k-1-(\overline\ell_k-\underline\ell_k)}}(a_j,b_j) \cup \bigcup_{j=0}^{q_k-1-(\overline\ell_k-\underline\ell_k)}(\hat a_j,\hat b_j)\Bigg)\Bigg)\\
	&\ge Leb\Bigg(\bigcup_{j=0}^{{q_k-1-(\overline\ell_k-\underline\ell_k)}}\big(v_k^{j},\hat v_k^j\big)\Bigg)-Leb\Bigg(\bigcup_{j=0}^{{q_k-1-(\overline\ell_k-\underline\ell_k)}}\big(y_k^{j},\hat y_k^j\big)\Bigg).
	\end{split}
\end{equation}
By the definitions of points $v_k^j,\hat v_k^j, y_k^j$ and $\hat y_k^j$ (see \eqref{eq: defzkj}, \eqref{eq: deftildezkj}, \eqref{eq: defykj} and \eqref{eq: deftildeykj}, respectively), we get
\begin{equation}\label{eq: finalestimate_complete_p2}
	\begin{split}
Leb&\Bigg(\bigcup_{j=0}^{{q_k-1-(\overline\ell_k-\underline\ell_k)}}\big(v_k^{j},\hat v_k^j\big)\Bigg)-Leb\Bigg(\bigcup_{j=0}^{{q_k-1-(\overline\ell_k-\underline\ell_k)}}\big(y_k^{j'},\hat y_k^j\big)\Bigg)\\
&=Leb\Bigg(\bigcup_{j=0}^{{q_k-1-(\overline\ell_k-\underline\ell_k)}}\Big(\hat y_k^j,\hat v_k^j+2\frac{\Delta_k}{q_k}\Big)\Bigg)-Leb\Bigg(\bigcup_{j=0}^{{q_k-1-(\overline\ell_k-\underline\ell_k)}}\big( y_k^j, v_k^j\big)\Bigg)
\end{split}
	\end{equation}
In view of definition of $y_k^j$ and $v_k^j$, as well as of \eqref{eq: DcompareswithD}, we get that 
\begin{equation}\label{eq: completebreakdown_left}
v_k^j-y_k^j=\frac{3}{7}\cdot \frac{e^{-D_k^j}}{q_k}= \frac{3}{7}\cdot \frac{e^{-D}}{q_k}\cdot e^{-D_k^j+D}\le 
\frac{3}{7}\cdot \frac{e^{-D}}{q_k}\cdot e^{5Be^{-D+2B}}\le \frac{3}{7}\cdot \frac{e^{-D}}{q_k}(1+6B^{-D+2B}).
\end{equation}
Moreover, in view of definition of $\hat y_k^j$ and $\hat v_k^j$, using again \eqref{eq: DcompareswithD} and \eqref{eq: D-tilde D relation}, we get 
\begin{equation}\label{eq: completebreakdown_right}
	\begin{split}
	\hat v_k^j-\hat y_k^j&=e^{-\hat D_k^j}+\frac{\Delta_k}{q_k}-\big(e^{-\hat D_k^j}+\frac{\Delta_k}{2q_k}\big)(1+10Be^{-D})
	=\frac{\Delta_k}{2q_k}-\frac{10Be^{-D}}{q_k}(e^{-D+2B}+2e^{-D})\\
	\ge &\frac{e^{-D}(1-\delta)}{2q_k}-\frac{11Be^{-D}}{q_k}e^{D+2B}\ge \frac{1}{2}\frac{e^{-D}}{q_k}(1-25Be^{-D+2B})
	\end{split}
\end{equation}
By applying \eqref{eq: completebreakdown_left} and \eqref{eq: completebreakdown_right} to \eqref{eq: finalestimate_complete_p2}, we get
\[
Leb\Bigg(\bigcup_{j=0}^{{q_k-1-(\overline\ell_k-\underline\ell_k)}}\big(v_k^{j},\hat v_k^j\big)\Bigg)-Leb\Bigg(\bigcup_{j=0}^{{q_k-1-(\overline\ell_k-\underline\ell_k)}}\big(y_k^{j},\hat y_k^j\big)\Bigg)
\ge \frac{1-\rho}{14} e^{-D}(1-16Be^{-D+2B})\ge \frac{1-\rho}{16}e^{-D},
\]
which by \eqref{eq: finalestimate_complete_p1}, \eqref{eq: finalestimate_complete_p2}, as well as \eqref{eq: finalestimate_incomplete} gives
\begin{equation}\label{eq: totalfinalestimate}
Leb(B_k)-Leb(A_k)\ge \frac{1-4\rho}{16}e^{-D}\ge \frac{e^{-D}}{20}.
\end{equation}
Since, in view of \eqref{eq: gammaestimate}, we have $\gamma\le 4\cdot 10^{-6}e^{-D}<\frac{1}{2}$,  by \eqref{eq: totalfinalestimate}, we get
\[
Leb(B_k)-Leb(A_k)\ge \gamma.
\]
which finishes the proof of the proposition.
	\end{proof}
\subsection{Distinguishing tails in the general case} \label{sec:distinguishedtailsgeneral}
We now deal with the case when the rescalings under consideration are the $K$ and $L$ rescalings,  with general $K, L$ positive integers.	We assume throughout  that $L>K$.  

\subsubsection{The $K$ and $L$ tail sets.}\label{sec:BKLsets}
Let $(M_k)_k$ be a sequence of good $L$-fold matchings, given by $(c,\delta)$-BT rigidity sets. 
We let, as in the previous sections:
\begin{itemize}
\item[-] $B$ be the constant  given by the linear control of trimmed derivatives (see Definition \ref{def: derivativebound});
\item[-]  $a_k$ be the centering constants given by $a_k:= S_{q_k}(f)(z_k)$ where $z_k$ is the reference point of the matching sets,
\item[-] $X_k=[0,1]\backslash M_k$ be the bad set for matchings. 
\item[-] $\Delta_k=q_k\, \tilde\lambda^k_1$ be the rescaled displacement,
\item[-] $t_k:= \frac{K }{L} S_{L q_k}(f)(v_k)-K a_k$, for $k\in \N$.
\end{itemize}
where $v_k$ marks the beginning of $B_k^L$ on the set $\tilde I_2^k$.
To deduce from the criterion on disjointness that $K$-th and $L$-th acceleration of our flow are disjoint, we need to consider the sets
\begin{align}\label{def:BKL}
B_k^L&:=\{x\in I\setminus X_k\mid S_{Lq_k}(f)(x)-La_k\ge \frac{L}{K}\, t_k:= S_{Lq_k}(f)(v_k)-La_k\},\\ \nonumber
B_k^K&:=\{x\in I\setminus X_k\mid S_{Kq_k}(f)(x)-Ka_k\ge t_k:= \frac{K}{L}S_{Lq_k}(f)(v_k)- Ka_k\}
\end{align}
and prove that asymptotically the measures of $B_k^L$ and $B_k^K$ are different. 
  
\smallskip
  The main estimate which allows to distinguish the tail sets is the following:
  {
  \begin{prop}[Distinguishing tails in the general case]\label{prop:generaltail} 
 { Given any $L\in \mathbb{N}$, there exists $\rho>0$ such that, for every  $f\in SymLog^2(T)$  and  for any $B>0$,}  
 there exists 
  $ \epsilon>0$ and $C=[c_0,c_1]\subset \mathbb{R}_+$, so that for any  
  sequence $(M_k)_k$ of {$(\gamma,B)$}-good $L$-fold $(z_k,q_k)-$matchings given a $(C,\epsilon, \rho)$-BT rigid presentation with
  ${\gamma< (L+1)\rho c_1+\epsilon }$ 
 we have that 
$$Leb(B_k^L)-Leb(B^K_k)\geq \gamma \quad \forall k\in \N,$$
where $B_k^K$ and $B_k^L$ are respectively the $L$ and $K$-tail sets defined above. Furthermore, 
 the sequence $(t_k)_k$  in the definition of the tail sets, namely $t_k= \frac{K }{L} S_{L q_k}(f)(v_k)-K S_{q_k}(f)(z_k)$, $k\in \mathbb{N}$,  is bounded.  
	\end{prop}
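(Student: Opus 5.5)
The plan is to follow the architecture of the proof of Proposition~\ref{prop:distinguishedtailsL2}, replacing the pair $(1,2)$ by $(K,L)$ and the single/double tails by $B^K_k$, $B^L_k$. First I fix the parameters as in Step 0: $\rho$ small depending on $L$ (say $\rho\le 10^{-6}L^{-2}$), then $D$ large in terms of $B$, $L$ and $C_{min}$ so that the error terms $LBe^{-D}$ and $LBe^{-D+2B}$ are negligible, then $\epsilon:=10^{-6}e^{-2D-B}L^{-1}$ and $C=[(1-\delta)e^{-D},e^{-D}]$. Since the matchings are $L$-fold, every orbit $\mathcal{O}(x,Lq_k)$ with $x\in M_k$ splits into $L$ blocks $\mathcal{O}(T^{iq_k}x,q_k)$. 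Each block is matched to $\mathcal{O}(z_k,q_k)$ and satisfies the $B$-linear trimmed bounds. So, exactly as in Step 4 and \eqref{eq: residuasmall}, I get
$$S_{Lq_k}f(x)-La_k=\sum_{i=0}^{L-1}\big(-\log(q_k\, m(T^{iq_k}x))\big)+O(LBe^{-D}),$$
where $m(\cdot)$ is the distance to the end of the cramped run. Boundedness of $t_k$ then follows as in \eqref{eq: boundonBS}: $0\le t_k\le \tfrac{K}{L}\cdot L(B+D)$.

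Next I locate the tails as in Steps 2–3. On left tail intervals $(a_j,b_j)$, write $u:=q_k(b_j-x)$. Since $T^{q_k}$ shifts by $-\Delta_k/q_k\approx -e^{-D}/q_k$, the main term of $S_{Nq_k}f-Na_k$ is
$$F^-_N(u):=-\sum_{i=0}^{N-1}\log(u+i\Delta),$$
which is decreasing in $u$. Hence $B^N_k\cap(a_j,b_j)$ is an interval at $b_j$ of normalized length $u_N$ solving $F^-_N(u_N)=\tfrac{N}{L}\,\tau$, with $\tau:=F^-_L(u_L)$. On right intervals $(\hat a_j,\hat b_j)$ the orbit moves toward the singularities. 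Here I reuse the three-case argument of Step 7: slide the blocks, use monotonicity on each side, and use the very-close-visit bound \eqref{eq: superclosevisit}. This shows the $N$-tails are intervals of normalized length $\hat u_N+(N-1)\Delta$, where $\hat u_N$ solves $F^+_N(\hat u_N)=\tfrac{N}{L}\tau$ with $F^+_N(w)=-\sum_i\log(w+i\Delta)$ measured at the last block. The normalization $v_k$ is chosen, as in \eqref{eq:vkmin}, as the side attaining the minimum, and $u_L=e^{-D}$ on that side.

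The core computation is then one-dimensional. The per-floor measure difference over complete runs is $\tfrac1{q_k}\big[(\hat u_L+(L-1)\Delta-u_L)-(\hat u_K+(K-1)\Delta-u_K)\big]$. Using the tail-matching equations, this should equal $\tfrac{1}{q_k}\big[(L-K)\Delta+(\hat u_L-\hat u_K)-(u_L-u_K)\big]$. Concavity of $\log$ (Jensen, as in Step 8) gives that $u_L-u_K$ and $\hat u_L-\hat u_K$ are controlled by $\tfrac{L-K}{2}\Delta$ with opposite signs. I expect the bracket to be bounded below by $c(K,L)\,e^{-D}$ with $c(K,L)>0$ explicit, mirroring the $\tfrac{10}{7}$ versus $\tfrac12$ comparison. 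I will compute this with $\Delta=e^{-D}$ exactly, then perturb by $\delta$ and the $\xi_k$ errors. Summing over the $\approx(1-\rho)q_k$ floors gives $Leb(B^L_k)-Leb(B^K_k)\ge c(K,L)(1-\rho)e^{-D}-O(LBe^{-D+2B}e^{-D})$.

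Incomplete runs are handled as in Step 9, now with at most $L$ blocks entering the cramped zone. Their contribution to the difference is at most $3L\rho e^{-D}$, so taking $\rho\ll c(K,L)/L$ gives a gain of at least $\tfrac{c(K,L)}{2}e^{-D}$. Since $\gamma<(L+1)\rho e^{-D}+\epsilon\ll c(K,L)e^{-D}$, the conclusion follows. The main obstacle is the explicit positivity of $c(K,L)$ for all $K<L$, i.e.\ showing the convexity gain on the right side strictly beats the loss on the left. If the direct inequality is unclear, I would first prove it for $K=L-1$ via a monotone comparison of the successive differences $u_{N+1}-u_N$ in $N$, and then telescope.
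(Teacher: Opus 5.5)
Your architecture is the paper's. On each floor the $N$-tail is an interval of normalized length $u_N$ on the left and $\hat u_N+(N-1)\Delta$ on the right, as in Lemma~\ref{lem: generalformoftails}, and everything reduces to a one-dimensional comparison. But that comparison is the only genuinely new ingredient beyond the case $K=1$, $L=2$, and you leave it as an expectation rather than a proof.

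Your bracket also has a sign error. The left tail contributes $+u_N$ to the measure, so the per-floor difference is
\[
(L-K)\Delta+(\hat u_L-\hat u_K)+(u_L-u_K).
\]
Moreover, $u_L-u_K$ and $\hat u_L-\hat u_K$ do not have opposite signs; both are \emph{negative}. Since $\log$ is increasing, the average of $\log(u+i\Delta)$ over $L$ terms exceeds the average over the first $K$ terms. Matching the threshold $\frac KL\tau$ therefore forces $u_K>u_L$ and $\hat u_K>\hat u_L$. So the only gain is the crossing term $(L-K)\Delta$, and it must absorb two losses. With your sign, the left side looks like a gain and the inequality looks almost free.

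What is needed is that each loss is strictly less than $\frac{L-K}{2}\Delta$, with a quantitative margin on one side. You guess the right scale, but this is exactly Lemma~\ref{lemma:convexity}: for $R\ge 0$,
\[
\sum_{i=1}^K\log\Big(R+\tfrac{L-K}{2}+i\Big)>\tfrac KL\sum_{i=1}^L\log(R+i).
\]
Apply it with $R=0$ on the side where $u_L=e^{-D}\approx\Delta$. This gives $u_K=\alpha e^{-D}$ with $\alpha<\frac{L-K}{2}+1$.

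Apply it with $R=\hat u_L/\Delta-1\ge 0$ on the other side; here you use the choice of $v_k$ via the minimum, i.e.\ $\hat D\le D$. This gives $\hat u_K<\hat u_L+\frac{L-K}{2}\Delta$. Together these yield $c(K,L)=\frac{L-K}{2}+1-\alpha>0$.

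Plain Jensen, as in Step 8, only covers $K=1$. For $K\ge 2$ the paper needs the majorization form of Jensen (Lemma~\ref{prop: Jensen}) together with a parity split.

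Your telescoping fallback does not bypass this inequality. With the common threshold scale $\tau$, the intermediate steps $(N,N+1)$ have $u_{N+1}>\Delta$. So you need the case $L=K+1$ of the same inequality for arbitrary $R\ge 0$, not a single numerical check.

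That case does have a short proof. Replace each $\log(R+\frac12+i)$ by the average of its two neighbours, using strict concavity. Then use that $\log$ lies above its chord on $[R+1,R+N+1]$, and that the interior points have the midpoint as their mean. So the fallback could be completed, but as written the decisive step of the proposition is missing.
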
 }
\noindent	The proof of Proposition~\ref{prop:generaltail} is given in \S~\ref{sec:prooftailestimate}. We first prove to auxiliary results (in \S~\ref{sec:convexity} and  \S~\ref{sec:auxiliary} respectively). 
\subsubsection{A convexity inequality.}\label{sec:convexity}  The key estimate which is used to show that the tails can be distinguished is based on convexity. We first state and prove the following Lemma, which will provide the key step to distinguish the tails. 

\begin{lemma}[A convexity estimate]\label{lemma:convexity}
For any non-negative number $R\ge 0$ and for every two positive integers $L\geq K>0$, it holds that 
\begin{equation}\label{eq: mainsimplification}
\sum_{i=1}^K  \log\left(R+\frac{L-K}{2}+i\right)>  \frac{K}{L}\sum_{i=1}^L \log(R+i).
\end{equation}
\end{lemma}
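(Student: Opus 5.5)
The idea is to view both sides of \eqref{eq: mainsimplification} as averages of the strictly concave function $g(x):=\log(R+x)$ over sets that are symmetric about the same centre. Dividing by $K$, the claim reads
\[
\frac1K\sum_{i=1}^K g\Big(\tfrac{L-K}{2}+i\Big)>\frac1L\sum_{i=1}^L g(i).
\]
The right-hand side averages $g$ over $\{1,\dots,L\}$. The left-hand side averages $g$ over the $K$ points $\tfrac{L-K}{2}+1,\dots,\tfrac{L+K}{2}$, which are half-integers when $L-K$ is odd. Both sets are symmetric about $c:=\tfrac{L+1}{2}$. The inequality then says that, for a concave function, the average over a symmetric block concentrated near the centre beats the average over a wider symmetric set.

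\emph{Caveat on the statement.} When $L=K$ the two sides coincide, so the strict inequality can only hold for $L>K$. I will assume $L>K$, which is the only case used, since $L>K$ throughout \S~\ref{sec:distinguishedtailsgeneral}.

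First, by strict concavity of $g$, the symmetric pair average
\[
s(t):=\tfrac12\big(g(c-t)+g(c+t)\big)
\]
is strictly decreasing in $t\ge 0$. Indeed, $s'(t)=\tfrac12\big(g'(c+t)-g'(c-t)\big)<0$, because $g'$ is strictly decreasing. All arguments stay positive since $R\ge0$ and every point is $\ge1$.

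Second, I organise both point sets by their distance $t$ to $c$.

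\begin{itemize}
\item The middle block consists of symmetric pairs at distances $t\le \tfrac{K-1}{2}$, together with the single centre point ($t=0$) when $K$ is odd. Its average $\mathcal M$ is therefore a convex combination of values $s(t)$ with $t\le\tfrac{K-1}{2}$. Each such value satisfies $s(t)\ge s(\tfrac{K-1}{2})$.
\item The full set $\{1,\dots,L\}$ contains $L-K$ points outside the interval $[\tfrac{L-K}{2}+1,\tfrac{L+K}{2}]$. These form symmetric pairs $(j,L+1-j)$ at distances $t\ge \tfrac{K+1}{2}>\tfrac{K-1}{2}$. Each such pair therefore contributes $s(t)<\mathcal M$.
\end{itemize}

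Third, I split the full set into its inner and outer parts. The $L$-point average is a weighted combination
\[
\tfrac{K'}{L}\,(\text{inner average})+\tfrac{L-K'}{L}\,(\text{outer average}).
\]
Here the inner part consists of the integers in $[\tfrac{L-K}{2}+1,\tfrac{L+K}{2}]$, symmetric about $c$. When $L-K$ is even, the inner part is exactly the middle block, so $K'=K$. When $L-K$ is odd, the middle block has half-integer points, so I cannot just split off the block itself. Instead I compare directly by distance profiles, which is where the main care is needed.

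\emph{Parity case $L-K$ odd.} Here I would use a majorization (Karamata) argument. Place the $K$ middle half-integer points and the $L$ integers in the same symmetric configuration. The uniform measure on the $K$ middle points is majorized by the uniform measure on $\{1,\dots,L\}$: both have mean $c$, and the first is more concentrated about $c$. Concretely, for every $u\ge0$, the middle-block mass within distance $u$ of $c$, as a fraction of $K$, is at least the corresponding fraction for the full set. This is an elementary counting check on the two lattices. Integrating $s$ against these distance distributions, and using that $s$ is decreasing, gives $\mathcal M\ge$ the full average. Strictness follows because the two distributions differ and $s$ is strictly decreasing; in particular, the full set has points at distance $\tfrac{L-1}{2}>\tfrac{K-1}{2}$.

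Multiplying the resulting inequality of averages by $K$ gives \eqref{eq: mainsimplification}. I expect this counting verification for the odd case to be the main obstacle; everything else is concavity.
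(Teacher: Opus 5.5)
\textbf{Verdict: the proposal has a genuine gap in the case $L-K$ odd; the rest is correct.}

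\textbf{What is correct.} Your caveat about $L=K$ is right: the two sides are then equal, so the lemma must be read with $L>K$, which is the only case used. Your argument for $L-K$ even is also correct. It is essentially the paper's Case 1, with your symmetric pair averages $s(t)$ playing the role of the separated-supports Jensen inequality (Lemma~\ref{prop: Jensen}).

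\textbf{The gap.} The ``elementary counting check'' you rely on is false.
\begin{itemize}
\item For $K=2$, $L=3$ the middle block is $\{\tfrac32,\tfrac52\}$ and $c=2$. For $0\le u<\tfrac12$ the middle block has no mass within distance $u$ of $c$, while the full set has mass $\tfrac13$ there.
\item For $K=3$, $L=4$ and $u=\tfrac12$ the two fractions are $\tfrac13<\tfrac12$.
\end{itemize}
More generally, with $L-K$ odd, the claimed first-order dominance of the distance distributions fails whenever $K$ is even or $K<L<2K$.

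Monotonicity of $s$ alone cannot rescue this. For $K=2$, $L=3$ the inequality to prove is $s(\tfrac12)>\tfrac13 s(0)+\tfrac23 s(1)$, and a function that is merely decreasing need not satisfy it. What is true is the convex-order statement behind your word ``majorized''. Using it requires second-order information on $g$, and verifying it is exactly the content you left unproved.

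\textbf{How the paper closes it.} In Case 2 the paper first applies strict concavity at each half-integer point $m$:
\[
g(m)>\tfrac12\bigl(g(m-\tfrac12)+g(m+\tfrac12)\bigr).
\]
This replaces the middle block by a distribution on the $K+1$ consecutive integers $\tfrac{L-K+1}{2},\dots,\tfrac{L+K+1}{2}$. Its weights are $\tfrac1{2K}$ at the two ends and $\tfrac1K$ in between.

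Now compare with the uniform weights $\tfrac1L$ on $\{1,\dots,L\}$. The surplus lies on an integer interval centred at $c$: the $K-1$ inner points, plus the two endpoints when $L\ge 2K$. The deficit lies on the complement. Both parts are symmetric about $c$ and carry equal mass, so your own pair argument with $s$ decreasing gives a non-strict inequality. Strictness comes from the first step. The paper's subcases $2K<L$ and $2K\ge L$ are exactly the two possibilities for the endpoint weights.

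Alternatively, you could use that $s$ is itself concave. You would then still need an actual check that the middle block's distance distribution is dominated in increasing convex order.
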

\noindent The rest of this subsection is taken by the proof of this Lemma.

\smallskip
Note that the inequality \eqref{eq: mainsimplification} does not depend on  dynamics, it is just a functional inequality.  If $K=1$ then it is just standard Jensen's inequality, so we assume that $K\geq 2$. To prove 
that it is always satisfied for any $L\geq K>0$, 
we need the following version of Jensen's Inequality (see e.g.~Lemma 1 in \cite{KPW}). 
\begin{lemma}[Jensen inequality]\label{prop: Jensen}
	Let $a<b$ be two real numbers and consider two convex combinations
	\[
	\sum_{i=1}^mp_ix_i=\sum_{j=1}^nq_jy_j\text{ with }1=\sum_{i=1}^mp_i=\sum_{j=1}^nq_j
	\]
	for some $n,m\in\N$ and $x_1,\ldots,x_m\in [a,b]$, while $y_1,\ldots,y_n\notin [a,b]$.
	Then for any strictly concave function $f$ on $\R$ we have
	\[
	\sum_{i=1}^mp_if(x_i)> \sum_{j=1}^nq_jf(y_j).
	\]
\end{lemma}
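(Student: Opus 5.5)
This is the classical Jensen majorization inequality (a Karamata-type statement), and I would prove it by reducing to the two-point case. First I would normalize by an affine change of variables so that the common mean is fixed, $\bar x:=\sum_i p_ix_i=\sum_j q_jy_j\in[a,b]$. The key observation is that every $y_j$ lies outside $[a,b]$, so each $y_j<a$ or $y_j>b$. Since $\bar y=\bar x\in[a,b]$, both sides must be nonempty, unless $\bar x$ equals an endpoint. That degenerate case forces a contradiction, since points strictly on one side cannot average to the endpoint, so it cannot occur unless some weights vanish; I would assume positive weights.

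\textbf{Step 1 (chord bound for the $y$'s).} Let $\ell$ be the affine function agreeing with $f$ at $a$ and $b$. By strict concavity, $f(y)<\ell(y)$ for every $y\notin[a,b]$, because outside the interval the concave function lies strictly below the extended chord. Hence
\[
\sum_j q_jf(y_j)<\sum_j q_j\ell(y_j)=\ell(\bar y).
\]

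\textbf{Step 2 (chord bound for the $x$'s).} For $x\in[a,b]$, concavity gives $f(x)\ge\ell(x)$. Hence
\[
\sum_i p_if(x_i)\ge\sum_i p_i\ell(x_i)=\ell(\bar x).
\]

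\textbf{Step 3 (combine).} Since $\bar x=\bar y$, the two bounds chain together to give the strict inequality. Strictness comes from Step 1 alone, which needs at least one $y_j$ with $q_j>0$; this always holds since $\sum_j q_j=1$.

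The only delicate point is the strict inequality $f(y)<\ell(y)$ outside $[a,b]$. For $y>b$, write $b$ as a convex combination of $a$ and $y$ with weight in $(0,1)$. Strict concavity then gives $f(b)>$ the value at $b$ of the chord through $(a,f(a))$ and $(y,f(y))$. Rearranging this yields $f(y)<\ell(y)$. The case $y<a$ is symmetric.
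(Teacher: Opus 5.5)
Your argument is correct, and it is self-contained where the paper is not: the paper gives no proof of this lemma and simply invokes Lemma 1 of \cite{KPW}.

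The secant argument does everything needed. Let $\ell$ be the affine function through $(a,f(a))$ and $(b,f(b))$. Then $f\ge \ell$ on $[a,b]$ and $f<\ell$ off $[a,b]$. By affinity,
\[
\sum_j q_j\ell(y_j)=\ell(\bar y)=\ell(\bar x)=\sum_i p_i\ell(x_i),
\]
and chaining these gives the result. Your check of $f(y)<\ell(y)$ for $y>b$, writing $b=ta+(1-t)y$ with $t\in(0,1)$, is right. Strictness uses only $q_j\ge 0$ and $\sum_j q_j=1$.

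Two small remarks. First, your preliminary paragraph is never used. Its claim that $\bar x\in\{a,b\}$ ``forces a contradiction'' is false: take $x_1=a$, $p_1=1$, and $y_1<a<b<y_2$ with weights chosen so the average is $a$. What is true in every case is that the $y_j$ with positive weight lie on both sides of $[a,b]$. Otherwise $\bar y$ would be strictly outside $[a,b]$, while $\bar x\in[a,b]$. You can delete that paragraph as written.

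Second, the true fact is worth keeping for a different reason. It shows that $a$ and $b$ lie in the convex hull of the $y_j$. So your proof works verbatim for $f$ strictly concave on any interval containing all the $x_i$ and $y_j$, not just on all of $\R$. This is the version actually needed when the lemma is applied with $f=\log$ in the proof of Lemma~\ref{lemma:convexity}, since $\log$ is not defined on all of $\R$.
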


\begin{proof}[Proof of Lemma~\ref{lemma:convexity}]
To prove that \eqref{eq: mainsimplification} is satisfied, consider two cases. 
\smallskip
 
\noindent {\it Case 1 (same parity)}
First assume that $K$ and $L$ are of the same parity. 
We have
\[
\begin{split}
\sum_{i=1}^K  &\log\left(R+\frac{L-K}{2}+i\right)-  \frac{K}{L}\sum_{i=1}^L\log(R+i)\\
&=\left(\frac{1}{K}-\frac{1}{L}\right)\sum_{i=1}^{K}\log\left(R+\frac{L-K}{2}+i\right)-\frac{1}{L}\left(\sum_{i=1}^{\frac{L-K}{2}}\log(R+i) +\sum_{i=\frac{L+K}{2}+1}^{L}\log(R+i)  \right)\\
&=\frac{L-K}{L}\left(\frac{1}{K}\sum_{i=1}^{K}\log\left(R+\frac{L-K}{2}+i\right) - \frac{1}{L-K}
\left(\sum_{i=1}^{\frac{L-K}{2}}\log(R+i) +\sum_{i=\frac{L+K}{2}+1}^{L}\log(R+i)  \right)   \right).
\end{split}
\]
Then, by applying Lemma \ref{prop: Jensen} to the following data:
\[
\begin{split}
&p_1=\ldots=p_{K}=\frac{1}{K};\quad x_i=R+\frac{L-K}{2}+i\text{ for }i=1,\ldots,K;\quad
q_1=\ldots=q_{L-K}=\frac{1}{L-K};\\
&y_j=R+j\text{ for }j=1,\ldots,\frac{L-K}{2}\ \text{ and }\  y_j=R+K+j\ \text{ for }\ j=\frac{L-K}{2}+1,\ldots,L-K,
\end{split}
\]
we get\eqref{eq: mainsimplification} in this case.

\smallskip

\noindent {\it Case 2 (opposite parity)} If $K$ and $L$ are not of the same parity, then we start by applying classical Jensen's Inequality to each of the logs separately and obtain
\[
\begin{split}
	\frac{1}{K} \sum_{i=1}^K &\log\left(R+\frac{L-K}{2}+i\right)
	> \sum_{i=0}^{K-1} \frac{1}{2K} \left(   \log\left(R+\frac{L-K+1}{2}+i\right) +  \log\left(R+\frac{L-K+1}{2}+i+1\right) \right)\\
	&= \frac{1}{2K}\log\left(R+\frac{L-K+1}{2}\right)+\frac{1}{K}\sum_{i=1}^{K-1}\log\left(R+\frac{L-K+1}{2}+i\right)+
	\frac{1}{2K}\log\left(R+\frac{L-K+1}{2}+K\right).
	\end{split}
\]
Then we get that 
\[
\begin{split}
	\sum_{i=1}^K  &\log\left(R+\frac{L-K}{2}+i\right)-  \frac{K}{L}\sum_{i=1}^L\log(R+i)\\
	&\ge\left(\frac{1}{2K}-\frac{1}{L}\right)\left(\log\left(R+\frac{L-K+1}{2}\right)+\log\left(R+\frac{L-K+1}{2}+K\right)\right)\\&+
	\left(\frac{1}{K}-\frac{1}{L}\right)\left(\log\left(R+\frac{L-K+1}{2}+1\right)+\ldots+\log\left(R+\frac{L-K+1}{2}+K-1\right)\right)\\
	&-\frac{1}{L}\left(\log(R+1)+\ldots+\log\left(R+\frac{L-K+1}{2}-1\right)+\log\left(R+\frac{L-K+1}{2}+K+1\right)+\ldots +\log(R+L)  \right)
	\end{split}
\]
{If $2K<L$, the by applying Lemma \ref{prop: Jensen} to the following data:
\[
\begin{split}
	&p_1=p_{K+1}=\frac{L-2K}{2K(L-K-1)}\text{ and }p_2=\ldots=p_{K}=\frac{L-K}{K(L-K-1)};\\ &x_i=R+\frac{L-K-1}{2}+i\text{ for }i=1,\ldots,K+1;\\
	&q_1=\ldots=q_{L-K-1}=\frac{1}{L-K-1};\\
	&y_j=R+j\text{ for }j=1,\ldots,\frac{L-K-1}{2}\ \text{ and }\  y_j=R+K+j+1\ \text{ for }\ j=\frac{L-K-1}{2}+1,\ldots,L-K-1,
\end{split}
\]
we get \eqref{eq: mainsimplification}.
If on the other hand $2K\ge L$, then we apply Lemma \ref{prop: Jensen} to
\[
\begin{split}
	&p_1=\ldots=p_{K-1}=\frac{1}{K-1};\quad x_i=R+\frac{L-K+1}{2}+i\text{ for }i=1,\ldots,K-1;\\
	&q_1=q_{L-K+1}=\frac{2K-L}{(L-K)(K-1)}\ \text{and}\ q_2=\ldots=q_{L-K}=\frac{K}{(L-K)(K-1)};\\
	&y_j=R+j\text{ for }j=1,\ldots,\frac{L-K+1}{2}\ \text{ and }\  y_j=R+K+j\ \text{ for }\ j=\frac{L-K+1}{2},\ldots,L-K,
\end{split}
\]
and obtain \eqref{eq: mainsimplification} in the remaining case}.
\end{proof}

\subsubsection{An auxiliary tail estimates.}\label{sec:auxiliary}
To deduce the estimate of measures of both  tail sets (which will be used in the proof of Proposition~\ref{prop:generaltail} in the next \S~\ref{sec:prooftailestimate})  
at the same time, we first prove the following estimate. 

For $N\in\N$ and $H>0$,  for every $k\in\N$ let $w_k\in(a,b)$ be such that $b-w_k=\frac{e^{-H}}{q_k}$ and   consider the following set:
\[
E^{H,N}_k:=\{x\in M_k \mid S_{Nq_k}(f)(x)-Na_k\ge S_{Nq_k}(f)(w_k)-N a_k\}.
\]
{Let $\hat w_k\in(\hat a,\hat b)$ be such that $S_{Nq_k}(f)(\hat w_k)=S_{Nq_k}(f)(w_k)$ and let $\hat H>0$ be defined as $T^{(N-1)q_k}\hat w_k-\hat a=\frac{e^{-\hat H}}{q_k}$.}
\begin{lemma}\label{lem: generalformoftails}
	If  $H$ is such that	 $3NB\leq D-3NB<H,\hat H\leq D$, then 
	\begin{equation}
		|Leb(E^{H,N}_{k})-(e^{-H}+{e^{-\hat H}}+(N-1)\Delta_k)|<(N+5)\rho e^{-D}.
	\end{equation}
\end{lemma}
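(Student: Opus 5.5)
We follow the strategy of Steps 2--9 of the proof of Proposition~\ref{prop:distinguishedtailsL2}, now for a general number $N$ of consecutive rigidity times. The orbit $\mathcal{O}(x,Nq_k)$ of a point $x\in M_k$ splits into the $N$ segments $\mathcal{O}(T^{iq_k}x,q_k)$, $0\le i<N$. By the $L$-fold matching property (Definition~\ref{def:Lmatchingset}) and the $(\gamma,B)$-goodness (Lemma~\ref{lemma:linearbounds}), each segment is matched with the reference orbit $\mathcal{O}(z_k,q_k)$ with a bounded error. Hence $S_{Nq_k}(f)(x)-Na_k$ equals, up to an error of order $NBe^{-D}$, the contribution of the closest visits in the $N$ segments. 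Since the singularities are cramped (Step~4, \eqref{eq: residuasmall}) and $C_f=1$, this contribution is $-\sum_{i=0}^{N-1}\log\big(q_k\,\mathrm{dist}(T^{iq_k}x,\text{run})\big)$. As before, we work floor by floor inside the intervals $(a_j,b_j)$ and $(\hat a_j,\hat b_j)$ of complete runs (Step~2), and we treat the incomplete runs separately.

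\textbf{Left side.} Take $x\in(a_j,b_j)$ with $b_j-x=s/q_k$. Each application of $T^{q_k}$ moves the point left by $\Delta_k/q_k$, so the distances are $s+i\Delta_k$ for $i=0,\dots,N-1$. The closest visit is therefore the first one, and by the monotonicity argument of Step~3 the set $E^{H,N}_k\cap(a_j,b_j)$ is an interval ending at $b_j$. Comparing the equation that defines its endpoint with the equation defining $w_k$, exactly as in \eqref{eq: DcompareswithD}, shows that its length is $e^{-H_j}/q_k$ with $|H_j-H|\le 5NBe^{-D+2B}$. The hypothesis $H\ge D-3NB$ keeps all of these points inside the monotonicity window $|b_j-y|<1/(Bq_k)$.

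\textbf{Right side.} Take $x\in(\hat a_j,\hat b_j)$. Now the iterates move towards the singularities, and eventually cross to the left floor. We repeat the three-case argument of Step~7 in $N$ blocks:
\begin{itemize}
\item as long as the iterates remain to the right of the singularities, they slide monotonically;
\item once an iterate $T^{iq_k}x$ crosses, the remaining blocks are compared with the left side using the monotonicity of Case~2;
\item points passing within $O(Be^{-2D+2B})/q_k$ of a discontinuity lie in the set by the crude bound \eqref{eq: superclosevisit}.
\end{itemize}
The outcome is that $E^{H,N}_k\cap\big(\bigcup_j(\hat a_j,\hat b_j)\cup\bigcup_j(a_j,b_j)\big)$ is, up to small errors, the union over floors of the intervals from $b_j-e^{-H}/q_k$ to the point $\hat x_j$. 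Here $\hat x_j$ is determined by requiring that its last iterate $T^{(N-1)q_k}\hat x_j$ lies at distance $e^{-\hat H}/q_k$ from $\hat a_j$, which means $\hat x_j-\hat a_j=(e^{-\hat H}+(N-1)\Delta_k)/q_k$. Along the way, the $N-1$ intermediate jumps sweep a region of width $(N-1)\Delta_k/q_k$. The constraint $\hat H\le D$ together with $\Delta_k\approx e^{-D}$ ensures that the first iterate landing to the left of the run stays at distance at most comparable to $e^{-D}/q_k$, so that the Case~2 and Case~3 dichotomy applies.

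\textbf{Summing up.} Each complete floor contributes length $(e^{-H}+e^{-\hat H}+(N-1)\Delta_k)/q_k$, with relative error $O(NBe^{-D+2B})$. The number of such floors is $q_k-(\overline\ell_k-\underline\ell_k)$, with $\overline\ell_k-\underline\ell_k\le\rho q_k$ by \eqref{eq: tausareclose}. Summing gives the main term, up to an error of $\rho(e^{-H}+e^{-\hat H}+(N-1)\Delta_k)\le (N+1)\rho e^{-D}$.

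\textbf{Incomplete runs and error budget.} The floors with incomplete runs number at most $\rho q_k$, and each carries tail of length at most $(N+2)e^{-D}/q_k$, as in \eqref{eq: tailsincomplete}. The set $X_k$ is excluded from $M_k$ by definition. The remaining errors are $O(NBe^{-D})$ terms; making them smaller than $\rho e^{-D}$ requires the choice of $D$ to be large in terms of $B$ and $N$ (as in Step~0), which should be acceptable. Summing all contributions gives the bound $(N+5)\rho e^{-D}$.

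\textbf{Main obstacle.} The hardest step is the right side for general $N$. Showing that $E^{H,N}_k$ fills the whole interval of length $(e^{-\hat H}+(N-1)\Delta_k)/q_k$ requires controlling every possible crossing index $i$ at which the orbit moves from right to left. It also requires checking that the sum of the $N$ logarithmic contributions is monotone across each crossing. I would attempt this by induction on the crossing index, pairing each block with the corresponding block of $\hat w_k$ and using the matching to $z_k$.
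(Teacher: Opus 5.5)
Your architecture matches the paper's. You describe $E^{H,N}_k$ floor by floor as $\bigcup_j[w_k^j,\hat w_k^j]\cap M_k$, with $H_k^j,\hat H_k^j$ close to $H,\hat H$ as in \eqref{eq: HcompareswithH}. You use monotonicity near $b_j$ and $\hat a_j$, dispose of orbits passing extremely close to a discontinuity by the crude bound \eqref{eq: superclosevisit}, and treat incomplete runs separately.

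The one step that is genuinely new for general $N$ is the crossing case: a point $x\in[\hat a_j,\hat a_j+(N-1)\Delta_k/q_k]\cap M_k$ whose orbit crosses the run at some index $M\in\{1,\dots,N-1\}$ must be shown to lie in $E^{H,N}_k$. You leave exactly this open, and the route you sketch does not work as stated.
\begin{itemize}
\item There is no monotonicity of $S_{Nq_k}(f)$ across a crossing to verify; it jumps there.
\item Pairing the blocks of $x$ with the same-index blocks of $\hat w_k^j$ fails. All blocks of $\hat w_k^j$ stay to the right of the run. The blocks $T^{iq_k}x$ with $i\ge M$ lie to its left and move away from it, so the late ones are farther from the singularities than the corresponding blocks of $\hat w_k^j$.
\end{itemize}

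The missing idea is a one-shot proportional splitting that uses \emph{both} threshold points. Set $c_i(y):=S_{q_k}(f)(T^{iq_k}y)-a_k$. Let $\Theta:=\sum_{i<N}c_i(w_k^j)=\sum_{i<N}c_i(\hat w_k^j)$; both sums equal $S_{Nq_k}(f)(w_k)-Na_k$ by definition.
\begin{itemize}
\item By monotonicity, $i\mapsto c_i(w_k^j)$ is decreasing (the orbit moves away from $b_j$) and $i\mapsto c_i(\hat w_k^j)$ is increasing (the orbit approaches $\hat a_j$).
\item At the crossing, $b_j-T^{Mq_k}x\lesssim\Delta_k/q_k\le e^{-H}/q_k$ and $T^{(M-1)q_k}x-\hat a_j\lesssim\Delta_k/q_k\le e^{-\hat H}/q_k$. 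This is precisely where $H,\hat H\le D$ and $\Delta_k\le c_1=e^{-D}$ enter.
\item Hence $c_{M+i}(x)\ge c_i(w_k^j)$ for $0\le i<N-M$, and $c_i(x)\ge c_{N-M+i}(\hat w_k^j)$ for $0\le i<M$.
\item The first $N-M$ terms of a decreasing sequence, and the last $M$ terms of an increasing one, average at least the overall mean. Therefore
\[
S_{Nq_k}(f)(x)-Na_k\ \ge\ \sum_{i<N-M}c_i(w_k^j)+\sum_{N-M\le i<N}c_i(\hat w_k^j)\ \ge\ \tfrac{N-M}{N}\,\Theta+\tfrac{M}{N}\,\Theta=\Theta .
\]
\end{itemize}
This gives $x\in E^{H,N}_k$ for every crossing index at once, with no induction on $M$. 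The small errors in \eqref{eq: HcompareswithH} and the width of the small towers can spoil the two inclusions above. When they do, some block passes $O(Be^{-2D+2B})/q_k$-close to a discontinuity, which your crude-bound case already covers.

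Two smaller points.

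First, in your error budget the inequality $\rho(e^{-H}+e^{-\hat H}+(N-1)\Delta_k)\le(N+1)\rho e^{-D}$ goes the wrong way. Since $H,\hat H\le D$, we have $e^{-H},e^{-\hat H}\ge e^{-D}$, possibly as large as $e^{3NB-D}$. So discarding the incomplete-run floors, and adding them back only through an upper bound, does not give the two-sided bound. The paper instead shows, via \eqref{eq: V_k is contained} and \eqref{eq: V_k contains}, that each incomplete floor carries an interval shorter than a complete one by only $O(\Delta_k)/q_k$. The total loss is then $O(\rho\Delta_k)+Leb(X_k)$, whatever the size of $e^{-H}$.

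Second, matching with $z_k$ controls $S_{q_k}(f)-a_k$ by the closest visits only up to an $O(1)$ constant, not $O(NBe^{-D})$. Errors of order $Be^{-D}$ arise only when comparing points $O(e^{-D}/q_k)$ apart, such as $x$ and $w_k^j$.
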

\smallskip
\noindent The proof, that takes the rest of this subsection, largely follows the computations already done in the $K=1,L=2$ case. 

\begin{proof}
	First notice that, due to the choice of interval for $H$, as in Step 3 of the proof of Proposition~\ref{prop:distinguishedtailsL2}, all orbits under consideration lie in the monotonicity intervals of $S_{q_n}(f)$, by increasing $D$ if necessary, depending on $N$. Hence, in the reminder of the proof, we always assume that we are in a proper monotonicity interval. 
	
	For every $j\in\{0,\ldots,q_k-1\}$, define $w_k^j$, $\hat w_k^j$, $H_k^j$ and $\hat H_k^j$ analogously as in \eqref{eq: defzkj}, \eqref{eq: deftildezkj}, \eqref{eq: defDjk} and \eqref{eq: defhatDjk}.
		By proceeding similarly as in the proof of \eqref{eq: DcompareswithD} (and \eqref{eq: tildeDcompareswithD}) we get
		\begin{equation}\label{eq: HcompareswithH}
			|H_k^j-H|\le Be^{-D+2B}\quad\text{and}\quad|\hat H_k^j-\hat H|\le Be^{-D+2B}\quad\text{for every}\quad j\in\{0,\ldots,q_k-1\}.
		\end{equation}
		We will show that $E^H_k$ is essentially a union of intervals and the above expression shows that their left- and right-handside endpoints are very comparable.
		
		Due to the fact that $M_k$ is a good matching set, we know that for every $j$, the points $w_k^{j}$ and $\hat w_k^j$ are endpoints of an interval of continuity of $f$ of length $\frac{e^{-H_k^j}}{q_k}+\frac{e^{-\hat H_k^j}}{q_k}+\delta_k$, where { $\delta_k:=|\tilde I_3^k|+ \ldots+ |\tilde I_d^k|$}. We will show that $E_k^H$ consists essentially of intervals. 
		
		More precisely, let $x\in[w_k^{j},\hat w_k^j]\cap(I\setminus X_k)$. Then either $x\in[w_k^{j},b_{j}]$ or $x\in[\hat a_j, \hat w_k^j]$. If $x\in[w_k^{j},b_{j}]\cap (I\setminus X_k)$, then by monotonicity of $S_{q_k}(f)$, we have
		\[
		S_{Nq_k}(f)(x)\ge S_{Nq_k}(f)(w_k^{j})
		\]
		and thus $x\in E_k^{H,N}$. Similarly, if $x\in[\hat a_j+\frac{(N-1)\Delta_k}{q_k},\hat w_k^j]\cap M_k$, then again by monotonicity argument we have
		\[
		S_{Nq_k}(f)(x)\ge S_{Nq_k}(f)(\hat w_k^j)
		\] 
		and thus again $x\in E_k^{H,N}$. Let then $x\in[\hat a_j,\hat a_j+\tfrac{(N-1)\Delta_k}{q_k}]\cap M_k$, and assume that there exists $M\in\{1,\ldots,N-1\}$ such that $T^{Mq_k}(x)\in [w_k^{j},b_j]$ and $T^{(M-1)q_k}(x)\in[\hat a_j, \hat w_k^j]$. Then
		\[
		S_{Nq_k}(f)\left(T^{Mq_k}(x)\right)\ge S_{Nq_k}(f)(w_k^j).
		\]
		However, due to monotonicity of $S_{q_k}(f)$, we have 
		\[
		S_{q_k}(f)\left(T^{Mq_k}(x)\right)\ge S_{q_k}(f)\left(T^{q_k}\circ T^{Mq_k}(x)\right)\ge\ldots
		\ge S_{q_k}(f)\left(T^{(N-M-1)q_k}\circ T^{Mq_k}(x)\right).
		\]
		Thus 
		\begin{equation}\label{eq: partialestimatept1}
			S_{(N-M)q_k}(f)\left(T^{Mq_k}(x)\right)\ge \frac{N-M}{N}S_{Nq_k}(w_k^j).
		\end{equation}
		Analogously
		\begin{equation}\label{eq: partialestimatept2}
			S_{Mq_k}(f)(x)\ge \frac{M}{N}S_{Nq_k}(T^{-Nq_k}\hat w_k^j).
		\end{equation}
		By summing up \eqref{eq: partialestimatept1} and \eqref{eq: partialestimatept2} we get that 
		\[
		S_{Nq_k}(f)(x)\ge S_{Nq_k}(w_k^j)
		\]
		and thus $x\in E^{H,N}_k$.
		
		It remains to consider the case when $x\in[\hat a_j,\hat a_j+\tfrac{(N-1)\Delta_k}{q_k}]\cap M_k$ but there is no $M\in\{0,\ldots,N-1\}$ for which $T^{Mq_k}(x)\in [w_k^{j},b_j]$ and $T^{(M-1)q_k}(x)\in[\hat a_j, \hat w_k^j]$. This may happen for example if $H\approx \hat H\approx D$. Then, since as in \eqref{eq: allareclose}, by \eqref{eq: HcompareswithH} we have 
		\begin{equation}\label{eq: allareclose_general}
			\max\{|\Delta_k-e^{-D}|,|e^{-H}-e^{-H_k^{j}}|,|e^{-\hat H}-e^{-\hat H_k^j}|\}\le 4Be^{-D+2B}\cdot e^{-D},
		\end{equation}
		if $x$ does not satisfy any of the considered conditions, then for some $M\in\{0,\ldots,N-1\}$ it holds that  $|T^{Mq_k}x-b_{j'}|\le 8Be^{-D+2B}\cdot e^{-D}$. Then, analogously as in the proof of \eqref{eq: superclosevisit}, since $H>3NB$, we have
		\[
		S_{Nq_k}(f)(x)-Na_k\ge (N+1)H-2NB> NH+NB,
		\]
		which implies that $x\in E_k^{H,N}$. To sum up we have 
		\begin{equation}\label{eq: E_k^{H,N} with all visits}
			\begin{split}
				E_k^{H,N}&\cap\left(\bigcup_{j=0}^{q_k-1-(\overline{\ell}_k-\underline{\ell}_k)}T^{-j}(a_0,b_0) \cup \bigcup_{j=0}^{(q_k-1-(\overline{\ell}_k-\underline{\ell}_k)}T^{-j}(\hat a_0,\hat b_0)\right)\cap M_k\\
				&= \bigcup_{j=0}^{q_k-(\overline{\ell}_k-\underline{\ell}_k)-1}[w_k^{j},\hat w_k^j]\cap M_k
			\end{split}
		\end{equation}
		
		We have described the set $E_k^{H,N}$ outside the set $\bigcup_{j=1}^{\overline\ell_k-\underline\ell_k}T^j(a_0,b_0)$ and $\bigcup_{j=1}^{-(\overline\ell_k-\underline\ell_k)}T^j(\hat a_0,\hat b_0)$. Let us denote 
		\[
		V_k:=E_k^{H,N}\cap \left(\bigcup_{j=1}^{\overline{\ell}_k-\underline{\ell}_k}T^j(a_0,b_0) \cup \bigcup_{j=1}^{\overline\ell_k-\underline\ell_k}T^j(\hat a_0,\hat b_0)\right)\cap M_k.
		\]
		Then, analogously as in \eqref{eq: notvisitingall_1} and \eqref{eq: notvisitingall_2} we have
		\begin{equation}\label{eq: V_k is contained}
			V_k\subseteq \bigcup_{j=1}^{\overline{\ell}_k-\underline{\ell}_k}(T^j(w_k^0),T^jb_0) \cup \bigcup_{j=1}^{\overline\ell_k-\underline\ell_k}(T^j\hat a_0,T^j(\hat w_k^0)),
		\end{equation}
		{while similarly as in \eqref{eq: notvisitingall_3} we have
		\begin{equation}\label{eq: V_k contains}
		\left(\bigcup_{j=1}^{\overline{\ell}_k-\underline{\ell}_k}(T^j(T^{-q_k}w_k^0),T^jb_0) \cup \bigcup_{j=-(\overline{\ell}_k-\underline{\ell}_k)}^{-1}(T^j\hat a_0,T^j(T^{q_k}\hat w_k^0))\right)\cap M_k\subseteq V_k.
		\end{equation}
		We note that in \eqref{eq: V_k contains}, the left-hand side of the tail did not appear in the formula since the measure of this part was small enough so that it could be considered as an error. However, although \eqref{eq: V_k contains} is more general, the proof goes along the same lines as the proof of \eqref{eq: notvisitingall_3}.}
		
		To summarize, in view of \eqref{eq: E_k^{H,N} with all visits}, using \eqref{eq: tausareclose}, \eqref{eq: allareclose_general} and \eqref{eq: V_k is contained}, we have
		\[
		\begin{split}
			Leb(E_k^{H,N})&= Leb \left(\bigcup_{j=0}^{q_k-(\overline\ell_k-\underline\ell_k)-1}[w_k^{j},\hat w_k^j]\cap M_k\right) + Leb(V_k)\\
			&\le q_k\cdot \frac{1}{q_k}\cdot ((1+4Be^{-D+2B})(e^{-H}+e^{-\hat H})+(N-1)\Delta_k)\\
			&\le (e^{-H}+e^{-\hat H}+(N-1)\Delta_k)+\rho e^{-D},
		\end{split}
		\]
		where the last inequality follows from the fact that $H,\hat H\ge 3NB$.
		
		{On the other hand, using \eqref{eq: tausareclose}, \eqref{eq: allareclose_general}, and \eqref{eq: V_k contains} we have
		\[
	\begin{split}
		Leb(E_k^{H,N})&= Leb \left(\bigcup_{j=0}^{q_k-(\overline\ell_k-\underline\ell_k)-1}[w_k^{j},\hat w_k^j
	]\cap M_k\right) + Leb(V_k)\\
		&\ge q_k\cdot \frac{1}{q_k}\cdot ((1-4Be^{-D+2B})(e^{-H}+e^{-\hat H})+(N-1)\Delta_k)- (\overline{\ell}_k-\underline{\ell}_k)2\frac{\Delta_k}{q_k}-Leb(X_k)  \\
		&\ge (e^{-H}+e^{-\hat H}+(N-1)\Delta_k)-(N+5)\rho e^{-D},
	\end{split}
		\]
	}Thus we get 
		\[
		-(N+5)\rho e^{-D}\le|Leb(E_k^{H,N})-(e^{-H}+e^{-\hat H}+(N-1)\Delta_k)|\le \rho e^{-D},
		\]
		which finishes the proof of the lemma.
		\end{proof}

\subsubsection{The tail estimates in the general case}\label{sec:prooftailestimate}
We will now sketch the proof of the general tail estimates  (i.e.~Proposition~\ref{prop:generaltail}), following the lines of the proof in the special case $K=1$, $L=2$. We highlight the arguments which are specific to the general case and leave the details of other steps to the reader.

\begin{proof}[Sketch of proof of Proposition~\ref{prop:generaltail}]
For any $k\in\N$, consider the $K,L$ tail sets $B_k^K$ and $ B_k^L$ defined in \S~\ref{sec:BKLsets}, see \eqref{def:BKL}. 
As a defining point $v_k$ for $B_k^L$, we take it so that it satisfies $b-v_k=\frac{e^{-D}}{q_k}$. {Then in view of Lemma \ref{lem: generalformoftails}, we have that $Leb(B_k)\approx e^{-D}+e^{-\hat D}+(L-1)e^{-D}$, where $\hat D$ is such that $T^{(L-1)q_k}\hat v_k-\hat a=\frac{e^{-\hat D}}{q_k}$ and $v_k$ marks the beginning of $B_k^L$ on the right-hand side. We assume from now on that $\hat D\le D$. If this inequality is the other way around, we first define $\hat v_k$ as the point whose distance is $\frac{e^{-D}}{q_k}$ from $\hat a$ and then proceed with the proof symmetrically.

Analogously, again by Lemma \ref{lem: generalformoftails}, we have that $Leb(A_k)\approx e^{-H}+e^{-\hat H}+(K-1)e^{-D}$, where $H$ and $\hat H$ are the constants defining $A_k$ from the left and right, respectively.
 Thus we get that 
\begin{equation}\label{eq: approx_difference}
	Leb(B_k)-Leb(A_k)\approx  e^{-D}+e^{-\hat D}+(L-1)e^{-D}-\left( e^{-H}+e^{-\hat H}+(K-1)e^{-D}\right)
\end{equation} }

As in \S~\ref{sec:strategy}, to determine whether a point $x$ belongs or not to $B_k^L$, as a first approximation,   we consider the contributions of the closest visits of the orbits to the closest singularity (which, since $C_f=1$ and the singularities are cramped, gives a good approximation of the contribution of all singularities).
{Taking this into consideration, by using Lemma \ref{lemma:convexity} we compute
\[
\begin{split}
	-\log&\left(e^{-\hat D}+\frac{L-K}{2}e^{-D}\right)-\log\left(e^{-\hat D}+\left(\frac{L-K}{2}+1\right)e^{-D}\right)-\ldots-\log\left(e^{-\hat D}+\left(\frac{L-K}{2}+K-1\right)e^{-D}\right)\\
	&< \frac{K}{L}\left(-\log\left(e^{-\hat D}\right)-\log\left(e^{-\hat D}+e^{-D}\right)-\ldots-\log\left(e^{-\hat D}+(L-1)e^{-D}\right)\right),
\end{split}
\]
which implies that $e^{-\tilde H}<e^{-\hat D}+\frac{L-K}{2}e^{-D}$ and thus 
\begin{equation}\label{eq: approx_difference_hat}
	e^{-\hat D}+(L-1)e^{-D}-\left(e^{-\hat H}+(K-1)e^{-D}\right)\ge \frac{L-K}{2}e^{-D}.
\end{equation}
On the other hand
\[
\begin{split}
	-\log&\left(\left(\frac{L-K}{2}+1\right)e^{-D}\right)-\log\left(\left(\frac{L-K}{2}+2\right)e^{-D}\right)-\ldots-\log\left(\left(\frac{L-K}{2}+K\right)e^{-D}\right)\\
	&< \frac{K}{L}\left(-\log\left(e^{-D}\right)-\log\left(2e^{-D}\right)-\ldots-\log\left(Le^{-D}\right)\right),
\end{split}
\]
again by Lemma~\ref{lemma:convexity}.  One can then show  that there exists a number $\alpha:=\alpha(K,L)$, with $\alpha<\frac{L-K}{2}+1$ (in the case $K=1,\,L=2$ we took $\alpha=\frac{10}{7}$),  such that 
\[
\begin{split}
	-\log&\left(\alpha e^{-D}\right)-\log\left(\left(\alpha+1\right)e^{-D}\right)-\ldots-\log\left(\left(\alpha+K-1\right)e^{-D}\right)\\
	&< \frac{K}{L}\left(-\log\left(e^{-D}\right)-\log\left(2e^{-D}\right)-\ldots-\log\left(Le^{-D}\right)\right),
\end{split}
\]
and at the same time, by picking $\rho:=\rho(K,L)$ sufficiently small, we can guarantee, that 
\begin{equation}\label{eq: approx_difference_nohat}
	e^{-H}\le \alpha e^{-D}.
	\end{equation}
By applying \eqref{eq: approx_difference_hat} and \eqref{eq: approx_difference_nohat} to \eqref{eq: approx_difference}, we get
\[
Leb(B_k^L)-Leb(B_k^K)\approx \left(\frac{L-K}{2}-(\alpha-1)\right)e^{-D},
\]
which is {a \emph{large} multiple of $e^{-D}$} and depends essentially only on $K$ and $L$.} On the other hand, in view of \eqref{eq: tausareclose}, we have that the number 
\[
Leb(X_k)=Leb([0,1)\setminus M_k)\le \gamma,
\]
with $\gamma:=\rho(L+1)e^{-D}$ is a very \emph{small} multiplicity of $e^{-D}$. Hence, since $\rho$ was taken  sufficiently small, we have that
\begin{equation}\label{eq: criterionassumptionsatisfied}
	Leb(B_k^L)-Leb(B_k^K)\ge \gamma.
\end{equation}
\noindent The remaining details of the proof, needed to  formalize the approximations made in the above calculation (as in the proof for $K=1$, $L=2$)
including the choice of $\epsilon$ and $\delta$, go similarly as in the case $K=1$, $L=2$.

\end{proof}

	\subsection{Final arguments}\label{sec:final}
	We can now conclude the proof of the main disjointness result, i.e.~Theorem~\ref{thm:main}.
	{	\begin{proof}[Proof of Theorem~\ref{thm:main}]
Let ${G}$ be the full measure set of IETs with $d=4g-3$ given by Proposition~\ref{prop:Egoodmatchings}, which admit  sequences of $L$-fold good matchings for every $L\in \mathbb{N}$.  
Let 
 $\mathcal{G}$ be the set of locally Hamiltonian flows on a surface of genus $g\ge 2$ in $\mathcal{U}_{min}$ which admit  a special flow representation over a $T\in G$, which has full measure with respect to the measure class on  $\mathcal{U}_{min}$  (see \S~\ref{sec:localHam}).  {By \cite{Ul:wea}  (see also \S~\ref{sec:main}, footnote \ref{ft:wm} at page~\pageref{ft:wm})}  there exists a 
 full measure set  $\mathcal{W}$  of locally Hamiltonian flows in $\mathcal{U}_{min}$ that  are weakly mixing. 
Let $\mathcal{F}:= \mathcal{G}\cap \mathcal{W}$ be the intersection of these two full measure sets.

Consider a locally Hamiltonian flow $\varphi_\R$ in $\mathcal{F}$. We claim that  it has disjoint rational rescalings. 
	Notice first that for every $p\in\R\setminus \{0\}$, the rescaling 
 flows $\varphi^{\kappa}_\R$ and $\varphi^{\kappa'}_\R$ are disjoint if and only if the rescalings  $\varphi^{p\kappa}_\R$ and $\varphi^{p\kappa'}_\R$ are disjoint. Hence, 
{taking into account also  Corollary \ref{cor: just positive} (which allows us to reduce to considering  only positive times)},  
  it suffices to  prove  that  $\varphi^{\kappa}_\R$ and $\varphi^{\kappa'}_\R$  for $\kappa=K$ and $\kappa'=L$ where $K, L \in \N$,  since then, the rational case follows.  
Consider a representation  of $\varphi_\R$ as special flow over $T$ in $G$ under a roof $f\in SymLog^2 (T)$. 
Since $\varphi_\R$  is isomorphic to the special flow $(T_t^f)_{t\in\R}$, one can see that then the rescalings  $\varphi^{L}_\R$ and $\varphi^{K'}_\R$ are isomorphic, respectively, to $(T_{Kt}^f)_{t\in\R}$ and  $(T_{Lt}^f)_{t\in\R}$. Thus we want to show that the latter two special flows are disjoint. 
 
Assume WLOG that $L\geq K$.  We will prove disjointness of $(T_{Kt}^f)_{t\in\R}$ and  $(T_{Lt}^f)_{t\in\R}$ via the disjointness criterion for special flows given by Proposition~\ref{prof:disjointnesssf}. The assumption that $(T_{t}^f)_{t\in\R}$ is weakly mixing holds by construction (since  $\varphi_\R$ in $\mathcal{W}$). 
{For $L$ and the $\rho>0$  given (depending on $L$) by Proposition \ref{prop:generaltail} (which gives the tail estimates) let $B>0$ be the constant
given by 
 Proposition \ref{prop:Egoodmatchings} (which gives the existence of good-matchings). Finally, let 
$ \epsilon>0 $, $ C=[c_0,c_1]\subset \mathbb{R}_+$  be given by the tail estimates  Proposition \ref{prop:generaltail}  applied to the given $L$, $\rho$ and $B$.  } 
     By  Proposition \ref{prop:Egoodmatchings} (which applies since by construction $T\in G$, which is exactly the set of IETs to which it applies), 
      there exists a sequence  $(M_n)_n$ of {$(\gamma,B)$}-good $(z_n,q_n)$ matchings 
      given by a $(C,\epsilon, \rho)$-BT rigid tower presentation (see Definition~\ref{def:givenby}).  
   Thus, by Proposition \ref{prop:exptails_via_match}, since $(M_n)_n$ are $L$ (and hence $K$)-fold $\gamma$-good $({z_n},q_n)$-matching sets for exponential tails, if we center the Birkhoff sums by $a_k=S_{q_k}f({{z_n}})$, the   sequences of measures on $\R$
 $$P_n:=(S_{Kq_n}(f)-Ka_n)_\ast (Leb_{M_n}), \quad Q_n:=(S_{Lq_n}(f)-La_n)_\ast (Leb_{M_n}), \qquad n\in \mathbb{N}$$    
 (where $Leb|_E$ denotes the restriction of the Lebesgue measure on $[0,1]$ to $E\subset [0,1]$)
  have uniform  exponential tails, i.e.~equation \eqref{eq:exptails} holds.

{
}

From the definition of BT-rigid presentations, we have that sequence  $(q_n)_n$   is a sequence of partial rigidity times with corresponding partial rigidity sets $(M_k)_{k\in\N}$ with $Leb(M_k)>1-\gamma$ {(see the proof of Lemma~\ref{cor:E_BTrigidity})}.   Without loss of generality (up to making $M_k$ smaller) we can assume that $\lim_{k\to \infty} Leb(M_k)=1-\gamma$.  
{Since the sequence $(t_k)_k$ given by Proposition~\ref{prop:generaltail} is bounded,   up to taking a subsequence, we can assume that  it converges and let $t:= \lim_{k\to \infty} t_k$.
Then, by the estimates given  by Proposition~\ref{prop:generaltail} on  the difference of measures of the $K$ and $L$ tail sets $B_k^K$ and $B_k^L$, {defined using $(t_k)_k$ as threshold   as in \eqref{def:BKL}}, we have that}
\begin{equation*}
\underline{\lim}_{n\to\infty}\frac{Leb \left(\{x \in M_n \; |S_{Kq_n}f(x)-Ka_n|\geq 
Kt \}\right) - Leb \left(\{x \in M_n: \; |S_{L q_n}f(x)-L 
a_n|\geq L t\}\right)\Big)}{Leb(M_n)}> \frac{{\gamma}}{1-\gamma} 
\end{equation*}
	This shows  that also the assumption \eqref{differenttail} of  Proposition~\ref{prof:disjointnesssf} holds.  
Thus, all the assumptions of the disjointness criterion given by Proposition~\ref{prof:disjointnesssf} hold and by the criterion we conclude, as claimed, that 
 the $K$ and $L$ rescalings 
   $(T_{Kt}^f)_{t\in\R}$ and  $(T_{Lt}^f)_{t\in\R}$ of $(T_t^f)_{t\in\R}$ are disjoint in the sense of Furstenberg.
	\end{proof}	}

\subsection*{Acknowledgements} 
{P.~B.~has received support from the Swiss National Science Foundation, through Grant 
 200021\textunderscore 188617/1 
{as well as the grant OPUS 29 2025/57/B/ST1/00704 from the National Science Centre (Poland).}  C.~U.~acknowledges also the support of the Swiss National Science Foundation through the SNSF Consolidator Grant Number 213663, \emph{New Frontiers of Renormalization}.}
 
\end{document}